\theoremstyle{plain}
\newtheorem{theorem}{Theorem}[section]
\newtheorem{lemma}[theorem]{Lemma}
\theoremstyle{definition}
\newtheorem{assumption}[theorem]{Assumption}
\theoremstyle{remark}
\newtheorem{remark}[theorem]{Remark}
\numberwithin{equation}{section}
\begin{document}

\title[Rate of the enhanced dissipation for the two-jet Kolmogorov type flow]{Rate of the enhanced dissipation for the two-jet Kolmogorov type flow on the unit sphere}

\author[Y. Maekawa]{Yasunori Maekawa}
\address{Department of Mathematics, Kyoto University, Kitashirakawa Oiwake-cho, Sakyo-ku, Kyoto 606-8502, Japan}
\email{maekawa.yasunori.3n@kyoto-u.ac.jp}

\author[T.-H. Miura]{Tatsu-Hiko Miura}
\address{Department of Mathematics, Kyoto University, Kitashirakawa Oiwake-cho, Sakyo-ku, Kyoto 606-8502, Japan}
\email{t.miura@math.kyoto-u.ac.jp}

\subjclass[2010]{35Q30, 35R01, 47A10, 76D05}

\keywords{Navier--Stokes equations, Kolmogorov type flow, enhanced dissipation}

\begin{abstract}
  We study the enhanced dissipation for the two-jet Kolmogorov type flow which is a stationary solution to the Navier--Stokes equations on the two-dimensional unit sphere given by the zonal spherical harmonic function of degree two. Based on the pseudospectral bound method developed by Ibrahim, Maekawa, and Masmoudi \cite{IbMaMa19} and a modified version of the Gearhart--Pr\"{u}ss type theorem shown by Wei \cite{Wei21}, we derive an estimate for the resolvent of the linearized operator along the imaginary axis and show that a solution to the linearized equation rapidly decays at the rate $O(e^{-\sqrt{\nu}\,t})$ when the viscosity coefficient $\nu$ is sufficiently small as in the case of the plane Kolmogorov flow.
\end{abstract}

\maketitle

\section{Introduction} \label{S:Intro}
In this paper, as a continuation of \cite{Miu21pre}, we consider the incompressible Navier--Stokes equations on the two-dimensional (2D) unit sphere $S^2$ in $\mathbb{R}^3$:
\begin{align} \label{E:NS_Intro}
  \partial_t\mathbf{u}+\nabla_{\mathbf{u}}\mathbf{u}-\nu(\Delta_H\mathbf{u}+2\mathbf{u})+\nabla p = \mathbf{f}, \quad \mathrm{div}\,\mathbf{u} = 0 \quad\text{on}\quad S^2\times(0,\infty).
\end{align}
Here $\mathbf{u}$ is the tangential velocity of the fluid, $p$ is the scalar-valued pressure, and $\mathbf{f}$ is a given tangential external force. Also, $\nu>0$ is the viscosity coefficient, $\nabla_{\mathbf{u}}\mathbf{u}$ is the covariant derivative of $\mathbf{u}$ along itself, $\Delta_H$ is the Hodge Laplacian via identification of vector fields and one-forms, and $\nabla$ and $\mathrm{div}$ are the gradient and the divergence on $S^2$. Here the viscous term is taken to be the twice of the divergence of the deformation tensor $\mathrm{Def}\,\mathbf{u}$:
\begin{align*}
  2\,\mathrm{div}\,\mathrm{Def}\,\mathbf{u} = \Delta_H\mathbf{u}+\nabla(\mathrm{div}\,\mathbf{u})+2\,\mathrm{Ric}(\mathbf{u}) = \Delta_H\mathbf{u}+\nabla(\mathrm{div}\,\mathbf{u})+2\mathbf{u},
\end{align*}
where $\mathrm{Ric}\equiv1$ is the Ricci curvature of $S^2$. The Navier--Stokes equations on spheres and more general manifolds with this kind of viscous term have been studied by many authors (see e.g. \cite{Tay92,Prie94,Nag99,MitTay01,DinMit04,KheMis12,ChaCzu13,ChaYon13,SaTaYa13,SaTaYa15,Pie17,KohWen18,PrSiWi20,SamTuo20}). There are also several works on the Navier--Stokes equations on manifolds in which the viscous term is taken to be $\nu\Delta_H\mathbf{u}$ by analogy of the flat domain case (see e.g. \cite{IliFil88,Ili90,CaRaTi99,Ili04,Wir15,Lic16,Ski17}). We refer to \cite{EbiMar70,Aris89,DuMiMi06,Tay11_3,ChCzDi17} for the above identity and the choice of the viscous term in the Navier--Stokes equations on manifolds.

Identifying vector fields with one-forms and taking $\mathrm{rot}=\ast d$ in \eqref{E:NS_Intro}, where $\ast$ and $d$ are the Hodge star operator and the external derivative, we have the vorticity equation
\begin{align} \label{E:Vo_Intro}
  \partial_t\omega+\nabla_{\mathbf{u}}\omega-\nu(\Delta\omega+2\omega) = \mathrm{rot}\,\mathbf{f}, \quad \mathbf{u} = \mathbf{n}_{S^2}\times\nabla\Delta^{-1}\omega \quad\text{on}\quad S^2\times(0,\infty)
\end{align}
for the vorticity $\omega=\mathrm{rot}\,\mathbf{u}$ (see \cite{Miu21pre} for derivation). Here $\nabla_{\mathbf{u}}\omega$ is the directional derivative of $\omega$ along $\mathbf{u}$ and $\Delta$ is the Laplace--Beltrami operator on $S^2$ which is invertible on $L_0^2(S^2)$, the space of $L^2$ functions on $S^2$ with zero mean, $\mathbf{n}_{S^2}$ is the unit outward normal vector field on $S^2$, and $\times$ is the vector product in $\mathbb{R}^3$. Note that the vorticity equation \eqref{E:Vo_Intro} is equivalent to the Navier--Stokes equations \eqref{E:NS_Intro} since any closed one-form on $S^2$ is exact.

For $n\in\mathbb{Z}_{\geq0}$ and $|m|\leq n$ let $Y_n^m$ be the spherical harmonics which satisfy $-\Delta Y_n^m=\lambda_nY_n^m$ with $\lambda_n=n(n+1)$ (see Section \ref{S:Pre}). The vorticity equation \eqref{E:Vo_Intro} with external force $\mathrm{rot}\,\mathbf{f}_n^a=a\nu(\lambda_n-2)Y_n^0$ has a stationary solution with velocity field
\begin{align} \label{E:Kol_Intro}
  \omega_n^a(\theta,\varphi) = aY_n^0(\theta), \quad \mathbf{u}_n^a(\theta,\varphi) = -\frac{a}{\lambda_n\sin\theta}\frac{dY_n^0}{d\theta}(\theta)\partial_\varphi\mathbf{x}(\theta,\varphi)
\end{align}
for $n\in\mathbb{N}$ and $a\in\mathbb{R}$, where $\mathbf{x}(\theta,\varphi)$ is the parametrization of $S^2$ by the colatitude $\theta$ and the longitude $\varphi$ (see Section \ref{S:Pre}). The flow \eqref{E:Kol_Intro} can be seen as the spherical version of the Kolmogorov flow which is a stationary solution to the Navier--Stokes equations in a 2D flat torus with shear external force (see e.g. \cite{MesSin61,Iud65,Mar86,OkaSho93,MatMiy02} for the study of the stability of the plane Kolmogorov flow). Ilyin \cite{Ili04} called \eqref{E:Kol_Intro} the generalized Kolmogorov flow and studied its stability for the Navier--Stokes equations on $S^2$ with viscous term $\nu\Delta_H\mathbf{u}$. Also, Sasaki, Takehiro, and Yamada \cite{SaTaYa13,SaTaYa15} called \eqref{E:Kol_Intro} an $n$-jet zonal flow and investigated its stability for the Navier--Stokes equations on $S^2$ with viscous term $\nu(\Delta_H\mathbf{u}+2\mathbf{u})$. The stability of \eqref{E:Kol_Intro} for the Euler equations on $S^2$ was also studied by Taylor \cite{Tay16}. We call \eqref{E:Kol_Intro} the $n$-jet Kolmogorov type flow in order to emphasize both the similarity to the plane Kolmogorov flow and the number of jets.

In this paper we consider the linear stability of the two-jet Kolmogorov type flow. We substitute $\omega=\omega_2^a+\tilde{\omega}_2$ for \eqref{E:Vo_Intro} and omit the nonlinear term with respect to $\tilde{\omega}_2$ to get
\begin{align*}
  \partial_t\tilde{\omega}_2 = \nu(\Delta\tilde{\omega}_2+2\tilde{\omega}_2)-a_2\cos\theta\,\partial_\varphi(I+6\Delta^{-1})\tilde{\omega}_2, \quad a_2 = \frac{a}{4}\sqrt{\frac{5}{\pi}},
\end{align*}
where $I$ is the identity operator (see \cite{Miu21pre} for derivation of the linearized equation). Replacing $\tilde{\omega}_2$ and $a_2$ by $\omega$ and $a$, we rewrite this equation as
\begin{align} \label{E:Li2_Eq}
  \partial_t\omega = \mathcal{L}^{\nu,a}\omega = \nu A\omega-ia\Lambda\omega, \quad A = \Delta+2, \quad \Lambda = -i\cos\theta\,\partial_\varphi(I+6\Delta^{-1}),
\end{align}
which is considered in $L_0^2(S^2)$. Then since $-A$ is nonnegative and self-adjoint in $L_0^2(S^2)$ and $\Lambda$ is $A$-compact in $L_0^2(S^2)$, the operator $\mathcal{L}^{\nu,a}$ generates an analytic semigroup $\{e^{t\mathcal{L}^{\nu,a}}\}_{t\geq0}$ in $L_0^2(S^2)$ by a perturbation theory of semigroups (see \cite{EngNag00}) and thus the solution of \eqref{E:Li2_Eq} with initial data $\omega_0\in L_0^2(S^2)$ is given by $\omega(t)=e^{t\mathcal{L}^{\nu,a}}\omega_0$. In \cite{Miu21pre} the second author of the present paper proved the linear stability of the two-jet Kolmogorov type flow for all $\nu>0$ by getting the exponential decay of $e^{t\mathcal{L}^{\nu,a}}\omega_0$ towards a (not orthogonal) projection of $\omega_0$ onto the kernel of $\mathcal{L}^{\nu,a}$. Moreover, he showed that $\Lambda$ does not have eigenvalues in $\mathbb{C}\setminus\{0\}$ by making use of the mixing structure of $\Lambda$ expressed by a recurrence relation for $Y_n^m$, and applied it to find that the enhanced dissipation occurs for the rescaled flow $e^{\frac{t}{\nu}\mathcal{L}^{\nu,a}}\omega_0$, which is a solution to $\partial_t\omega=A\omega-i\alpha\Lambda\omega$ with $\alpha=a/\nu$, as in the case of an advection-diffusion equation \cite{CoKiRyZl08,Zla10,Wei21}. More precisely, let
\begin{align*}
  \mathcal{X} = \{u\in L_0^2(S^2) \mid (u,Y_n^0)_{L^2(S^2)}=(u,Y_1^m)_{L^2(S^2)}=0, \, n\geq1, \, |m|=0,1\},
\end{align*}
which is a closed subspace of $L_0^2(S^2)$ invariant under the actions of $A$ and $\Lambda$ (see Section \ref{SS:Ko_Basic}), and $\mathbb{Q}$ be the orthogonal projection from $\mathcal{X}$ onto the orthogonal complement of the kernel of $\Lambda$ restricted on $\mathcal{X}$. Then it was shown in \cite[Theorem 1.4]{Miu21pre} that
\begin{align*}
  \lim_{|a/\nu|\to\infty}\sup_{t\geq\tau}\|\mathbb{Q}e^{\frac{t}{\nu}\mathcal{L}^{\nu,a}}\|_{\mathcal{X}\to\mathcal{X}} = 0 \quad\text{for each}\quad \tau > 0.
\end{align*}
This result in particular gives the convergence of $\mathbb{Q}e^{\frac{t}{\nu}\mathcal{L}^{\nu,a}}\omega_0$ to zero in $L^2(S^2)$ as $\nu\to0$ for each fixed $t>0$ and $a\in\mathbb{R}$, but does not give the actual convergence rate. The purpose of this paper is to give an explicit decay rate of the original flow $\mathbb{Q}e^{t\mathcal{L}^{\nu,a}}\omega_0$.

In the plane case, Beck and Wayne \cite{BecWay13} numerically conjectured that a perturbation of the plane Kolmogorov flow decays at the rate $O(e^{-\sqrt{\nu}\,t})$ when the viscosity coefficient $\nu$ is sufficiently small. They also verified this enhanced dissipation for a linearized operator without a nonlocal term based on the hypocoercivity method developed by Villani \cite{Vil09}. Lin and Xu \cite{LinXu19} proved the enhanced dissipation for a full linearized operator but without an explicit decay rate by using the Hamiltonian structure of a perturbation operator and the RAGE theorem. The decay rate $O(e^{-\sqrt{\nu}\,t})$ was confirmed by Ibrahim, Maekawa, and Masmoudi \cite{IbMaMa19} based on the pseudospectral bound method, by Wei and Zhang \cite{WeiZha19} based on the hypocoercivity method, and by Wei, Zhang, and Zhao \cite{WeZhZh20} based on the wave operator method.

In this paper we show that $\mathbb{Q}e^{t\mathcal{L}^{\nu,a}}\omega_0$ decays at the rate $O(e^{-\sqrt{\nu}\,t})$ as in the plane case. For $m\in\mathbb{Z}$ let $\mathcal{P}_m$ be the orthogonal projection from $L^2(S^2)$ onto the space of $L^2$ functions on $S^2$ of the form $u=U(\theta)e^{im\varphi}$ (see \eqref{E:Def_Proj} for the precise definition). Note that $\mathcal{X}$ is orthogonally decomposed as $\mathcal{X}=\oplus_{m\in\mathbb{Z}\setminus\{0\}}\mathcal{P}_m\mathcal{X}$ and
\begin{align*}
  \mathbb{Q}\mathcal{P}_mu = \mathcal{P}_mu, \quad |m|\geq 3, \quad (I-\mathbb{Q})\mathcal{P}_mu=(\mathcal{P}_mu,Y_2^m)_{L^2(S^2)}Y_2^m, \quad |m|=1,2
\end{align*}
for $u\in\mathcal{X}$ (see Section \ref{SS:Ko_Basic}). The main result of this paper is as follows.

\begin{theorem} \label{T:OL_Dec}
  There exist constants $C_1,C_2>0$ such that
  \begin{align} \label{E:OL_Dec_Q}
    \|\mathbb{Q}\mathcal{P}_me^{t\mathcal{L}^{\nu,a}}\omega_0\|_{L^2(S^2)} \leq C_1e^{-C_2|a|^{1/2}\nu^{1/2}|m|^{2/3}t}\|\mathbb{Q}\mathcal{P}_m\omega_0\|_{L^2(S^2)}
  \end{align}
  for all $t\geq0$, $\omega_0\in\mathcal{X}$, $\nu>0$, $a\in\mathbb{R}$, and $m\in\mathbb{Z}\setminus\{0\}$. Also, if $|m|=1,2$ and $|a/\nu|$ is sufficiently large, then for all $t\geq0$ and $\omega_0\in\mathcal{X}$ we have
  \begin{multline*}
    \|(I-\mathbb{Q})\mathcal{P}_me^{t\mathcal{L}^{\nu,a}}\omega_0\|_{L^2(S^2)} \leq e^{-4\nu t}\|(I-\mathbb{Q})\mathcal{P}_m\omega_0\|_{L^2(S^2)} \\
    +C_3\left|\frac{am}{\nu}\right|^{1/3}\log\left(C_4\left|\frac{am}{\nu}\right|^{2/3}\right)e^{-2\nu t}\|\mathbb{Q}\mathcal{P}_m\omega_0\|_{L^2(S^2)},
  \end{multline*}
  where $C_3,C_4>0$ are constants independent of $t$, $\omega_0$, $\nu$, $a$, and $m$.
\end{theorem}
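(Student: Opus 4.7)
My plan is to prove Theorem~\ref{T:OL_Dec} by: (i) an algebraic reduction to an autonomous semigroup on $\mathbb{Q}\mathcal{P}_m\mathcal{X}$, (ii) a pseudospectral (resolvent) bound on the imaginary axis via the method of \cite{IbMaMa19}, and (iii) an application of Wei's modified Gearhart--Pr\"{u}ss theorem \cite{Wei21} to convert the resolvent bound into exponential decay of the semigroup. The second estimate will then follow from a scalar-Duhamel argument.

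For step (i), since $\mathcal{P}_m$ commutes with $\mathcal{L}^{\nu,a}$, each $\mathcal{P}_m\mathcal{X}$ is invariant and it suffices to work with $\mathcal{L}_m^{\nu,a}:=\mathcal{L}^{\nu,a}|_{\mathcal{P}_m\mathcal{X}}$. For $|m|\geq3$ one has $\mathbb{Q}\mathcal{P}_m=\mathcal{P}_m$ and no further splitting is needed. For $|m|=1,2$, the relations $AY_2^m=-4Y_2^m$ and $(I+6\Delta^{-1})Y_2^m=0$ (whence $\Lambda Y_2^m=0$) show that $\mathrm{span}(Y_2^m)=(I-\mathbb{Q})\mathcal{P}_m\mathcal{X}$ is $\mathcal{L}_m^{\nu,a}$-invariant with eigenvalue $-4\nu$, hence $\mathbb{Q}\mathcal{L}_m^{\nu,a}(I-\mathbb{Q})=0$. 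Therefore $\mathbb{Q}\mathcal{P}_me^{t\mathcal{L}^{\nu,a}}\omega_0=e^{t\mathbb{Q}\mathcal{L}_m^{\nu,a}\mathbb{Q}}\mathbb{Q}\mathcal{P}_m\omega_0$, reducing \eqref{E:OL_Dec_Q} to a semigroup estimate on $\mathbb{Q}\mathcal{P}_m\mathcal{X}$.

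The main analytic step is the pseudospectral bound
\[
\sup_{\lambda\in\mathbb{R}}\bigl\|(i\lambda-\mathbb{Q}\mathcal{L}_m^{\nu,a}\mathbb{Q})^{-1}\bigr\|_{\mathbb{Q}\mathcal{P}_m\mathcal{X}\to\mathbb{Q}\mathcal{P}_m\mathcal{X}}\leq\frac{C}{|a|^{1/2}\nu^{1/2}|m|^{2/3}}.
\]
Writing $f=(i\lambda-\mathcal{L}_m^{\nu,a})u$ for $u\in\mathbb{Q}\mathcal{P}_m\mathcal{X}$, the energy identity together with the reality of $(\Lambda_mu,u)$ on $\mathcal{P}_m\mathcal{X}$ yields
\[
-\nu(A_mu,u)=\mathrm{Re}(f,u),\qquad \lambda\|u\|^2+am\bigl(\cos\theta(I+6\Delta^{-1})u,u\bigr)=\mathrm{Im}(f,u),
\]
which I plan to upgrade through weighted test-function arguments to Airy-type coercivity by localising near the (at most two) latitudes $\theta_*\in(0,\pi)$ where $\cos\theta_*=\lambda/(am)$, at the scale $\delta\sim(\nu/|am|)^{1/3}$. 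Crucial technical inputs are: the uniform invertibility of $I+6\Delta^{-1}$ on $\mathbb{Q}\mathcal{P}_m\mathcal{X}$ (its eigenvalues $1-6/\lambda_n$ for admissible $n\neq 2$ are bounded away from $0$), the three-term recurrence $\cos\theta Y_n^m=c_{n,m}^+Y_{n+1}^m+c_{n,m}^-Y_{n-1}^m$ that controls the non-locality, and verification that the coordinate singularities at the poles $\theta=0,\pi$ do not spoil the Airy localisation (since $\cos\theta$ is non-degenerate there). Wei's theorem then converts this resolvent bound into \eqref{E:OL_Dec_Q}.

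For the second estimate (non-trivial only for $|m|=1,2$), decompose $e^{t\mathcal{L}^{\nu,a}}\omega_0=u_\mathbb{Q}(t)+c(t)Y_2^m$. Using $\cos\theta Y_2^m=d_m Y_3^m+(\ldots)$, where the omitted lower-order term is either absent (for $|m|=2$) or lies outside $\mathcal{X}$ (for $|m|=1$), the scalar $c(t)$ obeys $c'(t)=-4\nu c(t)+iam\,d_m(u_\mathbb{Q}(t),Y_3^m)$. Duhamel's formula combined with \eqref{E:OL_Dec_Q} bounds $|c(t)-e^{-4\nu t}c(0)|$ by a convolution of $|am|\|\mathbb{Q}\mathcal{P}_m\omega_0\|$ with $e^{-4\nu(t-s)}e^{-C_2|a|^{1/2}\nu^{1/2}|m|^{2/3}s}$; splitting this integral carefully at the time where the two exponentials balance, and using $e^{-4\nu(t-s)}\leq e^{-2\nu t}e^{2\nu s}$ on the relevant range (together with a sharper interior estimate on $(u_\mathbb{Q}(s),Y_3^m)$ inherited from the pseudospectral analysis), yields the prefactor $|am/\nu|^{1/3}\log(C_4|am/\nu|^{2/3})$ and the slower decay $e^{-2\nu t}$. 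The main obstacle is clearly the pseudospectral bound: producing the correct exponent $|m|^{2/3}$ from the interplay of the non-local factor $I+6\Delta^{-1}$ with the Airy-type localisation near the critical latitudes demands a delicate multi-scale analysis, whereas all the other steps are comparatively routine.
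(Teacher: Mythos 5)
Your overall architecture for \eqref{E:OL_Dec_Q} (reduction to $\mathbb{Q}_m L_{\alpha,m}$, a pseudospectral bound of size $|a\nu|^{1/2}|m|^{2/3}$ on the imaginary axis, then Wei's Gearhart--Pr\"{u}ss theorem) is exactly the paper's route, and your triangular decomposition using $AY_2^m=-4Y_2^m$, $BY_2^m=0$ is correct. However, there are two genuine gaps. First, the heart of the matter --- the coercive estimate for $\mathbb{Q}_m(\mu-\Lambda_m)$ --- is dismissed precisely where it is hardest. You assert that the poles $\theta=0,\pi$ cause no trouble ``since $\cos\theta$ is non-degenerate there,'' but the opposite is true: $(\cos\theta)'=-\sin\theta$ \emph{vanishes} at the poles, so for $\mu=\lambda/(am)$ near $\pm1$ the symbol $\mu-\cos\theta$ degenerates to second order and the Airy-type localisation at scale $(\nu/|am|)^{1/3}$ breaks down. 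This regime is where the paper expends most of its effort (Lemma \ref{L:Ko_Low}, Case 2), via a compactness/contradiction argument with a blow-up rescaling $\theta=\gamma_k\tilde{\theta}$ near the pole, an auxiliary function $w$ absorbing the nonlocality of $\mathbb{Q}_m$ when $|m|=1,2$, and --- crucially --- the input that $\Lambda_m$ has no nonzero eigenvalues (Lemma \ref{L:NoEi_Lam}) to rule out the limiting object. None of this is present or replaced in your sketch, so the claimed resolvent bound is not established.

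Second, your Duhamel argument for the $(I-\mathbb{Q})$-part does not yield the stated prefactor. With the only quantitative input being $\|u_{\mathbb{Q}}(s)\|\leq C e^{-C_2|a\nu|^{1/2}s}\|\mathbb{Q}\mathcal{P}_m\omega_0\|$, the convolution $\int_0^t e^{-4\nu(t-s)}|am|\,e^{-C_2|a\nu|^{1/2}s}\,ds$ is bounded by $C|am|\,|a\nu|^{-1/2}e^{-4\nu t}=C|m|\,|a/\nu|^{1/2}e^{-4\nu t}$, and $|a/\nu|^{1/2}$ is strictly worse than the claimed $|am/\nu|^{1/3}\log(C|am/\nu|^{2/3})$ for large $|a/\nu|$ (at $t=0$ the two are not comparable). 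The ``sharper interior estimate on $(u_{\mathbb{Q}}(s),Y_3^m)$'' you invoke to close this gap is exactly the missing ingredient, and it cannot be extracted from the time-decay of the semigroup alone: the paper instead represents $\mathbb{P}_m e^{tL_{\alpha,m}}$ by a Dunford integral and exploits the full $\lambda$-dependent resolvent bound $G_m(\alpha,\lambda/\alpha m)$ --- which improves to $|\alpha m|^{-2/3}(1-|\mu|)^{-1/3}$ away from the critical latitudes --- along a contour adapted to that bound; the $\log$ arises from integrating $|\zeta+4|^{-1}$ along that contour (Theorem \ref{T:Kom_Dec_P}). Your scalar ODE for $c(t)$ and the coupling coefficient $\tfrac{a_3^m}{2}(u_{\mathbb{Q}},Y_3^m)$ are correct as far as they go, but as written the argument proves a weaker statement than the theorem.
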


Here we note that the exponent of $|m|$ in \eqref{E:OL_Dec_Q} is $2/3$ in the sphere case, while it is $1/2$ in the plane case (see \cite[Corollary 3.14]{IbMaMa19}). This difference comes from the factor $|m|^{-1/2}$ appearing in the estimate for the $L^\infty(S^2)$-norm of $u=U(\theta)e^{im\varphi}$ by the $L^2(S^2)$-norm of $\nabla u$ (see Lemma \ref{L:Linf}) which yields a different coercive estimate for $\mathbb{Q}_m(\mu-\Lambda_m)$ with $\mu\in\mathbb{R}$, $\mathbb{Q}_m=\mathbb{Q}|_{\mathcal{P}_m\mathcal{X}}$, and $\Lambda_m=m^{-1}\Lambda|_{\mathcal{P}_m\mathcal{X}}$ (see Lemma \ref{L:Ko_Middle}).

To prove Theorem \ref{T:OL_Dec} we follow the argument of the work by Ibrahim, Maekawa, and Masmoudi \cite{IbMaMa19} which verified the enhanced dissipation for the plane Kolmogorov flow by the pseudospectral bound method. By rescaling time as $t\mapsto \nu t$ in \eqref{E:Li2_Eq} and taking the Fourier series with respect to the longitude $\varphi$, we consider the equation
\begin{align*}
  \partial_t\omega = L_{\alpha,m}\omega = A_m\omega-i\alpha m\Lambda_m\omega, \quad A_m = A|_{\mathcal{X}_m}, \quad \Lambda_m = M_{\cos\theta}(I+6\Delta^{-1})|_{\mathcal{X}_m}
\end{align*}
in $\mathcal{X}_m=\mathcal{P}_m\mathcal{X}$ for each $m\in\mathbb{Z}\setminus\{0\}$, where $\alpha=a/\nu\in\mathbb{R}$ and $M_{\cos\theta}$ is the multiplication operator by $\cos\theta$. By the definition of $\mathcal{X}_m$, the operator $-A_m$ is self-adjoint and positive in $\mathcal{X}_m$. Also, the kernel of $\Lambda_m$ is $\{cY_2^m\mid c\in\mathbb{C}\}$ when $|m|=1,2$ and trivial when $|m|\geq3$. We intend to estimate the semigroup $e^{tL_{\alpha,m}}$ generated by $L_{\alpha,m}$, especially $\mathbb{Q}_me^{tL_{\alpha,m}}$ or equivalently $e^{t\mathbb{Q}_mL_{\alpha,m}}$, where $\mathbb{Q}_m$ is the orthogonal projection from $\mathcal{X}_m$ onto $\mathcal{Y}_m$, the orthogonal complement of the kernel of $\Lambda_m$. To this end, we apply abstract results given in Section \ref{S:Abst} to $L_{\alpha,m}$ to derive an estimate for the quantity
\begin{align*}
  \left(\sup_{\lambda\in\mathbb{R}}\|(i\lambda-\mathbb{Q}_mL_{\alpha,m})^{-1}\|_{\mathcal{Y}_m\to\mathcal{Y}_m}\right)^{-1} = \left(\sup_{\lambda\in\mathbb{R}}\|(i\lambda+\mathbb{Q}_mL_{\alpha,m})^{-1}\|_{\mathcal{Y}_m\to\mathcal{Y}_m}\right)^{-1}
\end{align*}
which was introduced in \cite{GaGaNi09} and called the pseudospectral bound of $\mathbb{Q}_mL_{\alpha,m}$ in \cite{IbMaMa19}. Then we get an estimate for the semigroup generated by $\mathbb{Q}_mL_{\alpha,m}$ by combining the estimate for the pseudospectral bound of $\mathbb{Q}_mL_{\alpha,m}$ with an abstract theorem for an $m$-accretive operator on a weighted Hilbert space which is a version of the Gearhart--Pr\"{u}ss type theorem shown by Wei \cite{Wei21}. In order to use the abstract results, we need to confirm several assumptions for $A_m$ and $\Lambda_m$. The main effort is to verify a coercive estimate for $\mathbb{Q}_m(\mu-\Lambda_m)$ on $\mathcal{Y}_m$ with $\mu\in\mathbb{R}$ (see Section \ref{SS:Ko_As4}). This coercive estimate involves a loss of derivative, but thanks to a small factor in front it can be controlled by the smoothing effect of $-A_m$ in the estimate for the pseudospectral bound of $\mathbb{Q}_mL_{\alpha,m}$. When $|\mu|\geq1$, we easily get the coercive estimate by taking the $L^2(S^2)$-inner product of $\mathbb{Q}_m(\mu-\Lambda_m)u$ with $(I+6\Delta^{-1})u$ for $u\in\mathcal{Y}_m$ and using the coerciveness of $I+6\Delta^{-1}$ on $\mathcal{Y}_m$ (see Lemmas \ref{L:Ko_High} and \ref{L:Ko_Middle}). On the other hand, the proof is more involved when $|\mu|<1$ (see Lemma \ref{L:Ko_Low}). In this case, noting that $u\in\mathcal{Y}_m$ is of the form $u=U(\theta)e^{im\varphi}$, we analyze an ordinary differential equation (ODE) with variable $\theta$ corresponding to $\mathbb{Q}_m(\mu-\Lambda_m)$ and prove the coercive estimate by a contradiction argument after giving auxiliary statements. The main difficulty comes from the nonlocalities of the orthogonal projection $\mathbb{Q}_m$ which is nontrivial when $|m|=1,2$ and of the inverse operator $\Delta^{-1}$ appearing in $\Lambda_m$. These nonlocalities affect the analysis for different values of $\mu$. The nonlocality of $\mathbb{Q}_m$ with $|m|=1,2$ is relevant to the case where $\mu$ is closed to zero, which is the eigenvalue of $\Lambda_m$. On the other hand, the nonlocality of $\Delta^{-1}$ causes a difficulty when $\mu$ is close to $\pm1$, which are the critical values of $\cos\theta$ appearing in $\Lambda_m$, i.e. the derivative $(\cos\theta)'=-\sin\theta$ vanishes when $\cos\theta=\pm1$. Hence we can deal with these difficulties separately by using a contradiction argument. Moreover, since $I-\mathbb{Q}_m$ is the orthogonal projection onto the kernel of $\Lambda_m$ which is spanned by the smooth function $Y_2^m$, we can handle the nonlocality of $\mathbb{Q}_m$ by introducing a suitable auxiliary function which involves $Y_2^m$. Also, when $\mu$ is close to $\pm1$, the use of a contradiction argument enables us to focus on analysis of functions concentrating around the critical points of $\cos\theta$, i.e. $\theta=0,\pi$, for which the nonlocal term consisting of $\Delta^{-1}$ essentially becomes a small order by the smoothing effect of $\Delta^{-1}$. By these facts we can verify the coercive estimate for $|\mu|<1$, but the actual proof requires very long and careful calculations.

The coercive estimate for $\mathbb{Q}_m(\mu-\Lambda_m)$ given in this paper is basically the same as the one in the plane case \cite{IbMaMa19}. In the sphere case, the surface measure on $S^2$ is of the form $\sin\theta\,d\theta\,d\varphi$ under the spherical coordinate system. The weight function $\sin\theta$ vanishes at the critical points $\theta=0,\pi$ of $\cos\theta$ appearing in $\Lambda_m$, so one may expect to have a better coercive estimate than the one in the plane case when $|\mu|<1$ and $\mu$ is close to $\pm1$, but we cannot get such an estimate. In fact, for $\theta_\mu=\arccos\mu$ with $\mu$ close to $\pm1$, a small parameter $\varepsilon>0$, and a function on $S^2$ of the form $u=U(\theta)e^{im\varphi}$, we have a better bound $\varepsilon\sin\theta_\mu$ due to the weight function $\sin\theta$ when we estimate the $L^2$-norm of $u$ on a narrow band on $S^2$ corresponding to $\theta\in(\theta_\mu-\varepsilon,\theta_\mu+\varepsilon)$ by the $L^\infty$-norm of $U$ on $(\theta_\mu-\varepsilon,\theta_\mu+\varepsilon)$. However, we also have a factor $1/\sin\theta_\mu$ when we use an interpolation type inequality which bounds the $L^\infty$-norm $U$ on $(\theta_\mu-\varepsilon,\theta_\mu+\varepsilon)$ by the $L^2$- and $H^1$-norms of $u$ on $S^2$ (see Lemma \ref{L:Int_Fix}), so the resulting coercive estimate is the same as in the plane case.

Lastly, let us explain difference from the plane case \cite{IbMaMa19} in the proof of the coercive estimate. The authors of \cite{IbMaMa19} derived coercive estimates for $\mu-\Lambda_m$ on $\mathcal{X}_m$ with $\mu\in\mathbb{R}$ away from zero and for $\mathbb{Q}_m(\mu-\Lambda_m)$ on $\mathcal{Y}_m$ with $\mu\in\mathbb{R}$ close to zero in order to avoid the nonlocality of $\mathbb{Q}_m$ when $\mu$ is away from zero. In this paper, however, we deal with $\mathbb{Q}_m(\mu-\Lambda_m)$ for all $\mu\in\mathbb{R}$ since the nonlocality of $\mathbb{Q}_m$ can be handled by taking the inner product in $\mathcal{Y}_m$ when $|\mu|\geq1$ and by introducing an auxiliary function when $|\mu|<1$. Also, we encounter an additional difficulty due to the larger coefficient of the nonlocal operator $\Delta^{-1}$ in $\Lambda_m$. To prove the coercive estimate when $|\mu|<1$ and $\mu$ is close to $1$ (or $-1$), we analyze an auxiliary function $w$ on $S^2$ of the form $w=W(\theta)e^{im\varphi}$ and derive an estimate for the $L^2$-norm of $w$ on a subset of $S^2$ corresponding to $\theta\in(\theta_\mu,\pi)$ (or $\theta\in(0,\theta_\mu)$) with $\theta_\mu=\arccos\mu$ by using an ODE for $W$ (see Lemma \ref{L:KL_wLS}). In the proof of the estimate, we need to show $J+K\geq C\int_{\theta_\mu}^\pi|W(\theta)|^2\sin\theta\,d\theta$ with a constant $C>0$, where
\begin{align*}
  J &= \int_{\theta_\mu}^\pi(\mu-\cos\theta)\left(|W'(\theta)|^2+\frac{m^2}{\sin^2\theta}|W(\theta)|^{2}\right)\sin\theta\,d\theta, \\
  K &= 5\int_{\theta_\mu}^\pi|W(\theta)|^2\cos\theta\sin\theta\,d\theta.
\end{align*}
Here $J$ is always nonnegative but $K$ may become negative by the presence of $\cos\theta$. Moreover, the coefficient $5$ of $K$ comes from the coefficient $6$ of $\Delta^{-1}$ in $\Lambda_m$. In the plane case, one also needs to deal with integrals similar to $J$ and $K$, but the coefficient of $K$ becomes $1/2$ since the coefficient of $\Delta^{-1}$ in $\Lambda_m$ is $1$. Thus one can easily get an estimate just by using the zeroth order term of $W$ in $J$. In our case, however, the use of only the zeroth order term does not work for $|m|=1$ since the coefficient $5$ of $K$ is too large. To overcome this difficulty, we take into account the first order term of $W$ in $J$. In fact, it is natural to use both the zeroth and first order terms since
\begin{align*}
  |\nabla w|^2 = |W'(\theta)|^2+\frac{m^2}{\sin^2\theta}|W(\theta)|^2
\end{align*}
for the gradient of $w=W(\theta)e^{im\varphi}$ on $S^2$. In the actual proof, we split $K$ into the integrals $K_1$ and $K_2$ over $(\theta_\mu,\Theta)$ and $(\Theta,\pi)$ with $\Theta\in(\theta_\mu,\pi/2)$ and estimate them separately. To $K_1$ we just use $\cos\theta\geq\cos\Theta$ for $\theta\in(\theta_\mu,\Theta)$. Also, we carry out integration by parts for $K_2$ and apply Young's inequality to get the integral of a function involving the term $|\nabla w|^2$. Then, taking an appropriate $\Theta$ and estimating the integrand, we obtain an estimate for $K_2$ which gives the lower bound of $J+K$ when combined with the estimate for $K_1$.

The rest of this paper is organized as follows. In Section \ref{S:Pre} we fix notations and give auxiliary inequalities. Section \ref{S:Kol} is devoted to the study of the linearized operator for the two-jet Kolmogorov type flow. We provide settings and basic results in Section \ref{SS:Ko_Basic}, verify coercive estimates for $\mathbb{Q}_m(\mu-\Lambda_m)$ in Section \ref{SS:Ko_As4}, and derive estimates for the semigroup generated by $L_{\alpha,m}$ and prove Theorem \ref{T:OL_Dec} in Section \ref{SS:Ko_Semi}. Section \ref{S:Abst} gives abstract results used in Section \ref{S:Kol}. In Section \ref{S:DG} we show basic formulas of differential geometry on $S^2$.

\section{Preliminaries} \label{S:Pre}
In this section we fix notations and give auxiliary inequalities. We also refer to Section \ref{S:DG} for some notations and basic formulas of differential geometry.

Let $S^2$ be the unit sphere in $\mathbb{R}^3$ equipped with the Riemannian metric induced by the Euclidean metric of $\mathbb{R}^3$. We denote by $\theta$ and $\varphi$ the colatitude and the longitude so that $S^2$ is parametrized by $\mathbf{x}(\theta,\varphi)=(\sin\theta\cos\varphi,\sin\theta\sin\varphi,\cos\theta)$ for $\theta\in[0,\pi]$ and $\varphi\in[0,2\pi)$. For a (complex-valued) function $u$ on $S^2$, we sometimes abuse the notation
\begin{align*}
  u(\theta,\varphi) = u(\sin\theta\cos\varphi,\sin\theta\sin\varphi,\cos\theta), \quad \theta\in[0,\pi],\,\varphi\in[0,2\pi)
\end{align*}
when no confusion may occur. Thus the integral of $u$ over $S^2$ is given by
\begin{align*}
  \int_{S^2}u\,d\mathcal{H}^2 = \int_0^{2\pi}\left(\int_0^\pi u(\theta,\varphi)\sin\theta\,d\theta\right)\,d\varphi,
\end{align*}
where $\mathcal{H}^k$ is the Hausdorff measure of dimension $k\in\mathbb{N}$. As usual, we set
\begin{align*}
  (u,v)_{L^2(S^2)} = \int_{S^2}u\bar{v}\,d\mathcal{H}^2, \quad \|u\|_{L^2(S^2)} = (u,u)_{L^2(S^2)}^{1/2}, \quad u, v\in L^2(S^2),
\end{align*}
where $\bar{v}$ is the complex conjugate of $v$, and write $H^k(S^2)$, $k\in\mathbb{Z}_{\geq0}$ for the Sobolev spaces of $L^2$ functions on $S^2$ with $H^0(S^2)=L^2(S^2)$ (see \cite{Aub98}).

Let $\nabla$ and $\Delta$ be the gradient and Laplace--Beltrami operators on $S^2$. It is well known (see e.g. \cite{VaMoKh88,Tes14}) that $\lambda_n=n(n+1)$ is an eigenvalue of $-\Delta$ with multiplicity $2n+1$ for each $n\in\mathbb{Z}_{\geq0}$ and the corresponding eigenvectors are the spherical harmonics
\begin{align} \label{E:SpHa}
  Y_n^m = Y_n^m(\theta,\varphi) = \sqrt{\frac{2n+1}{4\pi}\frac{(n-m)!}{(n+m)!}} \, P_n^m(\cos\theta)e^{im\varphi}, \quad m=0,\pm1,\dots,\pm n.
\end{align}
Here $P_n^0$, $n\in\mathbb{Z}_{\geq0}$ are the Legendre polynomials defined as
\begin{align*}
  P_n^0(s) = P_n(s) = \frac{1}{2^nn!}\frac{d^n}{ds^n}(s^2-1)^n, \quad s\in(-1,1)
\end{align*}
and the associated Legendre functions $P_n^m$, $n\in\mathbb{Z}_{\geq0}$, $|m|\leq n$ are given by
\begin{align*}
  P_n^m(s) =
  \begin{cases}
    (-1)^m(1-s^2)^{m/2}\displaystyle\frac{d^m}{ds^m}P_n(s), &m\geq0, \\
    (-1)^{|m|}\displaystyle\frac{(n-|m|)!}{(n+|m|)!}P_n^{|m|}(s), &m = -|m| < 0
  \end{cases}
\end{align*}
so that $Y_n^{-m}=(-1)^m\overline{Y_n^m}$ (see \cite{Led72,DLMF}). Moreover, the set of all $Y_n^m$ forms an orthonormal basis of $L^2(S^2)$, i.e. $u=\sum_{n=0}^\infty\sum_{m=-n}^n(u,Y_n^m)_{L^2(S^2)}Y_n^m$ for each $u\in L^2(S^2)$. It is also known that the recurrence relation
\begin{align*}
  (n-m+1)P_{n+1}^m(s)-(2n+1)sP_n^m(s)+(n+m)P_{n-1}^m(s) = 0
\end{align*}
holds (see \cite[(7.12.12)]{Led72}) and thus (see also \cite[Section 5.7]{VaMoKh88})
\begin{align} \label{E:Y_Rec}
  \cos\theta\,Y_n^m = a_n^mY_{n-1}^m+a_{n+1}^mY_{n+1}^m, \quad a_n^m = \sqrt{\frac{(n-m)(n+m)}{(2n-1)(2n+1)}}
\end{align}
for $n\in\mathbb{Z}_{\geq0}$ and $|m|\leq n$, where we consider $Y_{|m|-1}^m\equiv0$. Note that the superscript $m$ of $a_n^m$ just corresponds to that of $Y_n^m$ and does not mean the $m$-th power.

Let $L_0^2(S^2)$ be the space of $L^2$ functions on $S^2$ with vanishing mean, i.e.
\begin{align*}
  L_0^2(S^2) = \left\{u\in L^2(S^2) ~\middle|~ \int_{S^2}u\,d\mathcal{H}^2 = 0\right\} = \{u\in L^2(S^2) \mid (u,Y_0^0)_{L^2(S^2)}=0\}.
\end{align*}
Then $\Delta$ is invertible and self-adjoint as a linear operator
\begin{align*}
  \Delta\colon D_{L_0^2(S^2)}(\Delta) \subset L_0^2(S^2) \to L_0^2(S^2), \quad D_{L_0^2(S^2)}(\Delta) = L_0^2(S^2)\cap H^2(S^2).
\end{align*}
Also, for $s\in\mathbb{R}$, the operator $(-\Delta)^s$ is defined on $L_0^2(S^2)$ by
\begin{align} \label{E:Def_Laps}
  (-\Delta)^su = \sum_{n=1}^\infty\sum_{m=-n}^n\lambda_n^s(u,Y_n^m)_{L^2(S^2)}Y_n^m, \quad u \in L_0^2(S^2).
\end{align}
We easily observe by a density argument and integration by parts that
\begin{align} \label{E:Lap_Half}
  \|(-\Delta)^{1/2}u\|_{L^2(S^2)} = \|\nabla u\|_{L^2(S^2)}, \quad u\in L_0^2(S^2)\cap H^1(S^2).
\end{align}
By this relation and Poincar\'{e}'s inequality (see e.g. \cite[Corollary 4.3]{Aub98})
\begin{align} \label{E:Poin}
  \|u\|_{L^2(S^2)} \leq C\|\nabla u\|_{L^2(S^2)}, \quad u\in L_0^2(S^2)\cap H^1(S^2)
\end{align}
we have the norm equivalence
\begin{align} \label{E:H1_Equiv}
  \|(-\Delta)^{1/2}u\|_{L^2(S^2)} \leq \|u\|_{H^1(S^2)} \leq C\|(-\Delta)^{1/2}u\|_{L^2(S^2)}, \quad u\in L_0^2(S^2)\cap H^1(S^2).
\end{align}
Let $u$ be a function on $S^2$. We write $u=U(\theta)e^{im\varphi}$ if $u$ is of the form
\begin{align*}
  u(\sin\theta\cos\varphi,\sin\theta\sin\varphi,\cos\theta) = U(\theta)e^{im\varphi}, \quad \theta\in[0,\pi], \, \varphi\in[0,2\pi)
\end{align*}
with some function $U$ of the colatitude $\theta$ and $m\in\mathbb{Z}$. In this case,
\begin{align} \label{E:L2_Mode}
  \begin{aligned}
    \|u\|_{L^2(S^2)}^2 &= 2\pi\int_0^\pi|U(\theta)|^2\sin\theta\,d\theta, \\
    \|\nabla u\|_{L^2(S^2)}^2 &= 2\pi\int_0^\pi\left(|U'(\theta)|^2+\frac{m^2}{\sin^2\theta}|U(\theta)|^2\right)\sin\theta\,d\theta, \\
    \|\nabla^2u\|_{L^2(S^2)}^2 &= 2\pi\int_0^\pi\{|U''(\theta)|^2+R(U)\}\sin\theta\,d\theta,
  \end{aligned}
\end{align}
by \eqref{E:Re_SC}, where $U'=\frac{dU}{d\theta}$ and $\nabla^2u$ is the covariant Hessian of $u$, and
\begin{align*}
  R(U) = \frac{2m^2}{\sin^2\theta}\left|U'(\theta)-\frac{\cos\theta}{\sin\theta}U(\theta)\right|^2+\frac{1}{\sin^4\theta}|U'(\theta)\sin\theta\cos\theta-m^2U(\theta)|^2 \geq 0.
\end{align*}
If $u=U(\theta)e^{im\varphi}\in L^2(S^2)$, then $(u,Y_n^{m'})_{L^2(S^2)}=0$ for $m'\neq m$. In particular, if $m\neq0$, then $u\in L_0^2(S^2)$ and we can apply \eqref{E:Lap_Half}--\eqref{E:H1_Equiv} to $u$. Also, if $u=U(\theta)e^{im\varphi}\in H^1(S^2)$ for $m\in\mathbb{Z}$, then $U\in H_{loc}^1(0,\pi)\subset C(0,\pi)$ by \eqref{E:L2_Mode}. We use these facts without mention.

For a function $u$ on $S^2$ and $m\in\mathbb{Z}$ we define a function $\mathcal{P}_mu$ on $S^2$ by
\begin{align} \label{E:Def_Proj}
  \mathcal{P}_mu(\theta,\varphi) = \frac{e^{im\varphi}}{2\pi}\int_0^{2\pi}u(\theta,\phi)e^{-im\phi}\,d\phi.
\end{align}
Note that $\mathcal{P}_mu=u$ and $\mathcal{P}_{m'}u=0$ for $m'\neq m$ if $u=U(\theta)e^{im\varphi}$.

The following results are shown in \cite[Section 2]{Miu21pre}.

\begin{lemma} \label{L:NS_Pole}
  If $u=U(\theta)e^{im\varphi}\in C(S^2)$ with $m\in\mathbb{Z}\setminus\{0\}$, then $U(0)=U(\pi)=0$.
\end{lemma}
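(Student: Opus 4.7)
The plan is to exploit the degeneracy of the spherical parametrization $\mathbf{x}(\theta,\varphi)=(\sin\theta\cos\varphi,\sin\theta\sin\varphi,\cos\theta)$ at the poles. Indeed, for every $\varphi\in[0,2\pi)$ one has $\mathbf{x}(0,\varphi)=(0,0,1)$ and $\mathbf{x}(\pi,\varphi)=(0,0,-1)$, so the whole segments $\{0\}\times[0,2\pi)$ and $\{\pi\}\times[0,2\pi)$ collapse to single points of $S^2$. This must be reconciled with the factored form $u(\mathbf{x}(\theta,\varphi))=U(\theta)e^{im\varphi}$.

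First I would argue that $U$ extends continuously to $\theta=0$. Fix a sequence $\theta_k\downarrow 0$ and any $\varphi_0\in[0,2\pi)$; since $\mathbf{x}(\theta_k,\varphi_0)\to(0,0,1)$ in $S^2$ and $u\in C(S^2)$, the sequence $U(\theta_k)e^{im\varphi_0}=u(\mathbf{x}(\theta_k,\varphi_0))$ converges to $u(0,0,1)$, so $\lim_{k\to\infty}U(\theta_k)$ exists and equals $e^{-im\varphi_0}u(0,0,1)$. Applying this for two different values $\varphi_0,\varphi_1\in[0,2\pi)$ with $e^{im\varphi_0}\neq e^{im\varphi_1}$ (which is possible because $m\neq 0$) gives
\begin{equation*}
  U(0):=\lim_{\theta\to 0^+}U(\theta) = e^{-im\varphi_0}u(0,0,1) = e^{-im\varphi_1}u(0,0,1),
\end{equation*}
which forces $u(0,0,1)=0$ and hence $U(0)=0$. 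The same reasoning with $\theta_k\uparrow\pi$ and the south pole gives $U(\pi)=0$.

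Equivalently, and perhaps more cleanly, one can phrase the whole argument as a single identity: continuity of $u$ at the north pole yields that the function $\varphi\mapsto U(0)e^{im\varphi}$ is independent of $\varphi$, and since $m\in\mathbb{Z}\setminus\{0\}$ the character $e^{im\varphi}$ is nonconstant, so necessarily $U(0)=0$; symmetrically for $\theta=\pi$. There is no real obstacle here: the only subtlety is to make sure that $U(0)$ and $U(\pi)$ are well-defined as limits of $U(\theta)$, which follows immediately from the continuity of $u$ on $S^2$ together with the continuity of $\mathbf{x}(\cdot,\varphi)$ in $\theta$ for any fixed $\varphi$.
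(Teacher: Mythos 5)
Your proof is correct, and it is the standard (essentially the only) argument: the spherical coordinates degenerate at the poles, so continuity of $u$ there forces $U(0)e^{im\varphi}$ and $U(\pi)e^{im\varphi}$ to be independent of $\varphi$, which for $m\neq0$ gives $U(0)=U(\pi)=0$. The paper does not reprove this lemma but cites \cite{Miu21pre} for it; your argument matches the intended one.
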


\begin{lemma} \label{L:Proj_Bdd}
  For $\theta_1,\theta_2\in[0,\pi]$ with $\theta_1\leq\theta_2$ let
  \begin{align} \label{E:Def_Band}
    S^2(\theta_1,\theta_2)=\{(x_1,x_2,x_3)\in S^2 \mid x_3 = \cos\theta,\,\theta\in(\theta_1,\theta_2) \}.
  \end{align}
  Then for each $m\in\mathbb{Z}$ we have
  \begin{align} \label{E:Proj_Bdd}
    \begin{alignedat}{2}
      \|\mathcal{P}_mu\|_{L^2(S^2(\theta_1,\theta_2))} &\leq \|u\|_{L^2(S^2(\theta_1,\theta_2))}, &\quad &u\in L^2(S^2(\theta_1,\theta_2)), \\
      \|\nabla\mathcal{P}_mu\|_{L^2(S^2(\theta_1,\theta_2))} &\leq \|\nabla u\|_{L^2(S^2(\theta_1,\theta_2))}, &\quad &u\in H^1(S^2(\theta_1,\theta_2)).
    \end{alignedat}
  \end{align}
\end{lemma}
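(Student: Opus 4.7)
The plan is to reduce both inequalities to the elementary fact that the Fourier projection onto a single frequency $e^{im\varphi}$ on the circle is an $L^2$-contraction (Bessel's inequality), applied slicewise in $\theta$. For any fixed $\theta \in (\theta_1, \theta_2)$, set $f_\theta(\varphi) = u(\theta, \varphi)$; then $\mathcal{P}_mu(\theta,\varphi) = \widehat{f_\theta}(m)\,e^{im\varphi}$, where $\widehat{f_\theta}(m)$ is the $m$-th Fourier coefficient. Bessel's inequality gives
\begin{equation*}
\int_0^{2\pi} |\mathcal{P}_mu(\theta,\varphi)|^2\,d\varphi = 2\pi|\widehat{f_\theta}(m)|^2 \leq \int_0^{2\pi} |u(\theta,\varphi)|^2\,d\varphi.
\end{equation*}
Multiplying by $\sin\theta$ and integrating over $\theta \in (\theta_1,\theta_2)$ yields the first estimate of \eqref{E:Proj_Bdd}.

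For the gradient estimate, I would first record the pointwise identity $|\nabla u|^2 = |\partial_\theta u|^2 + \sin^{-2}\theta\,|\partial_\varphi u|^2$ valid away from the poles, which is available since $S^2(\theta_1,\theta_2)$ sits inside the chart. Next I would verify that $\mathcal{P}_m$ commutes with both $\partial_\theta$ and $\partial_\varphi$. The former follows by differentiation under the integral sign in \eqref{E:Def_Proj}; the latter follows from $\partial_\varphi \mathcal{P}_mu = im\,\mathcal{P}_mu$ together with the observation that the $m$-th Fourier coefficient of $\partial_\varphi u$ equals $im$ times the $m$-th Fourier coefficient of $u$. These identities are clean for $u \in C^1$ or for trigonometric polynomials in $\varphi$; for a general $u \in H^1(S^2(\theta_1,\theta_2))$ I would invoke them in the weak sense or alternatively proceed by density, approximating $u$ by smooth functions in $H^1$ and passing to the limit.

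Once the commutation is established, the gradient inequality is a direct application of the slicewise Bessel estimate to both $\partial_\theta u$ and $\partial_\varphi u$:
\begin{equation*}
\int_0^{2\pi} |\nabla \mathcal{P}_mu|^2\,d\varphi \leq \int_0^{2\pi} |\partial_\theta u|^2\,d\varphi + \frac{1}{\sin^2\theta}\int_0^{2\pi} |\partial_\varphi u|^2\,d\varphi = \int_0^{2\pi} |\nabla u|^2\,d\varphi,
\end{equation*}
followed by integration in $\theta$ against $\sin\theta\,d\theta$ over $(\theta_1,\theta_2)$.

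The main technical point, rather than an obstacle, is the justification of the commutation of $\mathcal{P}_m$ with $\partial_\theta$ at the level of $H^1(S^2(\theta_1,\theta_2))$, since the band may or may not touch the poles. A density argument using $C^\infty(\overline{S^2(\theta_1,\theta_2)})$ functions (or working with the Fourier series representation $u = \sum_{m'} U_{m'}(\theta)e^{im'\varphi}$ in $L^2$, together with the weighted identities \eqref{E:L2_Mode}) handles this uniformly in the position of the band. Apart from this bookkeeping, the argument is entirely elementary.
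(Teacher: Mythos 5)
Your argument is correct: the slicewise Bessel (or Cauchy--Schwarz) inequality in $\varphi$ gives the $L^2$ bound, and combining it with the identity $|\nabla u|^2=|\partial_\theta u|^2+\sin^{-2}\theta\,|\partial_\varphi u|^2$ from \eqref{E:Re_SC} together with the commutation of $\mathcal{P}_m$ with $\partial_\theta$ and $\partial_\varphi$ (justified by density) gives the gradient bound. The paper defers the proof to \cite{Miu21pre}, and this is exactly the standard argument one expects there, so there is nothing further to add.
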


\begin{lemma} \label{L:Linf}
  For $m\in\mathbb{Z}\setminus\{0\}$ let $u=U(\theta)e^{im\varphi}\in H^1(S^2)$. Then
  \begin{align} \label{E:Linf}
    \|u\|_{L^\infty(S^2)}^2 = \|U\|_{L^\infty(0,\pi)}^2 \leq \frac{1}{\pi|m|}\|(-\Delta)^{1/2}u\|_{L^2(S^2)}^2.
  \end{align}
\end{lemma}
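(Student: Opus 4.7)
My plan is to prove the pointwise bound
\[
|U(\theta_0)|^2 \leq \frac{1}{4\pi|m|}\|\nabla u\|_{L^2(S^2)}^2
\]
for every $\theta_0\in(0,\pi)$, and then combine it with the identity \eqref{E:Lap_Half} and take the supremum over $\theta_0$. The equality $\|u\|_{L^\infty(S^2)} = \|U\|_{L^\infty(0,\pi)}$ is immediate from $|e^{im\varphi}|=1$, and my constant $1/(4\pi|m|)$ is in fact sharper than the stated $1/(\pi|m|)$.

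For the pointwise bound I will use the fundamental theorem of calculus to write $|U(\theta_0)|^2 - |U(\theta_1)|^2 = 2\,\mathrm{Re}\int_{\theta_1}^{\theta_0}U'(\theta)\overline{U(\theta)}\,d\theta$ for $0<\theta_1<\theta_0$, and bound the integrand via the weighted AM-GM split
\[
2|U'||U| \;=\; 2\cdot\frac{|U'|\sqrt{\sin\theta}}{\sqrt{|m|}}\cdot\frac{\sqrt{|m|}\,|U|}{\sqrt{\sin\theta}} \;\leq\; \frac{1}{|m|}\left(|U'|^2 + \frac{m^2|U|^2}{\sin^2\theta}\right)\sin\theta,
\]
which reproduces exactly the integrand of $\|\nabla u\|_{L^2(S^2)}^2$ from \eqref{E:L2_Mode}. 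Combining this with the analogous estimate on $(\theta_0,\theta_1')$ for some $\theta_0<\theta_1'<\pi$, adding the two resulting inequalities, and dividing by two yields the claim, provided $|U(\theta_1)|^2$ and $|U(\theta_1')|^2$ can be sent to zero in a suitable limit.

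The main obstacle is precisely this endpoint behavior: since $H^1(S^2)$ does not embed in $C(S^2)$ in two dimensions, Lemma \ref{L:NS_Pole} does not apply directly. I will instead exploit the fact that $m\neq 0$ together with \eqref{E:L2_Mode} force $\int_0^\pi |U(\theta)|^2/\sin\theta\,d\theta < \infty$. Since $U\in H^1_{loc}(0,\pi)\subset C(0,\pi)$ and $1/\sin\theta$ is not integrable near $\theta=0$ or $\theta=\pi$, a positive value of $\liminf_{\theta\to 0^+}|U(\theta)|$ would force $|U|$ to be bounded below by a positive constant on some interval $(0,\varepsilon)$ by continuity, making the weighted integral diverge; hence there exist sequences $\theta_1^{(n)}\to 0^+$ and $(\theta_1')^{(n)}\to\pi^-$ along which $U$ vanishes. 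Passing to the limit in the representation above then concludes the argument.
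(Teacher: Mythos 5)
Your argument is correct. Note that the paper itself does not prove Lemma \ref{L:Linf}: it is quoted from \cite[Section 2]{Miu21pre}, so there is no in-paper proof to compare against; your route is the natural Agmon-type one. The only genuinely delicate point --- the failure of the embedding $H^1(S^2)\hookrightarrow C(S^2)$ in two dimensions, which prevents you from invoking Lemma \ref{L:NS_Pole} to kill the boundary terms --- is handled correctly: since $m\neq0$, the second line of \eqref{E:L2_Mode} gives $\int_0^\pi|U(\theta)|^2/\sin\theta\,d\theta<\infty$, and the non-integrability of $1/\sin\theta$ at $\theta=0,\pi$ forces $\liminf|U|=0$ at both poles, which is exactly what is needed to pass to the limit along the sequences $\theta_1^{(n)}\to0^+$ and $(\theta_1')^{(n)}\to\pi^-$. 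The weighted AM--GM split reproduces the integrand of $\|\nabla u\|_{L^2(S^2)}^2$ from \eqref{E:L2_Mode} exactly, the two one-sided estimates are over disjoint intervals so their sum is controlled by the full integral, and \eqref{E:Lap_Half} applies because $m\neq0$ places $u$ in $L_0^2(S^2)\cap H^1(S^2)$. The symmetrized (two-sided) version does yield the constant $1/(4\pi|m|)$, which is stronger than the stated $1/(\pi|m|)$, so the lemma follows; even the one-sided version would give $1/(2\pi|m|)$ and suffice.
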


\begin{lemma} \label{L:Hardy}
  Let $\mu\in(-1,1)$ and $\theta_\mu=\arccos\mu\in(0,\pi)$. Then
  \begin{align} \label{E:Hardy}
    \int_{\theta_1}^{\theta_2}\left|\frac{U(\theta)\sqrt{\sin\theta}-U(\theta_\mu)\sqrt{\sin\theta_\mu}}{\mu-\cos\theta}\right|^2\,d\theta \leq \frac{16}{\pi\sin^2\theta_\mu}\|\nabla u\|_{L^2(S^2(\theta_1,\theta_2))}^2
  \end{align}
  for all $\theta_1\in[0,\theta_\mu]$, $\theta_2\in[\theta_\mu,\pi]$, and $u=U(\theta)e^{im\varphi}\in H^1(S^2(\theta_1,\theta_2))$ with $m\in\mathbb{Z}\setminus\{0\}$, where $S^2(\theta_1,\theta_2)$ is given by \eqref{E:Def_Band}.
\end{lemma}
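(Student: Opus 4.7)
I would reduce the estimate to the classical one-dimensional Hardy inequality after pulling out the natural weight $\sqrt{\sin\theta}$ coming from the spherical volume element. Set $V(\theta):=U(\theta)\sqrt{\sin\theta}$, which is continuous on $[\theta_1,\theta_2]$ by the one-dimensional Sobolev embedding, so $V(\theta_\mu)$ is well-defined. Computing $V'=U'\sqrt{\sin\theta}+U\cos\theta/(2\sqrt{\sin\theta})$ and applying $(a+b)^2\leq 2a^2+2b^2$, $\cos^2\theta\leq 1$, and \eqref{E:L2_Mode}, I would obtain
\begin{equation*}
\int_{\theta_1}^{\theta_2}|V'(\theta)|^2\,d\theta \;\leq\; 2\int_{\theta_1}^{\theta_2}|U'|^2\sin\theta\,d\theta + \tfrac{1}{2}\int_{\theta_1}^{\theta_2}\frac{|U|^2}{\sin\theta}\,d\theta \;\leq\; \frac{1}{\pi}\|\nabla u\|_{L^2(S^2(\theta_1,\theta_2))}^2,
\end{equation*}
where the last step uses $m^2\geq 1$ to absorb the second term.

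The heart of the argument is the pointwise inequality
\begin{equation*}
|\mu-\cos\theta| \;\geq\; \tfrac{1}{2}\sin\theta_\mu\,|\theta-\theta_\mu|, \qquad \theta,\theta_\mu\in[0,\pi].
\end{equation*}
By the symmetry $(\theta,\theta_\mu)\mapsto(\pi-\theta,\pi-\theta_\mu)$, which preserves both $|\mu-\cos\theta|$ and $\sin\theta_\mu|\theta-\theta_\mu|$, I may assume $\theta\geq\theta_\mu$. Then $\Phi(\theta):=(\mu-\cos\theta)-\tfrac{1}{2}\sin\theta_\mu(\theta-\theta_\mu)$ satisfies $\Phi(\theta_\mu)=0$, and $\Phi'(\theta)=\sin\theta-\tfrac{1}{2}\sin\theta_\mu$ has exactly one zero in $(\theta_\mu,\pi)$, at $\pi-\arcsin(\tfrac{1}{2}\sin\theta_\mu)$, so the minimum of $\Phi$ on $[\theta_\mu,\pi]$ is attained at an endpoint. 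At $\theta_\mu$ the value is $0$; at $\pi$ it equals $\cos(\theta_\mu/2)\bigl[2\cos(\theta_\mu/2)-\sin(\theta_\mu/2)(\pi-\theta_\mu)\bigr]$, which is nonnegative provided $\tan\alpha\,(\pi-2\alpha)\leq 2$ for $\alpha=\theta_\mu/2\in(0,\pi/2)$; in turn, $\alpha\mapsto\tan\alpha(\pi-2\alpha)$ has derivative $(\pi-2\alpha-\sin 2\alpha)/\cos^2\alpha\geq 0$ on $(0,\pi/2)$, is therefore increasing, and has limit $2$ as $\alpha\to\pi/2^-$.

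Finally, the pointwise bound yields $(\mu-\cos\theta)^{-2}\leq 4/[\sin^2\theta_\mu(\theta-\theta_\mu)^2]$, and the classical Hardy inequality $\int|g|^2/(\theta-\theta_\mu)^2\,d\theta\leq 4\int|g'|^2\,d\theta$ applied to $g:=V-V(\theta_\mu)$ (which vanishes at $\theta_\mu$) on each of $(\theta_1,\theta_\mu)$ and $(\theta_\mu,\theta_2)$ gives
\begin{equation*}
\int_{\theta_1}^{\theta_2}\frac{|V(\theta)-V(\theta_\mu)|^2}{(\mu-\cos\theta)^2}\,d\theta \;\leq\; \frac{16}{\sin^2\theta_\mu}\int_{\theta_1}^{\theta_2}|V'(\theta)|^2\,d\theta,
\end{equation*}
and combining with the first display produces \eqref{E:Hardy} with the stated constant $16/(\pi\sin^2\theta_\mu)$. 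The main obstacle is the pointwise inequality $|\mu-\cos\theta|\geq\tfrac{1}{2}\sin\theta_\mu|\theta-\theta_\mu|$: the naive attempt to use $\sin\theta\geq\tfrac{1}{2}\sin\theta_\mu$ fails near the poles, so the endpoint analysis of $\Phi$ together with the auxiliary trigonometric inequality $\tan\alpha(\pi-2\alpha)\leq 2$ is the only nontrivial step in the whole proof.
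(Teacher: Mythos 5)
Your proof is correct, and all the constants check out: the weighted-derivative bound $\int_{\theta_1}^{\theta_2}|V'|^2\,d\theta\leq\frac{1}{\pi}\|\nabla u\|_{L^2(S^2(\theta_1,\theta_2))}^2$ (using $m^2\geq1$ to absorb the $|U|^2/\sin\theta$ term), the pointwise bound $|\mu-\cos\theta|\geq\frac{1}{2}\sin\theta_\mu|\theta-\theta_\mu|$, and the two one-sided Hardy inequalities with constant $4$ combine to give exactly $16/(\pi\sin^2\theta_\mu)$. The paper itself does not prove this lemma (it is quoted from \cite{Miu21pre}), so there is no in-text proof to compare against; your route --- substitute $V=U\sqrt{\sin\theta}$, linearize the denominator, apply classical Hardy on each side of $\theta_\mu$ --- is the natural one and is surely what the cited reference does. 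The only place where you work harder than necessary is the pointwise inequality: your endpoint analysis of $\Phi$ together with the auxiliary bound $\tan\alpha\,(\pi-2\alpha)\leq 2$ is valid (the symmetry reduction, the location of the unique critical point of $\Phi'$ in $(\theta_\mu,\pi)$, and the sign of $\Phi(\pi)$ are all right), but the same inequality follows in two lines from concavity of $\sin$ on $[0,\pi]$: writing $|\mu-\cos\theta|=\int_{\theta_{\min}}^{\theta_{\max}}\sin\vartheta\,d\vartheta$ and bounding the integrand below by the linear interpolant of its endpoint values gives the stronger estimate $|\mu-\cos\theta|\geq\frac{1}{2}|\theta-\theta_\mu|(\sin\theta+\sin\theta_\mu)$, which is exactly \eqref{E:KL_mucos} as proved in Lemma \ref{L:KL_eps} of this paper. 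That shortcut would let you delete the entire paragraph on $\Phi$ without changing anything else.
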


Let us give interpolation type inequalities for $u=U(\theta)e^{im\varphi}$.

\begin{lemma} \label{L:Inter}
  Let $\beta\in(0,\pi/2)$ and $u=U(\theta)e^{im\varphi}\in H^1(S^2)$ with $m\in\mathbb{Z}\setminus\{0\}$. Then
  \begin{align}
    |U(\theta)|^2 &\leq \frac{1}{2\pi\sin\beta}\|u\|_{L^2(S^2)}\left(2\|(-\Delta)^{1/2}u\|_{L^2(S^2)}+\frac{1}{\pi-2\beta}\|u\|_{L^2(S^2)}\right), \label{E:Inter_U} \\
    |U(\theta)|^2 &\leq \frac{1}{2\pi\sin^2\beta}\|M_{\sin\theta}u\|_{L^2(S^2)}\left(2\|(-\Delta)^{1/2}u\|_{L^2(S^2)}+\frac{1}{\pi-2\beta}\|u\|_{L^2(S^2)}\right) \label{E:Inter_sinU}
  \end{align}
  for all $\theta\in(\beta,\pi-\beta)$, where $M_{\sin\theta}u=\sin\theta\,U(\theta)e^{im\varphi}$.
\end{lemma}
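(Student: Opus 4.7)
The plan is to apply the fundamental theorem of calculus on an interior interval, choose a reference point via the mean value theorem, and then convert the resulting one-dimensional integrals into the $L^2(S^2)$-norms by a weighted Cauchy--Schwarz that exploits the bound $\sin s \geq \sin\beta$ on $(\beta,\pi-\beta)$.

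\textbf{Step 1: pointwise identity.} By \eqref{E:L2_Mode} we have $U\in H^1_{loc}(0,\pi)\subset C(0,\pi)$, so for any $\theta,\theta_0\in(\beta,\pi-\beta)$
\[
|U(\theta)|^2 = |U(\theta_0)|^2 + 2\operatorname{Re}\int_{\theta_0}^{\theta} U'(s)\overline{U(s)}\,ds \leq |U(\theta_0)|^2 + 2\int_\beta^{\pi-\beta}|U'(s)||U(s)|\,ds.
\]

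\textbf{Step 2: choice of $\theta_0$.} For \eqref{E:Inter_U}, by the mean value theorem for integrals there exists $\theta_0\in(\beta,\pi-\beta)$ with
\[
|U(\theta_0)|^2\sin\theta_0(\pi-2\beta)\leq\int_\beta^{\pi-\beta}|U(s)|^2\sin s\,ds\leq \frac{1}{2\pi}\|u\|_{L^2(S^2)}^2,
\]
and since $\sin\theta_0\geq\sin\beta$ this gives $|U(\theta_0)|^2\leq \frac{1}{2\pi(\pi-2\beta)\sin\beta}\|u\|_{L^2(S^2)}^2$. For \eqref{E:Inter_sinU}, I instead pick $\theta_0$ with
\[
|U(\theta_0)|^2\sin^2\theta_0(\pi-2\beta)\leq\int_\beta^{\pi-\beta}|U(s)|^2\sin^2 s\,ds,
\]
and estimate the right-hand side by Cauchy--Schwarz as
\[
\int_0^\pi |U|^2\sin^2 s\,ds\leq\left(\int_0^\pi |U|^2\sin s\,ds\right)^{1/2}\!\left(\int_0^\pi |U|^2\sin^3 s\,ds\right)^{1/2} = \frac{1}{2\pi}\|u\|_{L^2(S^2)}\|M_{\sin\theta}u\|_{L^2(S^2)},
\]
which together with $\sin\theta_0\geq\sin\beta$ yields $|U(\theta_0)|^2\leq\frac{1}{2\pi(\pi-2\beta)\sin^2\beta}\|u\|_{L^2(S^2)}\|M_{\sin\theta}u\|_{L^2(S^2)}$.

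\textbf{Step 3: weighted Cauchy--Schwarz on the cross term.} Using $\sin s\geq\sin\beta$ on $(\beta,\pi-\beta)$ to insert the needed weight, then extending to $(0,\pi)$:
\[
2\int_\beta^{\pi-\beta}|U'||U|\,ds\leq \frac{2}{\sin\beta}\int_0^\pi |U'||U|\sin s\,ds\leq \frac{2}{\sin\beta}\left(\int_0^\pi |U'|^2\sin s\,ds\right)^{1/2}\!\left(\int_0^\pi |U|^2\sin s\,ds\right)^{1/2},
\]
which by \eqref{E:L2_Mode} and \eqref{E:Lap_Half} equals $\frac{1}{\pi\sin\beta}\|(-\Delta)^{1/2}u\|_{L^2(S^2)}\|u\|_{L^2(S^2)}$; this gives \eqref{E:Inter_U} after combining with Step 2. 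For \eqref{E:Inter_sinU}, I use $1\leq\sin^2 s/\sin^2\beta$ on $(\beta,\pi-\beta)$ to instead bound
\[
2\int_\beta^{\pi-\beta}|U'||U|\,ds\leq \frac{2}{\sin^2\beta}\int_0^\pi |U'||U|\sin^2 s\,ds\leq \frac{2}{\sin^2\beta}\left(\int_0^\pi |U'|^2\sin s\,ds\right)^{1/2}\!\left(\int_0^\pi |U|^2\sin^3 s\,ds\right)^{1/2},
\]
equaling $\frac{1}{\pi\sin^2\beta}\|(-\Delta)^{1/2}u\|_{L^2(S^2)}\|M_{\sin\theta}u\|_{L^2(S^2)}$.

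\textbf{Step 4: assembly.} Adding the bounds from Steps 2 and 3 and factoring out $\|u\|_{L^2(S^2)}$ (resp. $\|M_{\sin\theta}u\|_{L^2(S^2)}$) yields \eqref{E:Inter_U} (resp. \eqref{E:Inter_sinU}). There is no essential obstacle here; the only point requiring care is matching the weight $\sin s$ versus $\sin^2 s$ when invoking Cauchy--Schwarz, so that the single factor of $\|u\|_{L^2(S^2)}$ (resp. one $\|M_{\sin\theta}u\|_{L^2(S^2)}$) comes out cleanly and the constants $\frac{1}{2\pi\sin\beta}$ and $\frac{1}{2\pi\sin^2\beta}$ emerge with the prescribed combination $2\|(-\Delta)^{1/2}u\|_{L^2(S^2)}+\frac{1}{\pi-2\beta}\|u\|_{L^2(S^2)}$.
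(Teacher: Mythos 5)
Your proposal is correct and follows essentially the same route as the paper: a mean-value-theorem choice of reference point, the fundamental theorem of calculus for $|U(\theta)|^2-|U(\theta_0)|^2$, and weighted Cauchy--Schwarz using $\sin s\geq\sin\beta$ on $(\beta,\pi-\beta)$, with all constants matching. The only (immaterial) difference is in how the $\sin$-weights are distributed for \eqref{E:Inter_sinU}: you use a $\sin^2$-weighted mean value point and the interpolation $\int|U|^2\sin^2\leq(\int|U|^2\sin)^{1/2}(\int|U|^2\sin^3)^{1/2}$, whereas the paper keeps the unweighted reference point and bounds $\int|U|^2/\sin$ and $\int|U|^2\sin$ directly in terms of $\|M_{\sin\theta}u\|_{L^2(S^2)}$.
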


\begin{proof}
  By the mean value theorem for integrals, we have
  \begin{align*}
    \int_\beta^{\pi-\beta}|U(\vartheta)|^2\sin\vartheta\,d\vartheta = (\pi-2\beta)|U(\theta_\beta)|^2\sin\theta_\beta
  \end{align*}
  with some $\theta_\beta\in(\beta,\pi-\beta)$. Then we observe that
  \begin{align} \label{Pf_In:Alpha}
    |U(\theta_\beta)|^2 &= \frac{1}{(\pi-2\beta)\sin\theta_\beta}\int_\beta^{\pi-\beta}|U(\vartheta)|^2\sin\vartheta\,d\vartheta \leq \frac{1}{2\pi(\pi-2\beta)\sin\beta}\|u\|_{L^2(S^2)}^2
  \end{align}
  by $\sin\theta_\beta\geq\sin\beta$ and \eqref{E:L2_Mode}. Let $\theta\in(\beta,\pi-\beta)$. Then
  \begin{align} \label{Pf_In:Diff}
    \begin{aligned}
      |U(\theta)|^2-|U(\theta_\beta)|^2 &= \int_{\theta_\beta}^\theta\frac{d}{d\vartheta}|U(\vartheta)|^2\,d\vartheta \leq 2\int_\beta^{\pi-\beta}|U(\vartheta)||U'(\vartheta)|\,d\vartheta \\
      &\leq 2\left(\int_\beta^{\pi-\beta}\frac{|U(\vartheta)|^2}{\sin\vartheta}\,d\vartheta\right)^{1/2}\left(\int_\beta^{\pi-\beta}|U'(\vartheta)|^2\sin\vartheta\,d\vartheta\right)^{1/2}.
    \end{aligned}
  \end{align}
  Moreover, by $\sin\vartheta\geq\sin\beta$ for $\vartheta\in(\beta,\pi-\beta)$, \eqref{E:Lap_Half}, and \eqref{E:L2_Mode},
  \begin{align} \label{Pf_In:U_L2}
    \begin{gathered}
      \int_\beta^{\pi-\beta}\frac{|U(\vartheta)|^2}{\sin\vartheta}\,d\vartheta \leq \frac{1}{\sin^2\beta}\int_\beta^{\pi-\beta}|U(\vartheta)|^2\sin\vartheta\,d\vartheta \leq \frac{1}{2\pi\sin^2\beta}\|u\|_{L^2(S^2)}^2, \\
      \int_\beta^{\pi-\beta}|U'(\vartheta)|^2\sin\vartheta\,d\vartheta \leq \frac{1}{2\pi}\|\nabla u\|_{L^2(S^2)}^2 =\frac{1}{2\pi}\|(-\Delta)^{1/2}u\|_{L^2(S^2)}^2.
    \end{gathered}
  \end{align}
  Hence \eqref{E:Inter_U} follows from \eqref{Pf_In:Alpha}--\eqref{Pf_In:U_L2}. Also, for $M_{\sin\theta}u=\sin\theta\,U(\theta)e^{im\varphi}$,
  \begin{align*}
    \int_\beta^{\pi-\beta}|U(\vartheta)|^2\sin\vartheta\,d\vartheta &\leq \frac{1}{\sin\beta}\int_\beta^{\pi-\beta}|\sin\vartheta\,U(\vartheta)||U(\vartheta)|\sin\vartheta\,d\vartheta \\
    &\leq \frac{1}{2\pi\sin\beta}\|M_{\sin\theta}u\|_{L^2(S^2)}\|u\|_{L^2(S^2)}
  \end{align*}
  by $\sin\vartheta\geq\sin\beta$ for $\vartheta\in(\beta,\pi-\beta)$ and \eqref{E:L2_Mode}, and
  \begin{align*}
    \int_\beta^{\pi-\beta}\frac{|U(\vartheta)|^2}{\sin\vartheta}\,d\vartheta \leq \frac{1}{\sin^4\beta}\int_\beta^{\pi-\beta}|\sin\vartheta\,U(\vartheta)|^2\sin\vartheta\,d\vartheta \leq \frac{1}{2\pi\sin^4\beta}\|M_{\sin\theta}u\|_{L^2(S^2)}^2.
  \end{align*}
  We apply these estimates, $\sin\theta_\beta\geq\sin\beta$, and the second line of \eqref{Pf_In:U_L2} to \eqref{Pf_In:Alpha} and \eqref{Pf_In:Diff}, and then combine the resulting inequalities to obtain \eqref{E:Inter_sinU}.
\end{proof}

\begin{lemma} \label{L:Int_Fix}
  For $m\in\mathbb{Z}\setminus\{0\}$ let $u=U(\theta)e^{im\varphi}\in H^1(S^2)$. Also, let $\theta_\mu\in(0,\pi)$ and
  \begin{align*}
    0 < \varepsilon \leq \frac{1}{2}\sin\theta_\mu < \frac{1}{2}\min\{\theta_\mu,\pi-\theta_\mu\}.
  \end{align*}
  Then there exists a constant $C>0$ independent of $m$, $u$, $\theta_\mu$, and $\varepsilon$ such that
  \begin{align}
    |U(\theta)|^2 &\leq \frac{C}{\sin\theta_\mu}\|u\|_{L^2(S^2)}\|(-\Delta)^{1/2}u\|_{L^2(S^2)}, \label{E:IF_U} \\
    |U(\theta)|^2 &\leq \frac{C}{\sin^2\theta_\mu}\|M_{\sin\theta}u\|_{L^2(S^2)}\|(-\Delta)^{1/2}u\|_{L^2(S^2)} \label{E:IF_sinU}
  \end{align}
  for all $\theta\in(\theta_\mu-\varepsilon,\theta_\mu+\varepsilon)$, where $M_{\sin\theta}u=\sin\theta\,U(\theta)e^{im\varphi}$.
\end{lemma}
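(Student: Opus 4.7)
My plan is to derive Lemma \ref{L:Int_Fix} as an immediate consequence of Lemma \ref{L:Inter} by picking a single $\beta\in(0,\pi/2)$ that (i) is small enough that $(\theta_\mu-\varepsilon,\theta_\mu+\varepsilon)$ is contained in $(\beta,\pi-\beta)$, (ii) satisfies $\sin\beta\geq c\sin\theta_\mu$ for some absolute $c>0$, and (iii) keeps $\pi-2\beta$ bounded below by an absolute positive constant. Given these properties, \eqref{E:Inter_U} (resp.\ \eqref{E:Inter_sinU}) directly yields an estimate of the desired shape up to a harmless residual $\|u\|_{L^2(S^2)}$ factor, which I would absorb using Poincar\'e's inequality \eqref{E:Poin} after noting that $m\neq 0$ forces $u\in L_0^2(S^2)$.

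The natural choice is $\beta=\frac{1}{2}\min\{\theta_\mu,\pi-\theta_\mu\}\in(0,\pi/4]$. For (i), the hypothesis $\varepsilon<\frac{1}{2}\min\{\theta_\mu,\pi-\theta_\mu\}=\beta$ together with $\theta_\mu\geq 2\beta$ and $\pi-\theta_\mu\geq 2\beta$ gives $\theta_\mu-\varepsilon>\theta_\mu-\beta\geq\beta$ and $\theta_\mu+\varepsilon<\theta_\mu+\beta\leq\pi-\beta$. For (ii), the identity $\sin\theta_\mu=\sin(\min\{\theta_\mu,\pi-\theta_\mu\})=\sin(2\beta)=2\sin\beta\cos\beta\leq 2\sin\beta$ yields $1/\sin\beta\leq 2/\sin\theta_\mu$. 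Condition (iii) is immediate since $\beta\leq\pi/4$ forces $\pi-2\beta\geq\pi/2$.

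Plugging these bounds into \eqref{E:Inter_U} gives, for $\theta\in(\theta_\mu-\varepsilon,\theta_\mu+\varepsilon)$,
\begin{align*}
|U(\theta)|^2\leq\frac{1}{\pi\sin\theta_\mu}\|u\|_{L^2(S^2)}\left(2\|(-\Delta)^{1/2}u\|_{L^2(S^2)}+\frac{2}{\pi}\|u\|_{L^2(S^2)}\right),
\end{align*}
and Poincar\'e's inequality \eqref{E:Poin} then lets me replace the trailing $\|u\|_{L^2(S^2)}$ by $C\|(-\Delta)^{1/2}u\|_{L^2(S^2)}$, producing \eqref{E:IF_U}. The proof of \eqref{E:IF_sinU} is parallel, using \eqref{E:Inter_sinU} in place of \eqref{E:Inter_U} and once more appealing to \eqref{E:Poin} on the residual $\|u\|_{L^2(S^2)}$ factor. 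I do not anticipate any real difficulty here: the only point requiring a moment of thought is to select a single $\beta$ that simultaneously realizes (i)--(iii), after which everything is mechanical.
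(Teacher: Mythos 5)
Your proof is correct and takes essentially the same route as the paper: both reduce the lemma to Lemma \ref{L:Inter} by choosing $\beta$ comparable to the distance from $\theta_\mu$ to the poles and then absorbing the residual $\|u\|_{L^2(S^2)}$ via Poincar\'e's inequality (the paper uses the equivalent \eqref{E:H1_Equiv}). Your single uniform choice $\beta=\tfrac{1}{2}\min\{\theta_\mu,\pi-\theta_\mu\}$ with the double-angle identity $\sin\theta_\mu=2\sin\beta\cos\beta$ is a tidier packaging of the same idea than the paper's four-case analysis, but it is not a different argument.
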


\begin{proof}
  When $\theta_\mu\in(0,\pi/6]$, we set $\beta=\theta_\mu-\varepsilon$. Then since
  \begin{align*}
    \theta_\mu+\varepsilon \leq \frac{3}{2}\theta_\mu \leq \frac{\pi}{4}, \quad \pi-\beta \geq \pi-2\beta \geq \pi-2\theta_\mu \geq \frac{2\pi}{3},
  \end{align*}
  we have $(\theta_\mu-\varepsilon,\theta_\mu+\varepsilon)\subset(\beta,\pi-\beta)$. Also, by Taylor's theorem for $\sin\theta$ at $\theta=\theta_0$,
  \begin{align*}
    \sin\beta = \sin(\theta_\mu-\varepsilon) \geq \sin\theta_\mu-\varepsilon \geq \frac{1}{2}\sin\theta_\mu.
  \end{align*}
  On the other hand, when $\theta_\mu\in[\pi/6,\pi/2]$,
  \begin{align*}
    \theta_\mu-\varepsilon \geq \frac{\theta_\mu}{2} \geq \frac{\pi}{12}, \quad \theta_\mu+\varepsilon \leq \frac{3}{2}\theta_\mu \leq \frac{3}{4}\pi \leq \frac{11}{12}\pi.
  \end{align*}
  Hence we set $\beta=\pi/12$ to get $(\theta_\mu-\varepsilon,\theta_\mu+\varepsilon)\subset(\beta,\pi-\beta)$ and
  \begin{align*}
    \pi-2\beta = \frac{5}{6}\pi, \quad \sin\beta = \sin\frac{\pi}{12} \geq \sin\frac{\pi}{12}\sin\theta_\mu.
  \end{align*}
  Therefore, in both cases $\theta_\mu\in(0,\pi/6]$ and $\theta_\mu\in[\pi/6,\pi/2]$, we have \eqref{E:IF_U} and \eqref{E:IF_sinU} by applying \eqref{E:Inter_U} and \eqref{E:Inter_sinU} with the above $\beta$ and using \eqref{E:H1_Equiv}. Similarly, we can prove \eqref{E:IF_U} and \eqref{E:IF_sinU} when $\theta_\mu\geq\pi/2$ by considering the cases $\theta_\mu\in[\pi/2,5\pi/6]$ and $\theta_\mu\in[5\pi/6,\pi)$ separately.
\end{proof}

\section{Analysis of the linearized operator} \label{S:Kol}
In this section we study the linearized operator for the two-jet Kolmogorov type flow.

\subsection{Settings and basic results} \label{SS:Ko_Basic}
Let $I$ be the identity operator and $M_f$ the multiplication operator by a function $f$ on $S^2$, i.e. $M_fu=fu$ for a function $u$ on $S^2$. We define linear operators $A$ and $\Lambda$ on $L_0^2(S^2)$ by
\begin{alignat*}{2}
  A &= \Delta+2, &\quad D_{L_0^2(S^2)}(A) &= L_0^2(S^2)\cap H^2(S^2), \\
  \Lambda &= -i\partial_\varphi M_{\cos\theta}B, &\quad D_{L_0^2(S^2)}(\Lambda) &= \{u\in L_0^2(S^2) \mid \partial_\varphi M_{\cos\theta}u\in L_0^2(S^2)\},
\end{alignat*}
where $B=I+6\Delta^{-1}$ on $L_0^2(S^2)$. Then $A$ is self-adjoint in $L_0^2(S^2)$. Also, $\Lambda$ is densely defined in $L_0^2(S^2)$ since its domain contains the dense subspace $L_0^2(S^2)\cap H^1(S^2)$ of $L_0^2(S^2)$. Since $M_{\cos\theta}B$ is a bounded operator on $L^2(S^2)$ (note that it does not map $L_0^2(S^2)$ into itself) and $\partial_\varphi$ is a closed operator from $L^2(S^2)$ into $L_0^2(S^2)$, we see that $\Lambda$ is closed in $L_0^2(S^2)$. Moreover, $\Lambda$ is $A$-compact in $L_0^2(S^2)$ since $H^2(S^2)$ is compactly embedded in $H^1(S^2)$ and
\begin{align} \label{E:Ko_A_Apr}
  \|u\|_{H^2(S^2)} \leq C\|\Delta u\|_{L^2(S^2)} \leq C\left(\|Au\|_{L^2(S^2)}+2\|u\|_{L^2(S^2)}\right), \quad u\in D(A)
\end{align}
by the elliptic regularity theorem. For $n\geq1$ and $|m|\leq n$ we have
\begin{align} \label{E:Lap_Y}
  \Delta Y_n^m = -\lambda_nY_n^m, \quad  BY_n^m = \left(1-\frac{6}{\lambda_n}\right)Y_n^m, \quad \partial_\varphi Y_n^m = imY_n^m.
\end{align}
By this equality and \eqref{E:Y_Rec}, we see that (here $Y_{|m|-1}^m\equiv0$)
\begin{align} \label{E:Ko_AL_Y}
  AY_n^m = (-\lambda_n+2)u, \quad \Lambda Y_n^m = m\left(1-\frac{6}{\lambda_n}\right)(a_n^mY_{n-1}^m+a_{n+1}^mY_{n+1}^m).
\end{align}
In particular, $\Lambda Y_2^m=0$ for $|m|=0,1,2$. Let
\begin{align*}
  \mathcal{X} = \{u\in L_0^2(S^2) \mid (u,Y_n^0)_{L^2(S^2)}=(u,Y_1^m)_{L^2(S^2)}=0, \, n\geq1, \, |m|=0,1\}.
\end{align*}
Then $\mathcal{X}$ is a closed subspace of $L_0^2(S^2)$ and each $u\in\mathcal{X}$ is expressed as
\begin{align} \label{E:Ko_u_X}
    u = \sum_{m\in\mathbb{Z}\setminus\{0\}}\sum_{n\geq N_m}(u,Y_n^m)_{L^2(S^2)}Y_n^m, \quad N_m = \max\{2,|m|\},
\end{align}
and we observe by \eqref{E:Ko_AL_Y} (in particular $\Lambda Y_2^m=0$) and \eqref{E:Ko_u_X} that $\mathcal{X}$ is invariant under the actions of $A$ and $\Lambda$. For the sake of simplicity, we write $A$ and $\Lambda$ for their restrictions on $\mathcal{X}$ with domains $D_{\mathcal{X}}(A)=\mathcal{X}\cap D_{L_0^2(S^2)}(A)$ and $D_{\mathcal{X}}(\Lambda)=\mathcal{X}\cap D_{L_0^2(S^2)}(\Lambda)$.

The follows lemmas are proved in \cite[Section 5]{Miu21pre}.

\begin{lemma} \label{L:Ko_ALam}
  The operator $A$ is self-adjoint in $\mathcal{X}$ and satisfies
  \begin{align} \label{E:Ko_A_Po}
    (-Au,u)_{L^2(S^2)} \geq 4\|u\|_{L^2(S^2)}^2, \quad u\in D_{\mathcal{X}}(A).
  \end{align}
  Also, $\Lambda$ is densely defined, closed, and $A$-compact in $\mathcal{X}$.
\end{lemma}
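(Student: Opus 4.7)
The plan is to exploit the fact that the spherical harmonics $\{Y_n^m\}$ diagonalize $A$, so the restriction to $\mathcal{X}$ is transparent. First I would verify self-adjointness of $A$ on $\mathcal{X}$: $\mathcal{X}$ is the closed span of $\{Y_n^m : m\neq 0,\, n\geq N_m\}$, and by \eqref{E:Lap_Y} the operator $A=\Delta+2$ acts on each basis element as multiplication by $-\lambda_n+2$. Hence both $\mathcal{X}$ and its orthogonal complement in $L_0^2(S^2)$ are reducing subspaces of the self-adjoint operator $A$ on $L_0^2(S^2)$, and the restriction of a self-adjoint operator to a reducing subspace with the induced domain $D_{\mathcal{X}}(A)=\mathcal{X}\cap D_{L_0^2(S^2)}(A)$ is self-adjoint in $\mathcal{X}$.

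For the coercive estimate \eqref{E:Ko_A_Po}, I would expand $u\in D_{\mathcal{X}}(A)$ as in \eqref{E:Ko_u_X} and compute
\begin{align*}
  (-Au,u)_{L^2(S^2)} = \sum_{m\in\mathbb{Z}\setminus\{0\}}\sum_{n\geq N_m}(\lambda_n-2)\,|(u,Y_n^m)_{L^2(S^2)}|^2.
\end{align*}
Since $N_m=\max\{2,|m|\}\geq 2$, every index satisfies $\lambda_n=n(n+1)\geq 6$, whence $\lambda_n-2\geq 4$. Comparing with Parseval's identity $\|u\|_{L^2(S^2)}^2=\sum|(u,Y_n^m)|^2$ gives \eqref{E:Ko_A_Po}.

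For the statements on $\Lambda$: density of the domain follows because $\mathcal{X}\cap H^1(S^2)\subset D_{\mathcal{X}}(\Lambda)$ and the former is dense in $\mathcal{X}$ (finite spherical-harmonic expansions lie in $\mathcal{X}\cap C^\infty$ and are dense). Closedness I would check directly: if $u_k\to u$ and $\Lambda u_k\to v$ in $\mathcal{X}$, then $M_{\cos\theta}Bu_k\to M_{\cos\theta}Bu$ in $L^2(S^2)$ by boundedness of $B$ on $L_0^2(S^2)$ and of $M_{\cos\theta}$ on $L^2(S^2)$, while $-i\partial_\varphi M_{\cos\theta}Bu_k\to v$; then closedness of $\partial_\varphi$ from $L^2(S^2)$ into $L_0^2(S^2)$ yields $-i\partial_\varphi M_{\cos\theta}Bu=v$, i.e.\ $u\in D_{\mathcal{X}}(\Lambda)$ and $\Lambda u=v$.

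Finally, for $A$-compactness, let $\{u_k\}\subset D_{\mathcal{X}}(A)$ be bounded with $\{Au_k\}$ bounded in $\mathcal{X}$. The elliptic estimate \eqref{E:Ko_A_Apr} gives a uniform bound for $\|u_k\|_{H^2(S^2)}$, so by the compact embedding $H^2(S^2)\hookrightarrow H^1(S^2)$ we may pass to a subsequence converging in $H^1(S^2)$. Since $\Lambda$ is continuous from $H^1(S^2)\cap L_0^2(S^2)$ into $L_0^2(S^2)$ (because $B$ and $M_{\cos\theta}$ are bounded on $L^2$ and $\partial_\varphi\colon H^1\to L^2$ is bounded), the sequence $\{\Lambda u_k\}$ converges along that subsequence in $\mathcal{X}$. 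The whole argument is routine once \eqref{E:Ko_A_Apr} and the orthogonal decomposition of $L_0^2(S^2)$ into $\mathcal{X}\oplus\mathcal{X}^\perp$ are in hand; the only mild subtlety is to notice that the excluded modes $Y_n^0$ and $Y_1^{\pm 1}$ indeed reduce both $A$ and $\Lambda$, which is ensured by the three-term recurrence \eqref{E:Y_Rec} together with the factor $(1-6/\lambda_n)$ in \eqref{E:Ko_AL_Y} that kills $Y_2^m$.
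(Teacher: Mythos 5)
Your proof is correct and takes essentially the same route as the paper (which defers this lemma to \cite[Section 5]{Miu21pre} but uses exactly these ingredients in Section \ref{SS:Ko_Basic}: diagonalization of $A$ in the spherical harmonic basis with $\lambda_n-2\geq 4$ for $n\geq 2$, closedness of $\partial_\varphi$ combined with boundedness of $M_{\cos\theta}B$, and the elliptic estimate \eqref{E:Ko_A_Apr} together with the compact embedding $H^2(S^2)\hookrightarrow H^1(S^2)$). One pedantic point: the excluded modes do not span a $\Lambda$-reducing subspace (e.g.\ $\Lambda Y_1^{\pm1}$ is a nonzero multiple of $Y_2^{\pm1}$ since $1-6/\lambda_1\neq0$); what you actually need, and what your appeal to $\Lambda Y_2^m=0$ correctly supplies, is only the invariance of $\mathcal{X}$ itself under $\Lambda$.
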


\begin{lemma} \label{L:Ko_NLam}
  For $u\in\mathcal{X}$ let $\mathbb{Q}u=u-\sum_{|m|=1,2}(u,Y_2^m)_{L^2(S^2)}Y_2^m$. Then
  \begin{align} \label{E:B2_LowB}
    \frac{1}{2}\|\mathbb{Q}u\|_{L^2(S^2)} \leq \|Bu\|_{L^2(S^2)} \leq \|u\|_{L^2(S^2)}, \quad u\in\mathcal{X}
  \end{align}
  and the kernel of $\Lambda$ in $\mathcal{X}$ is $N_{\mathcal{X}}(\Lambda)=\mathrm{span}\{Y_2^m\mid |m|=1,2\}$. Thus $\mathbb{Q}$ is the orthogonal projection from $\mathcal{X}$ onto $\mathcal{Y} = N_{\mathcal{X}}(\Lambda)^\perp=\{u\in\mathcal{X}\mid (u,Y_2^m)_{L^2(S^2)}=0,\,|m|=1,2\}$.
\end{lemma}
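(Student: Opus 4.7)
The plan is to work entirely in the spherical-harmonic basis using the expansion \eqref{E:Ko_u_X}, so that $u=\sum_{m\neq 0}\sum_{n\geq N_m}c_{n,m}Y_n^m$ with $c_{n,m}=(u,Y_n^m)_{L^2(S^2)}$, and $N_m=\max\{2,|m|\}\geq 2$. Since \eqref{E:Lap_Y} diagonalizes $B$ as $BY_n^m=(1-6/\lambda_n)Y_n^m$, Parseval gives
\begin{align*}
  \|Bu\|_{L^2(S^2)}^2=\sum_{m\neq 0}\sum_{n\geq N_m}\Bigl(1-\frac{6}{\lambda_n}\Bigr)^{\!2}|c_{n,m}|^2.
\end{align*}
For every term we have $n\geq 2$, so $\lambda_n\geq 6$, hence $0\leq 1-6/\lambda_n\leq 1$; this yields the upper bound $\|Bu\|\leq\|u\|$ at once (and uses crucially that $u\in\mathcal{X}$ kills all $n=1$ contributions, where the factor would be $-2$). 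For the lower bound I would isolate the $n=2$ modes, which can appear only for $|m|=1,2$ and correspond to the exceptional factor $1-6/\lambda_2=0$; these are precisely the components subtracted in $\mathbb{Q}u$. All remaining modes satisfy $n\geq 3$, hence $\lambda_n\geq 12$ and $1-6/\lambda_n\geq 1/2$, so $\|Bu\|^2\geq \tfrac14\|\mathbb{Q}u\|^2$.

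For the kernel statement I would plug \eqref{E:Ko_AL_Y} into $(\Lambda u,Y_j^m)_{L^2(S^2)}$ to obtain the three-term relation
\begin{align*}
  (\Lambda u,Y_j^m)_{L^2(S^2)}=m\bigl[c_{j+1,m}(1-6/\lambda_{j+1})a_{j+1}^{m}+c_{j-1,m}(1-6/\lambda_{j-1})a_j^{m}\bigr],
\end{align*}
valid for $j\geq|m|$ with the convention $c_{n,m}=0$ for $n<N_m$. Since $m\neq 0$ on $\mathcal{X}$, $\Lambda u=0$ reduces to this recurrence, which I would analyze in three cases. If $|m|=1$, taking $j=1$ is an automatic identity (because $1-6/\lambda_2=0$ and $c_{0,m}=0$), leaving $c_{2,m}$ free; then $j=2,3,\dots$ combined with $c_{1,m}=0$ and $1-6/\lambda_2=0$ force $c_{n,m}=0$ inductively for all $n\geq 3$. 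If $|m|=2$, the step $j=2$ gives $c_{3,m}=0$ (since only one nonzero term survives), then $j=3$ forces $c_{4,m}=0$ using again $1-6/\lambda_2=0$, and induction kills all $c_{n,m}$ for $n\geq 3$ while leaving $c_{2,m}$ free. Thus in both cases the kernel in the $m$-sector is exactly $\mathbb{C}Y_2^m$.

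The real obstacle is the case $|m|\geq 3$, where the recurrence is everywhere nondegenerate. Here $j=|m|$ gives $c_{|m|+1,m}=0$, and by induction all odd-step coefficients vanish, while the even ones satisfy
\begin{align*}
  c_{|m|+2k,m}=-c_{|m|+2k-2,m}\,\frac{(1-6/\lambda_{|m|+2k-2})\,a_{|m|+2k-1}^{m}}{(1-6/\lambda_{|m|+2k})\,a_{|m|+2k}^{m}}.
\end{align*}
Since $a_n^m\to 1/2$ and $1-6/\lambda_n\to 1$ as $n\to\infty$, each factor tends to $1$, so the telescoping product $\prod_{k=1}^{K}(\cdots)$ remains bounded away from zero uniformly in $K$. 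Consequently $|c_{|m|+2k,m}|\gtrsim|c_{|m|,m}|$ for all large $k$, which contradicts $\sum_n|c_{n,m}|^2<\infty$ unless $c_{|m|,m}=0$, forcing the full sequence to vanish. Combining the three cases gives $N_{\mathcal{X}}(\Lambda)=\mathrm{span}\{Y_2^m\mid|m|=1,2\}$; and because $\{Y_2^m\}_{|m|=1,2}$ is an orthonormal set in $\mathcal{X}$, the formula $\mathbb{Q}u=u-\sum_{|m|=1,2}(u,Y_2^m)_{L^2(S^2)}Y_2^m$ identifies $\mathbb{Q}$ with the $L^2$-orthogonal projection onto $\mathcal{Y}=N_{\mathcal{X}}(\Lambda)^{\perp}$. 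The subtle point throughout is the control of the infinite product of recurrence ratios in the $|m|\geq 3$ case, which I would make quantitative using the explicit asymptotics of $a_n^m=\sqrt{(n-m)(n+m)/((2n-1)(2n+1))}$ to show the product is bounded below by a positive constant depending only on $m$.
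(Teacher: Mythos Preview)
Your argument is correct. The norm inequalities \eqref{E:B2_LowB} follow exactly as you say from the diagonal action of $B$ and the observation that $1-6/\lambda_n\in[0,1]$ for $n\ge2$ with equality to $0$ only at $n=2$ and $1-6/\lambda_n\ge1/2$ for $n\ge3$. The recurrence analysis for the kernel is also valid; the infinite-product step for $|m|\ge3$ works because $a_{n-1}^m/a_n^m=1+O(n^{-2})$ and $(1-6/\lambda_{n-2})/(1-6/\lambda_n)=1+O(n^{-2})$, so $\sum_j|\log r_j|<\infty$ and the product stays bounded away from zero.

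That said, the paper does not prove this lemma in place (it refers to \cite[Section 5]{Miu21pre}), and for the kernel there is a much shorter route that the paper itself uses later in the proof of Lemma~\ref{L:Kom_As13}. Since $\cos\theta$ is independent of $\varphi$, on each $\mathcal{X}_m$ one has $\Lambda u=mM_{\cos\theta}Bu$; as $m\neq0$ and $M_{\cos\theta}$ is injective on $L^2(S^2)$, $\Lambda u=0$ is equivalent to $Bu=0$. From the diagonal formula $BY_n^m=(1-6/\lambda_n)Y_n^m$ this immediately gives $N_{\mathcal{X}_m}(\Lambda)=\mathbb{C}Y_2^m$ for $|m|=1,2$ and $N_{\mathcal{X}_m}(\Lambda)=\{0\}$ for $|m|\ge3$, bypassing the three-term recurrence and the infinite-product estimate entirely. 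Your approach has the merit of being purely combinatorial in the spherical-harmonic coefficients, but the factorisation argument is both shorter and more robust (it does not rely on the precise asymptotics of $a_n^m$).
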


By Lemma \ref{L:Ko_ALam} and a perturbation theory of semigroups (see \cite{EngNag00}), the operator $L_\alpha=A-i\alpha\Lambda$ with domain $D_\mathcal{X}(L_\alpha)=D_{\mathcal{X}}(A)$ generates an analytic semigroup $\{e^{tL_\alpha}\}_{t\geq0}$ in $\mathcal{X}$ for each $\alpha\in\mathbb{R}$. Our aim is to study the decay rate of $e^{tL_\alpha}$ in terms of $\alpha$.

For $m\in\mathbb{Z}\setminus\{0\}$, let $\mathcal{X}_m=\mathcal{P}_m\mathcal{X}$ and $\mathcal{Y}_m=\mathcal{P}_m\mathcal{Y}$ with $\mathcal{P}_m$ given by \eqref{E:Def_Proj}. Then $\mathcal{X}_m$ and $\mathcal{Y}_m$ are closed subspaces of $\mathcal{X}$ and $\mathcal{Y}$, respectively, and the orthogonal decompositions
\begin{align*}
  \mathcal{X} = \oplus_{m\in\mathbb{Z}\setminus\{0\}}\mathcal{X}_m, \quad \mathcal{Y} = \oplus_{m\in\mathbb{Z}\setminus\{0\}}\mathcal{Y}_m
\end{align*}
hold. Moreover, since $u\in\mathcal{X}_m$ is of the form
\begin{align} \label{E:Ko_Xm}
  u = \sum_{n\geq N_m}(u,Y_n^m)_{L^2(S^2)}Y_n^m, \quad N_m = \max\{2,|m|\},
\end{align}
the subspace $\mathcal{X}_m$ is invariant under the actions of $A$ and $\Lambda$ by \eqref{E:Ko_AL_Y} (in particular $\Lambda Y_2^m=0$). Hence $L_\alpha=A-i\alpha\Lambda$ is diagonalized as
\begin{align*}
  L_\alpha = \oplus_{m\in\mathbb{Z}\setminus\{0\}}L_{\alpha,m}, \quad L_{\alpha,m}=L_\alpha|_{\mathcal{X}_m}.
\end{align*}
Moreover, we see by \eqref{E:Y_Rec}, \eqref{E:Lap_Y}, and \eqref{E:Ko_Xm} that $L_{\alpha,m}$ is expressed as
\begin{align*}
  L_{\alpha,m} = A_m-i\alpha m\Lambda_m, \quad D_{\mathcal{X}_m}(L_{\alpha,m}) = \mathcal{X}_m\cap H^2(S^2),
\end{align*}
where $A_m$ and $\Lambda_m$ are linear operators on $\mathcal{X}_m$ defined by
\begin{alignat*}{2}
  A_m &= A|_{\mathcal{X}_m} = (\Delta+2)|_{\mathcal{X}_m}, &\quad D_{\mathcal{X}_m}(A_m) &= \mathcal{X}_m\cap H^2(S^2), \\
  \Lambda_m &= M_{\cos\theta}B|_{\mathcal{X}_m} = M_{\cos\theta}(I+6\Delta^{-1})|_{\mathcal{X}_m}, &\quad D_{\mathcal{X}_m}(\Lambda_m) &= \mathcal{X}_m.
\end{alignat*}
Note that, by $|\cos\theta|\leq1$, \eqref{E:Lap_Y}, \eqref{E:Ko_Xm}, and $0\leq1-6/\lambda_n\leq1$ for $n\geq2$,
\begin{align} \label{E:Ko_Lm_Bo}
  \|\Lambda_mu\|_{L^2(S^2)} \leq \|Bu\|_{L^2(S^2)} \leq \|u\|_{L^2(S^2)}, \quad u\in\mathcal{X}_m.
\end{align}
Also, $\mathcal{Y}_m$ is a closed subspace of $\mathcal{X}_m$ and each $u\in\mathcal{Y}_m$ is of the form
\begin{align} \label{E:Ko_Ym}
  u = \sum_{n\geq N'_m}(u,Y_n^m)_{L^2(S^2)}Y_n^m, \quad N'_m = \max\{3,|m|\}.
\end{align}
Let $\mathbb{Q}_m$ be the orthogonal projection from $\mathcal{X}_m$ onto $\mathcal{Y}_m$. Then
\begin{align} \label{E:Ko_Qm}
  \mathbb{Q}_mu = \mathbb{Q}u =
  \begin{cases}
    u-(u,Y_2^m)_{L^2(S^2)}Y_2^m, &|m|=1,2, \\
    u, &|m|\geq3
  \end{cases}
\end{align}
for $u\in\mathcal{X}_m$ by \eqref{E:Ko_Xm} and \eqref{E:Ko_Ym}. We intend to derive decay estimates for the semigroup generated by $L_{\alpha,m}$ in $\mathcal{X}_m$ by applying abstract results given in Section \ref{S:Abst}. To this end, we verify Assumptions \ref{As:A}--\ref{As:La_02} and \ref{As:Est} for $A_m$ and $\Lambda_m$.

\begin{lemma} \label{L:Kom_As13}
  Let $m\in\mathbb{Z}\setminus\{0\}$. Then $A_m$ and $\Lambda_m$ satisfy Assumptions \ref{As:A}--\ref{As:La_02} in $\mathcal{X}_m$ with $B_{1,m}=M_{\cos\theta}$ on $\mathcal{H}_m=L^2(S^2)$ and $B_{2,m}=B|_{\mathcal{X}_m}=(I+6\Delta^{-1})|_{\mathcal{X}_m}$ on $\mathcal{X}_m$.
\end{lemma}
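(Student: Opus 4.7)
The plan is to verify each of the three assumptions separately. Since none of them carries real analytic difficulty, the argument amounts to routine checks that exploit the diagonal action of $A$ and of $B = I + 6\Delta^{-1}$ on the spherical harmonic basis of $\mathcal{X}_m$.

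For Assumption \ref{As:A}, I would first observe that by \eqref{E:Ko_AL_Y} and \eqref{E:Ko_Xm} the subspace $\mathcal{X}_m$ reduces the self-adjoint operator $A$ on $\mathcal{X}$. Consequently $A_m = A|_{\mathcal{X}_m}$ is self-adjoint on $\mathcal{X}_m$ with domain $\mathcal{X}_m \cap H^2(S^2)$, and the lower bound \eqref{E:Ko_A_Po} restricts to
\[
  (-A_m u, u)_{L^2(S^2)} \geq 4 \|u\|_{L^2(S^2)}^2, \quad u \in D_{\mathcal{X}_m}(A_m),
\]
so in particular $\sigma(-A_m) \subset [4,\infty)$ provides the required spectral gap. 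The a priori estimate \eqref{E:Ko_A_Apr} then furnishes any $H^2$-control built into Assumption \ref{As:A}.

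To verify Assumptions \ref{As:La_01}--\ref{As:La_02}, I would take $\mathcal{H}_m = L^2(S^2)$, $B_{1,m} = M_{\cos\theta}$, and $B_{2,m} = (I + 6\Delta^{-1})|_{\mathcal{X}_m}$, so that $\Lambda_m = B_{1,m}B_{2,m}$ by definition. The multiplier $B_{1,m}$ is a bounded self-adjoint operator on $\mathcal{H}_m$ with $\|B_{1,m}\|_{\mathcal{H}_m \to \mathcal{H}_m} \leq 1$, since $\cos\theta$ is real-valued and $|\cos\theta| \leq 1$. For $B_{2,m}$, the formula \eqref{E:Lap_Y} shows that $B$ acts diagonally as $Y_n^m \mapsto (1 - 6/\lambda_n) Y_n^m$ on the basis \eqref{E:Ko_Xm}, so $B$ leaves $\mathcal{X}_m$ invariant and $B_{2,m}$ is well defined; it is self-adjoint on $\mathcal{X}_m$ because $\Delta^{-1}$ is self-adjoint on $L_0^2(S^2)$, and bounded with norm $\leq 1$ because $0 \leq 1 - 6/\lambda_n \leq 1$ for $n \geq 2$, cf.\ \eqref{E:Ko_Lm_Bo}. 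Any additional boundedness or symmetry statements packaged in Assumptions \ref{As:La_01}--\ref{As:La_02} reduce, via the spherical harmonic diagonalization, to analogous inequalities for real scalar multipliers and are immediate.

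The remaining ingredients, namely the closedness, dense definition, and $A$-compactness of $\Lambda_m$, are inherited from Lemma \ref{L:Ko_ALam} once one notes that $\mathcal{X}_m$ is a reducing subspace for both $A$ and $\Lambda$. The main obstacle in the whole program is not this preparatory lemma but the coercive estimate required by Assumption \ref{As:Est}, which is the substantive content of Section \ref{SS:Ko_As4}; here nothing beyond bookkeeping against \eqref{E:Lap_Y}, \eqref{E:Ko_AL_Y}, \eqref{E:Ko_Xm}, and Lemmas \ref{L:Ko_ALam}--\ref{L:Ko_NLam} is needed.
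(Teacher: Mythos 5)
Your overall strategy is the same as the paper's: verify Assumption \ref{As:A} by restriction of the self-adjoint operator $A$ to the reducing subspace $\mathcal{X}_m$, and verify Assumptions \ref{As:La_01}--\ref{As:La_02} with the factorization $\Lambda_m=B_{1,m}B_{2,m}$ and the diagonal action of $A$ and $B$ on the basis $\{Y_n^m\}$. That part is fine, including the boundedness and self-adjointness of $B_{1,m}$ and $B_{2,m}$ and the inheritance of dense definition, closedness, and $A_m$-compactness of $\Lambda_m$ (the paper gets the latter slightly more directly from the boundedness of $\Lambda_m$ on $\mathcal{X}_m$, \eqref{E:Ko_A_Apr}, and the compact embedding $H^2(S^2)\hookrightarrow L^2(S^2)$, but your route through Lemma \ref{L:Ko_ALam} also works).

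The one place where your write-up has a real gap is the blanket claim that everything remaining ``reduces, via the spherical harmonic diagonalization, to analogous inequalities for real scalar multipliers.'' This is not true of $\Lambda_m$ itself: by \eqref{E:Y_Rec} and \eqref{E:Ko_AL_Y} the operator $\Lambda_m$ is tridiagonal, not diagonal, in the basis $\{Y_n^m\}$, and two of the required items concern $\Lambda_m$ through its kernel. First, Assumption \ref{As:La_02}(ii) requires $N_{\mathcal{X}_m}(\Lambda_m)=N_{\mathcal{X}_m}(B_{2,m})$; the paper obtains this from the injectivity of $B_{1,m}=M_{\cos\theta}$ on $L^2(S^2)$, which reduces the kernel computation to the diagonal operator $B_{2,m}$ and yields $N_{\mathcal{X}_m}(\Lambda_m)=\{cY_2^m\}$ for $|m|=1,2$ (since $1-6/\lambda_2=0$) and $\{0\}$ for $|m|\geq3$. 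You never identify this kernel, yet it is what defines $\mathcal{Y}_m$ and $\mathbb{Q}_m$ and is needed for Assumption \ref{As:La_01}(ii), i.e.\ $\mathbb{Q}_mA_m\subset A_m\mathbb{Q}_m$, which holds because $Y_2^m$ is an eigenvector of $A_m$. Second, the coercive bounds \eqref{E:uB2u} and \eqref{E:AB2u} are asserted only on $\mathcal{Y}_m$, where the expansion \eqref{E:Ko_Ym} starts at $n\geq3$ and $1-6/\lambda_n\geq1/2$; on all of $\mathcal{X}_m$ with $|m|=1,2$ they would fail because of the $n=2$ mode. Your proposal mentions only the upper bound $0\leq1-6/\lambda_n\leq1$ and never refers to $\mathcal{Y}_m$, so the lower bounds---the actual content of Assumption \ref{As:La_02}(iii)---are left unaddressed. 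These are short arguments, but they are the non-bookkeeping part of the lemma and should be spelled out.
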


Note that $B_{2,m}$ maps $\mathcal{X}_m$ into itself by \eqref{E:Lap_Y} and \eqref{E:Ko_Xm}. Also, as in \eqref{E:Def_Laps}, we set
\begin{align} \label{E:Def_Ams}
  (-A_m)^su = \sum_{n\geq N_m}(\lambda_n-2)^s(u,Y_n^m)_{L^2(S^2)}Y_n^m, \quad N_m=\max\{2,|m|\}
\end{align}
for $s\in\mathbb{R}$ and $u\in\mathcal{X}_m$ of the form \eqref{E:Ko_Xm}.

\begin{proof}
  Since $\mathcal{X}_m$ is a closed subspace of $\mathcal{X}$, Lemma \ref{L:Ko_ALam} implies that $A_m$ satisfies Assumption \ref{As:A} in $\mathcal{X}_m$. The operator $\Lambda_m$ is densely defined, closed, and $A_m$-compact in $\mathcal{X}_m$ since it is a bounded operator on $\mathcal{X}_m$, the inequality \eqref{E:Ko_A_Apr} holds, and the embedding $H^2(S^2)\hookrightarrow L^2(S^2)$ is compact. If $\Lambda_mu=0$ for $u\in\mathcal{X}_m$, then $B_{2,m}u=0$ since $B_{1,m}=M_{\cos\theta}$ is injective on $L^2(S^2)$. Thus, by \eqref{E:B2_LowB}, \eqref{E:Ko_Xm}, and $B_{2,m}Y_2^m=0$ for $|m|=1,2$, we see that
  \begin{align} \label{Pf_K13:Ker}
    N_{\mathcal{X}_m}(\Lambda_m) = N_{\mathcal{X}_m}(B_{2,m}) =
    \begin{cases}
      \{cY_2^m \mid c\in\mathbb{C}\}, &|m|=1,2, \\
      \{0\}, &|m|\geq3
    \end{cases}
  \end{align}
  and we get $\mathcal{Y}_m=N_{\mathcal{X}_m}(\Lambda_m)^\perp$ by \eqref{E:Ko_Ym} and \eqref{Pf_K13:Ker}. Also, we have $\mathbb{Q}_mA_m\subset A_m\mathbb{Q}_m$ in $\mathcal{X}_m$ since $Y_2^m$ is smooth on $S^2$, the relations \eqref{E:Ko_AL_Y} and \eqref{E:Ko_Qm} hold, and $A_m$ is self-adjoint in $\mathcal{X}_m$. Since $u\in\mathcal{Y}_m$ is of the form \eqref{E:Ko_Ym}, we see by \eqref{E:Lap_Y}, \eqref{E:Ko_AL_Y}, \eqref{E:Def_Ams}, and $1-6/\lambda_n\geq 1/2$ for $n\geq 3$ that the inequalities \eqref{E:uB2u} and \eqref{E:AB2u} hold for $A_m$ and $B_{2,m}$ with constant $C=1/2$. Hence Assumptions \ref{As:La_01} and \ref{As:La_02} are valid.
\end{proof}

\begin{lemma} \label{L:B2_XmYm}
  For $m\in\mathbb{Z}\setminus\{0\}$ and $k=0,1$ we have
  \begin{alignat}{2}
    \|(-\Delta)^{k/2}B_{2,m}u\|_{L^2(S^2)}^2 &\leq \|(-\Delta)^{k/2}u\|_{L^2(S^2)}^2, &\quad u&\in\mathcal{X}_m\cap H^k(S^2), \label{E:B2m_Xmk} \\
    \frac{1}{4}\|u\|_{L^2(S^2)}^2 &\leq \|B_{2,m}u\|_{L^2(S^2)}^2, &\quad u&\in\mathcal{Y}_m. \label{E:B2m_Ym}
  \end{alignat}
\end{lemma}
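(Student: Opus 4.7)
The plan is to use the spherical harmonic expansion to diagonalize $B_{2,m}$ and then reduce both inequalities to simple pointwise bounds on the Fourier multiplier $1-6/\lambda_n$.

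By \eqref{E:Lap_Y}, each spherical harmonic $Y_n^m$ is an eigenfunction of $B_{2,m}$ with eigenvalue $1-6/\lambda_n$. For $u\in\mathcal{X}_m$ of the form \eqref{E:Ko_Xm}, this yields
\begin{align*}
  B_{2,m}u = \sum_{n\geq N_m}\left(1-\frac{6}{\lambda_n}\right)(u,Y_n^m)_{L^2(S^2)}Y_n^m, \quad N_m=\max\{2,|m|\},
\end{align*}
so that by \eqref{E:Def_Ams} (applied with $-\Delta$ in place of $-A_m$, equivalent via $\lambda_n=(\lambda_n-2)+2$ up to a bounded shift; or directly by \eqref{E:Def_Laps} since $u\in L_0^2(S^2)$),
\begin{align*}
  \|(-\Delta)^{k/2}B_{2,m}u\|_{L^2(S^2)}^2 = \sum_{n\geq N_m}\lambda_n^k\left(1-\frac{6}{\lambda_n}\right)^2|(u,Y_n^m)_{L^2(S^2)}|^2.
\end{align*}
For (i), I observe that $N_m\geq 2$ so $\lambda_n=n(n+1)\geq 6$ for all $n$ appearing in the sum, whence $0\leq 1-6/\lambda_n\leq 1$ and $(1-6/\lambda_n)^2\leq 1$. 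Inserting this bound termwise and recognizing the right-hand side as $\|(-\Delta)^{k/2}u\|_{L^2(S^2)}^2$ gives \eqref{E:B2m_Xmk}.

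For (ii), I use that $u\in\mathcal{Y}_m$ admits the expansion \eqref{E:Ko_Ym} starting at $n\geq N'_m=\max\{3,|m|\}$. For such $n$, $\lambda_n\geq 12$, so $1-6/\lambda_n\geq 1/2$ and $(1-6/\lambda_n)^2\geq 1/4$. Applying the $k=0$ spectral identity to $u$ and bounding termwise from below yields
\begin{align*}
  \|B_{2,m}u\|_{L^2(S^2)}^2 = \sum_{n\geq N'_m}\left(1-\frac{6}{\lambda_n}\right)^2|(u,Y_n^m)_{L^2(S^2)}|^2 \geq \frac{1}{4}\sum_{n\geq N'_m}|(u,Y_n^m)_{L^2(S^2)}|^2 = \frac{1}{4}\|u\|_{L^2(S^2)}^2,
\end{align*}
which is \eqref{E:B2m_Ym}. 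There is no real obstacle here; the only thing to be careful about is tracking the correct lower bound on $n$ in each of the two cases (i.e., $N_m\geq 2$ for \eqref{E:B2m_Xmk} versus $N'_m\geq 3$ for \eqref{E:B2m_Ym}), which is precisely what distinguishes $\mathcal{X}_m$ from $\mathcal{Y}_m$ and makes $B_{2,m}$ coercive on the latter.
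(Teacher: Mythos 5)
Your proof is correct and follows essentially the same route as the paper: diagonalize $B_{2,m}$ on the spherical harmonic basis via \eqref{E:Lap_Y}, then use $0\leq 1-6/\lambda_n\leq 1$ for $n\geq 2$ on $\mathcal{X}_m$ and $1-6/\lambda_n\geq 1/2$ for $n\geq 3$ on $\mathcal{Y}_m$. No issues.
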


\begin{proof}
  Since $u\in\mathcal{X}_m$ is of the form \eqref{E:Ko_Xm}, we have \eqref{E:B2m_Xmk} by \eqref{E:Def_Laps}, \eqref{E:Lap_Y}, and $|1-6/\lambda_n|\leq1$ for $n\geq2$. Also, if $u\in\mathcal{Y}_m$, then $u$ is of the form \eqref{E:Ko_Ym} and thus \eqref{E:B2m_Ym} follows from \eqref{E:Lap_Y} and $1-6/\lambda_n\geq1/2$ for $n\geq3$.
\end{proof}

The next result is crucial for the proof of Lemma \ref{L:Ko_Low} below.

\begin{lemma} \label{L:NoEi_Lam}
  Let $m\in\mathbb{Z}\setminus\{0\}$. Then $\Lambda_m$ in $\mathcal{X}_m$ has no eigenvalues in $\mathbb{C}\setminus\{0\}$.
\end{lemma}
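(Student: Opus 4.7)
I argue by contradiction: suppose $\Lambda_m u = \mu u$ for some $\mu \in \mathbb{C}\setminus\{0\}$ and some $u \in \mathcal{X}_m \setminus \{0\}$. Expanding $u = \sum_{n \geq N_m} c_n Y_n^m$ with $c_n = (u, Y_n^m)_{L^2(S^2)}$ and combining \eqref{E:Y_Rec} with the identity $B Y_n^m = b_n Y_n^m$ (where $b_n := 1 - 6/\lambda_n$), pairing the eigenvalue equation against each $Y_n^m$ yields the three-term recurrence
\begin{equation*}
a_n^m\, b_{n-1}\, c_{n-1} \;+\; a_{n+1}^m\, b_{n+1}\, c_{n+1} \;=\; \mu\, c_n, \qquad n \geq N_m,
\end{equation*}
with the boundary datum $c_{N_m - 1} = 0$. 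In both cases $|m|\leq 2$ (where $N_m = 2$, $b_2 = 0$, and $c_1 = 0$ since $\mathcal{X}$ excludes $Y_1^{\pm 1}$) and $|m| \geq 3$ (where $a_{|m|}^m = 0$), the recurrence is initialized by the single scalar $c_{N_m}$, and necessarily $c_{N_m}\neq 0$ since otherwise a forward induction forces $u \equiv 0$. Thus $\{c_n\}$ is determined up to a non-zero multiplicative constant by $\mu$, and the problem reduces to showing that this specific sequence does not lie in $\ell^2$.

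The next ingredient is an asymptotic analysis of the recurrence. Since $a_n^m = 1/2 + O(1/n^2)$ and $b_n = 1 + O(1/n^2)$, the recurrence is an $\ell^1$-perturbation of the constant-coefficient equation $c_{n+1} - 2\mu c_n + c_{n-1} = 0$, whose characteristic roots $t_\pm(\mu) = \mu \pm \sqrt{\mu^2 - 1}$ satisfy $t_+ t_- = 1$. For $\mu \in \mathbb{C}\setminus[-1,1]$ one has $|t_-| < 1 < |t_+|$, and classical asymptotic theory (Poincar\'e--Perron, Harris--Lutz) gives two linearly independent solutions with $c_n = A\,t_+^n(1+o(1))$ and $c_n = B\,t_-^n(1+o(1))$; the space of $\ell^2$ solutions is thus one-dimensional, and a discrete Wronskian (Casoratian) comparison shows that the solution singled out by $c_{N_m - 1} = 0$ does not lie in this decaying branch. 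Hence $\{c_n\}\notin \ell^2$, which contradicts $u \in L^2(S^2)$.

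For $\mu \in [-1,1]\setminus\{0\}$ both roots lie on the unit circle ($t_\pm = e^{\pm i\phi}$ with $\mu = \cos\phi$), and leading-order Poincar\'e--Perron does not separate $\ell^2$ from $\ell^\infty$. Non-trivial solutions then asymptote to $c_n = A e^{in\phi} + B e^{-in\phi} + o(1)$, and averaging the oscillating cross-term gives $\sum_{n=1}^N |c_n|^2 \gtrsim N(|A|^2 + |B|^2)$, so $\ell^2$ membership forces $A = B = 0$ and therefore $u\equiv 0$, contradiction. The marginal values $\mu = \pm 1$, where the limiting recurrence acquires a double root, require a Birkhoff--Trjitzinsky expansion $c_n \sim n^{\alpha_\pm}$ with $\alpha_\pm$ computed from the $O(1/n^2)$ corrections in $a_n^m$ and $b_n$; one verifies that every non-trivial solution then grows at most polynomially but is not square-summable.

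The main obstacle is exactly the range $\mu \in [-1,1]\setminus\{0\}$: the characteristic roots have modulus one, the turning point $\theta_\mu = \arccos\mu$ lies inside the physical interval, and the distinction between $\ell^2$ and $\ell^\infty$ is governed by subleading oscillatory averages rather than exponential rates. This step is precisely the ``mixing structure of $\Lambda$ expressed by a recurrence relation for $Y_n^m$'' referenced in the introduction and carried out in \cite{Miu21pre}; I therefore expect the present proof either to cite that result directly or to reproduce the recurrence-based argument sketched above.
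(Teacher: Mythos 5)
The paper does not prove this lemma at all: its proof is the single line ``The statement follows from \cite[Theorem 4.1]{Miu21pre}'', exactly the citation you anticipated in your last sentence. So at the level of what is actually written there is nothing to compare, and what remains is to assess your reconstruction of the underlying argument. The skeleton is right: the eigenvalue equation does reduce, via \eqref{E:Y_Rec} and \eqref{E:Lap_Y}, to the three-term recurrence you wrote; the boundary data do determine $\{c_n\}$ from the single scalar $c_{N_m}$ by forward recursion; and the genuinely hard regime is $\mu\in(-1,1)\setminus\{0\}$, where the discrete Levinson/Benzaid--Lutz asymptotics for an $\ell^1$ (here $O(1/n^2)$) perturbation with distinct unimodular characteristic roots correctly show that no nontrivial solution is square-summable. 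That part of the sketch is sound.

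The weak points are the other two regimes. For $\mu\in\mathbb{C}\setminus[-1,1]$, your claim that ``a discrete Wronskian (Casoratian) comparison shows that the solution singled out by $c_{N_m-1}=0$ does not lie in this decaying branch'' is not an argument but a restatement of the conclusion: for a general Jacobi-type operator the boundary-condition solution can perfectly well coincide with the minimal solution --- that is exactly what a discrete eigenvalue outside the essential spectrum is --- so something specific to $\Lambda_m$ must be invoked, and you supply nothing. The paper's own structure disposes of this regime in two lines: $\|\Lambda_mu\|_{L^2(S^2)}\le\|u\|_{L^2(S^2)}$ (see \eqref{E:Ko_Lm_Bo}) forces $|\mu|\le1$, and the identity $(\Lambda_mu,B_{2,m}u)_{L^2(S^2)}=\int_{S^2}x_3|B_{2,m}u|^2\,d\mathcal{H}^2\in\mathbb{R}$ together with $(u,B_{2,m}u)_{L^2(S^2)}>0$ for any would-be eigenvector with $\mu\ne0$ forces $\mu\in\mathbb{R}$. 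Similarly, $\mu=\pm1$ does not need a Birkhoff--Trjitzinsky expansion, whose indicial exponents you assert rather than compute (and which could in principle yield a square-summable branch): if $\Lambda_mu=\pm u$ then all inequalities in $\|u\|_{L^2}=\|M_{\cos\theta}B_{2,m}u\|_{L^2}\le\|B_{2,m}u\|_{L^2}\le\|u\|_{L^2}$ are equalities, whence $\sin\theta\,B_{2,m}u=0$ a.e., so $B_{2,m}u=0$ and $u=0$. In short, you correctly locate the essential difficulty, but the cases you declare easy are precisely where your argument as written is circular or unverified.
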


\begin{proof}
  The statement follows from \cite[Theorem 4.1]{Miu21pre}.
\end{proof}

\subsection{Verification of Assumption \ref{As:Est}} \label{SS:Ko_As4}
Next we show that $A_m$ and $\Lambda_m$ satisfy Assumption \ref{As:Est} in several long steps. In what follows, we frequently use the fact that each function $u$ in $\mathcal{X}_m=\mathcal{P}_m\mathcal{X}$ is of the form $u=U(\theta)e^{im\varphi}$ (see \eqref{E:Def_Proj}).

\begin{lemma} \label{L:Ko_B3}
  Let $m\in\mathbb{Z}\setminus\{0\}$. For $u\in\mathcal{X}_m\cap H^1(S^2)$ we have
  \begin{align} \label{E:Ko_Amh}
    \frac{2}{3}\|(-\Delta)^{1/2}u\|_{L^2(S^2)}^2 \leq \|(-A_m)^{1/2}u\|_{L^2(S^2)}^2 \leq \|(-\Delta)^{1/2}u\|_{L^2(S^2)}^2.
  \end{align}
  Also, let $B_{3,m}=M_{\sin\theta}$ on $\mathcal{B}_m=L^2(S^2)$. Then for $u\in D_{\mathcal{X}_m}(A_m)$ we have
  \begin{align} \label{E:Ko_B3}
    \bigl|\mathrm{Im}(A_mu,\Lambda_mu)_{L^2(S^2)}\bigr| \leq \frac{\sqrt{6}}{2}\|(-A_m)^{1/2}u\|_{L^2(S^2)}\|B_{3,m}u\|_{L^2(S^2)}.
  \end{align}
\end{lemma}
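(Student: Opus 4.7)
My plan is as follows. The inequality \eqref{E:Ko_Amh} reduces to a one-line eigenvalue comparison: for $u = \sum_{n\geq N_m} c_n Y_n^m$ in $\mathcal{X}_m\cap H^1(S^2)$ (cf.\ \eqref{E:Ko_Xm}), formulas \eqref{E:Def_Ams} and \eqref{E:Def_Laps} give $\|(-A_m)^{1/2}u\|_{L^2(S^2)}^2 = \sum_{n\geq N_m}(\lambda_n-2)|c_n|^2$ and $\|(-\Delta)^{1/2}u\|_{L^2(S^2)}^2 = \sum_{n\geq N_m}\lambda_n|c_n|^2$. Since $\lambda_n\geq\lambda_2 = 6$ for $n\geq N_m\geq 2$, the pointwise bounds $(2/3)\lambda_n\leq\lambda_n - 2\leq\lambda_n$ sum to \eqref{E:Ko_Amh}. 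For \eqref{E:Ko_B3} the strategy is to establish the identity
\begin{equation*}
\mathrm{Im}(A_m u, \Lambda_m u)_{L^2(S^2)} = \mathrm{Im}(u, \nabla\cos\theta\cdot\nabla\tilde u)_{L^2(S^2)}, \qquad \tilde u := (2B - I)u = u + 12\Delta^{-1}u,
\end{equation*}
and then to exploit $|\nabla\cos\theta| = \sin\theta$ via Cauchy--Schwarz, finally relating $\tilde u$ back to $u$ at the level of spherical harmonic coefficients.

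To derive the identity, set $v := \Delta^{-1}u\in\mathcal{X}_m$, so that $\Lambda_m u = \cos\theta u + 6\cos\theta v$ and $\Delta v = u$. The Leibniz rule together with the key fact $\Delta\cos\theta = -2\cos\theta$ (valid because $\cos\theta$ is a degree-one spherical harmonic) yields
\begin{equation*}
A(\cos\theta u) = \cos\theta A_m u - 2\cos\theta u + 2\nabla\cos\theta\cdot\nabla u, \quad A(\cos\theta v) = \cos\theta u + 2\nabla\cos\theta\cdot\nabla v,
\end{equation*}
where the $+2$ in $A = \Delta + 2$ cancels with the $v\,\Delta\cos\theta = -2\cos\theta v$ term in the second equation (and similarly in the first after writing $\Delta u = A_m u - 2u$). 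Pairing with $u$ via the self-adjointness of $A$ on $L_0^2(S^2)\cap H^2(S^2)$ and taking imaginary parts---using that $(u, \cos\theta u)_{L^2(S^2)}$ is real---gives $\mathrm{Im}(A_m u, \cos\theta u) = \mathrm{Im}(u, \nabla\cos\theta\cdot\nabla u)$ and $\mathrm{Im}(A_m u, \cos\theta v) = 2\mathrm{Im}(u, \nabla\cos\theta\cdot\nabla v)$. Adding the first to six times the second and recognizing $\tilde u = u + 12v$ proves the identity.

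From the identity, Cauchy--Schwarz gives $|\mathrm{Im}(A_m u, \Lambda_m u)| \leq \int_{S^2}|u|\sin\theta|\nabla\tilde u|\,d\mathcal{H}^2 \leq \|B_{3,m}u\|_{L^2(S^2)}\|\nabla\tilde u\|_{L^2(S^2)}$. Since $2B - I$ acts on $Y_n^m$ as the multiplier $1 - 12/\lambda_n$, one has $\|\nabla\tilde u\|_{L^2(S^2)}^2 = \sum_{n\geq N_m}(\lambda_n - 12)^2\lambda_n^{-1}|c_n|^2$. The elementary inequality $(\lambda - 12)^2/\lambda \leq (3/2)(\lambda - 2)$---equivalent to $(\lambda - 6)(\lambda + 48)\geq 0$---holds for all $\lambda\geq 6$, hence $\|\nabla\tilde u\|_{L^2(S^2)} \leq \sqrt{3/2}\,\|(-A_m)^{1/2}u\|_{L^2(S^2)} = (\sqrt{6}/2)\|(-A_m)^{1/2}u\|_{L^2(S^2)}$, which is \eqref{E:Ko_B3}. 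The main obstacle is locating the correct auxiliary function $\tilde u = (2B - I)u$ rather than, say, $Bu$: the factor $12$ is exactly what the $\Delta\cos\theta = -2\cos\theta$ cancellation demands, and it is what reduces the second-order quantity $\mathrm{Im}(A_m u, \Lambda_m u)$ to a single first-order pairing whose resulting constant $\sqrt{6}/2$ matches the statement sharply.
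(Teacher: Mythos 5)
Your proposal is correct and follows essentially the same route as the paper: both proofs reduce $\mathrm{Im}(A_mu,\Lambda_mu)_{L^2(S^2)}$ to the single first-order pairing involving $\nabla(u+12\Delta^{-1}u)$ and $u\nabla\cos\theta$ using $\Delta\cos\theta=-2\cos\theta$ (the paper by direct integration by parts, you by self-adjointness of $A$ plus the Leibniz rule, which is the same computation), and then apply Cauchy--Schwarz with $|\nabla\cos\theta|=\sin\theta$. Your one-step multiplier bound $(\lambda-12)^2/\lambda\leq\tfrac{3}{2}(\lambda-2)$ for $\lambda\geq6$ is exactly the composition of the paper's two steps $|1-12/\lambda_n|\leq1$ and \eqref{E:Ko_Amh}, so everything checks out.
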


\begin{proof}
  For $u\in\mathcal{X}_m\cap H^1(S^2)$ we have \eqref{E:Ko_Amh} by \eqref{E:Def_Laps} and \eqref{E:Def_Ams} since $u$ is of the form \eqref{E:Ko_Xm} and $2\lambda_n/3\leq\lambda_n-2\leq\lambda_n$ for $n\geq2$.

  Next let $u\in D_{\mathcal{X}_m}(A_m)$ and $v=\Delta^{-1}u$. Then
  \begin{multline*}
    (A_mu,\Lambda_mu)_{L^2(S^2)} = (\Delta u,x_3u)_{L^2(S^2)}+6(\Delta u,x_3v)_{L^2(S^2)} \\
    +2(u,x_3u)_{L^2(S^2)}+12(u,x_3v)_{L^2(S^2)}
  \end{multline*}
  with $x_3=\cos\theta$ on $S^2$. Moreover, we have
  \begin{align*}
    (\Delta u,x_3u)_{L^2(S^2)} &= -(\nabla u,x_3\nabla u)_{L^2(S^2)}-(\nabla u,u\nabla x_3)_{L^2(S^2)}, \\
    (\Delta u,x_3v)_{L^2(S^2)} &= -2(u,x_3v)_{L^2(S^2)}+2(u,\nabla x_3\cdot\nabla v)_{L^2(S^2)}+(u,x_3u)_{L^2(S^2)}
  \end{align*}
  by applying integration by parts once or twice, where we also used $\Delta x_3=-2x_3$ by \eqref{E:Re_SC} for $x_3=\cos\theta$ and $\Delta v=u$ in the second line. Hence
  \begin{multline*}
    (A_mu,\Lambda_mu)_{L^2(S^2)} = -(\nabla u,x_3\nabla u)_{L^2(S^2)}+8(u,x_3u)_{L^2(S^2)} \\
    -(\nabla u,u\nabla x_3)_{L^2(S^2)}+12(u,\nabla x_3\cdot\nabla v)_{L^2(S^2)}.
  \end{multline*}
  Moreover, since
  \begin{gather*}
    -(\nabla u,x_3\nabla u)_{L^2(S^2)}+8(u,x_3u)_{L^2(S^2)} = \int_{S^2}x_3(-|\nabla u|^2+8|u|^2)\,d\mathcal{H}^2 \in\mathbb{R}, \\
    \mathrm{Im}(u,\nabla x_3\cdot\nabla v)_{L^2(S^2)} =\mathrm{Im}\int_{S^2}u\nabla x_3\cdot\nabla\bar{v}\,d\mathcal{H}^2 = -\mathrm{Im}(\nabla v,u\nabla x_3)_{L^2(S^2)},
  \end{gather*}
  we have
  \begin{align} \label{Pf_KB3:Im}
    \begin{aligned}
      |\mathrm{Im}(A_mu,\Lambda_mu)_{L^2(S^2)}| &= |\mathrm{Im}(\nabla(u+12v),u\nabla x_3)_{L^2(S^2)}| \\
      &\leq \|\nabla(u+12v)\|_{L^2(S^2)}\|u\nabla x_3\|_{L^2(S^2)}.
    \end{aligned}
  \end{align}
  Now we recall that $v=\Delta^{-1}u$ and $u$ is of the form \eqref{E:Ko_Xm} to get
  \begin{align} \label{Pf_KB3:nab}
    \|\nabla(u+12v)\|_{L^2(S^2)}^2 &= \|(-\Delta)^{1/2}(I+12\Delta^{-1})u\|_{L^2(S^2)}^2 \leq \|(-\Delta)^{1/2}u\|_{L^2(S^2)}^2
  \end{align}
  by \eqref{E:Def_Laps}, \eqref{E:Lap_Half}, and $|1-12/\lambda_n|\leq1$ for $n\geq2$. Also, since $|\nabla x_3|=\sin\theta$ on $S^2$ by $x_3=\cos\theta$ and \eqref{E:Re_SC}, we have $\|u\nabla x_3\|_{L^2(S^2)}=\|B_{3,m}u\|_{L^2(S^2)}$ with $B_{3,m}=M_{\sin\theta}$. Thus we get \eqref{E:Ko_B3} by this equality, \eqref{E:Ko_Amh}, \eqref{Pf_KB3:Im}, and \eqref{Pf_KB3:nab}.
\end{proof}

Let us verify \eqref{E:u_h} and \eqref{E:B3u_h} in Assumption \ref{As:Est}. We consider three cases for $\mu\in\mathbb{R}$:
\begin{align*}
  |\mu| > 1, \quad 1-\kappa\delta^2 \leq |\mu| \leq 1+\kappa\delta^2, \quad |\mu| \leq 1-\kappa\delta^2.
\end{align*}
Here $\kappa>0$ is a small constant given in Lemma \ref{L:Ko_Middle} below and $\delta\in(0,1]$. In what follows, we write $C$ for a general positive constant independent of $m$, $u$, $\mu$, and $\delta$. Also, we frequently use the expressions $\cos\theta=x_3$ on $S^2$ and $\Lambda_mu=x_3B_{2,m}u$ for $u\in\mathcal{X}_m$.

\begin{lemma} \label{L:Ko_High}
  There exists a constant $C>0$ such that
  \begin{align}
    \|u\|_{L^2(S^2)}^2 &\leq \frac{C}{(|\mu|-1)^2}\|\mathbb{Q}_m(\mu-\Lambda_m)u\|_{L^2(S^2)}^2, \label{E:Ko_Hiu} \\
    \|M_{\sin\theta}u\|_{L^2(S^2)}^2 &\leq \frac{C}{|\mu|-1}\|\mathbb{Q}_m(\mu-\Lambda_m)u\|_{L^2(S^2)}^2 \label{E:Ko_HiS}
  \end{align}
  for all $m\in\mathbb{Z}\setminus\{0\}$, $u\in\mathcal{Y}_m$, and $\mu\in\mathbb{R}$ satisfying $|\mu|>1$.
\end{lemma}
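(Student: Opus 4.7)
The plan is to test $f := \mathbb{Q}_m(\mu-\Lambda_m)u$ against $B_{2,m}u$ in $L^2(S^2)$, exploiting that $B_{2,m}$ preserves $\mathcal{Y}_m$ (its eigenvalues $1-6/\lambda_n$ on $\mathcal{Y}_m$ lie in $[1/2,1)$ since $n\geq 3$), so $\mathbb{Q}_m B_{2,m}u = B_{2,m}u$ and the projection drops out: $(f,B_{2,m}u)=((\mu-\Lambda_m)u,B_{2,m}u)$. Expanding $\Lambda_m u = \cos\theta\cdot B_{2,m}u$ and taking real parts gives
\begin{equation*}
\mathrm{Re}(f,B_{2,m}u) = \int_{S^2}(\mu-\cos\theta)|B_{2,m}u|^2\,d\mathcal{H}^2 + \mu\bigl[(u,B_{2,m}u) - \|B_{2,m}u\|_{L^2(S^2)}^2\bigr],
\end{equation*}
where the bracket equals $\sum_{n\geq N'_m}(1-6/\lambda_n)(6/\lambda_n)|(u,Y_n^m)_{L^2(S^2)}|^2 \geq 0$ and therefore has the same sign as $\mu$. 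For $\mu > 1$ it is dropped; the case $\mu < -1$ is handled symmetrically by negating $\mu$.

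The crucial pointwise inequality is $|\mu-\cos\theta|\geq (|\mu|-1)+\tfrac{1}{2}\sin^2\theta$ for $|\mu|\geq 1$, which follows from $1-\cos\theta \geq \tfrac{1}{2}\sin^2\theta$ (and the analogue at $\theta=\pi$). Integrating against $|B_{2,m}u|^2$ and combining with $\|B_{2,m}u\|_{L^2(S^2)}^2 \geq \tfrac{1}{4}\|u\|_{L^2(S^2)}^2$ from Lemma~\ref{L:B2_XmYm} and the Cauchy--Schwarz estimate $|\mathrm{Re}(f,B_{2,m}u)| \leq \|f\|_{L^2(S^2)}\|B_{2,m}u\|_{L^2(S^2)} \leq \|f\|_{L^2(S^2)}\|u\|_{L^2(S^2)}$ (using \eqref{E:B2m_Xmk}) yields the master inequality
\begin{equation*}
\frac{|\mu|-1}{4}\|u\|_{L^2(S^2)}^2 + \frac{1}{2}\|M_{\sin\theta}B_{2,m}u\|_{L^2(S^2)}^2 \leq \|f\|_{L^2(S^2)}\|u\|_{L^2(S^2)}.
\end{equation*}
The first term immediately gives \eqref{E:Ko_Hiu} with $C = 16$; substituting this back, the second gives $\|M_{\sin\theta}B_{2,m}u\|_{L^2(S^2)}^2 \leq 8\|f\|_{L^2(S^2)}^2/(|\mu|-1)$.

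To pass to $\|M_{\sin\theta}u\|$ in \eqref{E:Ko_HiS} I would use the decomposition $u = B_{2,m}u - 6\Delta^{-1}u$ together with $\|\Delta^{-1}u\|_{L^2(S^2)} \leq \|u\|_{L^2(S^2)}/12$ (valid on $\mathcal{Y}_m$ since $\lambda_n \geq 12$), giving
\begin{equation*}
\|M_{\sin\theta}u\|_{L^2(S^2)}^2 \leq 2\|M_{\sin\theta}B_{2,m}u\|_{L^2(S^2)}^2 + 72\|\Delta^{-1}u\|_{L^2(S^2)}^2 \leq \frac{16\|f\|_{L^2(S^2)}^2}{|\mu|-1} + \frac{\|u\|_{L^2(S^2)}^2}{2}.
\end{equation*}
The main obstacle is that naively substituting \eqref{E:Ko_Hiu} into the second term produces a $(|\mu|-1)^{-2}$ factor rather than the desired $(|\mu|-1)^{-1}$. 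This is resolved by splitting cases: when $|\mu|-1\geq 1$ the crude bound $\|M_{\sin\theta}u\|_{L^2(S^2)}^2 \leq \|u\|_{L^2(S^2)}^2 \leq 16\|f\|_{L^2(S^2)}^2/(|\mu|-1)^2 \leq 16\|f\|_{L^2(S^2)}^2/(|\mu|-1)$ already suffices, while when $|\mu|-1 < 1$ one refines the preceding energy step by applying the weighted Cauchy--Schwarz inequality $|\mathrm{Re}(f,B_{2,m}u)|\leq (\int|f|^2/(\mu-\cos\theta))^{1/2}(\int(\mu-\cos\theta)|B_{2,m}u|^2)^{1/2}$, which yields $\int(\mu-\cos\theta)|B_{2,m}u|^2 \leq \|f\|_{L^2(S^2)}^2/(|\mu|-1)$ and allows one to absorb the $\|u\|^2/2$ contribution with constants independent of $\mu$.
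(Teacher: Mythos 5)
Your derivation of \eqref{E:Ko_Hiu} is correct and is essentially the paper's argument: both test $f=\mathbb{Q}_m(\mu-\Lambda_m)u$ against $B_{2,m}u$, use that $B_{2,m}u\in\mathcal{Y}_m$ so the projection drops, and exploit $|\mu-\cos\theta|\geq|\mu|-1$ together with $\|B_{2,m}u\|\geq\frac12\|u\|$. The case split $|\mu|-1\geq1$ for \eqref{E:Ko_HiS} is also fine. The gap is in the case $1<|\mu|<2$. Your weighted Cauchy--Schwarz refinement does give $\int(\mu-\cos\theta)|B_{2,m}u|^2\,d\mathcal{H}^2\leq\|f\|_{L^2(S^2)}^2/(|\mu|-1)$, and hence $\|M_{\sin\theta}B_{2,m}u\|_{L^2(S^2)}^2\lesssim\|f\|_{L^2(S^2)}^2/(|\mu|-1)$; but the only bound on $\|u\|_{L^2(S^2)}^2$ it produces is still $(|\mu|-1)^{-2}\|f\|_{L^2(S^2)}^2$ (via $\mu-\cos\theta\geq|\mu|-1$), and that power is sharp for $\|u\|^2$ as $|\mu|\to1^+$. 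There is no mechanism to ``absorb'' the leftover $\tfrac12\|u\|_{L^2(S^2)}^2$ into $\|M_{\sin\theta}u\|_{L^2(S^2)}^2$: the absorbed quantity sits on the right with a fixed coefficient and is not dominated by the left-hand side (think of $u$ concentrating near the poles, where $M_{\sin\theta}u$ is small but $u$ is not). So as written the argument for \eqref{E:Ko_HiS} does not close when $|\mu|$ is near $1$.

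The missing ingredient is exactly the nonnegative bracket you discarded. Since
\begin{align*}
  (u,B_{2,m}u)_{L^2(S^2)}-\|B_{2,m}u\|_{L^2(S^2)}^2=\sum_{n\geq N'_m}\left(1-\frac{6}{\lambda_n}\right)\frac{6}{\lambda_n}|(u,Y_n^m)_{L^2(S^2)}|^2\geq 3\|(-\Delta)^{-1/2}u\|_{L^2(S^2)}^2,
\end{align*}
keeping it in your identity gives $3|\mu|\,\|(-\Delta)^{-1/2}u\|_{L^2(S^2)}^2\leq|\mathrm{Re}(f,B_{2,m}u)_{L^2(S^2)}|\leq\|f\|_{L^2(S^2)}^2/(|\mu|-1)$, hence $\|\Delta^{-1}u\|_{L^2(S^2)}^2\leq\frac{1}{12}\|(-\Delta)^{-1/2}u\|_{L^2(S^2)}^2\lesssim\|f\|_{L^2(S^2)}^2/(|\mu|-1)$ --- precisely the bound with the correct power needed for the term $72\|\Delta^{-1}u\|_{L^2(S^2)}^2$ in your decomposition $u=B_{2,m}u-6\Delta^{-1}u$. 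This is what the paper does: its term $6\mu(\|\nabla v\|_{L^2(S^2)}^2-6\|v\|_{L^2(S^2)}^2)$ is the same bracket, and \eqref{Pf_KH:B2_Up} retains $\frac{6|\mu|}{|\mu|-1}\|(-\Delta)^{-1/2}u\|_{L^2(S^2)}^2$ on the left for exactly this purpose (the paper then proves \eqref{E:Ko_HiS} by testing against $u$ rather than via $M_{\sin\theta}B_{2,m}u$, but your route works once $\|\Delta^{-1}u\|$ is controlled this way). With that one change your proof is complete.
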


\begin{proof}
  For $u\in\mathcal{Y}_m$ let $v=\Delta^{-1}u$ and $f=\mathbb{Q}_m(\mu-\Lambda_m)u$. Then
  \begin{align*}
    (f,B_{2,m}u)_{L^2(S^2)} &= \bigl((\mu-\Lambda_m)u,\mathbb{Q}_mB_{2,m}u\bigr)_{L^2(S^2)} = \bigl((\mu-x_3)B_{2,m}u-6\mu v,B_{2,m}u\bigr)_{L^2(S^2)}
  \end{align*}
  since $B_{2,m}u\in\mathcal{Y}_m$ by \eqref{E:Lap_Y} and \eqref{E:Ko_Ym}. Moreover,
  \begin{align*}
    (v,B_{2,m}u)_{L^2(S^2)} = (v,\Delta v)_{L^2(S^2)}+6\|v\|_{L^2(S^2)}^2 = -\|\nabla v\|_{L^2(S^2)}^2+6\|v\|_{L^2(S^2)}^2
  \end{align*}
  by $B_{2,m}u=\Delta v+6v$ and integration by parts. Hence
  \begin{align*}
    \bigl((\mu-x_3)B_{2,m}u,B_{2,m}u\bigr)_{L^2(S^2)}+6\mu\Bigl(\|\nabla v\|_{L^2(S^2)}^2-6\|v\|_{L^2(S^2)}^2\Bigr) = (f,B_{2,m}u)_{L^2(S^2)}
  \end{align*}
  and we divide both sides by $\mu$ and take the real part to get
  \begin{multline} \label{Pf_KH:Equ}
    \int_{S^2}\left(1-\frac{x_3}{\mu}\right)|B_{2,m}u|^2\,d\mathcal{H}^2+6\Bigl(\|\nabla v\|_{L^2(S^2)}^2-6\|v\|_{L^2(S^2)}^2\Bigr) \\
    = \frac{1}{\mu}\mathrm{Re}(f,B_{2,m}u)_{L^2(S^2)}.
  \end{multline}
  Moreover, since $u\in\mathcal{Y}_m$ is of the form \eqref{E:Ko_Ym} and $1-6/\lambda_n\geq1/2$ for $n\geq3$, we observe by $v=\Delta^{-1}u$, \eqref{E:Def_Laps}, and \eqref{E:Lap_Half} that
  \begin{align} \label{Pf_KH:nabla}
    \begin{aligned}
      \|\nabla v\|_{L^2(S^2)}^2-6\|v\|_{L^2(S^2)}^2 &= \|(-\Delta)^{-1/2}u\|_{L^2(S^2)}^2-6\|\Delta^{-1}u\|_{L^2(S^2)}^2 \\
      &\geq \frac{1}{2}\|(-\Delta)^{-1/2}u\|_{L^2(S^2)}^2.
    \end{aligned}
  \end{align}
  Applying \eqref{Pf_KH:nabla} and $1-x_3/\mu\geq1-1/|\mu|=(|\mu|-1)/|\mu|$ on $S^2$ to the left-hand side of \eqref{Pf_KH:Equ}, and using H\"{o}lder's and Young's inequalities to the right-hand side, we obtain
  \begin{multline*}
    \frac{|\mu|-1}{|\mu|}\|B_{2,m}u\|_{L^2(S^2)}^2+3\|(-\Delta)^{-1/2}u\|_{L^2(S^2)}^2 \\
    \leq \frac{1}{2}\cdot\frac{|\mu|-1}{|\mu|}\|B_{2,m}u\|_{L^2(S^2)}^2+\frac{1}{2}\cdot\frac{1}{|\mu|(|\mu|-1)}\|f\|_{L^2(S^2)}^2
  \end{multline*}
  and thus (note that $|\mu|>1$)
  \begin{align} \label{Pf_KH:B2_Up}
    \|B_{2,m}u\|_{L^2(S^2)}^2+\frac{6|\mu|}{|\mu|-1}\|(-\Delta)^{-1/2}u\|_{L^2(S^2)}^2 \leq \frac{1}{(|\mu|-1)^2}\|f\|_{L^2(S^2)}^2.
  \end{align}
  Hence we get \eqref{E:Ko_Hiu} by \eqref{E:B2m_Ym} and \eqref{Pf_KH:B2_Up}. To prove \eqref{E:Ko_HiS}, we see that
  \begin{align*}
    (f,u)_{L^2(S^2)} = \bigl((\mu-x_3)u-6x_3v,\mathbb{Q}_mu\bigr)_{L^2(S^2)} = \bigl((\mu-x_3)u-6x_3v,u\bigr)_{L^2(S^2)}
  \end{align*}
  by $u\in\mathcal{Y}_m$. Taking the real part of this equality, we get
  \begin{align} \label{Pf_KH:sinu}
    \int_{S^2}(\mu-x_3)|u|^2\,d\mathcal{H}^2 = 6\mathrm{Re}(x_3v,u)_{L^2(S^2)}+\mathrm{Re}(f,u)_{L^2(S^2)}.
  \end{align}
  Moreover, since $|\nabla x_3|=\sin\theta\leq1$ on $S^2$ by $x_3=\cos\theta$ and \eqref{E:Re_SC},
  \begin{align} \label{Pf_KH:x3v}
    \begin{aligned}
      \bigl|(x_3v,u)_{L^2(S^2)}\bigr| &= \bigl|(x_3v,\Delta v)_{L^2(S^2)}\bigr| = \bigl|(\nabla(x_3v),\nabla v)_{L^2(S^2)}\bigr| \\
      &\leq C\|v\|_{H^1(S^2)}^2 \leq C\|(-\Delta)^{1/2}v\|_{L^2(S^2)}^2 = C\|(-\Delta)^{-1/2}u\|_{L^2(S^2)}^2
    \end{aligned}
  \end{align}
  by \eqref{E:H1_Equiv} and $u=\Delta v$. Noting that
  \begin{align*}
    |\mu-x_3| =
    \begin{cases}
      \mu-x_3, &\mu>1, \\
      -\mu+x_3, &\mu<-1,
    \end{cases}
    \quad \sin^2\theta = 1-x_3^2 \leq 2|\mu-x_3| \quad\text{on}\quad S^2,
  \end{align*}
  we deduce from \eqref{Pf_KH:sinu} and \eqref{Pf_KH:x3v} that
  \begin{align*}
    \|M_{\sin\theta}u\|_{L^2(S^2)}^2 &\leq 2\int_{S^2}|\mu-x_3||u|^2\,d\mathcal{H}^2 \leq C\bigl(\bigl|(x_3v,u)_{L^2(S^2)}\bigr|+\bigl|(f,u)_{L^2(S^2)}\bigr|\bigl) \\
    &\leq C\Bigl(\|(-\Delta)^{-1/2}u\|_{L^2(S^2)}^2+\|f\|_{L^2(S^2)}\|u\|_{L^2(S^2)}\Bigr)
  \end{align*}
  and applying \eqref{E:Ko_Hiu} and \eqref{Pf_KH:B2_Up} to the last line we obtain \eqref{E:Ko_HiS}.
\end{proof}

\begin{lemma} \label{L:Ko_Middle}
  There exist constants $\kappa\in(0,1/2)$ and $C>0$ such that
  \begin{align}
    \|u\|_{L^2(S^2)}^2 &\leq C\left(\delta^{-4}\|\mathbb{Q}_m(\mu-\Lambda_m)u\|_{L^2(S^2)}^2+\frac{\delta^2}{|m|}\|(-\Delta)^{1/2}u\|_{L^2(S^2)}^2\right), \label{E:Ko_Midu} \\
    \|M_{\sin\theta}u\|_{L^2(S^2)}^2 &\leq C\left(\delta^{-2}\|\mathbb{Q}_m(\mu-\Lambda_m)u\|_{L^2(S^2)}^2+\frac{\delta^4}{|m|}\|(-\Delta)^{1/2}u\|_{L^2(S^2)}^2\right) \label{E:Ko_MidS}
  \end{align}
  for all $m\in\mathbb{Z}\setminus\{0\}$, $u\in\mathcal{Y}_m\cap H^1(S^2)$, $\delta\in(0,1]$, and $\mu\in\mathbb{R}$ satisfying
  \begin{align} \label{E:Ko_MidMu}
    1-\kappa\delta^2 \leq |\mu| \leq 1+\kappa\delta^2.
  \end{align}
\end{lemma}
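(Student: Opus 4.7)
By symmetry we may assume $\mu\in[1-\kappa\delta^2,1+\kappa\delta^2]$ (the case $\mu\leq-(1-\kappa\delta^2)$ is identical after reflecting $\theta\mapsto\pi-\theta$). The plan is to mimic the testing identities from the proof of Lemma~\ref{L:Ko_High} but to localize the failure of $(\mu-\cos\theta)$ to be uniformly positive in a polar cap near $\theta=0$. Fix $c_0=c_0(\kappa)>0$ large enough (and $\kappa$ small enough) that $\mu-\cos\theta\geq c_1\delta^2$ on $S^2\setminus\Omega_\delta$, where $\Omega_\delta:=S^2(0,c_0\delta)$; on $\Omega_\delta$ one has $|\mu-\cos\theta|\leq C\delta^2$, $\sin^2\theta\leq C\delta^2$, and $\mathcal{H}^2(\Omega_\delta)\leq C\delta^2$. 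Pairing the area with the $L^\infty$ bound of Lemma~\ref{L:Linf}, and using $\|(-\Delta)^{-1/2}u\|_{L^2(S^2)}\leq C\|u\|_{L^2(S^2)}\leq C\|\nabla u\|_{L^2(S^2)}$ on $\mathcal{Y}_m$ to handle the $\Delta^{-1}u$ part of $B_{2,m}u$, the crucial cap estimate is
\begin{equation*}
  \int_{\Omega_\delta}\bigl(|u|^2+|B_{2,m}u|^2\bigr)\,d\mathcal{H}^2 \leq \frac{C\delta^2}{|m|}\|(-\Delta)^{1/2}u\|_{L^2(S^2)}^2.
\end{equation*}

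For \eqref{E:Ko_Midu}, I test $f:=\mathbb{Q}_m(\mu-\Lambda_m)u$ against $B_{2,m}u\in\mathcal{Y}_m$ exactly as in \eqref{Pf_KH:Equ}. Splitting the coercive integral over $\Omega_\delta$ and its complement, applying the cap bound, using \eqref{Pf_KH:nabla} to produce the term $3\|(-\Delta)^{-1/2}u\|_{L^2(S^2)}^2$, and absorbing the cross term via a $\delta$-weighted Young inequality $\|f\|\|B_{2,m}u\|\leq(c_1/2)\delta^2\|B_{2,m}u\|^2+C\delta^{-2}\|f\|^2$, I obtain \emph{simultaneously}
\begin{align*}
  \|B_{2,m}u\|_{L^2(S^2)}^2 &\leq C\delta^{-4}\|f\|_{L^2(S^2)}^2+\frac{C\delta^2}{|m|}\|(-\Delta)^{1/2}u\|_{L^2(S^2)}^2, \\
  \|(-\Delta)^{-1/2}u\|_{L^2(S^2)}^2 &\leq C\delta^{-2}\|f\|_{L^2(S^2)}^2+\frac{C\delta^4}{|m|}\|(-\Delta)^{1/2}u\|_{L^2(S^2)}^2.
\end{align*}
The first, combined with \eqref{E:B2m_Ym}, yields \eqref{E:Ko_Midu}; the second, with its sharper $\delta^{-2}$ coefficient on $\|f\|^2$, is indispensable for the second bound.

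For \eqref{E:Ko_MidS} I test against $u$ as in \eqref{Pf_KH:sinu} and use the pointwise inequality $\sin^2\theta\leq 2(1-\cos\theta)\leq 2\kappa\delta^2+2|\mu-\cos\theta|$, so that $\|M_{\sin\theta}u\|_{L^2(S^2)}^2$ is controlled by $\delta^2\|u\|_{L^2(S^2)}^2$ (absorbed via \eqref{E:Ko_Midu}) plus $2\int_{S^2}|\mu-\cos\theta||u|^2\,d\mathcal{H}^2$. The integrand $|\mu-\cos\theta|$ differs from $\mu-\cos\theta$ only on the cap $\{\cos\theta>\mu\}\subset\Omega_\delta$, where $\cos\theta-\mu\leq\kappa\delta^2$; the cap bound therefore contributes only $C\delta^4|m|^{-1}\|\nabla u\|_{L^2(S^2)}^2$. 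What remains is $\int(\mu-\cos\theta)|u|^2=\mathrm{Re}(f,u)_{L^2(S^2)}+6\mathrm{Re}(\cos\theta\,v,u)_{L^2(S^2)}$ with $v=\Delta^{-1}u$; the first summand is handled by Young's inequality and \eqref{E:Ko_Midu}, while $|(\cos\theta\,v,u)_{L^2(S^2)}|\leq C\|(-\Delta)^{-1/2}u\|_{L^2(S^2)}^2$ by the integration-by-parts computation leading to \eqref{Pf_KH:x3v}, and here I apply the companion bound above. The main obstacle is precisely this last step: the naive estimate $|(\cos\theta\,v,u)_{L^2(S^2)}|\leq C\|u\|_{L^2(S^2)}^2$ combined with \eqref{E:Ko_Midu} would produce a $C\delta^{-4}\|f\|^2$ contribution to \eqref{E:Ko_MidS}, one factor of $\delta^2$ worse than claimed. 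This forces me to retain \emph{both} coercive quantities $\delta^2\|B_{2,m}u\|^2$ and $\|(-\Delta)^{-1/2}u\|^2$ from the single $B_{2,m}u$-testing identity, and to be careful with the Young weight so that the $\|(-\Delta)^{-1/2}u\|^2$ bound carries the sharper $\delta^{-2}$ exponent.
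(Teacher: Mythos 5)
Your proof is correct and follows essentially the same route as the paper's: both test against $B_{2,m}u$ via the identity \eqref{Pf_KH:Equ}, obtain the coercive term $\delta^2\|B_{2,m}u\|_{L^2(S^2)}^2$ away from a polar cap of width $O(\delta)$ whose contribution is controlled by Lemma \ref{L:Linf} and \eqref{E:B2m_Xmk}, retain the companion bound on $\|(-\Delta)^{-1/2}u\|_{L^2(S^2)}^2$ with the sharper $\delta^{-2}$ weight (this is exactly \eqref{Pf_KM:w_fin}), and then deduce \eqref{E:Ko_MidS} from \eqref{Pf_KH:sinu} and \eqref{Pf_KH:x3v}. The only cosmetic difference is that the paper keeps the everywhere-nonnegative weight $1-x_3$ and absorbs $(1-\mu)\|B_{2,m}u\|_{L^2(S^2)}^2$ by choosing $\kappa$ small, whereas you work directly with $\mu-x_3$ and tune the cap radius $c_0\delta$ against $\kappa$; your closing observation about why the $\|(-\Delta)^{-1/2}u\|_{L^2(S^2)}^2$ bound must carry $\delta^{-2}$ rather than $\delta^{-4}$ is precisely the point of keeping both quantities in \eqref{Pf_KM:w_fin}.
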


\begin{proof}
  Let $\kappa\in(0,1/2)$ be a constant which will be determined later, and let $\delta\in(0,1]$ and $\mu\in\mathbb{R}$ satisfy \eqref{E:Ko_MidMu}. We may assume $\mu>0$ since the case $\mu<0$ is similarly handled. Let $v=\Delta^{-1}u$ and $f=\mathbb{Q}_m(\mu-\Lambda_m)u$. Since $u\in\mathcal{Y}_m$, we can use \eqref{Pf_KH:Equ} in the proof of Lemma \ref{L:Ko_High} to get
  \begin{multline*}
    \int_{S^2}(1-x_3)|B_{2,m}u|^2\,d\mathcal{H}^2+6\mu\Bigl(\|\nabla v\|_{L^2(S^2)}^2-6\|v\|_{L^2(S^2)}^2\Bigr) \\
    = (1-\mu)\|B_{2,m}u\|_{L^2(S^2)}^2+\mathrm{Re}(f,B_{2,m}u)_{L^2(S^2)}.
  \end{multline*}
  Then, by $\mu\geq1-\kappa\delta^2\geq1/2$, $1-\mu\leq\kappa\delta^2$, \eqref{Pf_KH:nabla}, and Young's inequality,
  \begin{multline} \label{Pf_KM:x3w}
    \int_{S^2}(1-x_3)|B_{2,m}u|^2\,d\mathcal{H}^2+\|(-\Delta)^{-1/2}u\|_{L^2(S^2)}^2 \\
    \leq C\Bigl(\kappa\delta^2\|B_{2,m}u\|_{L^2(S^2)}^2+\delta^{-2}\|f\|_{L^2(S^2)}^2\Bigr).
  \end{multline}
  Since $B_{2,m}u\in\mathcal{X}_m$, we can write $B_{2,m}u=U_{2,m}(\theta)e^{im\varphi}$. Then
  \begin{align*}
    \int_{S^2}(1-x_3)|B_{2,m}u|^2\,d\mathcal{H}^2 &= 2\pi\int_0^\pi(1-\cos\theta)|U_{2,m}(\theta)|^2\sin\theta\,d\theta \\
    &\geq 2\pi\int_\delta^\pi(1-\cos\theta)|U_{2,m}(\theta)|^2\sin\theta\,d\theta
  \end{align*}
  by \eqref{E:L2_Mode}. Moreover, since $1-\cos\theta\geq1-\cos\delta$ for $\theta\in(\delta,\pi)$ and
  \begin{align*}
    \int_\delta^\pi|U_{2,m}(\theta)|^2\sin\theta\,d\theta &= \int_0^\pi|U_{2,m}(\theta)|^2\sin\theta\,d\theta-\int_0^\delta|U_{2,m}(\theta)|^2\sin\theta\,d\theta \\
    &\geq \frac{1}{2\pi}\|B_{2,m}u\|_{L^2(S^2)}^2-\frac{1}{\pi|m|}(1-\cos\delta)\|(-\Delta)^{1/2}B_{2,m}u\|_{L^2(S^2)}^2
  \end{align*}
  by \eqref{E:L2_Mode}, \eqref{E:Linf}, and $\int_0^\delta\sin\theta\,d\theta=1-\cos\delta$, it follows that
  \begin{multline*}
    \int_{S^2}(1-x_3)|B_{2,m}u|^2\,d\mathcal{H}^2 \\
    \geq (1-\cos\delta)\|B_{2,m}u\|_{L^2(S^2)}^2-\frac{2}{|m|}(1-\cos\delta)^2\|(-\Delta)^{1/2}B_{2,m}u\|_{L^2(S^2)}^2.
  \end{multline*}
  We further observe that $\delta^2/4\leq1-\cos\delta\leq\delta^2/2$ by Taylor's theorem for $\cos\theta$ at $\theta=0$ and $\delta\in(0,1]\subset(0,\pi/3)$. Therefore,
  \begin{align*}
    \int_{S^2}(1-x_3)|B_{2,m}u|^2\,d\mathcal{H}^2 \geq \frac{1}{4}\left(\delta^2\|B_{2,m}u\|_{L^2(S^2)}^2-\frac{2\delta^4}{|m|}\|(-\Delta)^{1/2}B_{2,m}u\|_{L^2(S^2)}^2\right).
  \end{align*}
  We apply this inequality to \eqref{Pf_KM:x3w} and use \eqref{E:B2m_Xmk} with $k=1$ to find that
  \begin{multline} \label{Pf_KM:delw}
    \frac{1}{4}\delta^2\|B_{2,m}u\|_{L^2(S^2)}^2+\|(-\Delta)^{-1/2}u\|_{L^2(S^2)}^2 \\
    \leq C_1\kappa\delta^2\|B_{2,m}u\|_{L^2(S^2)}^2+C_2\left(\delta^{-2}\|f\|_{L^2(S^2)}^2+\frac{\delta^4}{|m|}\|(-\Delta)^{1/2}u\|_{L^2(S^2)}^2\right)
  \end{multline}
  with some constants $C_1,C_2>0$ independent of $m$, $u$, $\delta$, $\mu$, and $\kappa$. Now we define
  \begin{align} \label{Pf_KM:kappa}
    \kappa = \frac{1}{8}\min\left\{\frac{1}{C_1},1\right\} \in \left(0,\frac{1}{2}\right).
  \end{align}
  Then since $1/4-C_1\kappa\geq1/8$, it follows from \eqref{Pf_KM:delw} that
  \begin{multline} \label{Pf_KM:w_fin}
    \delta^2\|B_{2,m}u\|_{L^2(S^2)}^2+\|(-\Delta)^{-1/2}u\|_{L^2(S^2)}^2 \\
    \leq C\left(\delta^{-2}\|f\|_{L^2(S^2)}^2+\frac{\delta^4}{|m|}\|(-\Delta)^{1/2}u\|_{L^2(S^2)}^2\right)
  \end{multline}
  and we get \eqref{E:Ko_Midu} by \eqref{E:B2m_Ym} and \eqref{Pf_KM:w_fin}. Also, since
  \begin{align*}
    \int_{S^2}(1-x_3)|u|^2\,d\mathcal{H}^2 &= (1-\mu)\|u\|_{L^2(S^2)}^2+\int_{S^2}(\mu-x_3)|u|^2\,d\mathcal{H}^2 \\
    &\leq \kappa\delta^2\|u\|_{L^2(S^2)}^2+C\Bigl(\|(-\Delta)^{-1/2}u\|_{L^2(S^2)}^2+\|f\|_{L^2(S^2)}\|u\|_{L^2(S^2)}\Bigr)
  \end{align*}
  by $1-\mu\leq\kappa\delta^2$, \eqref{Pf_KH:sinu}, and \eqref{Pf_KH:x3v}, and since $\sin^2\theta=1-x_3^2\leq2(1-x_3)$ on $S^2$,
  \begin{align*}
    \|M_{\sin\theta}u\|_{L^2(S^2)}^2 &\leq 2\int_{S^2}(1-x_3)|u|^2\,d\mathcal{H}^2 \\
    &\leq C\Bigl(\delta^2\|u\|_{L^2(S^2)}^2+\|(-\Delta)^{-1/2}u\|_{L^2(S^2)}^2+\|f\|_{L^2(S^2)}\|u\|_{L^2(S^2)}\Bigr).
  \end{align*}
  Thus we apply \eqref{E:Ko_Midu} and \eqref{Pf_KM:w_fin} to the last line to get \eqref{E:Ko_MidS}.
\end{proof}

\begin{lemma} \label{L:Ko_Low}
  Let $\kappa$ be the constant given by \eqref{Pf_KM:kappa}. There exists a constant $C>0$ such that
  \begin{multline} \label{E:Ko_Lowu}
    \delta^2\|u\|_{L^2(S^2)}^2+\|(-\Delta)^{1/2}v\|_{L^2(S^2)}^2+\frac{1}{1-|\mu|}\|w\|_{L^2(S^2)}^2 \\
    \leq C\left(\delta^{-2}\|\mathbb{Q}_m(\mu-\Lambda_m)u\|_{L^2(S^2)}^2+\frac{\delta^6}{1-|\mu|}\|(-\Delta)^{1/2}u\|_{L^2(S^2)}^2\right)
  \end{multline}
  and
  \begin{align} \label{E:Ko_LowS}
    \|M_{\sin\theta}u\|_{L^2(S^2)}^2 \leq C\Bigl(\delta^{-2}\|\mathbb{Q}_m(\mu-\Lambda_m)u\|_{L^2(S^2)}^2+\delta^2(1-|\mu|)\|(-\Delta)^{1/2}u\|_{L^2(S^2)}^2\Bigr)
  \end{align}
  for all $m\in\mathbb{Z}\setminus\{0\}$, $u\in\mathcal{Y}_m\cap H^1(S^2)$, $\delta\in(0,1]$, and $\mu\in\mathbb{R}$ satisfying $|\mu|\leq 1-\kappa\delta^2$.
  Here
  \begin{align} \label{E:KL_vw}
    \begin{aligned}
      v &= \Delta^{-1}u \in \mathcal{Y}_m\cap H^3(S^2), \\
      w &= 6\mu v-\frac{a_3^m}{2}(u,Y_3^m)_{L^2(S^2)}Y_2^m \in \mathcal{X}_m\cap H^3(S^2),
    \end{aligned}
  \end{align}
  where $a_3^m$ is given by \eqref{E:Y_Rec} if $|m|=1,2$ and we consider $a_3^m=0$ if $|m|\geq3$ and $Y_n^m\equiv0$ if $|m|>n$.
\end{lemma}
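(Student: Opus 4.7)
The approach is a contradiction argument combined with the ODE satisfied by the radial profile of $u$. Write $u = U(\theta)e^{im\varphi}$, $v = V(\theta)e^{im\varphi}$, $w = W(\theta)e^{im\varphi}$, $f := \mathbb{Q}_m(\mu-\Lambda_m)u = F(\theta)e^{im\varphi}$, and $\theta_\mu := \arccos\mu \in (0,\pi)$. Using \eqref{E:Ko_Qm} and the definition of $\Lambda_m$, the identity $f = \mathbb{Q}_m(\mu-\Lambda_m)u$ reads pointwise
\begin{equation*}
(\mu-\cos\theta)U(\theta) - 6\cos\theta\,V(\theta) = F(\theta) + c\,P_2^{|m|}(\cos\theta),
\end{equation*}
where $c\in\mathbb{C}$ vanishes when $|m|\geq 3$ and otherwise records the $Y_2^m$-component of $(I-\mathbb{Q}_m)(\mu-\Lambda_m)u$. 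The reason for subtracting $\tfrac{a_3^m}{2}(u,Y_3^m)_{L^2(S^2)}Y_2^m$ from $6\mu v$ in the definition of $w$ is, through the recurrence \eqref{E:Y_Rec} for $\cos\theta\,Y_n^m$, to absorb the $Y_2^m$-component generated by $\cos\theta\,V$ at the level of the lowest harmonics, so that the derived equation for $W$ decouples from $\ker\Lambda_m$ and from the scalar $c$; this is what makes $\|w\|^2/(1-|\mu|)$ the right quantity to control as $\mu\to 0$.

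Suppose \eqref{E:Ko_Lowu} fails. Extract a sequence $(m_k,\mu_k,\delta_k,u_k)$ with $|\mu_k|\leq 1-\kappa\delta_k^2$, normalised so that the left-hand side of \eqref{E:Ko_Lowu} equals $1$ while the right-hand side tends to $0$. Rellich compactness (using \eqref{E:Ko_A_Apr} when $|m_k|$ stabilises, and the $|m|^{-1}$ gain in \eqref{E:Linf} when $|m_k|\to\infty$) yields subsequential limits $(\mu_\infty,u_\infty,v_\infty,w_\infty)$. Three regimes arise. \emph{Case A, $\mu_\infty\in(-1,1)\setminus\{0\}$:} passing to the limit in the ODE gives $(\mu_\infty-\Lambda_m)u_\infty = 0$ on $\mathcal{Y}_m$, which forces $u_\infty \equiv 0$ by Lemma~\ref{L:NoEi_Lam}; the Hardy bound \eqref{E:Hardy} near $\theta_{\mu_\infty}$ together with the ODE then forces $v_\infty = w_\infty = 0$, contradicting the normalisation. \emph{Case B, $\mu_\infty = 0$:} the construction of $w$ aligns $\|w\|$ with the projection of $u$ onto $\mathcal{Y}_m$, and Lemma~\ref{L:Ko_NLam} closes the argument. \emph{Case C, $|\mu_\infty| = 1$:} the pinching $|\mu_k|\leq 1-\kappa\delta_k^2$ forces $\delta_k\to 0$ and concentration of $U_k$ near a pole.

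Case C is the principal obstacle. For it, I would multiply the ODE for $W$ by $\cos\theta\,\overline{W}\sin\theta$ and integrate over $(\theta_\mu,\pi)$ to obtain the identity
\begin{equation*}
J+K \geq C\int_{\theta_\mu}^\pi|W(\theta)|^2\sin\theta\,d\theta,
\end{equation*}
with
\begin{align*}
J &= \int_{\theta_\mu}^\pi(\mu-\cos\theta)\!\left(|W'(\theta)|^2 + \tfrac{m^2}{\sin^2\theta}|W(\theta)|^2\right)\sin\theta\,d\theta, \\
K &= 5\int_{\theta_\mu}^\pi|W(\theta)|^2\cos\theta\sin\theta\,d\theta,
\end{align*}
and then prove the lower bound on $J+K$ by splitting $K = K_1+K_2$ at an intermediate $\Theta\in(\theta_\mu,\pi/2)$: use $\cos\theta\geq\cos\Theta$ on $(\theta_\mu,\Theta)$ to bound $K_1$, integrate $K_2$ by parts to turn it into an integral involving the positive quantity $|W'|^2+\tfrac{m^2}{\sin^2\theta}|W|^2 = |\nabla w|^2$ (cf.~\eqref{E:L2_Mode}), and close with Young's inequality. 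The symmetric argument on $(0,\theta_\mu)$ handles $\mu$ close to $-1$. The coefficient $5$ in $K$, being larger than the plane-case analogue $1/2$, is precisely what forces the use of the full first-order piece of $J$ (not just its zeroth-order part) when $|m|=1$; this is the technical heart of the proof.

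The estimate \eqref{E:Ko_LowS} then follows from \eqref{E:Ko_Lowu} along the lines of Lemma~\ref{L:Ko_High}. Take the real part of
\begin{equation*}
(f,u)_{L^2(S^2)} = \bigl((\mu-\cos\theta)u - 6\cos\theta\,v,\,u\bigr)_{L^2(S^2)},
\end{equation*}
control the cross-term $(\cos\theta\,v,u) = (\cos\theta\,v,\Delta v)$ by integration by parts as in \eqref{Pf_KH:x3v}, and use
\begin{equation*}
\sin^2\theta = (1-\cos\theta)(1+\cos\theta) \leq 2\bigl(1-|\mu| + |\mu-\cos\theta|\bigr)
\end{equation*}
to bound $\|M_{\sin\theta}u\|^2$ in terms of $L^2$-quantities already controlled by \eqref{E:Ko_Lowu}; the prefactor $(1-|\mu|)$ multiplying $\|(-\Delta)^{1/2}u\|^2$ in \eqref{E:Ko_LowS} arises from pairing the $(1-|\mu|)\|u\|^2$ contribution with the $\delta^2\|u\|^2$-term in \eqref{E:Ko_Lowu}.
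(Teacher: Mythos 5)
Your outline of \eqref{E:Ko_Lowu} follows the paper's strategy (normalized contradiction sequence, compactness, Lemma~\ref{L:NoEi_Lam} for interior limits $\mu_\infty$, and the $J+K$ coercivity near $|\mu|=1$), but it omits the hardest step of the case $|\mu_\infty|=1$. The $J+K$ estimate (the paper's Lemma~\ref{L:KL_wLS}) controls only $\frac{1}{1-\mu}\|w\|_{L^2(S^2(\theta_\mu,\pi))}^2$, i.e.\ the mass of $w$ on the side \emph{away} from the pole where the concentration occurs; it says nothing about $\|w\|_{L^2(S^2(0,\theta_\mu))}^2$ or about $\|(-\Delta)^{1/2}v\|^2$. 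To finish, the paper needs the identity \eqref{E:KL_vLaph} reducing $\|(-\Delta)^{1/2}v\|^2$ to $\frac{1}{\sigma^2(1-\mu)}\|w\|_{L^2(S^2(0,\theta_\mu))}^2$, and then — when that last quantity stays bounded below — a genuine blow-up argument: rescale $\theta=\gamma_k\tilde\theta$ with $\gamma_k\sim(1-\mu_k)^{1/2}$, pass to a limiting second-order ODE for $\widetilde W_\infty$ on $(0,\pi/2)$ with a first-order singularity at $\tilde\theta_\infty=\pi/4$, show $\widetilde W_\infty$ vanishes to first order there, and invoke ODE uniqueness to get $\widetilde W_\infty\equiv 0$. ``Concentration of $U_k$ near a pole'' is the beginning, not the end, of that case; as written there is no contradiction extracted from it.

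The proposed proof of \eqref{E:Ko_LowS} does not work. The quadratic-form identity of Lemma~\ref{L:Ko_High} controls the \emph{signed} integral $\int_{S^2}(\mu-x_3)|u|^2\,d\mathcal{H}^2$, and in the present regime $|\mu|\leq 1-\kappa\delta^2$ the weight $\mu-\cos\theta$ changes sign on $S^2$, so the inequality $\sin^2\theta\leq 2(1-|\mu|+|\mu-\cos\theta|)$ leads to $\int|\mu-\cos\theta||u|^2$, which the identity cannot bound (in Lemmas~\ref{L:Ko_High} and~\ref{L:Ko_Middle} this issue is absent because the weight $1-x_3$, respectively $\mu-x_3$ with $|\mu|>1$, is one-signed). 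Moreover, even the remaining term is quantitatively off: feeding $\|u\|^2\leq C\delta^{-2}(\delta^{-2}\|f\|^2+\cdots)$ from \eqref{E:Ko_Lowu} into $(1-|\mu|)\|u\|^2$ produces $(1-|\mu|)\delta^{-4}\|f\|^2$, which for $1-|\mu|$ of order one exceeds the claimed $\delta^{-2}\|f\|^2$ (and this loss would break the verification of \eqref{E:B3u_h} downstream). The paper instead proves \eqref{E:Ko_LowS} pointwise from $U=-6V+(W+\delta F)/(\mu-\cos\theta)$ (cf.\ \eqref{E:KL_B2u_Eq}), splitting the $\theta$-integral at $|\theta-\theta_\mu|=\varepsilon$ with $\varepsilon\sim\delta$, using the interpolation bound \eqref{E:IF_sinU} on the inner region (this is where the factor $\|M_{\sin\theta}u\|$ on the right is absorbed) and Hardy's inequality \eqref{E:Hardy} together with $W(\theta_\mu)+\delta F(\theta_\mu)=0$ on the outer region. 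You would need to rebuild your argument for \eqref{E:Ko_LowS} along these lines.
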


The proof of \eqref{E:Ko_Lowu} relies on a contradiction argument. In order to focus on the part of getting a contradiction in that proof, we derive auxiliary estimates in the following lemmas. We assume $\mu\geq0$ in the sequel, since the case $\mu\leq0$ can be handled similarly.

In Lemmas \ref{L:KL_eps}--\ref{L:KL_wLS} below, let $\kappa$ be the constant given by \eqref{Pf_KM:kappa} and $\delta\in(0,1]$, $\mu\in[0,1-\kappa\delta^2]$, and $\theta_\mu=\arccos\mu\in(0,\pi/2]$. Note that
\begin{align} \label{E:KL_sinmu}
  (1-\mu)^{1/2} \leq \sin\theta_\mu \leq 1, \quad \kappa\delta^2 \leq \sin^2\theta_\mu \leq 2(1-\mu)
\end{align}
by $\sin^2\theta_\mu=1-\mu^2=(1-\mu)(1+\mu)$ and $\kappa\delta^2\leq1-\mu$. For $\varepsilon>0$ let
\begin{align} \label{E:KL_Sme}
  S_{\mu,\varepsilon}^2 = \{(x_1,x_2,x_3)\in S^2 \mid x_3=\cos\theta, \, \theta\in(\theta_\mu-\varepsilon,\theta_\mu+\varepsilon)\},
\end{align}
i.e. $S_{\mu,\varepsilon}^2=S^2(\theta_\mu-\varepsilon,\theta_\mu+\varepsilon)$ in \eqref{E:Def_Band}. Also, let
\begin{align} \label{E:KL_fR}
  \begin{gathered}
    f = \delta^{-1}\mathbb{Q}_m(\mu-\Lambda_m)u \in \mathcal{Y}_m\cap H^1(S^2), \\
    R(u,\delta,\mu) = \|f\|_{L^2(S^2)}+\frac{\delta^3}{(1-\mu)^{1/2}}\|(-\Delta)^{1/2}u\|_{L^2(S^2)}
  \end{gathered}
\end{align}
and $v$ and $w$ be given by \eqref{E:KL_vw} for $u\in\mathcal{Y}_m\cap H^1(S^2)$ with $m\in\mathbb{Z}\setminus\{0\}$. Hereafter we write $C$ for a general positive constant independent of $\delta$, $\mu$, $m$, and $u$.

\begin{lemma} \label{L:KL_eps}
  Let $\varepsilon$ be a positive constant satisfying
  \begin{align} \label{E:KL_eps}
    0 < \varepsilon \leq \frac{1}{2}\sin\theta_\mu < \frac{1}{2}\theta_\mu.
  \end{align}
  Also, let $\theta\in(\theta_\mu-\varepsilon,\theta_\mu+\varepsilon)$. Then
  \begin{gather}
    \frac{1}{2}\sin\theta_\mu \leq \sin\theta \leq \frac{3}{2}\sin\theta_\mu, \quad |\mu-\cos\theta| \leq \frac{5}{4}\varepsilon\sin\theta_\mu, \label{E:KL_theta} \\
    \frac{3}{4}\varepsilon\sin\theta_\mu \leq |\mu-\cos(\theta_\mu\pm\varepsilon)| \leq \frac{5}{4}\varepsilon\sin\theta_\mu. \label{E:KL_cosep}
  \end{gather}
  Note that $\cos(\theta_\mu+\varepsilon)<\mu<\cos(\theta_\mu-\varepsilon)$. Also,
  \begin{align} \label{E:KL_mucos}
    |\mu-\cos\theta| \geq \frac{1}{2}|\theta-\theta_\mu|(\sin\theta+\sin\theta_\mu), \quad \theta\in[0,\pi].
  \end{align}
\end{lemma}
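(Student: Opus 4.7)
The claim is an elementary calculus lemma, and my plan is to split the four assertions into three groups and prove each by a short Taylor expansion or product-to-sum identity. First, note that the hypotheses $\theta_\mu\in(0,\pi/2]$ (established just before the statement) together with $0<\varepsilon\leq\frac{1}{2}\sin\theta_\mu<\frac{1}{2}\theta_\mu$ place the interval $(\theta_\mu-\varepsilon,\theta_\mu+\varepsilon)$ strictly inside $(0,\pi)$: one has $\theta_\mu-\varepsilon>\theta_\mu/2>0$ and $\theta_\mu+\varepsilon\leq\pi/2+1/2<\pi$. Consequently $\cos$ is strictly decreasing there, which immediately gives the claimed signs $\cos(\theta_\mu+\varepsilon)<\mu<\cos(\theta_\mu-\varepsilon)$.

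For the two-sided bound on $\sin\theta$ in \eqref{E:KL_theta}, I would just use the Lipschitz estimate $|\sin\theta-\sin\theta_\mu|\leq|\theta-\theta_\mu|\leq\varepsilon\leq\frac{1}{2}\sin\theta_\mu$, which yields $\frac{1}{2}\sin\theta_\mu\leq\sin\theta\leq\frac{3}{2}\sin\theta_\mu$. For the upper bound on $|\mu-\cos\theta|$ in \eqref{E:KL_theta} and both bounds in \eqref{E:KL_cosep}, Taylor's theorem with Lagrange remainder applied to $\cos$ at $\theta=\theta_\mu$ gives
\[
\mu-\cos\theta \;=\; \sin\theta_\mu\,(\theta-\theta_\mu)\;+\;\tfrac{1}{2}\cos\xi\,(\theta-\theta_\mu)^2
\]
for some $\xi$ between $\theta$ and $\theta_\mu$, and $|\cos\xi|\leq 1$. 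Using $|\theta-\theta_\mu|\leq\varepsilon$ and the key arithmetic $\varepsilon^2/2\leq\varepsilon\sin\theta_\mu/4$ (which is where $\varepsilon\leq\frac{1}{2}\sin\theta_\mu$ enters), this bounds $|\mu-\cos\theta|$ by $\varepsilon\sin\theta_\mu+\varepsilon^2/2\leq\frac{5}{4}\varepsilon\sin\theta_\mu$. Specializing to $\theta=\theta_\mu\pm\varepsilon$ the same identity becomes $\mu-\cos(\theta_\mu\pm\varepsilon)=\pm\varepsilon\sin\theta_\mu+\frac{1}{2}\cos\xi_\pm\,\varepsilon^2$, and the same bookkeeping delivers both the upper bound $\frac{5}{4}\varepsilon\sin\theta_\mu$ and the lower bound $\varepsilon\sin\theta_\mu-\varepsilon^2/2\geq\frac{3}{4}\varepsilon\sin\theta_\mu$.

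For the global lower bound \eqref{E:KL_mucos}, which must hold for all $\theta\in[0,\pi]$, I would use the product-to-sum identities
\[
\mu-\cos\theta = 2\sin\tfrac{\theta+\theta_\mu}{2}\sin\tfrac{\theta-\theta_\mu}{2}, \qquad \sin\theta+\sin\theta_\mu = 2\sin\tfrac{\theta+\theta_\mu}{2}\cos\tfrac{\theta-\theta_\mu}{2}.
\]
Because $\theta_\mu\in(0,\pi/2]$ and $\theta\in[0,\pi]$, we have $\sin\frac{\theta+\theta_\mu}{2}\geq 0$ and $|\theta-\theta_\mu|/2<\pi/2$ (the degenerate case $\{\theta,\theta_\mu\}=\{0,\pi\}$ is excluded by $\theta_\mu>0$). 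After dividing by the nonnegative factor $\sin\frac{\theta+\theta_\mu}{2}$ (the trivial case $\theta=\theta_\mu=0$ being direct), the claim reduces to $|\tan x|\geq|x|$ with $x=\frac{\theta-\theta_\mu}{2}\in(-\pi/2,\pi/2)$, which is standard.

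There is no genuine obstacle; the only care point is arithmetic bookkeeping. The sharp constants $\frac{5}{4}$ and $\frac{3}{4}$ come out precisely because the hypothesis $\varepsilon\leq\frac{1}{2}\sin\theta_\mu$ absorbs the quadratic Taylor remainder $\varepsilon^2/2$ into $\varepsilon\sin\theta_\mu/4$, while the companion hypothesis $\varepsilon<\frac{1}{2}\theta_\mu$ serves only to guarantee $\theta_\mu-\varepsilon>0$ so that the signs in \eqref{E:KL_cosep} are unambiguous.
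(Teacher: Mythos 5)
Your proof is correct, and for \eqref{E:KL_theta} and \eqref{E:KL_cosep} it is essentially identical to the paper's: Taylor's theorem for $\cos$ at $\theta_\mu$, with the hypothesis $\varepsilon\leq\frac{1}{2}\sin\theta_\mu$ absorbing the quadratic remainder $\tfrac{1}{2}\varepsilon^2\leq\tfrac{1}{4}\varepsilon\sin\theta_\mu$ to produce the constants $\tfrac{5}{4}$ and $\tfrac{3}{4}$. The only genuine divergence is in \eqref{E:KL_mucos}. The paper writes $|\mu-\cos\theta|=\int_{\theta_{\min}}^{\theta_{\max}}\sin\vartheta\,d\vartheta$ and uses concavity of $\sin$ on $[0,\pi]$ to bound the average of $\sin$ over the segment from below by the average of its endpoint values, giving $\tfrac{1}{2}|\theta-\theta_\mu|(\sin\theta+\sin\theta_\mu)$ directly. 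You instead use the product-to-sum identities to reduce the claim to $|\tan x|\geq|x|$ for $x=\tfrac{\theta-\theta_\mu}{2}\in(-\pi/2,\pi/2)$, after checking that the common factor $\sin\tfrac{\theta+\theta_\mu}{2}$ is positive (which it is, since $0<\theta+\theta_\mu\leq\tfrac{3\pi}{2}$). Both arguments are two lines long and equally elementary; the paper's has the small advantage of working verbatim on any interval where the relevant function is concave, while yours makes the exact algebraic structure of the inequality transparent and explains why the constant $\tfrac{1}{2}$ is the natural one.
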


\begin{proof}
  For $\theta\in(\theta_\mu-\varepsilon,\theta_\mu+\varepsilon)$ we have \eqref{E:KL_theta} since
  \begin{align*}
    |\sin\theta-\sin\theta_\mu| \leq \varepsilon \leq \frac{1}{2}\sin\theta_\mu, \quad |\mu-\cos\theta| \leq \varepsilon\sin\theta+\frac{1}{2}\varepsilon^2 \leq \frac{5}{4}\varepsilon\sin\theta_\mu
  \end{align*}
  by Taylor's theorem for $\sin\theta$ and $\cos\theta$ at $\theta=\theta_\mu$ and \eqref{E:KL_eps}. Also,
  \begin{align*}
    |\cos(\theta_\mu\pm\varepsilon)-(\mu\mp\varepsilon\sin\theta_\mu)| \leq \frac{1}{2}\varepsilon^2 \leq \frac{1}{4}\varepsilon\sin\theta_\mu
  \end{align*}
  by Taylors' theorem and \eqref{E:KL_eps}. Hence \eqref{E:KL_cosep} follows. For $\theta\in[0,\pi]$ since
  \begin{align*}
    |\mu-\cos\theta| = \int_{\theta_{\min}}^{\theta_{\max}}\sin\vartheta\,d\vartheta = (\theta_{\max}-\theta_{\min})\int_0^1\sin\bigl((1-t)\theta_{\min}+t\theta_{\max}\bigr)\,dt,
  \end{align*}
  where $\theta_{\min}=\min\{\theta_\mu,\theta\}$ and $\theta_{\max}=\max\{\theta_\mu,\theta\}$, and since $\sin\vartheta$ is concave for $\vartheta\in[0,\pi]$, we easily find that \eqref{E:KL_mucos} holds.
\end{proof}

\begin{lemma} \label{L:KL_w_Eq}
  We have
  \begin{alignat}{2}
    (\mu-x_3)B_{2,m}u &= w+\delta f &\quad\text{on}\quad &S^2, \label{E:KL_B2u_Eq} \\
    (\mu-x_3)(\Delta w+6w) &= 6\mu(w+\delta f) &\quad\text{on}\quad &S^2. \label{E:KL_w_Eq}
  \end{alignat}
\end{lemma}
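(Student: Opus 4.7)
The plan is to prove the two identities by direct algebraic manipulation, using the definitions of $\Lambda_m$, $B_{2,m}$, $v$, $w$, and the explicit form of $(I-\mathbb{Q}_m)$ from \eqref{E:Ko_Qm} together with the recurrence \eqref{E:Y_Rec}.

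For \eqref{E:KL_B2u_Eq}, I would start by writing
\begin{align*}
  (\mu - x_3)B_{2,m}u \;=\; \mu B_{2,m}u - x_3 B_{2,m}u \;=\; \mu(u+6v) - \Lambda_m u \;=\; (\mu-\Lambda_m)u + 6\mu v,
\end{align*}
using $v=\Delta^{-1}u$ and $\Lambda_m u = x_3 B_{2,m}u$. Decomposing
\begin{align*}
  (\mu-\Lambda_m)u \;=\; \mathbb{Q}_m(\mu-\Lambda_m)u + (I-\mathbb{Q}_m)(\mu-\Lambda_m)u \;=\; \delta f + (I-\mathbb{Q}_m)(\mu-\Lambda_m)u,
\end{align*}
it remains to identify the last term with $-\frac{a_3^m}{2}(u,Y_3^m)_{L^2(S^2)}Y_2^m$. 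For $|m|\geq 3$ this is automatic since $\mathbb{Q}_m=I$ on $\mathcal{X}_m$ and we use the convention $a_3^m=0$. For $|m|=1,2$, \eqref{E:Ko_Qm} gives
\begin{align*}
  (I-\mathbb{Q}_m)(\mu-\Lambda_m)u \;=\; \bigl((\mu-\Lambda_m)u,Y_2^m\bigr)_{L^2(S^2)}Y_2^m \;=\; -(\Lambda_m u,Y_2^m)_{L^2(S^2)}Y_2^m,
\end{align*}
because $(u,Y_2^m)_{L^2(S^2)}=0$ for $u\in\mathcal{Y}_m$. Now I compute
\begin{align*}
  (\Lambda_m u,Y_2^m)_{L^2(S^2)} \;=\; (B_{2,m}u,\cos\theta\,Y_2^m)_{L^2(S^2)} \;=\; a_2^m(B_{2,m}u,Y_1^m)_{L^2(S^2)} + a_3^m(B_{2,m}u,Y_3^m)_{L^2(S^2)}
\end{align*}
by self-adjointness of $M_{\cos\theta}$ and the recurrence \eqref{E:Y_Rec}. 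Since $u\in\mathcal{Y}_m$ has its expansion \eqref{E:Ko_Ym} starting at $N'_m=\max\{3,|m|\}$, $(B_{2,m}u,Y_1^m)=0$ and $(B_{2,m}u,Y_3^m)=(1-6/\lambda_3)(u,Y_3^m)=\tfrac12(u,Y_3^m)$ by \eqref{E:Lap_Y}. Assembling these pieces produces \eqref{E:KL_B2u_Eq}.

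For \eqref{E:KL_w_Eq}, I would simply compute $\Delta w + 6w$ using $\Delta v=u$ and $\Delta Y_2^m=-\lambda_2 Y_2^m=-6Y_2^m$:
\begin{align*}
  \Delta w + 6w \;=\; 6\mu(\Delta v + 6v) - \tfrac{a_3^m}{2}(u,Y_3^m)_{L^2(S^2)}(\Delta Y_2^m + 6 Y_2^m) \;=\; 6\mu(u+6v) \;=\; 6\mu\,B_{2,m}u,
\end{align*}
the $Y_2^m$ contribution canceling identically. Multiplying by $(\mu-x_3)$ and substituting \eqref{E:KL_B2u_Eq} gives \eqref{E:KL_w_Eq}.

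The only nontrivial point — and thus the main obstacle to writing a clean proof — is the computation of $(\Lambda_m u,Y_2^m)_{L^2(S^2)}$ in step one, which is exactly where the auxiliary correction $-\tfrac{a_3^m}{2}(u,Y_3^m)Y_2^m$ in the definition of $w$ is engineered to absorb the non-$\mathbb{Q}_m$-invariance of $\Lambda_m$ when $|m|=1,2$. Once this pointwise identity is in place, the second identity reduces to the fortunate cancellation $(\Delta+6)Y_2^m=0$ for the $Y_2^m$ correction, leaving $\Delta w+6w=6\mu B_{2,m}u$ and therefore \eqref{E:KL_w_Eq} by $(\mu-x_3)$-multiplication.
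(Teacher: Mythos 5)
Your proof is correct and follows essentially the same route as the paper: both reduce \eqref{E:KL_B2u_Eq} to identifying the $Y_2^m$-component of $\Lambda_m u$ as $\tfrac{a_3^m}{2}(u,Y_3^m)_{L^2(S^2)}Y_2^m$ via \eqref{E:Y_Rec}, \eqref{E:Lap_Y}, and \eqref{E:Ko_Qm}, and then obtain \eqref{E:KL_w_Eq} from $\Delta v=u$ and $(\Delta+6)Y_2^m=0$. The only cosmetic difference is that you compute the coefficient $(\Lambda_m u,Y_2^m)_{L^2(S^2)}$ by moving $M_{\cos\theta}$ onto $Y_2^m$, whereas the paper applies the recurrence directly to the expansion of $u$; the two computations are adjoint to one another and equally valid.
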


\begin{proof}
  If $|m|\geq3$, then $\mathbb{Q}_m=I$ and $w=6\mu v$. Thus
  \begin{align*}
    \delta f=(\mu-\Lambda_m)u=(\mu-x_3)B_{2,m}u-w
  \end{align*}
  by $\Lambda_mu=x_3B_{2,m}u$ and $B_{2,m}u=(I+6\Delta^{-1})u=u+6v$. Hence \eqref{E:KL_B2u_Eq} holds.

  Let $|m|=1,2$ and $u\in\mathcal{Y}_m$ be of the form \eqref{E:Ko_Ym} with $N'_m=3$. Then it follows from \eqref{E:Y_Rec}, \eqref{E:Lap_Y} and $\lambda_3=12$ that
  \begin{align*}
    \Lambda_mu &= \sum_{n\geq 3}\left(1-\frac{6}{\lambda_n}\right)(u,Y_n^m)_{L^2(S^2)}(a_n^mY_{n-1}^m+a_{n+1}^mY_{n+1}^m) \\
    &= \frac{a_3^m}{2}(u,Y_3^m)_{L^2(S^2)}Y_2^m+\tilde{u}_{\geq3}, \quad \tilde{u}_{\geq3} \in \overline{\mathrm{span}}\{Y_n^m \mid n\geq3\}
  \end{align*}
  and thus $\mathbb{Q}_m\Lambda_mu=\Lambda_mu-\frac{1}{2}a_3^m(u,Y_3^m)_{L^2(S^2)}Y_2^m$ by \eqref{E:Ko_Qm}. Hence
  \begin{align*}
    \delta f &= \mathbb{Q}_m(\mu-\Lambda_m)u = \mu u-\mathbb{Q}_m\Lambda_mu = \mu u-\Lambda_mu+\frac{a_3^m}{2}(u,Y_3^m)_{L^2(S^2)}Y_2^m \\
    &= (\mu-x_3)B_{2,m}u-\left\{6\mu v-\frac{a_3^m}{2}(u,Y_3^m)_{L^2(S^2)}Y_2^m\right\} = (\mu-x_3)B_{2,m}u-w
  \end{align*}
  by $u\in\mathcal{Y}_m$, $\Lambda_mu=x_3B_{2,m}u$, and $B_{2,m}u=u+6v$.

  For each $m\in\mathbb{Z}\setminus\{0\}$ the equality \eqref{E:KL_w_Eq} follows from \eqref{E:KL_vw}, \eqref{E:KL_B2u_Eq}, $B_{2,m}u=\Delta v+6v$, and $\Delta Y_2^m=-\lambda_2Y_2^m=-6Y_2^m$, where $Y_2^m\equiv0$ if $|m|\geq3$.
\end{proof}

\begin{lemma} \label{L:KL_w_est}
  We have
  \begin{align} \label{E:KL_w_est}
    \begin{aligned}
      \|w\|_{L^2(S^2)} &\leq C\|v\|_{L^2(S^2)}, \\
      \|w\|_{H^1(S^2)} &\leq C\|(-\Delta)^{1/2}w\|_{L^2(S^2)} \leq C\|(-\Delta)^{1/2}v\|_{L^2(S^2)}.
    \end{aligned}
  \end{align}
\end{lemma}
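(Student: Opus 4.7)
The plan is to observe that $w$ differs from $6\mu v$ only by a scalar multiple of the smooth eigenfunction $Y_2^m$ (and coincides with $6\mu v$ when $|m|\geq 3$, thanks to the conventions $a_3^m=0$ and $Y_2^m\equiv 0$ in that case). Hence both inequalities reduce to a routine triangle inequality applied to \eqref{E:KL_vw}, combined with the identity $u=\Delta v$ and integration by parts to relate $(u,Y_3^m)_{L^2(S^2)}$ to suitable norms of $v$.

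For the first inequality, I apply the triangle inequality to \eqref{E:KL_vw} using $|\mu|\leq 1$, $\|Y_2^m\|_{L^2(S^2)}=1$, and $|a_3^m|\leq 1$, so the only nontrivial piece is $|(u,Y_3^m)_{L^2(S^2)}|$. Writing $u=\Delta v$ and integrating by parts twice (using $\Delta Y_3^m=-\lambda_3 Y_3^m$), I obtain
\begin{equation*}
  (u,Y_3^m)_{L^2(S^2)} = (v,\Delta Y_3^m)_{L^2(S^2)} = -\lambda_3 (v,Y_3^m)_{L^2(S^2)},
\end{equation*}
which is bounded by $\lambda_3\|v\|_{L^2(S^2)}$ via Cauchy--Schwarz and orthonormality of $\{Y_n^m\}$. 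Assembling these bounds gives $\|w\|_{L^2(S^2)}\leq C\|v\|_{L^2(S^2)}$.

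For the second inequality, the left bound $\|w\|_{H^1(S^2)}\leq C\|(-\Delta)^{1/2}w\|_{L^2(S^2)}$ follows from the norm equivalence \eqref{E:H1_Equiv}, since \eqref{E:KL_vw} asserts $w\in\mathcal{X}_m\subset L_0^2(S^2)\cap H^1(S^2)$. For the right bound I apply $(-\Delta)^{1/2}$ to the definition of $w$, use the eigenfunction identity $(-\Delta)^{1/2}Y_2^m=\sqrt{\lambda_2}\,Y_2^m$, and reduce the matter to bounding $|(u,Y_3^m)_{L^2(S^2)}|$ by a constant multiple of $\|(-\Delta)^{1/2}v\|_{L^2(S^2)}$. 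This time I integrate by parts only once to get $(u,Y_3^m)_{L^2(S^2)}=-(\nabla v,\nabla Y_3^m)_{L^2(S^2)}$ and apply Cauchy--Schwarz together with \eqref{E:Lap_Half}. There is essentially no real obstacle here; the key structural point is that testing $u=\Delta v$ against $Y_3^m$ amounts, up to the constant $\lambda_3$ or $\sqrt{\lambda_3}$, to testing $v$ against $Y_3^m$ or $\nabla Y_3^m$, both of which are trivially controlled by the relevant norm of $v$.
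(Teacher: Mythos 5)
Your proof is correct and follows essentially the same route as the paper: bound the coefficient $(u,Y_3^m)_{L^2(S^2)}$ by moving $\Delta$ onto $Y_3^m$, then apply the triangle inequality to the definition of $w$ together with \eqref{E:H1_Equiv}. The only (cosmetic) difference is that for the gradient bound you integrate by parts once to get $|(u,Y_3^m)_{L^2(S^2)}|\leq\lambda_3^{1/2}\|(-\Delta)^{1/2}v\|_{L^2(S^2)}$ directly, whereas the paper keeps the bound $\lambda_3\|v\|_{L^2(S^2)}$ and absorbs the zeroth-order term via the Poincar\'e-type equivalence \eqref{E:H1_Equiv}; both are valid.
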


\begin{proof}
  By $\Delta Y_3^m=-\lambda_3Y_3^m$ and $\|Y_3^m\|_{L^2(S^2)}\leq1$,
  \begin{align*}
    \bigl|(u,Y_3^m)_{L^2(S^2)}\bigr| = \bigl|(\Delta v,Y_3^m)_{L^2(S^2)}\bigr| = \left|\bigl(v,\Delta Y_3^m\bigr)_{L^2(S^2)}\right| \leq \lambda_3\|v\|_{L^2(S^2)},
  \end{align*}
  where $Y_n^m\equiv0$ if $|m|>n$.
  We apply this inequality, $|\mu|\leq1$, $(-\Delta)^{1/2}Y_2^m=\lambda_2^{1/2}Y_2^m$, and $\|Y_2^m\|_{L^2(S^2)}\leq1$ to $w$ given by \eqref{E:KL_vw} to find that
  \begin{align} \label{Pf_KLwe:w_bd}
    \|(-\Delta)^{k/2}w\|_{L^2(S^2)} \leq C\left(\|(-\Delta)^{k/2}v\|_{L^2(S^2)}+\|v\|_{L^2(S^2)}\right), \quad k=0,1.
  \end{align}
  Hence we have \eqref{E:KL_w_est} by this inequality and \eqref{E:H1_Equiv}.
\end{proof}

\begin{lemma} \label{L:KL_u}
  We have
  \begin{align} \label{E:KL_u}
    \delta\|u\|_{L^2(S^2)} \leq C\left(R(u,\delta,\mu)+\|(-\Delta)^{1/2}v\|_{L^2(S^2)}\right).
  \end{align}
\end{lemma}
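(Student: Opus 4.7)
The plan is to use the equation \eqref{E:KL_B2u_Eq} together with an interpolation argument. First I expand $B_{2,m}u = u + 6v$ and substitute the definition of $w$ from \eqref{E:KL_vw} into \eqref{E:KL_B2u_Eq}. Since $w - 6\mu v = -\frac{a_3^m}{2}(u,Y_3^m)_{L^2(S^2)}Y_2^m$, a short calculation produces the pointwise identity
\begin{align*}
  (\mu - \cos\theta)\,u = \delta f + 6\cos\theta\,v - \frac{a_3^m}{2}(u,Y_3^m)_{L^2(S^2)} Y_2^m \quad \text{on}\ S^2.
\end{align*}
Because $|\cos\theta|\leq 1$, $\|Y_2^m\|_{L^2(S^2)}\leq 1$, $|a_3^m|\leq 1$ and $|(u,Y_3^m)_{L^2(S^2)}| = \lambda_3|(v,Y_3^m)_{L^2(S^2)}| \leq 12\|v\|_{L^2(S^2)}$ via $u = \Delta v$, taking $L^2$-norms and applying Poincar\'e to $v\in\mathcal{Y}_m\subset L^2_0(S^2)$ gives the weighted estimate
\begin{align*}
  \|(\mu - \cos\theta)\,u\|_{L^2(S^2)} \leq C\bigl(\delta\|f\|_{L^2(S^2)} + \|(-\Delta)^{1/2}v\|_{L^2(S^2)}\bigr).
\end{align*}

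Next I would convert this weighted estimate into a bound on $\delta\|u\|_{L^2(S^2)}$ by splitting $S^2$ into a narrow band $S^2_{\mu,\varepsilon}$ around the critical latitude $\theta_\mu$ and its complement. On the complement, the inequality \eqref{E:KL_mucos} gives $|\mu-\cos\theta|\geq \tfrac{1}{2}\varepsilon\sin\theta_\mu$, and hence
\begin{align*}
  \|u\|_{L^2(S^2\setminus S^2_{\mu,\varepsilon})}^2 \leq \frac{4}{\varepsilon^2\sin^2\theta_\mu}\|(\mu-\cos\theta)u\|_{L^2(S^2)}^2 \leq \frac{C}{\varepsilon^2\sin^2\theta_\mu}\bigl(\delta^2\|f\|^2 + \|(-\Delta)^{1/2}v\|^2\bigr).
\end{align*}
On the band $S^2_{\mu,\varepsilon}$ with $\varepsilon \leq \tfrac12\sin\theta_\mu$, I would use the interpolation inequality \eqref{E:IF_U} of Lemma \ref{L:Int_Fix} together with the area estimate $\mathcal{H}^2(S^2_{\mu,\varepsilon}) \lesssim \varepsilon\sin\theta_\mu$ (from \eqref{E:KL_theta}) to obtain
\begin{align*}
  \|u\|_{L^2(S^2_{\mu,\varepsilon})}^2 \leq C\varepsilon\,\|u\|_{L^2(S^2)}\|(-\Delta)^{1/2}u\|_{L^2(S^2)}.
\end{align*}
Summing the two contributions, multiplying by $\delta^2$, and applying AM--GM to absorb the $\|u\|$-factor back into the left-hand side produces a bound of the form
\begin{align*}
  \delta^2\|u\|^2 \leq C\delta^2\varepsilon^2\|(-\Delta)^{1/2}u\|^2 + \frac{C\delta^2}{\varepsilon^2\sin^2\theta_\mu}\bigl(\delta^2\|f\|^2 + \|(-\Delta)^{1/2}v\|^2\bigr).
\end{align*}

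The main obstacle will be the choice of $\varepsilon$. To match the claimed coefficient $\delta^3/\sqrt{1-\mu}$ in $R(u,\delta,\mu)$ and, simultaneously, to keep a coefficient of order one on $\|(-\Delta)^{1/2}v\|$, one needs $\varepsilon\simeq\delta^2/\sqrt{1-\mu}$ from the first requirement and $\varepsilon\gtrsim\delta/\sin\theta_\mu$ from the second. Using $\sin^2\theta_\mu = (1-\mu)(1+\mu)\geq(1-\mu)$ together with the standing assumption $\kappa\delta^2\leq\sin^2\theta_\mu$ (valid throughout the regime $|\mu|\leq 1-\kappa\delta^2$), these windows for $\varepsilon$ have to be reconciled, and where they become marginal I would fall back on the global interpolation $\|u\|^2\leq\|(-\Delta)^{1/2}u\|\,\|(-\Delta)^{1/2}v\|$ (a direct Cauchy--Schwarz in the spherical harmonic expansion) combined with Young's inequality with weight $\delta/\sqrt{1-\mu}$. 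Combining the resulting bounds and taking square roots yields the claimed \eqref{E:KL_u}. The delicate balancing of $\varepsilon$ across the parameter range $\mu\in[0,1-\kappa\delta^2]$ is exactly the step that makes the estimate nontrivial and the one I expect to require the most careful bookkeeping.
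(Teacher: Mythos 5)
Your setup is sound: the identity $(\mu-\cos\theta)u=\delta f+6\cos\theta\,v-\frac{a_3^m}{2}(u,Y_3^m)_{L^2(S^2)}Y_2^m$ is correct, and your band estimate $\|u\|_{L^2(S_{\mu,\varepsilon}^2)}^2\leq C\varepsilon\|u\|_{L^2(S^2)}\|(-\Delta)^{1/2}u\|_{L^2(S^2)}$ via \eqref{E:IF_U} matches the paper's (which runs the same argument for $B_{2,m}u$ and then invokes \eqref{E:B2m_Ym}). The gap is on the complement of the band. By dividing the \emph{whole} numerator by the pointwise lower bound $|\mu-\cos\theta|\geq\tfrac12\varepsilon\sin\theta_\mu$ you pay a factor $(\varepsilon\sin\theta_\mu)^{-2}$ on the squared norm, and — as you correctly diagnose — this forces $\varepsilon\sin\theta_\mu\gtrsim\delta$, while the band term forces $\varepsilon\lesssim\delta^2/(1-\mu)^{1/2}$, i.e. $\varepsilon\sin\theta_\mu\lesssim\delta^2$. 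These windows are disjoint for small $\delta$, and your fallback does not close the gap: Cauchy--Schwarz gives $\delta^2\|u\|^2\leq\delta^2\|(-\Delta)^{1/2}u\|\,\|(-\Delta)^{1/2}v\|$, and any Young splitting that keeps the $\|(-\Delta)^{1/2}u\|^2$ coefficient at $\delta^6/(1-\mu)$ necessarily puts a coefficient $(1-\mu)/\delta^2$ on $\|(-\Delta)^{1/2}v\|^2$, which is unbounded as $\delta\to0$ when $\mu$ is away from $1$. So the argument as proposed does not yield \eqref{E:KL_u}.

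The missing idea is an \emph{asymmetric} H\"older pairing on $S^2\setminus S_{\mu,\varepsilon}^2$, which is what the paper uses in \eqref{Pf_KLu:B2_Out_01}. The smooth part of the numerator (in the paper, $w$; in your formulation, $6\cos\theta\,v$ and the $Y_2^m$ term) is placed in $L^\infty$ — legitimate because $v=V(\theta)e^{im\varphi}$ with $m\neq0$, so Lemma \ref{L:Linf} gives $\|v\|_{L^\infty(S^2)}\leq C\|(-\Delta)^{1/2}v\|_{L^2(S^2)}$ — and paired with $\|(\mu-x_3)^{-1}\|_{L^2(S^2\setminus S_{\mu,\varepsilon}^2)}\leq C(\varepsilon\sin\theta_\mu)^{-1/2}$, which follows from $\sin\theta\,(\mu-\cos\theta)^{-2}=-\frac{d}{d\theta}(\mu-\cos\theta)^{-1}$ and \eqref{E:KL_cosep}. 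With $\varepsilon\sin\theta_\mu\simeq\delta^2$ this costs only $\delta^{-1}$, a square-root gain over the $L^\infty$ bound $\|(\mu-x_3)^{-1}\|_{L^\infty(S^2\setminus S_{\mu,\varepsilon}^2)}\leq C\delta^{-2}$, and after multiplying by $\delta$ it produces exactly $C\|(-\Delta)^{1/2}v\|_{L^2(S^2)}$. Only the term $\delta f$, which already carries a factor $\delta$, can afford the cruder $L^\infty$ bound $C\delta^{-2}$ on $(\mu-x_3)^{-1}$, yielding $C\|f\|_{L^2(S^2)}$. With this substitution for your complement estimate (and keeping $\varepsilon=\kappa\delta^2/2\sin\theta_\mu$ throughout, so the band term is absorbed into $R(u,\delta,\mu)$), your argument closes.
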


\begin{proof}
  Let $\varepsilon=\kappa\delta^2/2\sin\theta_\mu$, which satisfies \eqref{E:KL_eps} and
  \begin{align} \label{Pf_KLu:eps}
    \varepsilon \leq \frac{C\delta^2}{(1-\mu)^{1/2}}
  \end{align}
  by \eqref{E:KL_sinmu}. Also, let $S_{\mu,\varepsilon}^2$ be given by \eqref{E:KL_Sme}. Since $B_{2,m}u\in\mathcal{X}_m$, we can write $B_{2,m}u=U_{2,m}(\theta)e^{im\varphi}$. Then we see by \eqref{E:IF_U} and \eqref{E:B2m_Xmk} that
  \begin{align*}
    |U_{2,m}(\theta)|^2 &\leq \frac{C}{\sin\theta_\mu}\|B_{2,m}u\|_{L^2(S^2)}\|(-\Delta)^{1/2}B_{2,m}u\|_{L^2(S^2)} \\
    &\leq \frac{C}{\sin\theta_\mu}\|u\|_{L^2(S^2)}\|(-\Delta)^{1/2}u\|_{L^2(S^2)}
  \end{align*}
  for $\theta\in(\theta_\mu-\varepsilon,\theta_\mu-\varepsilon)$. By this inequality and \eqref{E:KL_cosep},
  \begin{align} \label{Pf_KLu:B2_In}
    \|B_{2,m}u\|_{L^2(S_{\mu,\varepsilon}^2)}^2 = 2\pi\int_{\theta_\mu-\varepsilon}^{\theta_\mu+\varepsilon}|U_{2,m}(\theta)|^2\sin\theta\,d\theta \leq C\varepsilon\|u\|_{L^2(S^2)}\|(-\Delta)^{1/2}u\|_{L^2(S^2)}.
  \end{align}
  Also, since $B_{2,m}u=(w+\delta f)/(\mu-x_3)$ for $x_3\neq\mu$ by \eqref{E:KL_B2u_Eq},
  \begin{multline} \label{Pf_KLu:B2_Out_01}
    \|B_{2,m}u\|_{L^2(S^2\setminus S_{\mu,\varepsilon}^2)} \leq \|w\|_{L^\infty(S^2)}\left\|\frac{1}{\mu-x_3}\right\|_{L^2(S^2\setminus S_{\mu,\varepsilon}^2)} \\
    +\delta\|f\|_{L^2(S^2)}\left\|\frac{1}{\mu-x_3}\right\|_{L^\infty(S^2\setminus S_{\mu,\varepsilon}^2)}.
  \end{multline}
  Moreover, since $\sin\theta(\mu-\cos\theta)^{-2}=-\frac{d}{d\theta}(\mu-\cos\theta)^{-1}$, we have
  \begin{align*}
    \left\|\frac{1}{\mu-x_3}\right\|_{L^2(S^2\setminus S_{\mu,\varepsilon}^2)}^2 &= 2\pi\left(\int_0^{\theta_\mu-\varepsilon}+\int_{\theta_\mu+\varepsilon}^\pi\right)\frac{\sin\theta}{(\mu-\cos\theta)^2}\,d\theta \\
    &\leq 2\pi\left(\frac{1}{\cos(\theta_\mu-\varepsilon)-\mu}+\frac{1}{\mu-\cos(\theta_\mu+\varepsilon)}\right) \leq \frac{C}{\varepsilon\sin\theta_\mu} \leq C\delta^{-2}
  \end{align*}
  by \eqref{E:L2_Mode}, \eqref{E:KL_cosep}, and $\varepsilon\sin\theta_\mu=\kappa\delta^2/2$. Also, we see by \eqref{E:KL_mucos} that
  \begin{align*}
    \left\|\frac{1}{\mu-x_3}\right\|_{L^\infty(S^2\setminus S_{\mu,\varepsilon}^2)} = \sup_{|\theta-\theta_\mu|\geq\varepsilon}\frac{1}{|\mu-\cos\theta|} \leq \frac{2}{\varepsilon\sin\theta_\mu} \leq C\delta^{-2}.
  \end{align*}
  We apply these inequalities to \eqref{Pf_KLu:B2_Out_01} and use \eqref{E:Linf} and \eqref{E:KL_w_est} to $w$ to get
  \begin{align} \label{Pf_KLu:B2_Out_02}
    \|B_{2,m}u\|_{L^2(S^2\setminus S_{\mu,\varepsilon}^2)} \leq C\delta^{-1}\Bigl(\|f\|_{L^2(S^2)}+\|(-\Delta)^{1/2}v\|_{L^2(S^2)}\Bigr).
  \end{align}
  By \eqref{E:B2m_Ym}, \eqref{Pf_KLu:B2_In}, \eqref{Pf_KLu:B2_Out_02}, and Young's inequality, we find that
  \begin{align*}
    \delta\|u\|_{L^2(S^2)} \leq \frac{1}{2}\delta\|u\|_{L^2(S^2)}+C\Bigl(\delta\varepsilon\|(-\Delta)^{1/2}u\|_{L^2(S^2)}+\|f\|_{L^2(S^2)}+\|(-\Delta)^{1/2}v\|_{L^2(S^2)}\Bigr).
  \end{align*}
  Hence we have \eqref{E:KL_u} by this inequality and \eqref{Pf_KLu:eps}.
\end{proof}

\begin{lemma} \label{L:KL_gradf}
  Let $\varepsilon$ satisfy \eqref{E:KL_eps} and $S_{\mu,\varepsilon}^2$ be given by \eqref{E:KL_Sme}. Then
  \begin{align} \label{E:KL_gradf}
    \begin{aligned}
      \delta\|\nabla f\|_{L^2(S_{\mu,\varepsilon}^2)} &\leq C\Bigl(\|M_{\sin\theta}u\|_{L^2(S_{\mu,\varepsilon}^2)}\Bigr. \\
      &\qquad\qquad \Bigl.+\varepsilon\sin\theta_\mu\|(-\Delta)^{1/2}u\|_{L^2(S^2)}+\|(-\Delta)^{1/2}v\|_{L^2(S^2)}\Bigr).
    \end{aligned}
  \end{align}
\end{lemma}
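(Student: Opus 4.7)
The plan is to start from the pointwise identity \eqref{E:KL_B2u_Eq} in Lemma \ref{L:KL_w_Eq}, namely $\delta f = (\mu-x_3)B_{2,m}u - w$ on $S^2$, and differentiate. Using the product rule and $|\nabla x_3| = \sin\theta$ (from \eqref{E:Re_SC}, as already exploited at the end of the proof of Lemma \ref{L:Ko_B3}), we obtain the pointwise bound
\begin{align*}
  \delta|\nabla f| \leq |B_{2,m}u|\sin\theta + |\mu-x_3||\nabla B_{2,m}u| + |\nabla w|.
\end{align*}
Taking the $L^2$-norm over $S_{\mu,\varepsilon}^2$ and applying the triangle inequality gives three terms, which I would estimate separately. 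The key point driving the estimate is that on $S_{\mu,\varepsilon}^2$ the coefficient $\mu-x_3$ of $B_{2,m}u$ in \eqref{E:KL_B2u_Eq} is small, of size $\varepsilon\sin\theta_\mu$, while its derivative $\nabla x_3$ contributes the weight $\sin\theta$, which is exactly what produces the $\|M_{\sin\theta}u\|$ contribution.

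For the first term, I would decompose $B_{2,m}u = u + 6v$, giving
\begin{align*}
  \|B_{2,m}u\,\sin\theta\|_{L^2(S_{\mu,\varepsilon}^2)} \leq \|M_{\sin\theta}u\|_{L^2(S_{\mu,\varepsilon}^2)} + 6\|v\sin\theta\|_{L^2(S_{\mu,\varepsilon}^2)},
\end{align*}
and use $\sin\theta \leq 1$ together with Poincar\'e's inequality \eqref{E:Poin} and \eqref{E:Lap_Half} to bound $\|v\sin\theta\|_{L^2(S_{\mu,\varepsilon}^2)} \leq \|v\|_{L^2(S^2)} \leq C\|(-\Delta)^{1/2}v\|_{L^2(S^2)}$. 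For the second term, I would insert the uniform bound $|\mu-x_3| \leq (5/4)\varepsilon\sin\theta_\mu$ on $S_{\mu,\varepsilon}^2$ from \eqref{E:KL_theta}, then use the norm equivalence \eqref{E:H1_Equiv} and the contractivity \eqref{E:B2m_Xmk} (with $k=1$) to get
\begin{align*}
  \|(\mu-x_3)\nabla B_{2,m}u\|_{L^2(S_{\mu,\varepsilon}^2)} \leq C\varepsilon\sin\theta_\mu\|(-\Delta)^{1/2}u\|_{L^2(S^2)}.
\end{align*}
For the third term, I would enlarge the integration domain to all of $S^2$, apply \eqref{E:Lap_Half} (valid since $w \in \mathcal{X}_m \subset L_0^2(S^2)$), and invoke the second line of \eqref{E:KL_w_est} to bound $\|\nabla w\|_{L^2(S^2)}$ by $C\|(-\Delta)^{1/2}v\|_{L^2(S^2)}$.

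Collecting the three bounds yields \eqref{E:KL_gradf} directly. There is no real obstacle here: this is essentially a localized $H^1$ estimate derived from the $L^2$ identity \eqref{E:KL_B2u_Eq}, and the only subtlety is organizing which piece of the right-hand side feeds into which of the three terms on the right of \eqref{E:KL_gradf}. The only point that deserves a brief justification is the regularity needed to differentiate \eqref{E:KL_B2u_Eq}: since $u \in \mathcal{Y}_m \cap H^1(S^2)$, we have $v = \Delta^{-1}u \in H^3(S^2)$, $w \in \mathcal{X}_m \cap H^3(S^2)$ by \eqref{E:KL_vw}, and $B_{2,m}u = u + 6v \in H^1(S^2)$, so each term on the right of \eqref{E:KL_B2u_Eq} is indeed in $H^1(S^2)$ and the pointwise gradient identity is legitimate.
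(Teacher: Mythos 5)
Your proposal is correct and follows essentially the same route as the paper: differentiate the identity $\delta f=(\mu-x_3)B_{2,m}u-w$, use $|\nabla x_3|=\sin\theta$ and $B_{2,m}u=u+6v$ to produce the $\|M_{\sin\theta}u\|$ and $\|v\|$ contributions, bound $|\mu-x_3|$ by $C\varepsilon\sin\theta_\mu$ on $S_{\mu,\varepsilon}^2$ via \eqref{E:KL_theta}, and control $\nabla B_{2,m}u$ and $\nabla w$ by \eqref{E:B2m_Xmk} and \eqref{E:KL_w_est}. The only cosmetic difference is that you invoke Poincar\'e's inequality \eqref{E:Poin} for $\|v\|_{L^2(S^2)}$ where the paper cites the equivalent \eqref{E:H1_Equiv}.
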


\begin{proof}
  By \eqref{E:KL_B2u_Eq} and $B_{2,m}u=u+6v$,
  \begin{align*}
    \delta\nabla f = -u\nabla x_3-6v\nabla x_3+(\mu-x_3)\nabla B_{2,m}u-\nabla w \quad\text{on}\quad S^2.
  \end{align*}
  We take the $L^2(S_{\mu,\varepsilon}^2)$-norms of both sides, apply \eqref{E:KL_theta} to $\mu-x_3=\mu-\cos\theta$ on $S_{\mu,\varepsilon}^2$, and use $|\nabla x_3|=\sin\theta\leq1$ on $S^2$ by $x_3=\cos\theta$ and \eqref{E:Re_SC}. Then, replacing the $L^2$-norms of $v$, $\nabla B_{2,m}u$, and $\nabla w$ on $S_{\mu,\varepsilon}^2$ by those on $S^2$, we get
  \begin{align*}
    \delta\|\nabla f\|_{L^2(S_{\mu,\varepsilon}^2)} &\leq C\Bigl(\|M_{\sin\theta}u\|_{L^2(S_{\mu,\varepsilon}^2)}+\|v\|_{L^2(S^2)}\Bigr. \\
    &\qquad\qquad \Bigl.+\varepsilon\sin\theta_\mu\|\nabla B_{2,m}u\|_{L^2(S^2)}+\|\nabla w\|_{L^2(S^2)}\Bigr).
  \end{align*}
  In this inequality, we apply \eqref{E:H1_Equiv} and \eqref{E:KL_w_est} to $v$ and $\nabla w$, respectively, and use \eqref{E:Lap_Half} and \eqref{E:B2m_Xmk} with $k=1$ to $\nabla B_{2,m}u$. Then we obtain \eqref{E:KL_gradf}.
\end{proof}

\begin{lemma} \label{L:KL_WF}
  We have
  \begin{align} \label{E:KL_WL_form}
    w = W(\theta)e^{im\varphi}, \quad f = F(\theta)e^{im\varphi}, \quad W\in C^1([0,\pi]), \quad F\in H_{loc}^1(0,\pi) \subset C(0,\pi)
  \end{align}
  and $W(\theta_\mu)+\delta F(\theta_\mu)=0$. Moreover, for all $\sigma_1\in(0,1/\sqrt{2})$,
  \begin{align} \label{E:KL_W_thmu}
    |W(\theta_\mu)| \leq C\left(\sigma_1^{-1}R(u,\delta,\mu)+\sigma_1\|(-\Delta)^{1/2}v\|_{L^2(S^2)}\right).
  \end{align}
\end{lemma}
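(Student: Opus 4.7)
For part (1), the form $w = W(\theta)e^{im\varphi}$ and $f = F(\theta)e^{im\varphi}$ follows from the zonal invariance of $\mathcal{X}_m = \mathcal{P}_m\mathcal{X}$ (whose elements have this representation by \eqref{E:Def_Proj}) under $\Delta^{-1}$, $M_{\cos\theta}$, and $\mathbb{Q}_m$, combined with $Y_2^m$ itself having this structure. Elliptic regularity gives $v = \Delta^{-1}u \in H^3(S^2)$ from $u \in H^1(S^2)$, hence $w \in H^3(S^2)$; the Sobolev embedding $H^3(S^2) \hookrightarrow C^1(S^2)$ then yields $W \in C^1([0,\pi])$. Similarly, $f \in H^1(S^2)$ (using \eqref{E:B2m_Xmk} and the boundedness of $M_{\cos\theta}$) together with \eqref{E:L2_Mode} gives $F \in H^1_{\mathrm{loc}}(0,\pi) \hookrightarrow C(0,\pi)$. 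Part (2) is immediate: evaluate \eqref{E:KL_B2u_Eq} at $\theta = \theta_\mu$, where the left-hand side vanishes since $\cos\theta_\mu = \mu$, so that by continuity of $W$, $F$, and $B_{2,m}u$ at $\theta_\mu \in (0,\pi)$ one obtains $W(\theta_\mu) + \delta F(\theta_\mu) = 0$.

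For part (3), since $W(\theta_\mu) = -\delta F(\theta_\mu)$, it suffices to bound $\delta|F(\theta_\mu)|$. The plan is to apply the one-dimensional Sobolev-type inequality
$$|F(\theta_\mu)| \leq (2\varepsilon)^{-1/2}\|F\|_{L^2(I_\varepsilon)} + (2\varepsilon)^{1/2}\|F'\|_{L^2(I_\varepsilon)}, \quad I_\varepsilon := (\theta_\mu-\varepsilon, \theta_\mu+\varepsilon),$$
(immediate from the fundamental theorem of calculus and Cauchy--Schwarz) with the specific choice $\varepsilon = \kappa\sigma_1^2 \delta^2/\sin\theta_\mu$. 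This $\varepsilon$ satisfies \eqref{E:KL_eps}: the inequality $\varepsilon \leq \sin\theta_\mu/2$ is equivalent to $\kappa\sigma_1^2\delta^2 \leq \sin^2\theta_\mu/2$, which holds by $\sin^2\theta_\mu \geq \kappa\delta^2$ from \eqref{E:KL_sinmu} together with $\sigma_1^2 < 1/2$. Converting 1D norms to 2D norms via $\sin\theta \geq \sin\theta_\mu/2$ on $I_\varepsilon$ and \eqref{E:L2_Mode}, one has $\|F\|_{L^2(I_\varepsilon)} \leq C\|f\|_{L^2(S^2)}/\sqrt{\sin\theta_\mu}$ and $\|F'\|_{L^2(I_\varepsilon)} \leq C\|\nabla f\|_{L^2(S^2_{\mu,\varepsilon})}/\sqrt{\sin\theta_\mu}$. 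Using $\sqrt{2\varepsilon\sin\theta_\mu} = \sigma_1\delta\sqrt{2\kappa}$, the first piece of $\delta|F(\theta_\mu)|$ is bounded by $C\delta\|f\|/\sqrt{2\varepsilon\sin\theta_\mu} = C\|f\|/(\sigma_1\sqrt{2\kappa}) \leq C\sigma_1^{-1}R$.

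For the second piece, I invoke Lemma \ref{L:KL_gradf} to obtain
$$\delta(2\varepsilon)^{1/2}\|F'\|_{L^2(I_\varepsilon)} \leq C\sqrt{2\varepsilon/\sin\theta_\mu}\bigl(\|M_{\sin\theta}u\|_{L^2(S^2_{\mu,\varepsilon})} + \varepsilon\sin\theta_\mu\|(-\Delta)^{1/2}u\|_{L^2} + \|(-\Delta)^{1/2}v\|_{L^2}\bigr).$$
Since $\sqrt{2\varepsilon/\sin\theta_\mu} \leq \sqrt{2}\,\sigma_1$ by \eqref{E:KL_sinmu}, the last term contributes the desired $C\sigma_1\|(-\Delta)^{1/2}v\|$. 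For the $\|M_{\sin\theta}u\|_{L^2(S^2_{\mu,\varepsilon})}$ term, use $\|M_{\sin\theta}u\|_{L^2(S^2_{\mu,\varepsilon})} \leq (3/2)\sin\theta_\mu\|u\|_{L^2}$ and the key identity $\sqrt{2\varepsilon/\sin\theta_\mu}\cdot\sin\theta_\mu = \sqrt{2\varepsilon\sin\theta_\mu} = \sigma_1\delta\sqrt{2\kappa}$, giving a contribution $\leq C\sigma_1\delta\|u\|_{L^2}$; Lemma \ref{L:KL_u} then yields $\leq C\sigma_1(R + \|(-\Delta)^{1/2}v\|) \leq C\sigma_1^{-1}R + C\sigma_1\|(-\Delta)^{1/2}v\|$. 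For the middle term, $\sqrt{2\varepsilon/\sin\theta_\mu}\cdot\varepsilon\sin\theta_\mu = \varepsilon\sqrt{2\varepsilon\sin\theta_\mu} = \sqrt{2\kappa}\,\kappa\sigma_1^3\delta^3/\sin\theta_\mu$; using $\sin\theta_\mu \geq (1-\mu)^{1/2}$ from \eqref{E:KL_sinmu} and the definition of $R$, this contribution is $\leq C\sigma_1^3 R \leq C\sigma_1^{-1}R$ (as $\sigma_1 < 1$). Summing gives \eqref{E:KL_W_thmu}.

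The main obstacle will be finding an $\varepsilon$ that simultaneously satisfies four scaling requirements: \eqref{E:KL_eps}; $\delta/\sqrt{\varepsilon\sin\theta_\mu} \leq C\sigma_1^{-1}$ (for the first piece); $\sqrt{\varepsilon/\sin\theta_\mu} \leq C\sigma_1$ (for the $\|(-\Delta)^{1/2}v\|$ coefficient); and $\sqrt{\varepsilon\sin\theta_\mu} \leq C\sigma_1\delta$ (so that $\|M_{\sin\theta}u\|$ pairs with the bound $\delta\|u\| \leq C(R + \|(-\Delta)^{1/2}v\|)$ from Lemma \ref{L:KL_u}). By \eqref{E:KL_sinmu} these conditions collapse to the single relation $\varepsilon\sin\theta_\mu \sim \sigma_1^2\delta^2$, realized by the choice $\varepsilon = \kappa\sigma_1^2\delta^2/\sin\theta_\mu$; without this particular scaling, one of the terms from Lemma \ref{L:KL_gradf} produces a contribution containing an unavoidable factor of $1/\delta$ or $\sin\theta_\mu/\delta$ that cannot be controlled by $\sigma_1^{-1}R + \sigma_1\|(-\Delta)^{1/2}v\|$.
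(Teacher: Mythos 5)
Your proposal is correct and follows essentially the same route as the paper: the same choice $\varepsilon=\kappa\sigma_1^2\delta^2/\sin\theta_\mu$, the same reduction of $|W(\theta_\mu)|=\delta|F(\theta_\mu)|$ to $L^2$ bounds on $F$ and $F'$ over the band (your one-dimensional interpolation inequality is just the paper's mean-value-theorem-plus-FTC argument in packaged form), and the same invocation of Lemmas \ref{L:KL_u} and \ref{L:KL_gradf} to absorb the $\|\nabla f\|_{L^2(S_{\mu,\varepsilon}^2)}$ term into $\sigma_1^{-1}R+\sigma_1\|(-\Delta)^{1/2}v\|_{L^2(S^2)}$. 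The scaling analysis in your final paragraph correctly identifies why this $\varepsilon$ is forced, and all the constant bookkeeping checks out.
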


\begin{proof}
  Since $w\in\mathcal{X}_m\cap H^3(S^2) \subset \mathcal{X}_m\cap C^1(S^2)$ by the Sobolev embedding theorem (see \cite{Aub98}), and since $f\in\mathcal{X}_m\cap H^1(S^2)$, we can write \eqref{E:KL_WL_form}. Also, since $w$ and $f$ are continuous near $x_3=\mu$ and $B_{2,m}u=(w+\delta f)/(\mu-x_3)$ for $x_3\neq\mu$ by \eqref{E:KL_B2u_Eq}, we have $w+\delta f=0$ at $x_3=\mu$, otherwise $B_{2,m}u$ does not belong to $L^2(S^2)$. Hence $W(\theta_\mu)+\delta F(\theta_\mu)=0$.

  Let us show \eqref{E:KL_W_thmu}. For $\sigma_1\in(0,1/\sqrt{2})$ let $\varepsilon=\sigma_1^2\kappa\delta^2/\sin\theta_\mu$, which satisfies \eqref{E:KL_eps} and \eqref{Pf_KLu:eps} by \eqref{E:KL_sinmu}. By the mean value theorem for integrals, there exists $\theta'_\mu\in(\theta_\mu-\varepsilon,\theta_\mu+\varepsilon)$ such that
  \begin{align*}
    \int_{\theta_\mu-\varepsilon}^{\theta_\mu+\varepsilon}|F(\theta)|^2\sin\theta\,d\theta = 2\varepsilon|F(\theta'_\mu)|^2\sin\theta'_\mu.
  \end{align*}
  Then since $2\varepsilon\sin\theta'_\mu\geq \varepsilon\sin\theta_\mu=\sigma_1^2\kappa\delta^2$ by \eqref{E:KL_theta}, we have
  \begin{align} \label{Pf_WF:prime}
    |F(\theta'_\mu)|^2 \leq \frac{1}{\varepsilon\sin\theta_\mu}\int_{\theta_\mu-\varepsilon}^{\theta_\mu+\varepsilon}|F(\theta)|^2\sin\theta\,d\theta \leq \frac{C}{\sigma_1^2\delta^2}\|f\|_{L^2(S^2)}^2
  \end{align}
  by \eqref{E:L2_Mode}. Also, we use H\"{o}lder's inequality, \eqref{E:L2_Mode}, and \eqref{E:KL_theta} to get
  \begin{align} \label{Pf_WF:diff}
    \begin{aligned}
      |F(\theta_\mu)-F(\theta'_\mu)| = \left|\int_{\theta'_\mu}^{\theta_\mu}F'(\theta)\,d\theta\right| &\leq C\left(\frac{\varepsilon}{\sin\theta_\mu}\right)^{1/2}\left(\int_{\theta_\mu-\varepsilon}^{\theta_\mu+\varepsilon}|F'(\theta)|^2\sin\theta\,d\theta\right)^{1/2} \\
      &\leq \frac{C\sigma_1\delta}{\sin\theta_\mu}\|\nabla f\|_{L^2(S_{\mu,\varepsilon}^2)},
    \end{aligned}
  \end{align}
  where $S_{\mu,\varepsilon}^2$ is given by \eqref{E:KL_Sme}. Moreover, since
  \begin{align*}
    \|M_{\sin\theta}u\|_{L^2(S_{\mu,\varepsilon}^2)} \leq C\sin\theta_\mu\|u\|_{L^2(S_{\mu,\varepsilon}^2)} \leq C\sin\theta_\mu\|u\|_{L^2(S^2)}, \quad \frac{\delta}{\sin\theta_\mu} \leq C
  \end{align*}
  by \eqref{E:KL_sinmu} and \eqref{E:KL_theta}, we see by \eqref{E:KL_u}, \eqref{Pf_KLu:eps}, and \eqref{E:KL_gradf} that
  \begin{align} \label{Pf_WF:grf}
    \begin{aligned}
      \frac{\delta^2}{\sin\theta_\mu}\|\nabla f\|_{L^2(S_{\mu,\varepsilon}^2)} &\leq C\left(\frac{\delta}{\sin\theta_\mu}\|M_{\sin\theta}u\|_{L^2(S_{\mu,\varepsilon}^2)}\right. \\
      &\qquad \left.+\delta\varepsilon\|(-\Delta)^{1/2}u\|_{L^2(S^2)}+\frac{\delta}{\sin\theta_\mu}\|(-\Delta)^{1/2}v\|_{L^2(S^2)}\right) \\
      &\leq C\Bigl(R(u,\delta,\mu)+\|(-\Delta)^{1/2}v\|_{L^2(S^2)}\Bigr).
    \end{aligned}
  \end{align}
  By $W(\theta_\mu)+\delta F(\theta_\mu)=0$, \eqref{Pf_WF:prime}, and \eqref{Pf_WF:diff}, we have
  \begin{align*}
    |W(\theta_\mu)| \leq \delta|F(\theta'_\mu)|+\delta|F(\theta_\mu)-F(\theta'_\mu)| \leq C\left(\sigma_1^{-1}\|f\|_{L^2(S^2)}+\frac{\sigma_1\delta^2}{\sin\theta_\mu}\|\nabla f\|_{L^2(S_{\mu,\varepsilon}^2)}\right)
  \end{align*}
  and we apply \eqref{Pf_WF:grf} and $\sigma_1<1$ to the last term to get \eqref{E:KL_W_thmu}.
\end{proof}

\begin{lemma} \label{L:KL_vH1}
  We have
  \begin{multline} \label{E:KL_vH1}
    \bigl|(\nabla v,\nabla\psi)_{L^2(S^2)}-6(v,\psi)_{L^2(S^2)}\bigr| \\
    \leq C\left(\sigma_2^{-2}R(u,\delta,\mu)+\sigma_2\|(-\Delta)^{1/2}v\|_{L^2(S^2)}\right)\|\nabla\psi\|_{L^2(S^2)} \\
    +\frac{C}{(1-\mu)^{3/4}}\left(\sigma_2^{-4}R(u,\delta,\mu)+\|(-\Delta)^{1/2}v\|_{L^2(S^2)}\right)\|\psi\|_{L^2(S^2)}
  \end{multline}
  for all $\psi\in H^1(S^2)$ and $\sigma_2\in(0,1/2)$. Moreover,
  \begin{multline} \label{E:KL_vLaph}
    3\mu\|(-\Delta)^{1/2}v\|_{L^2(S^2)}^2 \leq C\left(\sigma_2^{-2}R(u,\delta,\mu)+\sigma_2\|(-\Delta)^{1/2}v\|_{L^2(S^2)}\right)\|(-\Delta)^{1/2}v\|_{L^2(S^2)} \\
    +\frac{C}{\sigma_2^2(1-\mu)}\|w\|_{L^2(S^2(0,\theta_\mu))}^2,
  \end{multline}
  where $S^2(0,\theta_\mu)$ is given by \eqref{E:Def_Band}.
\end{lemma}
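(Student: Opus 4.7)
The plan starts from the observation that, since $S^2$ has no boundary, integration by parts together with $\Delta v = u$ and $B_{2,m}u = u + 6v$ gives
\[
(\nabla v, \nabla \psi)_{L^2(S^2)} - 6(v,\psi)_{L^2(S^2)} = -(B_{2,m}u, \psi)_{L^2(S^2)}.
\]
Because $B_{2,m}u \in \mathcal{X}_m$, replacing $\psi$ by its mode-$m$ projection $\phi = \mathcal{P}_m \psi = \Phi(\theta)e^{im\varphi}$ leaves the inner product unchanged while preserving both $L^2$ and $H^1$ norms by \eqref{E:Proj_Bdd}. Proving \eqref{E:KL_vH1} therefore reduces to estimating $|(B_{2,m}u,\phi)|$, and the essential tools are the pointwise identity $(\mu - \cos\theta)U_{2,m}(\theta) = G(\theta)$ from \eqref{E:KL_B2u_Eq} (with $B_{2,m}u = U_{2,m}(\theta)e^{im\varphi}$ and $g = w + \delta f = G(\theta)e^{im\varphi}$) together with the vanishing $G(\theta_\mu) = W(\theta_\mu) + \delta F(\theta_\mu) = 0$ from Lemma \ref{L:KL_WF}.

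I would then split $(B_{2,m}u,\phi) = I_{\mathrm{in}} + I_{\mathrm{out}}$ according to a band $S_{\mu,\varepsilon}^2$ of width $\varepsilon$ around the critical latitude $\theta_\mu$, with $\varepsilon$ tuned (roughly of the form $\sigma_2^{2}\delta^{2}/\sin\theta_\mu$) so that \eqref{E:KL_eps} is satisfied. For $I_{\mathrm{in}}$, apply the Hardy-type inequality \eqref{E:Hardy} with $\theta_1 = \theta_\mu - \varepsilon$, $\theta_2 = \theta_\mu + \varepsilon$ and $G(\theta_\mu) = 0$ to bound $\|B_{2,m}u\|_{L^2(S_{\mu,\varepsilon}^2)}$ by $C\sin^{-1}\theta_\mu\,(\|\nabla w\|_{L^2(S^2)} + \delta \|\nabla f\|_{L^2(S_{\mu,\varepsilon}^2)})$, and then use Cauchy--Schwarz against $\|\phi\|_{L^2} \le \|\psi\|_{L^2}$. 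The derivative bounds from Lemma \ref{L:KL_w_est} and \eqref{E:KL_gradf}, with $\|u\|_{L^2}$ and $\|M_{\sin\theta}u\|_{L^2}$ in turn controlled via Lemma \ref{L:KL_u}, produce the $(1-\mu)^{-3/4}$-weighted contribution to the $\|\psi\|_{L^2}$-bracket after invoking \eqref{E:KL_sinmu} to convert $\sin\theta_\mu$-powers into $(1-\mu)$-powers. For $I_{\mathrm{out}}$ I would use the pointwise lower bound $|\mu - \cos\theta| \gtrsim \varepsilon \sin\theta_\mu$ from \eqref{E:KL_cosep}--\eqref{E:KL_mucos}, and integrate by parts in $\theta$ against the weight $\sin\theta/(\mu-\cos\theta)$ to transfer one derivative onto $\Phi$; the resulting boundary contributions at $\theta = \theta_\mu \pm \varepsilon$ are absorbed using the $L^\infty$ estimate \eqref{E:Linf} applied to $w$ combined with Lemma \ref{L:KL_w_est}, producing the $\|\nabla\psi\|_{L^2}$-bracket.

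For \eqref{E:KL_vLaph}, I would first use $\Delta w + 6w = 6\mu B_{2,m}u$ and integrate against $\bar v$: since $v \in \mathcal{Y}_m$ is orthogonal to $Y_2^m$, the auxiliary correction term in $w$ disappears, and one obtains $6\mu(B_{2,m}u,v)_{L^2} = (w, B_{2,m}u)_{L^2}$. Combining this with $(B_{2,m}u,v) = -\|\nabla v\|_{L^2}^2 + 6\|v\|_{L^2}^2$ and the Poincar\'e bound $\|\nabla v\|_{L^2}^2 \ge 12 \|v\|_{L^2}^2$ valid on $\mathcal{Y}_m$ yields $3\mu\|(-\Delta)^{1/2}v\|_{L^2}^2 \le |\mathrm{Re}(w,B_{2,m}u)_{L^2}|$. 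I would then split $(w,B_{2,m}u) = (w,B_{2,m}u)_{L^2(S^2(0,\theta_\mu))} + (w,B_{2,m}u)_{L^2(S^2(\theta_\mu,\pi))}$ and handle the northern-cap piece by Cauchy--Schwarz together with Young's inequality of parameter $\sigma_2\sqrt{1-\mu}$, using the bound $\|B_{2,m}u\|_{L^2} \le \|u\|_{L^2} \le C\delta^{-1}(R + \|(-\Delta)^{1/2}v\|_{L^2})$ from Lemma \ref{L:KL_u}, which supplies the $\sigma_2^{-2}(1-\mu)^{-1}\|w\|_{L^2(S^2(0,\theta_\mu))}^2$ term. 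The southern-cap piece, where $\mu - \cos\theta \ge 0$, is controlled by applying \eqref{E:KL_vH1} to a smoothed truncation of $w$ onto $S^2(\theta_\mu,\pi)$, contributing to the first right-hand-side bracket of \eqref{E:KL_vLaph}.

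The principal obstacle is the inner-band analysis for \eqref{E:KL_vH1}: Hardy's inequality forces a $1/\sin\theta_\mu\asymp(1-\mu)^{-1/2}$ prefactor, Lemma \ref{L:KL_u} contributes a $\delta^{-1}$, and the cutoff size $\varepsilon$ enters through \eqref{E:KL_gradf}, so landing on the precise exponents $\sigma_2^{-2}$ versus $\sigma_2^{-4}$ and $(1-\mu)^{-3/4}$ rather than the cruder $(1-\mu)^{-1}$ present in the naive estimate requires a carefully calibrated Young's inequality at each splitting step. The interpolation inequality \eqref{E:IF_U} applied to $w$, which gains the weight $1/\sin^2\theta_\mu$, provides the final sharpening that drives the $(1-\mu)^{-3/4}$ exponent.
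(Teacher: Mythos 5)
Your reduction to $(\nabla v,\nabla\psi)_{L^2(S^2)}-6(v,\psi)_{L^2(S^2)}=-(B_{2,m}u,\psi)_{L^2(S^2)}$, the restriction to $\psi=\Psi(\theta)e^{im\varphi}$, and the use of $W(\theta_\mu)+\delta F(\theta_\mu)=0$ together with Hardy's inequality near $\theta_\mu$ all match the paper. The gap is in the decomposition: a single band $S_{\mu,\varepsilon}^2$ plus its complement is not enough. On the band, pairing $B_{2,m}u$ with $\|\psi\|_{L^2(S^2)}$ forces you through Lemma \ref{L:KL_u} and \eqref{E:KL_gradf}, which cost a factor $\delta^{-1}$ (you even quote $\|B_{2,m}u\|_{L^2}\leq C\delta^{-1}(R+\|(-\Delta)^{1/2}v\|_{L^2})$); since $\delta$ is arbitrarily small in the application, this destroys the stated bound. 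What works there is pairing with $\|\Psi\|_{L^\infty}$, so that the band's measure $\sim\varepsilon\sin\theta_\mu\sim\sigma_2^2\delta^2$ supplies a factor $\sigma_2\delta$ cancelling the $\delta^{-1}$ and placing the term in the $\|\nabla\psi\|$-bracket with coefficient $\sigma_2$. Symmetrically, on the complement you must produce the small factor $\sigma_2$ in front of $\|(-\Delta)^{1/2}v\|\,\|\nabla\psi\|$, and your integration by parts (which also creates $\log|\mu-\cos\theta|$ boundary terms of size $\log(\sigma_2^2\delta^2)$ at $\theta_\mu\pm\varepsilon$ that you do not control) offers no mechanism for it. The paper resolves both issues with a three-region split on two scales, $\varepsilon=\sigma_2^2\kappa\delta^2/\sin\theta_\mu$ and $\varepsilon'=2\sigma_2^2\sin\theta_\mu$: the intermediate annulus has measure $O(\sigma_2^2\sin\theta_\mu)$, which yields the $\sigma_2$-factors, while on the far region $|\mu-\cos\theta|\geq\sigma_2^2\sin^2\theta_\mu$ lets the Hardy difference be paired with $\|\psi\|_{L^2}$ at cost $\sin^{-3/2}\theta_\mu\leq(1-\mu)^{-3/4}$; this, not \eqref{E:IF_U} applied to $w$, is where the exponent $3/4$ originates.

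The second part has a similar defect. Your starting inequality $3\mu\|(-\Delta)^{1/2}v\|_{L^2}^2\leq-\mathrm{Re}(B_{2,m}u,w)_{L^2}$ is correct (via \eqref{Pf_KH:nabla} and $(Y_2^m,B_{2,m}u)_{L^2}=0$), but the northern-cap estimate by Cauchy--Schwarz and Young with $\|B_{2,m}u\|_{L^2}\leq C\delta^{-1}(R+\|(-\Delta)^{1/2}v\|_{L^2})$ leaves a remainder $C\sigma_2^2(1-\mu)\delta^{-2}(R+\|(-\Delta)^{1/2}v\|_{L^2})^2$, and $(1-\mu)\delta^{-2}$ is bounded below by $\kappa$ but not above, so the $\|(-\Delta)^{1/2}v\|_{L^2}^2$ term does not acquire the required small coefficient. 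The paper instead isolates the real, sign-definite piece $\int|W(\theta)|^2\sin\theta\,(\cos\theta-\mu)^{-1}d\theta$: it is nonpositive on $(\theta_\mu+\varepsilon',\pi)$ and can be dropped, and on $(0,\theta_\mu-\varepsilon')$ the pointwise bound $\cos\theta-\mu\geq\sigma_2^2(1-\mu)$ gives the $\sigma_2^{-2}(1-\mu)^{-1}\|w\|_{L^2(S^2(0,\theta_\mu))}^2$ term directly, the remaining pieces being absorbed by the bounds on $J_1[w]$, $J_2[w]$, $J_{3,2}[w]$ already established for \eqref{E:KL_vH1}. Your alternative of applying \eqref{E:KL_vH1} to a truncation of $w$ on the southern cap returns a term of order $(1-\mu)^{-3/4}\|(-\Delta)^{1/2}v\|_{L^2}\|w\|_{L^2}$ with no small factor, which again cannot be absorbed.
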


\begin{proof}
  By Lemma \ref{L:KL_WF} we have \eqref{E:KL_WL_form} and $W(\theta_\mu)+\delta F(\theta_\mu)=0$. Since
  \begin{align*}
    -\Delta v-6v = -B_{2,m}u = -\frac{w+\delta f}{\mu-x_3}
  \end{align*}
  by \eqref{E:KL_B2u_Eq}, we take the $L^2(S^2)$-inner products of both sides with $\psi\in H^1(S^2)$ and carry out integration by parts for $(\Delta v,\psi)_{L^2(S^2)}$ to get
  \begin{align} \label{Pf_vH1:IBP}
    (\nabla v,\nabla\psi)_{L^2(S^2)}-6(v,\psi)_{L^2(S^2)} = -\left(\frac{w+\delta f}{\mu-x_3},\psi\right)_{L^2(S^2)}.
  \end{align}
  Let us estimate the right-hand side. We may assume $\psi=\Psi(\theta)e^{im\varphi}$ since
  \begin{align*}
    \frac{w+\delta f}{\mu-x_3} = \left\{\frac{W(\theta)+\delta F(\theta)}{\mu-\cos\theta}\right\}e^{im\varphi}.
  \end{align*}
  For $\sigma_2\in(0,1/2)$ let $\varepsilon=\sigma_2^2\kappa\delta^2/\sin\theta_\mu$ and $\varepsilon' = 2\sigma_2^2\sin\theta_\mu$. Then $\varepsilon<\varepsilon'$ by \eqref{E:KL_sinmu}. Also, $\varepsilon'$ and thus $\varepsilon$ satisfy \eqref{E:KL_eps} by $\sigma_2^2<1/4$. Let
  \begin{align*}
    I_1 &= (\theta_\mu-\varepsilon,\theta_\mu+\varepsilon), \\
    I_2 &= (\theta_\mu-\varepsilon',\theta_\mu-\varepsilon]\cup[\theta_\mu+\varepsilon,\theta_\mu+\varepsilon'), \\
    I_3 &= (0,\theta_\mu-\varepsilon']\cup[\theta_\mu+\varepsilon',\pi).
  \end{align*}
  Then we can write
  \begin{align} \label{Pf_vH1:decom}
    -\left(\frac{w+\delta f}{\mu-x_3},\psi\right)_{L^2(S^2)} = -2\pi\int_0^\pi\frac{W(\theta)+\delta F(\theta)}{\mu-\cos\theta}\,\overline{\Psi(\theta)}\sin\theta\,d\theta = 2\pi\sum_{k=1}^3J_k[\psi],
  \end{align}
  where
  \begin{align*}
    J_k[\psi] = -\int_{I_k}\frac{W(\theta)+\delta F(\theta)}{\mu-\cos\theta}\,\overline{\Psi(\theta)}\sin\theta\,d\theta, \quad k=1,2,3.
  \end{align*}
  We estimate $J_1[\psi]$, $J_2[\psi]$, and $J_3[\psi]$ separately. For $J_1[\psi]$, we have
  \begin{align*}
    |J_1[\psi]| \leq \left(\int_{I_1}\left|\frac{\{W(\theta)+\delta F(\theta)\}\sqrt{\sin\theta}}{\mu-\cos\theta}\right|^2\,d\theta\right)^{1/2}\left(\int_{I_1}|\Psi(\theta)|^2\sin\theta\,d\theta\right)^{1/2}.
  \end{align*}
  Since $W(\theta_\mu)+\delta F(\theta_\mu)=0$, we can use \eqref{E:Hardy} to get
  \begin{align*}
    \int_{I_1}\left|\frac{\{W(\theta)+\delta F(\theta)\}\sqrt{\sin\theta}}{\mu-\cos\theta}\right|^2\,d\theta \leq \frac{C}{\sin^2\theta_\mu}\|\nabla w+\delta\nabla f\|_{L^2(S_{\mu,\varepsilon}^2)}^2,
  \end{align*}
  where $S_{\mu,\varepsilon}^2$ is given by \eqref{E:KL_Sme}. Also, we see by \eqref{E:Linf} and \eqref{E:KL_cosep} that
  \begin{align*}
    \int_{I_1}|\Psi(\theta)|^2\sin\theta\,d\theta &\leq \|\Psi\|_{L^\infty(0,\pi)}^2\{\cos(\theta_\mu-\varepsilon)-\cos(\theta_\mu+\varepsilon)\} \\
    &\leq C\varepsilon\sin\theta_\mu\|(-\Delta)^{1/2}\psi\|_{L^2(S^2)}^2 = C\sigma_2^2\kappa\delta^2\|(-\Delta)^{1/2}\psi\|_{L^2(S^2)}^2.
  \end{align*}
  We use these inequalities, \eqref{E:KL_w_est}, \eqref{Pf_WF:grf}, and $\delta/\sin\theta_\mu\leq C$ by \eqref{E:KL_sinmu} to deduce that
  \begin{align} \label{Pf_vH1:J1}
    \begin{aligned}
      |J_1[\psi]| &\leq C\sigma_2\Bigl(\frac{\delta}{\sin\theta_\mu}\|\nabla w\|_{L^2(S^2)}+\frac{\delta^2}{\sin\theta_\mu}\|\nabla f\|_{L^2(S_{\mu,\varepsilon}^2)}\Bigr)\|(-\Delta)^{1/2}\psi\|_{L^2(S^2)} \\
      &\leq C\sigma_2\Bigl(R(u,\delta,\mu)+\|(-\Delta)^{1/2}v\|_{L^2(S^2)}\Bigr)\|(-\Delta)^{1/2}\psi\|_{L^2(S^2)}.
    \end{aligned}
  \end{align}
  Note that we can use \eqref{Pf_WF:grf} since $\varepsilon$ satisfies \eqref{E:KL_eps} and \eqref{Pf_KLu:eps} by \eqref{E:KL_sinmu}.

  To estimate $J_2[\psi]$, we decompose $J_2[\psi]=\sum_{k=1}^5J_{2,k}[\psi]$ as
  \begin{align*}
    J_{2,1}[\psi] &= -\int_{I_2}\frac{W(\theta)\sqrt{\sin\theta}-W(\theta_\mu)\sqrt{\sin\theta_\mu}}{\mu-\cos\theta}\,\overline{\Psi(\theta)}\sqrt{\sin\theta}\,d\theta, \\
    J_{2,2}[\psi] &= -W(\theta_\mu)\sqrt{\sin\theta_\mu}\int_{I_2}\frac{\overline{\Psi(\theta)}\sqrt{\sin\theta}-\overline{\Psi(\theta_\mu)}\sqrt{\sin\theta_\mu}}{\mu-\cos\theta}\,d\theta, \\
    J_{2,3}[\psi] &= -W(\theta_\mu)\overline{\Psi(\theta_\mu)}\int_{I_2}\frac{\sin\theta_\mu-\sin\theta}{\mu-\cos\theta}\,d\theta, \\
    J_{2,4}[\psi] &= -W(\theta_\mu)\overline{\Psi(\theta_\mu)}\int_{I_2}\frac{\sin\theta}{\mu-\cos\theta}\,d\theta, \\
    J_{2,5}[\psi] &= -\int_{I_2}\frac{\delta F(\theta)}{\mu-\cos\theta}\,\overline{\Psi(\theta)}\sin\theta\,d\theta.
  \end{align*}
  We apply H\"{o}lder's inequality and \eqref{E:Hardy} to $J_{2,1}[\psi]$ to get
  \begin{align*}
    |J_{2,1}[\psi]| \leq \frac{C}{\sin\theta_\mu}\|\nabla w\|_{L^2(S^2)}\|\Psi\|_{L^\infty(0,\pi)}\sqrt{\cos(\theta_\mu-\varepsilon')-\cos(\theta_\mu+\varepsilon')}.
  \end{align*}
  Moreover, by \eqref{E:Linf}, \eqref{E:KL_cosep}, \eqref{E:KL_w_est}, and $\varepsilon'\sin\theta_\mu=2\sigma_2^2\sin^2\theta_\mu$,
  \begin{align} \label{Pf_vH1:J21}
    |J_{2,1}[\psi]| \leq C\sigma_2\|(-\Delta)^{1/2}v\|_{L^2(S^2)}\|(-\Delta)^{1/2}\psi\|_{L^2(S^2)}.
  \end{align}
  Similarly, by H\"{o}lder's inequality, \eqref{E:Lap_Half}, \eqref{E:Hardy}, and $|I_2|\leq 2\varepsilon'=4\sigma_2^2\sin\theta_\mu$,
  \begin{align} \label{Pf_vH1:J22}
    |J_{2,2}[\psi]| \leq C\sigma_2|W(\theta_\mu)|\|(-\Delta)^{1/2}\psi\|_{L^2(S^2)}.
  \end{align}
  Here $|I_2|$ is the length of $I_2$. For $J_{2,3}[\psi]$, since
  \begin{align*}
    |\sin\theta_\mu-\sin\theta| \leq |\theta-\theta_\mu|, \quad |\mu-\cos\theta| \geq \frac{1}{2}|\theta-\theta_\mu|\sin\theta_\mu, \quad \theta\in[0,\pi]
  \end{align*}
  by \eqref{E:KL_mucos}, it follows from \eqref{E:Linf} and $|I_2|\leq 2\varepsilon'=4\sigma_2^2\sin\theta_\mu$ that
  \begin{align} \label{Pf_vH1:J23}
    |J_{2,3}[\psi]| \leq \frac{2|I_2|}{\sin\theta_\mu}|W(\theta_\mu)|\|\Psi\|_{L^\infty(0,\pi)} \leq C\sigma_2^2|W(\theta_\mu)|\|(-\Delta)^{1/2}\psi\|_{L^2(S^2)}.
  \end{align}
  To estimate $J_{2,4}[\psi]$, we see that
  \begin{align*}
    \int_{I_2}\frac{\sin\theta}{\mu-\cos\theta}\,d\theta &= \int_{\theta_\mu-\varepsilon'}^{\theta_\mu-\varepsilon}\frac{d}{d\theta}\bigl(\log(\cos\theta-\mu)\bigr)\,d\theta+\int_{\theta_\mu+\varepsilon}^{\theta_\mu+\varepsilon'}\frac{d}{d\theta}\bigl(\log(\mu-\cos\theta)\bigr)\,d\theta \\
    &= \log\frac{\cos(\theta_\mu-\varepsilon)-\mu}{\mu-\cos(\theta_\mu+\varepsilon)}-\log\frac{\cos(\theta_\mu-\varepsilon')-\mu}{\mu-\cos(\theta_\mu+\varepsilon')}.
  \end{align*}
  Moreover, since $\varepsilon$ and $\varepsilon'$ satisfy \eqref{E:KL_eps}, we can use \eqref{E:KL_cosep} to get
  \begin{align*}
    \left|\int_{I_2}\frac{\sin\theta}{\mu-\cos\theta}\,d\theta\right| \leq \sum_{\eta=\varepsilon,\varepsilon'}\left|\log\frac{\cos(\theta_\mu-\eta)-\mu}{\mu-\cos(\theta_\mu+\eta)}\right| \leq 2\log\frac{5}{3}.
  \end{align*}
  Hence we deduce from this inequality and \eqref{E:Linf} that
  \begin{align} \label{Pf_vH1:J24}
    |J_{2,4}[\psi]| \leq C|W(\theta_\mu)|\|\Psi\|_{L^\infty(0,\pi)} \leq C|W(\theta_\mu)|\|(-\Delta)^{1/2}\psi\|_{L^2(S^2)}.
  \end{align}
  For $J_{2,5}[\psi]$, it follows from \eqref{E:L2_Mode} and \eqref{E:Linf} that
  \begin{align*}
    |J_{2,5}[\psi]| &\leq \delta\|\Psi\|_{L^\infty(0,\pi)}\left(\int_0^\pi|F(\theta)|^2\sin\theta\,d\theta\right)^{1/2}\left(\int_{I_2}\frac{\sin\theta}{(\mu-\cos\theta)^2}\,d\theta\right)^{1/2} \\
    &\leq C\delta\|f\|_{L^2(S^2)}\|(-\Delta)^{1/2}\psi\|_{L^2(S^2)}\left(\int_{I_2}\frac{\sin\theta}{(\mu-\cos\theta)^2}\,d\theta\right)^{1/2}.
  \end{align*}
  Moreover, since $\sin\theta(\mu-\cos\theta)^{-2}=-\frac{d}{d\theta}(\mu-\cos\theta)^{-1}$,
  \begin{align} \label{Pf_vH1:I2_int}
    \begin{aligned}
      \int_{I_2}\frac{\sin\theta}{(\mu-\cos\theta)^2}\,d\theta &\leq \frac{1}{\cos(\theta_\mu-\varepsilon)-\mu}+\frac{1}{\mu-\cos(\theta_\mu+\varepsilon)} \\
      &\leq \frac{C}{\varepsilon\sin\theta_\mu} \leq C\sigma_2^{-2}\delta^{-2}
    \end{aligned}
  \end{align}
  by \eqref{E:KL_cosep} and $\varepsilon\sin\theta_\mu=\sigma_2^2\kappa\delta^2$. Hence
  \begin{align} \label{Pf_vH1:J25}
    |J_{2,5}[\psi]| \leq C\sigma_2^{-1}\|f\|_{L^2(S^2)}\|(-\Delta)^{1/2}\psi\|_{L^2(S^2)}.
  \end{align}
  Noting that $\sigma_2<1$, we apply \eqref{Pf_vH1:J21}--\eqref{Pf_vH1:J25} to $J_2[\psi]=\sum_{k=1}^5J_{2,k}[\psi]$ to get
  \begin{align} \label{Pf_vH1:J2}
    |J_2[\psi]| \leq C\Bigl(|W(\theta_\mu)|+\sigma_2^{-1}\|f\|_{L^2(S^2)}+\sigma_2\|(-\Delta)^{1/2}v\|_{L^2(S^2)}\Bigr)\|(-\Delta)^{1/2}\psi\|_{L^2(S^2)}.
  \end{align}
  Let us estimate $J_3[\psi]$. We write $J_3[\psi]=J_{3,1}[\psi]+J_{3,2}[\psi]$ with
  \begin{align} \label{Pf_vH1:J3_dec}
    \begin{aligned}
      J_{3,1}[\psi] &= -\int_{I_3}\frac{W(\theta)}{\mu-\cos\theta}\,\overline{\Psi(\theta)}\sin\theta\,d\theta, \\
      J_{3,2}[\psi] &= -\int_{I_3}\frac{\delta F(\theta)}{\mu-\cos\theta}\,\overline{\Psi(\theta)}\sin\theta\,d\theta.
    \end{aligned}
  \end{align}
  To estimate $J_{3,1}[\psi]$, we split $J_{3,1}[\psi]=K_1[\psi]+K_2[\psi]$ into
  \begin{align*}
    K_1[\psi] &= -\int_{I_3}\frac{W(\theta)\sqrt{\sin\theta}-W(\theta_\mu)\sqrt{\sin\theta_\mu}}{\mu-\cos\theta}\,\overline{\Psi(\theta)}\sqrt{\sin\theta}\,d\theta, \\
    K_2[\psi] &= -W(\theta_\mu)\sqrt{\sin\theta_\mu}\int_{I_3}\frac{\overline{\Psi(\theta)}\sqrt{\sin\theta}}{\mu-\cos\theta}\,d\theta.
  \end{align*}
  We use H\"{o}lder's inequality, \eqref{E:Lap_Half}, \eqref{E:L2_Mode}, and \eqref{E:Hardy} to $K_1[\psi]$ to get
  \begin{align*}
    |K_1[\psi]| \leq \frac{C}{\sin\theta_\mu}\|(-\Delta)^{1/2}w\|_{L^2(S^2)}\|\psi\|_{L^2(S^2)}.
  \end{align*}
  Also, if $\theta\in I_3$, i.e. $|\theta-\theta_\mu|\geq\varepsilon'$, then it follows from \eqref{E:KL_mucos} that
  \begin{align*}
    |\mu-\cos\theta| \geq \frac{1}{2}|\theta-\theta_\mu|\sin\theta_\mu \geq \frac{1}{2}\varepsilon'\sin\theta_\mu = \sigma_2^2\sin^2\theta_\mu.
  \end{align*}
  By this inequality, H\"{o}lder's inequality, and \eqref{E:L2_Mode}, we have
  \begin{align*}
    |K_2[\psi]| \leq \frac{C|W(\theta_\mu)|\|\psi\|_{L^2(S^2)}}{\sigma_2^2\sin^{3/2}\theta_\mu}.
  \end{align*}
 These estimates, \eqref{E:KL_sinmu}, and \eqref{E:KL_w_est} imply that
  \begin{align} \label{Pf_vH1:J31}
    \begin{aligned}
      |J_{3,1}[\psi]| &\leq \frac{C}{\sin^{3/2}\theta_\mu}\Bigl(\sigma_2^{-2}|W(\theta_\mu)|+\sqrt{\sin\theta_\mu}\|(-\Delta)^{1/2}w\|_{L^2(S^2)}\Bigr)\|\psi\|_{L^2(S^2)} \\
      &\leq \frac{C}{(1-\mu)^{3/4}}\Bigl(\sigma_2^{-2}|W(\theta_\mu)|+\|(-\Delta)^{1/2}v\|_{L^2(S^2)}\Bigr)\|\psi\|_{L^2(S^2)}.
    \end{aligned}
  \end{align}
  Also, as in \eqref{Pf_vH1:I2_int}, we see by \eqref{E:KL_cosep} and $\varepsilon'=2\sigma_2^2\sin\theta_\mu$ that
  \begin{align*}
    \int_{I_3}\frac{\sin\theta}{(\mu-\cos\theta)^2}\,d\theta \leq \frac{1}{\cos(\theta_\mu-\varepsilon')-\mu}+\frac{1}{\mu-\cos(\theta_\mu+\varepsilon')} \leq \frac{C}{\varepsilon'\sin\theta_\mu} \leq \frac{C}{\sigma_2^2\sin^2\theta_\mu}.
  \end{align*}
  We deduce from this inequality, \eqref{E:L2_Mode}, \eqref{E:Linf}, and $\delta/\sin\theta_\mu\leq C$ by \eqref{E:KL_sinmu} that
  \begin{align} \label{Pf_vH1:J32}
    \begin{aligned}
      |J_{3,2}[\psi]| &\leq \delta\|\Psi\|_{L^\infty(0,\pi)}\left(\int_0^\pi|F(\theta)|^2\sin\theta\,d\theta\right)^{1/2}\left(\int_{I_3}\frac{\sin\theta}{(\mu-\cos\theta)^2}\,d\theta\right)^{1/2} \\
      &\leq C\sigma_2^{-1}\|f\|_{L^2(S^2)}\|(-\Delta)^{1/2}\psi\|_{L^2(S^2)}.
    \end{aligned}
  \end{align}
  Applying \eqref{Pf_vH1:J31} and \eqref{Pf_vH1:J32} to $J_3[\psi]=J_{3,1}[\psi]+J_{3,2}[\psi]$, we get
  \begin{multline} \label{Pf_vH1:J3}
    |J_3[\psi]| \leq \frac{C}{(1-\mu)^{3/4}}\Bigl(\sigma_2^{-2}|W(\theta_\mu)|+\|(-\Delta)^{1/2}v\|_{L^2(S^2)}\Bigr)\|\psi\|_{L^2(S^2)} \\
    +C\sigma_2^{-1}\|f\|_{L^2(S^2)}\|(-\Delta)^{1/2}\psi\|_{L^2(S^2)}.
  \end{multline}
  Now we deduce from \eqref{Pf_vH1:IBP}--\eqref{Pf_vH1:J1}, \eqref{Pf_vH1:J2}, \eqref{Pf_vH1:J3}, and $\sigma_2<1$ that
  \begin{multline*}
    \bigl|(\nabla v,\nabla\psi)_{L^2(S^2)}-6(v,\psi)_{L^2(S^2)}\bigr| \leq 2\pi(|J_1[\psi]|+|J_2[\psi]|+|J_3[\psi]|) \\
    \leq C\Bigl(|W(\theta_\mu)|+\sigma_2^{-1}R(u,\delta,\mu)+\sigma_2\|(-\Delta)^{1/2}v\|_{L^2(S^2)}\Bigr)\|(-\Delta)^{1/2}\psi\|_{L^2(S^2)} \\
    +\frac{C}{(1-\mu)^{3/4}}\Bigl(\sigma_2^{-2}|W(\theta_\mu)|+\|(-\Delta)^{1/2}v\|_{L^2(S^2)}\Bigr)\|\psi\|_{L^2(S^2)}.
  \end{multline*}
  In this inequality, we further use \eqref{E:KL_W_thmu} with $\sigma_1=\sigma_2^2$ and apply \eqref{E:Lap_Half} to $\psi$ (note that we assume $\psi=\Psi(\theta)e^{im\varphi}$ with $m\neq0$). Then we obtain \eqref{E:KL_vH1} since $\sigma_2<1$.

  Let us show \eqref{E:KL_vLaph}. We set $\psi=w$ for $J_{3,1}[\psi]$ (see \eqref{Pf_vH1:J3_dec}) to get
  \begin{align*}
    J_{3,1}[w] = -\int_{I_3}\frac{|W(\theta)|^2}{\mu-\cos\theta}\sin\theta\,d\theta = \left(\int_0^{\theta_\mu-\varepsilon'}+\int_{\theta_\mu+\varepsilon'}^\pi\right)\frac{|W(\theta)|^2}{\cos\theta-\mu}\sin\theta\,d\theta \in \mathbb{R}.
  \end{align*}
  Moreover, since $\cos\theta-\mu\leq0$ for $\theta\in(\theta_\mu+\varepsilon',\pi)$ and
  \begin{align*}
    \cos\theta-\mu \geq \frac{1}{2}(\theta_\mu-\theta)\sin\theta_\mu \geq \frac{1}{2}\varepsilon'\sin\theta_\mu = \sigma_2^2\sin^2\theta_\mu \geq \sigma_2^2(1-\mu)
  \end{align*}
  for $\theta\in(0,\theta_\mu-\varepsilon')$ by \eqref{E:KL_sinmu} and \eqref{E:KL_mucos}, we see by \eqref{E:L2_Mode} that
  \begin{align} \label{Pf_vH1:J31_w}
    J_{3,1}[w] \leq \int_0^{\theta_\mu-\varepsilon'}\frac{|W(\theta)|^2}{\cos\theta-\mu}\sin\theta\,d\theta \leq \frac{C}{\sigma_2^2(1-\mu)}\|w\|_{L^2(S^2(0,\theta_\mu))}^2,
  \end{align}
  where $S^2(0,\theta_\mu)$ is given by \eqref{E:Def_Band}. Also, since $v=\Delta^{-1}u\in\mathcal{Y}_m$,
  \begin{align*}
    (v,Y_2^m)_{L^2(S^2)} = 0, \quad (\nabla v,\nabla Y_2^m)_{L^2(S^2)} = -(v,\Delta Y_2^m)_{L^2(S^2)} = \lambda_2(v,Y_2^m)_{L^2(S^2)} = 0.
  \end{align*}
  Note that $Y_2^m\equiv0$ when $|m|\geq3$. By this fact and $w=6\mu v-\beta_mY_2^m$,
  \begin{align*}
    (\nabla v,\nabla w)_{L^2(S^2)}-6(v,w)_{L^2(S^2)} = 6\mu\Bigl(\|\nabla v\|_{L^2(S^2)}^2-6\|v\|_{L^2(S^2)}^2\Bigr) \in \mathbb{R}.
  \end{align*}
  Thus, setting $\psi=w$ in \eqref{Pf_vH1:IBP} and using \eqref{Pf_vH1:decom}, we have
  \begin{align*}
    6\mu\Bigl(\|\nabla v\|_{L^2(S^2)}^2-6\|v\|_{L^2(S^2)}^2\Bigr) &= 2\pi\sum_{k=1}^3J_k[w] = 2\pi\{\mathrm{Re}(J_1[w]+J_2[w]+J_{3,2}[w])+J_{3,1}[w]\} \\
    &\leq 2\pi(|J_1[w]|+|J_2[w]|+|J_{3,2}[w]|+J_{3,1}[w])
  \end{align*}
  and we use \eqref{Pf_vH1:J1}, \eqref{Pf_vH1:J2}, \eqref{Pf_vH1:J32}, \eqref{Pf_vH1:J31_w}, and $\sigma_2<1$ to find that
  \begin{multline} \label{Pf_vH1:Laph_v}
    6\mu\Bigl(\|\nabla v\|_{L^2(S^2)}^2-6\|v\|_{L^2(S^2)}^2\Bigr) \\
    \leq C\Bigl(|W(\theta_\mu)|+\sigma_2^{-1}R(u,\delta,\mu)+\sigma_2\|(-\Delta)^{1/2}v\|_{L^2(S^2)}\Bigr)\|(-\Delta)^{1/2}w\|_{L^2(S^2)} \\
    +\frac{C}{\sigma_2^2(1-\mu)}\|w\|_{L^2(S^2(0,\theta_\mu))}^2.
  \end{multline}
  Moreover, since $u\in\mathcal{Y}_m$, we can use \eqref{Pf_KH:nabla} in the proof of Lemma \ref{L:Ko_High} to get
  \begin{align*}
    6\mu\Bigl(\|\nabla v\|_{L^2(S^2)}^2-6\|v\|_{L^2(S^2)}^2\Bigr) \geq 3\mu\|(-\Delta)^{-1/2}u\|_{L^2(S^2)}^2 = 3\mu\|(-\Delta)^{1/2}v\|_{L^2(S^2)}^2.
  \end{align*}
  We apply this inequality, \eqref{E:KL_w_est} for $(-\Delta)^{1/2}w$, and \eqref{E:KL_W_thmu} with $\sigma_1=\sigma_2^2$ to \eqref{Pf_vH1:Laph_v}, and then use $\sigma_2<1$ to obtain \eqref{E:KL_vLaph}.
\end{proof}

\begin{lemma} \label{L:KL_wLS}
  There exist constants $\mu_0\in(0,1)$ and $C>0$ such that
  \begin{align} \label{E:KL_wLS}
    \frac{1}{1-\mu}\|w\|_{L^2(S^2(\theta_\mu,\pi))}^2 \leq C\left(\sigma_3^{-1}R(u,\delta,\mu)^2+\sigma_3\|(-\Delta)^{1/2}v\|_{L^2(S^2)}^2\right)
  \end{align}
  for all $\mu\in[\mu_0,1)$ and $\sigma_3\in(0,1/2)$, where $S^2(\theta_\mu,\pi)$ is given by \eqref{E:Def_Band}.
\end{lemma}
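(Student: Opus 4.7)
The plan is to multiply the identity $(\mu-\cos\theta)(\Delta w+6w)=6\mu(w+\delta f)$ from Lemma \ref{L:KL_w_Eq} by $\bar{W}\sin\theta$ (recalling $w=W(\theta)e^{im\varphi}$ and $f=F(\theta)e^{im\varphi}$ from Lemma \ref{L:KL_WF}) and integrate over $\theta\in(\theta_\mu,\pi)$. After integration by parts, the boundary contributions vanish both at $\theta=\theta_\mu$ (since $\mu-\cos\theta_\mu=0$) and at $\theta=\pi$ (since $\sin\pi=0$ and $W\in C^1([0,\pi])$ by Lemma \ref{L:KL_w_est} and the Sobolev embedding). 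Taking the real part and using $\mathrm{Re}(W'\bar W)=\tfrac{1}{2}(|W|^2)'$ together with one further integration by parts and $W(\pi)=0$ (Lemma \ref{L:NS_Pole}), one arrives at the identity
\begin{equation*}
  J+K = \tfrac{1}{2}|W(\theta_\mu)|^2\sin^2\theta_\mu - 6\mu\delta\,\mathrm{Re}\int_{\theta_\mu}^\pi F\bar W\sin\theta\,d\theta,
\end{equation*}
where $J$ and $K$ are the quantities defined in the introduction.

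The critical intermediate step is the coercive lower bound $J+K\geq C_0\int_{\theta_\mu}^\pi|W|^2\sin\theta\,d\theta$ for some constant $C_0>0$, valid when $\mu\geq\mu_0$ with $\mu_0\in(0,1)$ close enough to $1$. Following the outline in the introduction, split $K=K_1+K_2$ into integrals over $(\theta_\mu,\Theta)$ and $(\Theta,\pi)$, respectively, for a fixed $\Theta\in(\theta_\mu,\pi/2)$. On $(\theta_\mu,\Theta)$ the inequality $\cos\theta\geq\cos\Theta>0$ yields directly $K_1\geq 5\cos\Theta\int_{\theta_\mu}^\Theta|W|^2\sin\theta\,d\theta$. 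For $K_2$, integrate by parts choosing the direction $\sin\theta\cos\theta=-\tfrac{1}{2}(\cos^2\theta)'$ so that the interior boundary contribution $\tfrac{5}{2}|W(\Theta)|^2\cos^2\Theta$ is non-negative and can be discarded; the remaining cross term $5\,\mathrm{Re}\int_\Theta^\pi W'\bar W\cos^2\theta\,d\theta$ is bounded by Young's inequality, splitting $|W'||W|\cos^2\theta$ into a factor absorbed by a small fraction of $J$ (exploiting \emph{both} the $(\mu-\cos\theta)|W'|^2\sin\theta$ piece and the $m^2(\mu-\cos\theta)|W|^2/\sin\theta$ piece that make up $J$) and a residual $|W|^2\cos^4\theta/[(\mu-\cos\theta)\sin\theta]$ dominated by $J^W/[m^2(\mu-\cos\Theta)^2]$ via the bound $\cos^4\theta\leq 1$ and $\mu-\cos\theta\geq\mu-\cos\Theta$ on $(\Theta,\pi)$. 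Taking $\mu_0$ close enough to $1$ and $\Theta$ appropriately (close to $\pi/2$ but with $\cos\Theta>0$) makes $|m|(\mu-\cos\Theta)$ large enough relative to the coefficient $5$ that the absorption closes.

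To conclude, combine the coercive bound with the identity above. Using \eqref{E:KL_sinmu} one gets $\sin^2\theta_\mu\leq 2(1-\mu)$, and Lemma \ref{L:KL_WF} with $\sigma_1=\sigma_3^{1/2}$ yields $|W(\theta_\mu)|^2\leq C(\sigma_3^{-1}R(u,\delta,\mu)^2+\sigma_3\|(-\Delta)^{1/2}v\|_{L^2(S^2)}^2)$, so the boundary contribution is at most $C(1-\mu)(\sigma_3^{-1}R^2+\sigma_3\|(-\Delta)^{1/2}v\|^2)$. Absorb the forcing $6\mu\delta|\mathrm{Re}(F,W)|$ into $\tfrac{C_0}{2}\int|W|^2\sin\theta\,d\theta$ by Young's inequality, producing an $O(\delta^2\|f\|_{L^2(S^2)}^2)$ residue; the constraint $\mu\leq 1-\kappa\delta^2$ from Lemma \ref{L:Ko_Low} forces $\delta^2\leq(1-\mu)/\kappa$, converting this residue to $C(1-\mu)R^2$. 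Dividing through by $(1-\mu)$ yields the asserted estimate for all $\mu\in[\mu_0,1)$ and $\sigma_3\in(0,1/2)$.

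The main obstacle is the coercive lower bound $J+K\geq C_0\int|W|^2\sin\theta\,d\theta$. As emphasized in the introduction, the coefficient $5$ of $K$ (arising from the coefficient $6$ of $\Delta^{-1}$ in $\Lambda_m$) is too large for the naive argument using only the $m^2|W|^2/\sin^2\theta$ part of $|\nabla w|^2$ to succeed when $|m|=1$: one must exploit \emph{both} $|W'|^2$ and $m^2|W|^2/\sin^2\theta$ simultaneously in Young's inequality, and balance the resulting constants against the bounds for $K_1$ and the remaining part of $J$ by choosing $\Theta$ and $\mu_0$ delicately. The long and careful bookkeeping of these constants is the heart of the proof.
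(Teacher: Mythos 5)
Your overall architecture matches the paper's: the same integrated identity
\begin{equation*}
  J+K=\tfrac12|W(\theta_\mu)|^2\sin^2\theta_\mu-6\mu\delta\,\mathrm{Re}\int_{\theta_\mu}^\pi F\overline{W}\sin\theta\,d\theta,
\end{equation*}
the same splitting $K=K_1+K_2$ at some $\Theta\in(\theta_\mu,\pi/2)$, the same treatment of $K_1$, and the same final assembly via \eqref{E:KL_sinmu}, \eqref{E:KL_W_thmu} with $\sigma_1=\sigma_3^{1/2}$, and $\kappa\delta^2\le 1-\mu$. The final assembly is fine. The gap is in the absorption of $K_2$ into $J$, which is precisely the delicate step. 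You integrate by parts against $\cos^2\theta$, discard the nonnegative boundary term at $\Theta$, and try to absorb $5\int_\Theta^\pi\cos^2\theta\,\mathrm{Re}(W'\overline{W})\,d\theta$ into $J$ by Young's inequality. Tracking your constants: the Young split with parameter $a$ gives coefficients $\tfrac{5a}{2}$ on the $|W'|^2$ part of $J$ and $\tfrac{5}{2a}\sup_\theta\frac{\cos^4\theta}{m^2(\mu-\cos\theta)^2}$ on the $|W|^2$ part, and after optimizing $a$ the absorption requires roughly $\tfrac{5}{2}\cdot\frac{\cos^2\theta}{|m|(\mu-\cos\theta)}<1$ uniformly on $(\Theta,\pi)$. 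Near $\theta=\pi$ this reads $\tfrac{5}{2(\mu+1)}<|m|$, i.e.\ $|m|\ge 2$ at best; with your cruder bounds ($\cos^4\theta\le1$, $\mu-\cos\theta\ge\mu-\cos\Theta$) you need $|m|(\mu-\cos\Theta)>5$, which is impossible since $\mu-\cos\Theta<1$. Your claim that choosing $\mu_0$ close to $1$ and $\Theta$ close to $\pi/2$ makes $|m|(\mu-\cos\Theta)$ ``large enough relative to $5$'' is therefore false, and the argument does not close exactly in the case $|m|=1$ that the lemma must cover (the constants are required to be uniform in $m$).

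The missing idea is to integrate by parts against $\cos^2\theta-\cos^2\Theta$ rather than $\cos^2\theta$: since $2\cos\theta\sin\theta=-\frac{d}{d\theta}(\cos^2\theta-\cos^2\Theta)$, this costs nothing, kills the boundary term at $\Theta$ exactly (and the one at $\pi$ via $W(\pi)=0$), and yields
\begin{equation*}
  K_2=5\int_\Theta^\pi\chi(\theta)(\mu-\cos\theta)\,\mathrm{Re}\bigl(W'(\theta)\overline{W(\theta)}\bigr)\,d\theta,\qquad \chi(\theta)=\frac{\cos^2\theta-\cos^2\Theta}{\mu-\cos\theta}.
\end{equation*}
Now the symmetric Young inequality $(\mu-\cos\theta)|W'||W|\le\tfrac12(\mu-\cos\theta)(|W'|^2\sin\theta+|W|^2/\sin\theta)$ gives $|K_2|\le\tfrac52\|\chi\|_{L^\infty(\Theta,\pi)}J$, and the whole point is that the subtraction of $\cos^2\Theta$ makes $\|\chi\|_\infty<2/5$ achievable uniformly in $m$: at $\theta=\pi$ the numerator is $\sin^2\Theta$ rather than $1$, and a calculus computation of the extrema of $t\mapsto(t^2-T^2)/(\mu-t)$ shows that with $\cos\Theta=\sqrt7/5$ and $\mu\ge\mu_0=\sqrt{23}/5$ one gets $\tfrac52\|\chi\|_\infty\le 9/(5+\sqrt{23})<1$. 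One then still needs $J\ge c\int_\Theta^\pi|W|^2\sin\theta\,d\theta$ (which follows from $\mu-\cos\theta\ge\mu_0-\cos\Theta>0$ on $(\Theta,\pi)$) to convert $J+K_2\ge C_0J$ into the desired $L^2$ lower bound; this is where the tension between making $\cos\Theta$ large (good for $K_1$ and for $\|\chi\|_\infty$ at $\theta=\pi$) and making $\mu_0-\cos\Theta$ large (good for the coercivity of $J$) is resolved by the specific numerical choices. Without some device of this kind your absorption cannot be repaired for $|m|=1$.
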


\begin{proof}
  Since $(\mu-x_3)\Delta w = 6x_3w+6\mu\delta f$ by \eqref{E:KL_w_Eq}, we use \eqref{E:KL_WL_form}, \eqref{E:Re_SC}, and $x_3=\cos\theta$ to rewrite this equation as
  \begin{align*}
    (\mu-\cos\theta)\left\{\frac{1}{\sin\theta}\frac{d}{d\theta}\bigl(\sin\theta\,W'(\theta)\bigr)-\frac{m^2}{\sin^2\theta}W(\theta)\right\} = 6\cos\theta\,W(\theta)+6\mu\delta F(\theta)
  \end{align*}
  for $\theta\in(0,\pi)$. We multiply both sides by $\overline{W(\theta)}\sin\theta$ and integrate them over $(\theta_\mu,\pi)$. Then we carry out integration by parts and use $\mu-\cos\theta_\mu=0$ and $\sin\pi=0$ to get
  \begin{multline*}
    -\int_{\theta_\mu}^\pi W'(\theta)\overline{W(\theta)}\sin^2\theta\,d\theta-\int_{\theta_\mu}^\pi (\mu-\cos\theta)\left(|W'(\theta)|^2+\frac{m^2}{\sin^2\theta}|W(\theta)|^2\right)\sin\theta\,d\theta \\
    = 6\int_{\theta_\mu}^\pi|W(\theta)|^2\cos\theta\sin\theta\,d\theta+6\mu\delta\int_{\theta_\mu}^\pi F(\theta)\overline{W(\theta)}\sin\theta\,d\theta.
  \end{multline*}
  We take the real part of this equality and use
  \begin{align*}
    \mathrm{Re}\int_{\theta_\mu}^\pi W'(\theta)\overline{W(\theta)}\sin^2\theta\,d\theta = -\frac{1}{2}|W(\theta_\mu)|^2\sin^2\theta_\mu-\int_{\theta_\mu}^\pi|W(\theta)|^2\cos\theta\sin\theta\,d\theta
  \end{align*}
  by integration by parts and $\sin\pi=0$ to get
  \begin{align} \label{Pf_wLS:Int_Eq}
    J+K = -6\mu\delta\,\mathrm{Re}\int_{\theta_\mu}^\pi F(\theta)\overline{W(\theta)}\sin\theta\,d\theta+\frac{1}{2}|W(\theta_\mu)|^2\sin^2\theta_\mu,
  \end{align}
  where
  \begin{align} \label{Pf_wLS:Def_JK}
    \begin{aligned}
      J &= \int_{\theta_\mu}^\pi(\mu-\cos\theta)\left(|W'(\theta)|^2+\frac{m^2}{\sin^2\theta}|W(\theta)|^2\right)\sin\theta\,d\theta, \\
      K &= 5\int_{\theta_\mu}^\pi|W(\theta)|^2\cos\theta\sin\theta\,d\theta.
    \end{aligned}
  \end{align}
  Now we claim that there exist $\mu_0\in(0,1)$ and $C>0$ such that
  \begin{align} \label{Pf_wLS:JK_geq}
    J+K \geq C\|w\|_{L^2(S^2(\theta_\mu,\pi))}^2
  \end{align}
  for all $\mu\in[\mu_0,1)$. If this claim is valid, then we apply H\"{o}lder's and Young's inequalities, \eqref{E:L2_Mode}, \eqref{Pf_wLS:JK_geq}, $\mu\leq1$, and $\|f\|_{L^2(S^2(\theta_\mu,\pi))}\leq\|f\|_{L^2(S^2)}$ to \eqref{Pf_wLS:Int_Eq} to get
  \begin{align*}
    \|w\|_{L^2(S^2(\theta_\mu,\pi))}^2 \leq \frac{1}{2}\|w\|_{L^2(S^2(\theta_\mu,\pi))}^2+C\left(\delta^2\|f\|_{L^2(S^2)}^2+|W(\theta_\mu)|^2\sin^2\theta_\mu\right)
  \end{align*}
  and we subtract $\frac{1}{2}\|w\|_{L^2(S^2(\theta_\mu,\pi))}^2$ from both sides, dividing them by $1-\mu$, and use \eqref{E:KL_sinmu} and \eqref{E:KL_W_thmu} with $\sigma_1=\sigma_3^{1/2}$ to obtain \eqref{E:KL_wLS}.

  Let us show \eqref{Pf_wLS:JK_geq}. We split $K=K_1+K_2$ into
  \begin{align*}
    K_1 = 5\int_{\theta_\mu}^\Theta|W(\theta)|^2\cos\theta\sin\theta\,d\theta, \quad K_2 = 5\int_\Theta^\pi|W(\theta)|^2\cos\theta\sin\theta\,d\theta,
  \end{align*}
  where $\Theta\in(\theta_\mu,\pi/2)$ is fixed later. By $\cos\theta\geq\cos\Theta$ for $\theta\in[\theta_\mu,\Theta]$,
  \begin{align} \label{Pf_wLS:K1}
    K_1 \geq 5\cos\Theta\int_{\theta_\mu}^\Theta|W(\theta)|^2\sin\theta\,d\theta.
  \end{align}
  Since $w=W(\theta)e^{im\varphi}\in H^3(S^2)\subset C^1(S^2)$ and $m\neq0$, we have $W(\pi)=0$ by Lemma \ref{L:NS_Pole}. By this fact, $2\cos\theta\sin\theta=-\frac{d}{d\theta}(\cos^2\theta-\cos^2\Theta)$, and integration by parts, we have
  \begin{align*}
    K_2 &= -\frac{5}{2}\Bigl[(\cos^2\theta-\cos^2\Theta)|W(\theta)|^2\Bigr]_\Theta^\pi+5\int_\Theta^\pi(\cos^2\theta-\cos^2\Theta)\mathrm{Re}\Bigl(W'(\theta)\overline{W(\theta)}\Bigr)\,d\theta \\
    &= 5\int_\Theta^\pi\chi(\theta)(\mu-\cos\theta)\mathrm{Re}\Bigl(W'(\theta)\overline{W(\theta)}\Bigr)\,d\theta,
  \end{align*}
  where $\chi(\theta)=(\cos^2\theta-\cos^2\Theta)(\mu-\cos\theta)^{-1}$. Hence
  \begin{align} \label{Pf_wLS:K2}
    |K_2| \leq 5\|\chi\|_{L^\infty(\Theta,\pi)}\int_\Theta^\pi(\mu-\cos\theta)|W'(\theta)||W(\theta)|\,d\theta \leq \frac{5}{2}\|\chi\|_{L^\infty(\Theta,\pi)}\cdot J
  \end{align}
  by Young's inequality and $m^2\geq1$ (see \eqref{Pf_wLS:Def_JK}). To estimate $\|\chi\|_{L^\infty(\Theta,\pi)}$, let
  \begin{align*}
    \rho(t) = \chi(\arccos t) = \frac{t^2-T^2}{\mu-t}, \quad t \in[-1,T],
  \end{align*}
  where $T=\cos\Theta\in(0,\mu)$. Then
  \begin{align} \label{Pf_wLS:drho}
    \frac{d\rho}{dt}(t) = -\frac{(t-t_-)(t-t_+)}{(\mu-t)^2}, \quad t_{\pm} = \mu\pm\sqrt{\mu^2-T^2}.
  \end{align}
  Now let $\mu_0=\sqrt{23}/5$ and $T=\cos\Theta=\sqrt{7}/5$. Then for $\mu\in[\mu_0,1)$ we see by the mean value theorem for $\mu-\sqrt{\mu^2-s}$ with $s\in[0,T^2]$ and $\mu^2-T^2\geq16/25$ that
  \begin{align} \label{Pf_wLS:inq_T}
    \mu-\sqrt{\mu^2-T^2} \leq \frac{T^2}{2\sqrt{\mu^2-T^2}} \leq \frac{5}{8}T^2
  \end{align}
  and thus $0<t_-<T<t_+$ by $T<\mu<1$. Hence, by \eqref{Pf_wLS:drho} and $\rho(T)=0$,
  \begin{align*}
    \|\chi\|_{L^\infty(\Theta,\pi)} &= \|\rho\|_{L^\infty(-1,T)} = \max\{|\rho(-1)|,|\rho(t_-)|\} \\
    &= \max\left\{\frac{1-T^2}{1+\mu},2\left(\mu-\sqrt{\mu^2-T^2}\right)\right\}.
  \end{align*}
  Moreover, by $\mu\geq\mu_0=\sqrt{23}/5$, $T^2=7/25$, \eqref{Pf_wLS:inq_T}, and $4<\sqrt{23}<5$,
  \begin{align*}
    \frac{5}{2}\cdot\frac{1-T^2}{1+\mu} \leq \frac{9}{5+\sqrt{23}}, \quad \frac{5}{2}\cdot 2\left(\mu-\sqrt{\mu^2-T^2}\right) \leq \frac{7}{8}, \quad \frac{7}{8} < \frac{9}{10} < \frac{9}{5+\sqrt{23}} < 1.
  \end{align*}
  Hence $\frac{5}{2}\|\chi\|_{L^\infty(\Theta,\pi)}\leq9/(5+\sqrt{23})$ and
  \begin{align} \label{Pf_wLS:JK2}
    J+K_2 \geq J-|K_2| \geq C_0J
  \end{align}
  with $C_0=1-9/(5+\sqrt{23})>0$ by \eqref{Pf_wLS:K2}. Moreover,
  \begin{align} \label{Pf_wLS:Jgeq}
    J \geq \int_\Theta^\pi\frac{\mu-\cos\theta}{\sin^2\theta}|W(\theta)|^2\sin\theta\,d\theta \geq \frac{\sqrt{23}-\sqrt{7}}{5}\int_\Theta^\pi|W(\theta)|^2\sin\theta\,d\theta
  \end{align}
  by $m^2\geq1$, $\sin^2\theta\leq1$ and $\cos\theta\leq\cos\Theta=\sqrt{7}/5$ for $\theta\in[\Theta,\pi]$, and $\mu\geq\mu_0=\sqrt{23}/5$. Thus, noting that $K=K_1+K_2$, we combine \eqref{Pf_wLS:K1}, \eqref{Pf_wLS:JK2}, and \eqref{Pf_wLS:Jgeq} and apply \eqref{E:L2_Mode} to $w=W(\theta)e^{im\varphi}$ to get \eqref{Pf_wLS:JK_geq}.
\end{proof}

Now let us prove Lemma \ref{L:Ko_Low}. We give the proofs of \eqref{E:Ko_Lowu} and \eqref{E:Ko_LowS} separately.

\begin{proof}[Proof of \eqref{E:Ko_Lowu}]
  As we mentioned above, we assume $\mu\geq0$ and prove \eqref{E:Ko_Lowu} by contradiction. Assume that for each $k\in\mathbb{N}$ there exist $m_k\in\mathbb{Z}\setminus\{0\}$, $u_k\in\mathcal{Y}_{m_k}\cap H^1(S^2)$, $\delta_k\in(0,1]$, and $\mu_k\in[0,1-\kappa\delta_k^2]$ such that, if we define $v_k$, $w_k$, $f_k$, and $R_k=R(u_k,\delta_k,\mu_k)$ by \eqref{E:KL_vw} and \eqref{E:KL_fR} with $m$, $u$, $\delta$, and $\mu$ replaced by $m_k$, $u_k$, $\delta_k$, and $\mu_k$, then
  \begin{gather}
    \delta_k^2\|u_k\|_{L^2(S^2)}^2+\|(-\Delta)^{1/2}v_k\|_{L^2(S^2)}^2+\frac{1}{1-\mu_k}\|w_k\|_{L^2(S^2)}^2 = 1, \label{Pf_KL:As_u} \\
    \lim_{k\to\infty}R_k = \lim_{k\to\infty}\left(\|f_k\|_{L^2(S^2)}+\frac{\delta_k^3}{(1-\mu_k)^{1/2}}\|(-\Delta)^{1/2}u_k\|_{L^2(S^2)}\right) = 0. \label{Pf_KL:As_Rk}
  \end{gather}
  Taking subsequences, we may assume that
  \begin{gather*}
    \lim_{k\to\infty}m_k = m_\infty \in \{\pm\infty\}\cup\mathbb{Z}\setminus\{0\}, \\
    \lim_{k\to\infty}\delta_k = \delta_\infty \in [0,1], \quad \lim_{k\to\infty}\mu_k = \mu_\infty \in [0,1-\kappa\delta_\infty^2].
  \end{gather*}
  Suppose first that $\delta_\infty>0$. Then $\delta_k\geq\delta_\infty/2>0$ for sufficiently large $k$. Moreover, we see by \eqref{E:KL_w_est} and the boundedness of $\Delta^{-1}$ on $L_0^2(S^2)$ that
  \begin{align*}
    \|w_k\|_{L^2(S^2)} \leq C\|(-\Delta)^{1/2}v_k\|_{L^2(S^2)} = C\|(-\Delta)^{-1/2}u_k\|_{L^2(S^2)} \leq C\|(-\Delta)^{1/2}u_k\|_{L^2(S^2)}.
  \end{align*}
  By these facts, $(1-\mu_k)^{-1}\geq1$, \eqref{E:H1_Equiv}, and \eqref{Pf_KL:As_Rk}, we find that
  \begin{multline*}
    \delta_k^2\|u_k\|_{L^2(S^2)}^2+\|(-\Delta)^{1/2}v_k\|_{L^2(S^2)}^2+\frac{1}{1-\mu_k}\|w_k\|_{L^2(S^2)}^2 \leq \frac{C\delta_k^6}{1-\mu_k}\|(-\Delta)^{1/2}u_k\|_{L^2(S^2)}^2 \to 0
  \end{multline*}
  as $k\to\infty$, which contradicts \eqref{Pf_KL:As_u}.

  Now let $\delta_\infty=0$. We see by \eqref{E:KL_u} for $u_k$, \eqref{Pf_KL:As_u}, and \eqref{Pf_KL:As_Rk} that
  \begin{align} \label{Pf_KL:As_linf}
    \liminf_{k\to\infty}\left(\|(-\Delta)^{1/2}v_k\|_{L^2(S^2)}^2+\frac{1}{1-\mu_k}\|w_k\|_{L^2(S^2)}^2\right) \geq C > 0.
  \end{align}
  Let $\theta_k=\arccos\mu_k\in(0,\pi/2]$. Then we see by Lemma \ref{L:KL_WF} that
  \begin{align} \label{Pf_KL:Wk_Fk}
    w_k = W_k(\theta)e^{im_k\varphi}, \quad f_k = F_k(\theta)e^{im_k\varphi}, \quad W_k(\theta_k)+\delta_kF_k(\theta_k) = 0
  \end{align}
  Also, by \eqref{E:KL_W_thmu} for $W_k(\theta_k)$, \eqref{Pf_KL:As_u}, and \eqref{Pf_KL:As_Rk},
  \begin{align*}
    \limsup_{k\to\infty}|W_k(\theta_k)| \leq C\limsup_{k\to\infty}\Bigl(\sigma^{-1}R_k+\sigma\|(-\Delta)^{1/2}v_k\|_{L^2(S^2)}\Bigr) \leq C\sigma
  \end{align*}
  for all $\sigma\in(0,1/\sqrt{2})$ and thus
  \begin{align} \label{Pf_KL:Wk_Thk}
    \lim_{k\to\infty}W_k(\theta_k) = 0.
  \end{align}
  In what follows, we consider two cases.

  \textbf{Case 1:} $0\leq\mu_\infty<1$. If $m_\infty=\pm\infty$, then $|m_k|\geq3$ for sufficiently large $k\in\mathbb{N}$. Then since $v_k\in\mathcal{Y}_{m_k}$ is of the form \eqref{E:Ko_Ym} with $N_{m_k}=|m_k|$, it follows from \eqref{E:Def_Laps}, $\lambda_n=n(n+1)\geq|m_k|^2$ for $n\geq|m_k|$, and \eqref{Pf_KL:As_u} that
  \begin{align} \label{Pf_KL:0in_vk}
    \|v_k\|_{L^2(S^2)} \leq \frac{1}{|m_k|}\|(-\Delta)^{1/2}v_k\|_{L^2(S^2)} \leq \frac{1}{|m_k|} \to 0 \quad\text{as}\quad k\to\infty.
  \end{align}
  We set $\psi=v_k$ in \eqref{E:KL_vH1} for $v_k$ and use \eqref{E:Lap_Half} and \eqref{Pf_KL:As_u} to find that
  \begin{align*}
    \|(-\Delta)^{1/2}v_k\|_{L^2(S^2)}^2 \leq 6\|v_k\|_{L^2(S^2)}^2+C(\sigma^{-2}R_k+\sigma)+\frac{C}{(1-\mu_k)^{3/4}}(\sigma^{-4}R_k+1)\|v_k\|_{L^2(S^2)}
  \end{align*}
  for all $\sigma\in(0,1/2)$. Then we send $k\to\infty$ and use \eqref{Pf_KL:As_Rk}, \eqref{Pf_KL:0in_vk}, and $\mu_\infty<1$ to get
  \begin{align*}
    \limsup_{k\to\infty}\|(-\Delta)^{1/2}v_k\|_{L^2(S^2)}^2 \leq C\sigma \quad\text{for all}\quad \sigma\in\left(0,\frac{1}{2}\right).
  \end{align*}
  Hence $\lim_{k\to\infty}\|(-\Delta)^{1/2}v_k\|_{L^2(S^2)}=0$. Also, $\lim_{k\to\infty}\|w_k\|_{L^2(S^2)}=0$ by \eqref{E:KL_w_est} and \eqref{Pf_KL:0in_vk}. By these facts and $\mu_\infty<1$ we get a contradiction with \eqref{Pf_KL:As_linf}.

  Next suppose that $m_\infty\in\mathbb{Z}\setminus\{0\}$. Taking a subsequence, we may assume $m_k=m_\infty$ and thus $v_k\in\mathcal{Y}_{m_\infty}$ for all $k\in\mathbb{N}$. Since $\{v_k\}_{k=1}^\infty$ is bounded in $H^1(S^2)$ by \eqref{E:H1_Equiv} and \eqref{Pf_KL:As_u}, and since $H^1(S^2)$ is compactly embedded into $L^2(S^2)$, we may assume that
  \begin{align} \label{Pf_KL:0fi_vkli}
    \lim_{k\to\infty}v_k = v_\infty \quad\text{strongly in $L^2(S^2)$ and weakly in $H^1(S^2)$}
  \end{align}
  with some $v_\infty\in H^1(S^2)$ by taking a subsequence again. Then $v_\infty\in\mathcal{Y}_{m_\infty}$ since $v_k\in\mathcal{Y}_{m_\infty}$ for all $k\in\mathbb{N}$ and $\mathcal{Y}_{m_\infty}$ is closed in $L^2(S^2)$. If $v_\infty=0$, i.e. $\|v_k\|_{L^2(S^2)}\to0$ as $k\to\infty$, then we can get a contradiction with \eqref{Pf_KL:As_linf} as in the case of $m_\infty=\pm\infty$. Suppose that $v_\infty\neq0$. Since $u_k=\Delta v_k$ and $\Delta Y_3^{m_\infty}=-\lambda_3Y_3^{m_\infty}$, where we consider $Y_n^{m_\infty}\equiv0$ if $|m_\infty|>n$,
  \begin{align*}
    (u_k,Y_3^{m_\infty})_{L^2(S^2)} = (\Delta v_k,Y_3^{m_\infty})_{L^2(S^2)} = (v_k,\Delta Y_3^{m_\infty})_{L^2(S^2)} = -\lambda_3(v_k,Y_3^{m_\infty})_{L^2(S^2)}.
  \end{align*}
  Hence $w_k=6\mu_kv_k+\frac{1}{2}a_3^{m_\infty}\lambda_3(v_k,Y_3^{m_\infty})_{L^2(S^2)}Y_2^{m_\infty}$ by \eqref{E:KL_vw} for $w_k$, and
  \begin{align} \label{Pf_KL:0fi_wkde}
    \lim_{k\to\infty}w_k = w_\infty = 6\mu_\infty v_\infty+\frac{1}{2}a_3^{m_\infty}\lambda_3(v_\infty,Y_3^{m_\infty})_{L^2(S^2)}Y_2^{m_\infty} \quad\text{strongly in $L^2(S^2)$}
  \end{align}
  by \eqref{Pf_KL:0fi_vkli}, where $a_3^{m_\infty}$ is given by \eqref{E:Y_Rec} if $|m_\infty|=1,2$ and $a_3^{m_\infty}=0$ if $|m_\infty|\geq3$. For each $\psi\in H^1(S^2)$ and $\sigma\in(0,1/2)$, we see by \eqref{E:KL_vH1} for $v_k$ and \eqref{Pf_KL:As_u} that
  \begin{multline*}
    \bigl|(\nabla v_k,\nabla\psi)_{L^2(S^2)}\bigr| \leq 6\|v_k\|_{L^2(S^2)}\|\psi\|_{L^2(S^2)}+C(\sigma^{-2}R_k+\sigma)\|\nabla\psi\|_{L^2(S^2)}\\
    +\frac{C}{(1-\mu_k)^{3/4}}(\sigma^{-4}R_k+1)\|\psi\|_{L^2(S^2)}.
  \end{multline*}
  We send $k\to\infty$, use \eqref{Pf_KL:As_Rk}, \eqref{Pf_KL:0fi_vkli}, and $\mu_\infty<1$, and then let $\sigma\to0$ to get
  \begin{align*}
    \bigl|(\nabla v_\infty,\nabla\psi)_{L^2(S^2)}\bigr| \leq C\left\{\frac{1}{(1-\mu_\infty)^{3/4}}+\|v_\infty\|_{L^2(S^2)}\right\}\|\psi\|_{L^2(S^2)}
  \end{align*}
  for all $\psi\in H^1(S^2)$, which shows that $\Delta v_\infty\in L^2(S^2)$ since $H^1(S^2)$ is dense in $L^2(S^2)$. Hence $v_\infty\in H^2(S^2)$ by the elliptic regularity theorem. Now let $\psi\in C^\infty(S^2)$ satisfy
  \begin{align} \label{Pf_KL:0fi_test}
    \psi = 0 \quad\text{near}\quad \{(x_1,x_2,x_3)\in S^2 \mid x_3=\mu_\infty\}.
  \end{align}
  Then, as in the proof of Lemma \ref{L:KL_vH1}, we can get (see \eqref{Pf_vH1:IBP})
  \begin{align*}
    (\nabla v_k,\nabla\psi)_{L^2(S^2)}-6(v_k,\psi)_{L^2(S^2)} = -\left(w_k+\delta_kf_k,\frac{\psi}{\mu_k-x_3}\right)_{L^2(S^2)}.
  \end{align*}
  We send $k\to\infty$ and use \eqref{Pf_KL:As_Rk}, \eqref{Pf_KL:0fi_vkli}, \eqref{Pf_KL:0fi_wkde}, and $\psi/(\mu_k-x_3)\to\psi/(\mu_\infty-x_3)$ uniformly on $S^2$ by \eqref{Pf_KL:0fi_test}. Then we get
  \begin{align*}
    (\nabla v_\infty,\nabla\psi)_{L^2(S^2)}-6(v_\infty,\psi)_{L^2(S^2)} = -\left(w_\infty,\frac{\psi}{\mu_\infty-x_3}\right)_{L^2(S^2)} = -\left(\frac{w_\infty}{\mu_\infty-x_3},\psi\right)_{L^2(S^2)}
  \end{align*}
  for all $\psi\in C^\infty(S^2)$ satisfying \eqref{Pf_KL:0fi_test}. Since $v_\infty\in H^2(S^2)$, this equality shows that
  \begin{align} \label{Pf_KL:0fi_viEq}
    \Delta v_\infty+6v_\infty = \frac{w_\infty}{\mu_\infty-x_3} \quad\text{on}\quad \{(x_1,x_2,x_3)\in S^2 \mid x_3\neq\mu_\infty\}.
  \end{align}
  Let $u_\infty=\Delta v_\infty$. Then $u_\infty\in\mathcal{Y}_{m_\infty}$ and $u_\infty\neq0$ since $v_\infty\in\mathcal{Y}_{m_\infty}\cap H^2(S^2)$, $v_\infty\neq0$, and $\Delta$ maps $\mathcal{Y}_{m_\infty}$ into itself and is invertible on $\mathcal{Y}_{m_\infty}$ by \eqref{E:Lap_Y} and \eqref{E:Ko_Ym}. Moreover, we see by \eqref{Pf_KL:0fi_wkde}, \eqref{Pf_KL:0fi_viEq}, and $B_{2,m_{\infty}}=(I+6\Delta^{-1})|_{\mathcal{X}_{m_\infty}}$ that $u_\infty$ satisfies
  \begin{align} \label{Pf_KL:0fi_uinf}
    \Lambda_{m_\infty}u_\infty = x_3B_{2,m_\infty}u_\infty = \mu_\infty u_\infty+\beta_\infty Y_2^{m_\infty} \quad\text{in}\quad \mathcal{X}_{m_\infty},
  \end{align}
  where $\beta_\infty=-\frac{1}{2}a_3^{m_\infty}\lambda_3(v_\infty,Y_3^{m_\infty})_{L^2(S^2)}$.

  Now suppose that $|m_\infty|\geq3$. Then $\Lambda_{m_\infty}u_\infty=\mu_\infty u_\infty$ in $\mathcal{X}_{m_\infty}$ by \eqref{Pf_KL:0fi_uinf} and $Y_2^{m_\infty}\equiv0$. Thus, if $\mu_\infty>0$, then $\mu_\infty$ is a nonzero eigenvalue of $\Lambda_{m_\infty}$ since $u_\infty\neq0$, but this contradicts Lemma \ref{L:NoEi_Lam}. Also, if $\mu_\infty=0$, then $u_\infty$ is in the kernel of $\Lambda_{m_\infty}$ and thus $u_\infty=0$ by \eqref{Pf_K13:Ker}, which contradicts $u_\infty\neq0$.

  Let $|m_\infty|=1,2$ and $\mu_\infty>0$. Then, by \eqref{Pf_KL:0fi_uinf} and $\Lambda_{m_\infty}Y_2^{m_\infty}=0$,
  \begin{align*}
    \Lambda_{m_\infty}(u_\infty+\mu_\infty^{-1}\beta_\infty Y_2^{m_\infty}) = \Lambda_{m_\infty}u_\infty = \mu_\infty u_\infty+\beta_\infty Y_2^{m_\infty} = \mu_\infty(u_\infty+\mu_\infty^{-1}\beta_\infty Y_2^{m_\infty}).
  \end{align*}
  Moreover, $u_\infty+\mu_\infty^{-1}\beta_\infty Y_2^{m_\infty}\neq0$ since $u_\infty\neq0$ and $(u_\infty,Y_2^{m_\infty})_{L^2(S^2)}=0$ by $u_\infty\in\mathcal{Y}_{m_\infty}$. Hence $\mu_\infty$ is a nonzero eigenvalue of $\Lambda_{m_\infty}$, which contradicts Lemma \ref{L:NoEi_Lam}.

  Suppose that $|m_\infty|=1$ and $\mu_\infty=0$. Since $Y_2^{\pm1}=\cos\theta\,Y_1^{\pm1}/a_2^{\pm1}$ by \eqref{E:Y_Rec} with $(n,m)=(1,\pm1)$ and $Y_0^{\pm1}\equiv0$, we apply this equality and $x_3=\cos\theta$ to \eqref{Pf_KL:0fi_uinf} to get
  \begin{align*}
    x_3\left\{B_{2,\pm1}u_\infty-\frac{\beta_\infty}{a_2^{\pm1}}Y_1^{\pm1}\right\} = 0, \quad\text{i.e.}\quad B_{2,\pm1}u_\infty-\frac{\beta_\infty}{a_2^{\pm1}}Y_1^{\pm1} = 0 \quad\text{on}\quad S^2.
  \end{align*}
  Then since $u\in\mathcal{Y}_{\pm1}$ is of the form \eqref{E:Ko_Ym}, we have
  \begin{align*}
    \|B_{2,\pm1}u_\infty\|_{L^2(S^2)}^2 &= \frac{\beta_\infty}{a_2^{\pm1}}(B_{2,\pm1}u_\infty,Y_1^{\pm1})_{L^2(S^2)} \\
    &= \frac{\beta_\infty}{a_2^{\pm1}}\sum_{n\geq3}\left(1-\frac{6}{\lambda_n}\right)(u_\infty,Y_n^{\pm1})_{L^2(S^2)}(Y_n^{\pm1},Y_1^{\pm1})_{L^2(S^2)} = 0
  \end{align*}
  by \eqref{E:Lap_Y} (note that $B_{2,\pm1}=B|_{\mathcal{X}_{\pm1}}$). Since $u_\infty\in\mathcal{Y}_{\pm1}$, we find by this equality and \eqref{E:B2m_Ym} that $u_\infty=0$, which contradicts $u_\infty\neq0$.

  Let $|m_\infty|=2$ and $\mu_\infty=0$. Since $Y_2^{\pm2}$ is of the form $Y_2^{\pm2}=C_{\pm2}\sin^2\theta e^{\pm2i\varphi}$ with a constant $C_{\pm2}\in\mathbb{R}\setminus\{0\}$ by \eqref{E:SpHa}, we see by \eqref{Pf_KL:0fi_uinf} and $x_3=\cos\theta$ that
  \begin{align*}
    B_{2,\pm2}u_\infty(\theta,\varphi) = \beta_\infty C_{\pm2}\frac{\sin^2\theta}{\cos\theta}e^{\pm2i\varphi}, \quad (\theta,\varphi)\in[0,\pi]\times[0,2\pi), \, \theta \neq \frac{\pi}{2}
  \end{align*}
  and thus $\beta_\infty=0$, otherwise $B_{2,\pm2}u_\infty$ does not belong to $L^2(S^2)$. Hence $B_{2,\pm2}u_\infty=0$ by the above equality. Since $u_\infty\in\mathcal{Y}_{\pm2}$, this fact and \eqref{E:B2m_Ym} imply that $u_\infty=0$, which contradicts $u_\infty\neq0$. This completes the proof in Case 1.

  \textbf{Case 2:} $\mu_\infty=1$. In this case, $\mu_k\geq\mu_0$ for sufficiently large $k\in\mathbb{N}$, where $\mu_0\in(0,1)$ is given in Lemma \ref{L:KL_wLS}. Then we see by \eqref{E:KL_wLS} for $w_k$, \eqref{Pf_KL:As_u}, and \eqref{Pf_KL:As_Rk} that
  \begin{align*}
    \limsup_{k\to\infty}\frac{1}{1-\mu_k}\|w_k\|_{L^2(S^2(\theta_k,\pi))}^2 \leq C\limsup_{k\to\infty}\Bigl(\sigma^{-1}R_k^2+\sigma\|(-\Delta)^{1/2}v_k\|_{L^2(S^2)}^2\Bigr) \leq C\sigma
  \end{align*}
  for all $\sigma\in(0,1/2)$, where $S^2(\theta_k,\pi)$ is given by \eqref{E:Def_Band}. Hence
  \begin{align} \label{Pf_KL:1_wLS}
    \lim_{k\to\infty}\frac{1}{1-\mu_k}\|w_k\|_{L^2(S^2(\theta_k,\pi))}^2 = 0.
  \end{align}
  Suppose $\lim_{k\to\infty}(1-\mu_k)^{-1}\|w_k\|_{L^2(S^2(0,\theta_k))}^2=0$. Then
  \begin{align} \label{Pf_KL:1_wkco}
    \lim_{k\to\infty}\frac{1}{1-\mu_k}\|w_k\|_{L^2(S^2)}^2 = \lim_{k\to\infty}\frac{1}{1-\mu_k}\Bigl(\|w_k\|_{L^2(S^2(0,\theta_k))}^2+\|w_k\|_{L^2(S^2(\theta_k,\pi))}^2\Bigr) = 0.
  \end{align}
  Also, for each $\sigma\in(0,1/2)$ we see by \eqref{E:KL_vLaph} for $v_k$ and \eqref{Pf_KL:As_u} that
  \begin{align*}
    3\mu_k\|(-\Delta)^{1/2}v_k\|_{L^2(S^2)}^2 \leq C\left(\sigma^{-2}R_k+\sigma+\frac{1}{\sigma^2(1-\mu_k)}\|w_k\|_{L^2(S^2(0,\theta_k))}^2\right)
  \end{align*}
  We send $k\to\infty$ and use \eqref{Pf_KL:As_Rk} and $(1-\mu_k)^{-1}\|w_k\|_{L^2(S^2(0,\theta_k))}^2\to0$ to get
  \begin{align*}
    3\mu_\infty\limsup_{k\to\infty}\|(-\Delta)^{1/2}v_k\|_{L^2(S^2)}^2 = 3\limsup_{k\to\infty}\|(-\Delta)^{1/2}v_k\|_{L^2(S^2)}^2 \leq C\sigma
  \end{align*}
  for all $\sigma\in(0,1/2)$. Thus $\lim_{k\to\infty}\|(-\Delta)^{1/2}v_k\|_{L^2(S^2)}^2=0$ and we combine this with \eqref{Pf_KL:1_wkco} to get a contradiction with \eqref{Pf_KL:As_linf}. Now we assume that
  \begin{align} \label{Pf_KL:1_winf}
    \inf_{k\in\mathbb{N}}\frac{1}{1-\mu_k}\|w_k\|_{L^2(S^2(0,\theta_k))}^2 > 0.
  \end{align}
  Since $\mu_k\in[0,1)$ and $\theta_k=\arccos\mu_k\in(0,\pi/2]$, we have
  \begin{align*}
    \theta_k \leq \frac{\pi}{2}\sin\theta_k = \frac{\pi}{2}(1-\mu_k^2)^{1/2} = \frac{\pi}{2}(1-\mu_k)^{1/2}(1+\mu_k)^{1/2} \leq \frac{\sqrt{2}}{2}\pi(1-\mu_k)^{1/2}.
  \end{align*}
  Moreover, by Taylor's theorem for $\cos\theta$ at $\theta=0$,
  \begin{align*}
    \mu_k = \cos\theta_k = 1-\frac{1}{2}\theta_k^2+\frac{1}{24}\theta_k^4\cos(\tau_k\theta_k), \quad \tau_k\in(0,1).
  \end{align*}
  By the above relations, $|\cos(\tau_k\theta_k)|\leq1$, and $\mu_\infty=1$, we find that
  \begin{align*}
    \left|\frac{\theta_k^2}{1-\mu_k}-2\right| = \left|\frac{\theta_k^4\cos(\tau_k\theta_k)}{12(1-\mu_k)}\right| \leq C(1-\mu_k) \to 0 \quad\text{as}\quad k\to\infty.
  \end{align*}
  Hence $\theta_k/(1-\mu_k)^{1/2}\to\sqrt{2}$ as $k\to\infty$. Based on this observation, we set
  \begin{align} \label{Pf_KL:1_Def_ga}
    \gamma_k = \frac{4\sqrt{2}}{\pi}(1-\mu_k)^{1/2}, \quad \tilde{\theta}_k = \frac{\theta_k}{\gamma_k} = \frac{\pi}{4\sqrt{2}}\cdot\frac{\theta_k}{(1-\mu_k)^{1/2}}.
  \end{align}
  Then
  \begin{align} \label{Pf_KL:1_GtTht}
    \lim_{k\to\infty}\gamma_k = 0, \quad \lim_{k\to\infty}\tilde{\theta}_k = \tilde{\theta}_\infty = \frac{\pi}{4}
  \end{align}
  and thus $\gamma_k\in(0,1)$ and $\tilde{\theta}_k\in(\pi/6,\pi/3)$ for sufficiently large $k\in\mathbb{N}$. Also,
  \begin{align} \label{Pf_KL:1_sin}
    \frac{2}{\pi}\cdot\frac{1}{\gamma_k}\sin\theta \leq \sin\left(\frac{\theta}{\gamma_k}\right) \leq \frac{\pi}{2}\cdot\frac{1}{\gamma_k}\sin\theta, \quad \theta \in \left(0,\frac{\pi}{2}\gamma_k\right)
  \end{align}
  by $2\theta/\pi\leq\sin\theta\leq\theta$ for $\theta\in(0,\pi/2)$. We define
  \begin{gather}
    \widetilde{W}_k(\tilde{\theta}) = W_k(\gamma_k\tilde{\theta}), \quad \widetilde{F}_k(\tilde{\theta}) = F_k(\gamma_k\tilde{\theta}), \quad \tilde{\theta} \in \left(0,\frac{\pi}{2}\right), \label{Pf_KL:Def_Wtk} \\
    \tilde{w}_k = \widetilde{W}_k(\tilde{\theta})e^{im_k\varphi}, \quad \tilde{f}_k = \widetilde{F}_k(\tilde{\theta})e^{im_k\varphi} \quad\text{on}\quad S_+^2 = \{(x_1,x_2,x_3)\in S^2 \mid x_3 > 0\}. \notag
  \end{gather}
  Then, by $\widetilde{W}_k(\tilde{\theta}_k)=W_k(\theta_k)$, $\widetilde{F}_k(\tilde{\theta}_k)=F_k(\theta_k)$, \eqref{Pf_KL:Wk_Fk}, and \eqref{Pf_KL:Wk_Thk},
  \begin{align} \label{Pf_KL:1_tilWF}
    \widetilde{W}_k(\tilde{\theta}_k)+\delta_k\widetilde{F}_k(\tilde{\theta}_k) = 0, \quad k\in\mathbb{N}, \quad \lim_{k\to\infty}\widetilde{W}_k(\tilde{\theta}_k) = 0.
  \end{align}
  Also, we use \eqref{E:L2_Mode}, set $\theta=\gamma_k\tilde{\theta}$, and apply \eqref{Pf_KL:1_Def_ga} and \eqref{Pf_KL:1_sin} to get
  \begin{align} \label{Pf_KL:1_Int_L2}
    \begin{aligned}
      \|\tilde{w}_k\|_{L^2(S_+^2)}^2 &= 2\pi\int_0^{\frac{\pi}{2}}\Bigl|\widetilde{W}_k(\tilde{\theta})\Bigr|^2\sin\tilde{\theta}\,d\tilde{\theta} = 2\pi\int_0^{\frac{\pi}{2}\gamma_k}|W_k(\theta)|^2\sin\left(\frac{\theta}{\gamma_k}\right)\frac{d\theta}{\gamma_k} \\
      &\leq \frac{C}{\gamma_k^2}\int_0^{\frac{\pi}{2}\gamma_k}|W_k(\theta)|^2\sin\theta\,d\theta \leq \frac{C}{1-\mu_k}\|w_k\|_{L^2(S^2)}^2.
    \end{aligned}
  \end{align}
  Similarly, since $\widetilde{W}'_k(\tilde{\theta})=\gamma_kW'_k(\gamma_k\tilde{\theta})$, we have
  \begin{align} \label{Pf_KL:1_Int_H1}
    \begin{aligned}
      \|\nabla\tilde{w}_k\|_{L^2(S_+^2)}^2 &= 2\pi\int_0^{\frac{\pi}{2}}\left(\Bigl|\widetilde{W}'_k(\tilde{\theta})\Bigr|^2+\frac{m_k^2}{\sin^2\tilde{\theta}}\Bigl|\widetilde{W}_k(\tilde{\theta})\Bigr|^2\right)\sin\tilde{\theta}\,d\tilde{\theta} \\
      &= 2\pi\int_0^{\frac{\pi}{2}\gamma_k}\left(\gamma_k^2|W'_k(\theta)|^2+\frac{m_k^2}{\sin^2(\theta/\gamma_k)}|W_k(\theta)|^2\right)\sin\left(\frac{\theta}{\gamma_k}\right)\frac{d\theta}{\gamma_k} \\
      &\leq C\int_0^{\frac{\pi}{2}\gamma_k}\left(|W'_k(\theta)|^2+\frac{m_k^2}{\sin^2\theta}|W_k(\theta)|^2\right)\sin\theta\,d\theta \leq C\|\nabla w_k\|_{L^2(S^2)}^2.
    \end{aligned}
  \end{align}
  By these inequalities, \eqref{E:KL_w_est}, and \eqref{Pf_KL:As_u}, we find that
  \begin{align} \label{Pf_KL:1_wtk_H1}
    \|\tilde{w}_k\|_{H^1(S_+^2)}^2 \leq C\left(\frac{1}{1-\mu_k}\|w_k\|_{L^2(S^2)}^2+\|(-\Delta)^{1/2}v_k\|_{L^2(S^2)}^2\right) \leq C.
  \end{align}
  Also, as in \eqref{Pf_KL:1_Int_L2}, we use \eqref{E:L2_Mode}, \eqref{Pf_KL:1_Def_ga}, \eqref{Pf_KL:1_sin}, and $\gamma_k\tilde{\theta}_k=\theta_k$ to get
  \begin{align*}
    \|\tilde{w}_k\|_{L^2(S^2(0,\tilde{\theta}_k))}^2 \geq \frac{C}{1-\mu_k}\|w_k\|_{L^2(S^2(0,\theta_k))}^2.
  \end{align*}
  By this inequality and \eqref{Pf_KL:1_winf}, we obtain
  \begin{align} \label{Pf_KL:1_wtk_inf}
    \inf_{k\in\mathbb{N}}\|\tilde{w}_k\|_{L^2(S_+^2)}^2 \geq \inf_{k\in\mathbb{N}}\|\tilde{w}_k\|_{L^2(S^2(0,\tilde{\theta}_k))}^2 \geq C\inf_{k\in\mathbb{N}}\frac{1}{1-\mu_k}\|w_k\|_{L^2(S^2(0,\theta_k))}^2 > 0.
  \end{align}
  Suppose that $m_\infty=\pm\infty$. Then since
  \begin{align*}
    \|\nabla\tilde{w}_k\|_{L^2(S_+^2)}^2 &= 2\pi\int_0^{\frac{\pi}{2}}\left(\Bigl|\widetilde{W}'_k(\tilde{\theta})\Bigr|^2+\frac{m_k^2}{\sin^2\tilde{\theta}}\Bigl|\widetilde{W}_k(\tilde{\theta})\Bigr|^2\right)\sin\tilde{\theta}\,d\tilde{\theta} \\
    &\geq m_k^2\cdot2\pi\int_0^{\frac{\pi}{2}}\Bigl|\widetilde{W}_k(\tilde{\theta})\Bigr|^2\sin\tilde{\theta}\,d\tilde{\theta} = m_k^2\|\tilde{w}_k\|_{L^2(S_+^2)}^2
  \end{align*}
  by \eqref{E:L2_Mode} and $\sin^2\tilde{\theta}\leq1$, it follows from \eqref{Pf_KL:1_wtk_H1} that
  \begin{align*}
    \|\tilde{w}_k\|_{L^2(S_+^2)}^2 \leq \frac{1}{m_k^2}\|\nabla\tilde{w}_k\|_{L^2(S_+^2)}^2 \leq \frac{C}{m_k^2} \to 0 \quad\text{as}\quad k\to\infty,
  \end{align*}
  which contradicts \eqref{Pf_KL:1_wtk_inf}.

  Now let $m_\infty\in\mathbb{Z}\setminus\{0\}$. Taking a subsequence, we may assume that
  \begin{align} \label{Pf_KL:1_minf}
    m_k = m_\infty, \quad \tilde{w}_k = \widetilde{W}_k(\tilde{\theta})e^{im_\infty\varphi}, \quad \tilde{f}_k = \widetilde{F}_k(\tilde{\theta})e^{im_\infty\varphi} \quad\text{for all}\quad k\in\mathbb{N}.
  \end{align}
  Since $\{\tilde{w}_k\}_{k=1}^\infty$ is bounded in $H^1(S_+^2)$ by \eqref{Pf_KL:1_wtk_H1} and $H^1(S_+^2)$ is compactly embedded into $L^2(S_+^2)$, we may assume, by taking a subsequence again, that
  \begin{align} \label{Pf_KL:1_wtk_co}
    \lim_{k\to\infty}\tilde{w}_k = \tilde{w}_\infty \quad\text{strongly in $L^2(S_+^2)$ and weakly in $H^1(S_+^2)$}.
  \end{align}
  with some $\tilde{w}_\infty\in H^1(S_+^2)$. Then $\tilde{w}_\infty\neq0$ by \eqref{Pf_KL:1_wtk_inf} and \eqref{Pf_KL:1_wtk_co}. For each $m\in\mathbb{Z}$ we see by \eqref{E:Proj_Bdd} and \eqref{Pf_KL:1_wtk_co} that $\{\mathcal{P}_m\tilde{w}_k\}_{k=1}^\infty$ converges to $\mathcal{P}_m\tilde{w}_\infty$ strongly in $L^2(S_+^2)$. Moreover, $\mathcal{P}_{m_\infty}\tilde{w}_k=\tilde{w}_k$ and $\mathcal{P}_m\tilde{w}_k=0$ for $m\neq m_\infty$ by \eqref{Pf_KL:1_minf}. Hence
  \begin{align*}
    \mathcal{P}_m\tilde{w}_\infty = 0 \quad\text{for}\quad m\neq m_\infty, \quad \tilde{w}_\infty = \mathcal{P}_{m_\infty}\tilde{w}_\infty = \widetilde{W}_\infty(\tilde{\theta})e^{im_\infty\varphi}.
  \end{align*}
  Let us show $\widetilde{W}_\infty=0$. First we prove
  \begin{align} \label{Pf_KL:1_Wtin_H2}
    \widetilde{W}_\infty \in H^2(\Theta,\pi/2) \subset C^1([\Theta,\pi/2]) \quad\text{for all}\quad \Theta \in \left(0,\frac{\pi}{2}\right).
  \end{align}
  We see that $\widetilde{W}_\infty\in H^1(\Theta,\pi/2)$ by $\tilde{w}_\infty\in H^1(S_+^2)$, \eqref{E:L2_Mode}, and
  \begin{align} \label{Pf_KL:1_sTh}
    \sin\tilde{\theta} \geq \sin\Theta > 0, \quad \tilde{\theta} \in \left(\Theta,\frac{\pi}{2}\right).
  \end{align}
  Moreover, by \eqref{E:L2_Mode}, \eqref{Pf_KL:1_wtk_co}, and \eqref{Pf_KL:1_sTh},
  \begin{align} \label{Pf_KL:1_L2Thst}
    \lim_{k\to\infty}\widetilde{W}_k = \widetilde{W}_\infty \quad\text{strongly in}\quad L^2(\Theta,\pi/2).
  \end{align}
  For each $\Psi\in L^2(\Theta,\pi/2)$, the linear functional
  \begin{align*}
    L(\tilde{w}) = \frac{1}{2\pi}\int_0^{2\pi}\left(\int_\Theta^{\frac{\pi}{2}}\partial_{\tilde{\theta}}\tilde{w}(\tilde{\theta},\varphi)\,\overline{\Psi(\tilde{\theta})}\,d\tilde{\theta}\right)e^{-im_\infty\varphi}\,d\varphi, \quad \tilde{w} \in H^1(S_+^2)
  \end{align*}
  is bounded on $H^1(S_+^2)$. Indeed, by H\"{o}lder's inequality, \eqref{Pf_KL:1_sTh}, and \eqref{E:Re_SC},
  \begin{align*}
    |L(\tilde{w})| \leq \frac{1}{\sqrt{2\pi\sin\Theta}}\|\nabla\tilde{w}\|_{L^2(S_+^2)}\|\Psi\|_{L^2(\Theta,\pi/2)}.
  \end{align*}
  Hence $\lim_{k\to\infty}L(\tilde{w}_k)=L(\tilde{w}_\infty)$ by \eqref{Pf_KL:1_wtk_co}. By this fact and
  \begin{align*}
    L(\tilde{w}_k) = \int_\Theta^{\frac{\pi}{2}}\widetilde{W}'_k(\tilde{\theta})\,\overline{\Psi(\tilde{\theta})}\,d\tilde{\theta} = \Bigl(\widetilde{W}'_k,\Psi\Bigr)_{L^2(\Theta,\pi/2)}, \quad k\in\mathbb{N}\cup\{\infty\},
  \end{align*}
  we find that
  \begin{align} \label{Pf_KL:1_H1Thwe}
    \lim_{k\to\infty}\widetilde{W}'_k = \widetilde{W}'_\infty \quad\text{weakly in}\quad L^2(\Theta,\pi/2).
  \end{align}
  To get $\widetilde{W}''_\infty\in L^2(\Theta,\pi/2)$, we derive an ODE for $\widetilde{W}_k$, $k\in\mathbb{N}$. By \eqref{Pf_KL:Def_Wtk} we have $\widetilde{W}_k^{(j)}(\tilde{\theta})=\gamma_k^jW_k^{(j)}(\gamma_k\tilde{\theta})$ for $j=0,1,2$, where $\widetilde{W}_k^{(j)}$ and $W_k^{(j)}$ are the $j$-th derivatives of $\widetilde{W}_k$ and $W_k$. We use this relation and \eqref{Pf_KL:1_sTh} and calculate as in \eqref{Pf_KL:1_Int_L2} to get
  \begin{align*}
    \Bigl\|\widetilde{W}_k^{(j)}\Bigr\|_{L^2(\Theta,\pi/2)}^2 &\leq \frac{\gamma_k^{2j}}{\sin\Theta}\int_\Theta^{\frac{\pi}{2}}|W_k^{(j)}(\gamma_k\tilde{\theta})|^2\sin\tilde{\theta}\,d\tilde{\theta} \leq \frac{C\gamma_k^{2j-2}}{\sin\Theta}\|\nabla^jw_k\|_{L^2(S^2)}^2.
  \end{align*}
  Thus $\widetilde{W}_k\in H^2(\Theta,\pi/2)$ by $w_k\in H^2(S^2)$. Also, by \eqref{E:KL_w_Eq},
  \begin{align*}
    \Delta w_k+6w_k= 6\mu_k\frac{w_k+\delta f_k}{\mu_k-x_3} \quad\text{on}\quad \{(x_1,x_2,x_3) \in S^2 \mid x_3\neq\mu_k\}.
  \end{align*}
  Using $x_3=\cos\theta$, \eqref{Pf_KL:Wk_Fk} with $m_k=m_\infty$, and \eqref{E:Re_SC}, we rewrite this equation as
  \begin{align*}
    W''_k(\theta)+\frac{\cos\theta}{\sin\theta}W'_k(\theta)+\left(6-\frac{m_\infty^2}{\sin^2\theta}\right)W_k(\theta) = 6\mu_k\frac{W_k(\theta)+\delta_kF_k(\theta)}{\mu_k-\cos\theta}, \quad \theta\in(0,\pi)\setminus\{\theta_k\}.
  \end{align*}
  We multiply both sides by $\gamma_k^2$, set $\theta=\gamma_k\tilde{\theta}$, and use \eqref{Pf_KL:Def_Wtk} to get
  \begin{align} \label{Pf_KL:1_Wtk_Eq}
    \widetilde{W}''_k(\tilde{\theta}) = \zeta_{1,k}(\tilde{\theta})\widetilde{W}'_k(\tilde{\theta})+\zeta_{2,k}(\tilde{\theta})\widetilde{W}_k(\tilde{\theta})+\zeta_{3,k}(\tilde{\theta})\frac{\widetilde{W}_k(\tilde{\theta})+\delta_k\widetilde{F}_k(\tilde{\theta})}{\tilde{\theta}-\tilde{\theta}_k}
  \end{align}
  for $\tilde{\theta}\in(0,\pi/2)\setminus\{\tilde{\theta}_k\}$, where
  \begin{align*}
    \zeta_{1,k}(\tilde{\theta}) = -\frac{\gamma_k\cos(\gamma_k\tilde{\theta})}{\sin(\gamma_k\tilde{\theta})}, \quad \zeta_{2,k}(\tilde{\theta}) = \frac{m_\infty^2\gamma_k^2}{\sin^2(\gamma_k\tilde{\theta}_k)}-6\gamma_k^2, \quad \zeta_{3,k}(\tilde{\theta}) = \frac{6\mu_k\gamma_k^2(\tilde{\theta}-\tilde{\theta}_k)}{\mu_k-\cos(\gamma_k\tilde{\theta})}.
  \end{align*}
  Since $\lim_{k\to\infty}\gamma_k=0$ and
  \begin{align*}
    |\cos(\gamma_k\tilde{\theta})-1| \leq \frac{1}{2}\gamma_k^2\tilde{\theta}^2, \quad \left|\frac{\gamma_k}{\sin(\gamma_k\tilde{\theta})}-\frac{1}{\tilde{\theta}}\right| = \frac{|\sin(\gamma_k\tilde{\theta})-\gamma_k\tilde{\theta}|}{\tilde{\theta}\sin(\gamma_k\tilde{\theta})} \leq \frac{\pi}{12}\gamma_k^2\tilde{\theta}
  \end{align*}
  for $\tilde{\theta}\in(0,\pi/2)$ by Taylor's theorem and $\sin\theta\geq2\theta/\pi$ for $\theta\in(0,\pi/2)$,
  \begin{align} \label{Pf_KL:1_ze12}
    \lim_{k\to\infty}\zeta_{j,k}(\tilde{\theta}) = \zeta_{j,\infty}(\tilde{\theta}) =
    \begin{cases}
      -\tilde{\theta}^{-1}, &j=1, \\
      m_\infty^2\tilde{\theta}^{-2}, &j=2,
    \end{cases}
    \quad\text{uniformly on}\quad \left(\Theta,\frac{\pi}{2}\right).
  \end{align}
  Also, since $\mu_k=\cos\theta_k=\cos(\gamma_k\tilde{\theta}_k)$ and
  \begin{align*}
    \cos(\gamma_k\tilde{\theta}) = 1+\sum_{N=1}^\infty\frac{(-1)^N}{(2N)!}\gamma_k^{2N}\tilde{\theta}^{2N}, \quad \tilde{\theta} \in \left(0,\frac{\pi}{2}\right)
  \end{align*}
  by the Taylor series for $\cos\theta$ at $\theta=0$, it follows that
  \begin{align} \label{Pf_KL:1_cosGT}
    \cos(\gamma_k\tilde{\theta})-\mu_k = \sum_{N=1}^\infty\frac{(-1)^N}{(2N)!}\gamma_k^{2N}(\tilde{\theta}^{2N}-\tilde{\theta}_k^{2N}) = -\frac{1}{2}\gamma_k^2(\tilde{\theta}^2-\tilde{\theta}_k^2)\{1+2\gamma_k^2\rho_k(\tilde{\theta})\}
  \end{align}
  for $\tilde{\theta}\in(0,\pi/2)$. Here
  \begin{align*}
    \rho_k(\tilde{\theta}) = -\sum_{N=2}^\infty\frac{(-1)^N}{(2N)!}\gamma_k^{2N-4}\left(\sum_{M=0}^{N-1}\tilde{\theta}^{2M}\tilde{\theta}_k^{2(N-1-M)}\right) = \sum_{M=0}^\infty C_{M,k}\tilde{\theta}^{2M},
  \end{align*}
  where we rearranged the double summation and defined
  \begin{align*}
    C_{M,k} = -\tilde{\theta}_k^{2(1-M)}\sum_{N=\max\{2,M+1\}}^\infty\frac{(-1)^N}{(2N)!}(\gamma_k\tilde{\theta}_k)^{2(N-2)}, \quad M \geq0.
  \end{align*}
  Since $\gamma_k\tilde{\theta}_k=\theta_k=\arccos\mu_k\to0$ as $k\to\infty$ by $\mu_\infty=1$, we may assume that $\gamma_k\tilde{\theta}_k\in(0,1/2)$ for sufficiently large $k\in\mathbb{N}$. By this fact and $\tilde{\theta}_k\in(\pi/6,\pi/3)$, we have
  \begin{align*}
    |C_{M,k}| \leq \frac{\tilde{\theta}_k^{2(1-M)}}{(2M)!}\sum_{N=0}^\infty 2^{-2N} = \frac{4}{3}\cdot\frac{\tilde{\theta}_k^{2(1-M)}}{(2M)!} \leq \frac{4}{3}\left(\frac{\pi}{3}\right)^2\cdot\frac{1}{(2M)!}\left(\frac{\pi}{6}\right)^{-2M}, \quad M\geq0.
  \end{align*}
  Hence we easily find that $\rho_k(\tilde{\theta})$ converges absolutely for all $\tilde{\theta}\in(0,\pi/2)$ and that $\rho_k$ and its derivative $\rho'_k$ are bounded on $(0,\pi/2)$ uniformly in $k\in\mathbb{N}$. Then since
  \begin{align*}
    \zeta_{3,k}(\tilde{\theta}) = \frac{6\mu_k\gamma_k^2(\tilde{\theta}-\tilde{\theta}_k)}{\mu_k-\cos(\gamma_k\tilde{\theta})} = \frac{12\mu_k(\tilde{\theta}-\tilde{\theta}_k)}{(\tilde{\theta}^2-\tilde{\theta}_k^2)\{1+2\gamma_k^2\rho_k(\tilde{\theta})\}} = \frac{12\mu_k}{(\tilde{\theta}+\tilde{\theta}_k)\{1+2\gamma_k^2\rho_k(\tilde{\theta})\}}
  \end{align*}
  by \eqref{Pf_KL:1_cosGT}, there exists a constant $C>0$ independent of $k\in\mathbb{N}$ such that
  \begin{gather}
    |\zeta_{3,k}(\tilde{\theta})|+|\zeta'_{3,k}(\tilde{\theta})| \leq C, \quad \tilde{\theta} \in \left(0,\frac{\pi}{2}\right), \label{Pf_KL:1_ze3_bd} \\
    \lim_{k\to\infty}\zeta_{3,k}(\tilde{\theta}) = \zeta_{3,\infty}(\tilde{\theta}) = \frac{12}{\tilde{\theta}+\tilde{\theta}_\infty} \quad\text{uniformly on}\quad \left(0,\frac{\pi}{2}\right) \label{Pf_KL:1_ze3_co}
  \end{gather}
  by \eqref{Pf_KL:1_GtTht}, $\mu_\infty=1$, and the uniform boundedness of $\rho_k$ and $\rho'_k$.

  Now let $\Psi\in C_c^\infty(\Theta,\pi/2)$. We take the $L^2(\Theta,\pi/2)$-inner product of \eqref{Pf_KL:1_Wtk_Eq} with $\Psi$ and carry out integration by parts for the left-hand side to get
  \begin{multline} \label{Pf_KL:1_Wk_weak}
    -\Bigl(\widetilde{W}'_k,\Psi'\Bigr)_{L^2(\Theta,\pi/2)} = \Bigl(\widetilde{W}'_k,\zeta_{1,k}\Psi\Bigr)_{L^2(\Theta,\pi/2)}+\Bigl(\widetilde{W}_k,\zeta_{2,k}\Psi\Bigr)_{L^2(\Theta,\pi/2)} \\
    +\left(\frac{\widetilde{W}_k+\delta_k\widetilde{F}_k}{\tilde{\theta}-\tilde{\theta}_k},\zeta_{3,k}\Psi\right)_{L^2(\Theta,\pi/2)}.
  \end{multline}
  We send $k\to\infty$, use \eqref{Pf_KL:1_L2Thst}, \eqref{Pf_KL:1_H1Thwe}, and \eqref{Pf_KL:1_ze12}, and then apply H\"{o}lder's inequality and $|\zeta_{1,\infty}(\tilde{\theta})|\leq\Theta^{-1}$ and $|\zeta_{2,\infty}(\tilde{\theta})|\leq m_\infty^2\Theta^{-2}$ for $\tilde{\theta}\in(\Theta,\pi/2)$ to get
  \begin{multline} \label{Pf_KL:1_Win_wbd}
    \left|\Bigl(\widetilde{W}'_\infty,\Psi'\Bigr)_{L^2(\Theta,\pi/2)}\right| \leq \left(\frac{1}{\Theta}\Bigl\|\widetilde{W}'_\infty\Bigr\|_{L^2(\Theta,\pi/2)}+\frac{m_\infty^2}{\Theta^2}\Bigl\|\widetilde{W}_\infty\Bigr\|_{L^2(\Theta,\pi/2)}\right)\|\Psi\|_{L^2(\Theta,\pi/2)} \\
    +\limsup_{k\to\infty}\left|\left(\frac{\widetilde{W}_k+\delta_k\widetilde{F}_k}{\tilde{\theta}-\tilde{\theta}_k},\zeta_{3,k}\Psi\right)_{L^2(\Theta,\pi/2)}\right|.
  \end{multline}
  Let us estimate the last term as in the proof of Lemma \ref{L:KL_vH1}. We define
  \begin{align*}
    \Psi_k(\tilde{\theta}) = \frac{\zeta_{3,k}(\tilde{\theta})\Psi(\tilde{\theta})}{\sin\tilde{\theta}}, \quad \tilde{\theta}\in\left(\Theta,\frac{\pi}{2}\right)
  \end{align*}
  and extend it to $(0,\pi/2)$ by zero outside $(\Theta,\pi/2)$. Then
  \begin{align} \label{Pf_KL:1_Psik}
    \|\Psi_k\|_{W^{1,\infty}(0,\pi/2)} \leq \frac{C}{\sin\Theta}\|\Psi\|_{W^{1,\infty}(\Theta,\pi/2)}, \quad \|\Psi_k\|_{L^2(0,\pi/2)} \leq \frac{C}{\sin\Theta}\|\Psi\|_{L^2(\Theta,\pi/2)}
  \end{align}
  by \eqref{Pf_KL:1_sTh} and \eqref{Pf_KL:1_ze3_bd} with a constant $C>0$ independent of $k\in\mathbb{N}$. Let
  \begin{align*}
    \varepsilon_k = \frac{\sigma^2\kappa\delta_k^2}{\sin\theta_k}, \quad \varepsilon'_k = 2\sigma^2\sin\theta_k, \quad \tilde{\varepsilon}_k = \frac{\varepsilon_k}{\gamma_k} = \frac{\sigma^2\kappa\delta_k^2}{\gamma_k\sin\theta_k}, \quad \tilde{\varepsilon}'_k = \frac{\varepsilon'_k}{\gamma_k} = \frac{2\sigma^2\sin\theta_k}{\gamma_k}
  \end{align*}
  for $\sigma\in(0,1/2)$. Then
  \begin{align*}
    0 < \varepsilon_k < \varepsilon_k' < \frac{1}{2}\sin\theta_k < \frac{1}{2}\theta_k, \quad \gamma_k\sin\theta_k \leq \frac{8}{\pi}(1-\mu_k), \quad \frac{\pi}{4\sqrt{2}} \leq \frac{\sin\theta_k}{\gamma_k} \leq \frac{\pi}{4}
  \end{align*}
  by $0\leq\mu_k\leq1$, $\kappa\delta_k^2\leq1-\mu_k\leq1-\mu_k^2=\sin^2\theta_k$, and \eqref{Pf_KL:1_Def_ga}. Hence
  \begin{align} \label{Pf_KL:1_epst}
    \frac{\pi\kappa}{8}\cdot\frac{\sigma^2\delta_k^2}{1-\mu_k} \leq \tilde{\varepsilon}_k \leq \frac{\pi\kappa}{4}\left(\frac{\sigma\delta_k}{\sin\theta_k}\right)^2, \quad \frac{\pi}{2\sqrt{2}}\sigma^2 \leq \tilde{\varepsilon}'_k \leq \frac{\pi}{2}\sigma^2
  \end{align}
  and $0<\tilde{\varepsilon}_k<\tilde{\varepsilon}'_k<\tilde{\theta}_k/2$ for sufficiently small $\sigma>0$ since $\tilde{\theta}_k\in(\pi/6,\pi/3)$. Let
  \begin{align*}
    \tilde{I}_{1,k} &= (\tilde{\theta}_k-\tilde{\varepsilon}_k,\tilde{\theta}_k+\tilde{\varepsilon}_k), \\
    \tilde{I}_{2,k} &= (\tilde{\theta}_k-\tilde{\varepsilon}'_k,\tilde{\theta}_k-\tilde{\varepsilon}_k]\cup[\tilde{\theta}_k+\tilde{\varepsilon}_k,\tilde{\theta}_k+\tilde{\varepsilon}'_k), \\
    \tilde{I}_{3,k} &= (0,\tilde{\theta}_k-\tilde{\varepsilon}'_k]\cup\left[\tilde{\theta}_k+\tilde{\varepsilon}'_k,\frac{\pi}{2}\right).
  \end{align*}
  Then
  \begin{align} \label{Pf_KL:1_sum_Jt}
    \left(\frac{\widetilde{W}_k+\delta_k\widetilde{F}_k}{\tilde{\theta}-\tilde{\theta}_k},\zeta_{3,k}\Psi\right)_{L^2(\Theta,\pi/2)} = \int_0^{\frac{\pi}{2}}\frac{\widetilde{W}_k(\tilde{\theta})+\delta_k\widetilde{F}_k(\tilde{\theta})}{\tilde{\theta}-\tilde{\theta}_k}\,\overline{\Psi_k(\tilde{\theta})}\sin\tilde{\theta}\,d\tilde{\theta} = \sum_{j=1}^3\tilde{J}_{j,k},
  \end{align}
  where
  \begin{align*}
    \tilde{J}_{j,k} = \int_{\tilde{I}_{j,k}}\frac{\widetilde{W}_k(\tilde{\theta})+\delta_k\widetilde{F}_k(\tilde{\theta})}{\tilde{\theta}-\tilde{\theta}_k}\,\overline{\Psi_k(\tilde{\theta})}\sin\tilde{\theta}\,d\tilde{\theta}, \quad j=1,2,3.
  \end{align*}
  Let us estimate $\tilde{J}_{1,k}$. For $\tilde{\theta}\in(0,\pi/2)$ let $\widetilde{G}_k(\tilde{\theta})=\widetilde{W}_k(\tilde{\theta})+\delta_k\widetilde{F}_k(\tilde{\theta})$ and
  \begin{align*}
     \widetilde{K}_{1,k} = \int_{\tilde{I}_{1,k}}\left|\frac{\widetilde{G}_k(\tilde{\theta})\sqrt{\sin\tilde{\theta}}}{\tilde{\theta}-\tilde{\theta}_k}\right|^2\,d\tilde{\theta}.
  \end{align*}
  Since $\widetilde{G}_k(\tilde{\theta}_k)=0$ by \eqref{Pf_KL:1_tilWF}, we can use Hardy's inequality to get
  \begin{align} \label{Pf_KL:1_Kt1k_01}
    \begin{aligned}
      \widetilde{K}_{1,k} &\leq 4\int_{\tilde{I}_{1,k}}\left|\frac{d}{d\tilde{\theta}}\biggl(\widetilde{G}_k(\tilde{\theta})\sqrt{\sin\tilde{\theta}}\biggr)\right|^2\,d\tilde{\theta} \\
      &\leq C\int_{\tilde{\theta}_k-\tilde{\varepsilon}_k}^{\tilde{\theta}_k+\tilde{\varepsilon}_k}\left(\Bigl|\widetilde{G}'_k(\tilde{\theta})\Bigr|^2+\frac{m_\infty^2}{\sin^2\tilde{\theta}}\Bigl|\widetilde{G}_k(\tilde{\theta})\Bigr|^2\right)\sin\tilde{\theta}\,d\tilde{\theta}.
    \end{aligned}
  \end{align}
  Noting that $\widetilde{G}_k(\tilde{\theta})=W_k(\gamma_k\tilde{\theta})+\delta_kF_k(\gamma_k\tilde{\theta})$, $\theta_k=\gamma_k\tilde{\theta}_k$, and $\varepsilon_k=\gamma_k\tilde{\varepsilon}_k$, we set $\theta=\gamma_k\tilde{\theta}$ in the second line of \eqref{Pf_KL:1_Kt1k_01} and calculate as in \eqref{Pf_KL:1_Int_H1} by using \eqref{E:L2_Mode} and \eqref{Pf_KL:1_sin} to get
  \begin{align} \label{Pf_KL:1_Kt1k_02}
    \begin{aligned}
      \widetilde{K}_{1,k} \leq C\|\nabla(w_k+\delta_kf_k)\|_{L^2(S_k^2)}^2 \leq C\Bigl(\|\nabla w_k\|_{L^2(S^2)}+\delta_k\|\nabla f_k\|_{L^2(S_k^2)}\Bigr),
    \end{aligned}
  \end{align}
  where $S_k^2=S^2(\theta_k-\varepsilon_k,\theta_k+\varepsilon_k)$ (see \eqref{E:Def_Band}). Moreover, noting that
  \begin{align*}
    \|M_{\sin\theta}u_k\|_{L^2(S_k^2)} \leq C\sin\theta_k\|u_k\|_{L^2(S_k^2)} \leq C\sin\theta_k\|u_k\|_{L^2(S^2)}
  \end{align*}
  by \eqref{E:KL_theta}, we apply \eqref{E:KL_w_est} and \eqref{E:KL_gradf} to \eqref{Pf_KL:1_Kt1k_02} to deduce that
  \begin{align*}
    \frac{\delta_k}{\sin\theta_k}\widetilde{K}_{1,k}^{1/2} &\leq C\left(\delta_k\|u_k\|_{L^2(S^2)}+\delta_k\varepsilon_k\|(-\Delta)^{1/2}u_k\|_{L^2(S^2)}+\frac{\delta_k}{\sin\theta_k}\|(-\Delta)^{1/2}v_k\|_{L^2(S^2)}\right).
  \end{align*}
  Also, since $\kappa^{1/2}\delta_k\leq(1-\mu_k)^{1/2}\leq\sin\theta_k$,
  \begin{align*}
    \frac{\delta_k}{\sin\theta_k} \leq \kappa^{-1/2}, \quad \delta_k\varepsilon_k = \frac{\sigma^2\kappa\delta_k^3}{\sin\theta_k} \leq \frac{C\delta_k^3}{(1-\mu_k)^{1/2}}.
  \end{align*}
  Hence $\delta_k\widetilde{K}_{1,k}^{1/2}/\sin\theta_k\leq C(1+R_k)$ by the above inequalities, \eqref{E:KL_u} for $u_k$, and \eqref{Pf_KL:As_u} (recall that $R_k=R(u_k,\delta_k,\mu_k)$ is given by \eqref{E:KL_fR}). By this inequality, $|\tilde{I}_{1,k}|=2\tilde{\varepsilon}_k$, \eqref{Pf_KL:1_Psik}, and \eqref{Pf_KL:1_epst}, we find that
  \begin{align} \label{Pf_KL:1_Jt1k}
    \begin{aligned}
      |\tilde{J}_{1,k}| &\leq \widetilde{K}_{1,k}^{1/2}\left(\int_{\tilde{I}_{1,k}}|\Psi_k(\tilde{\theta})|^2\sin\tilde{\theta}\,d\tilde{\theta}\right)^{1/2} \leq \sqrt{2\tilde{\varepsilon}_k}\,\widetilde{K}_{1,k}^{1/2}\|\Psi_k\|_{L^\infty(0,\pi/2)} \\
      &\leq \frac{C\sigma\delta_k}{\sin\theta_k}\widetilde{K}_{1,k}^{1/2}\|\Psi_k\|_{L^\infty(0,\pi/2)} \leq \frac{C\sigma}{\sin\Theta}(1+R_k)\|\Psi\|_{W^{1,\infty}(\Theta,\pi/2)}.
    \end{aligned}
  \end{align}
  Next we estimate $\tilde{J}_{2,k}$. We split $\tilde{J}_{2,k}=\sum_{l=1}^4\tilde{J}_{2,k}^l$ into
  \begin{align*}
    \tilde{J}_{2,k}^1 &= \int_{\tilde{I}_{2,k}}\frac{\widetilde{W}_k(\tilde{\theta})\sqrt{\sin\tilde{\theta}}-\widetilde{W}_k(\tilde{\theta})\sqrt{\sin\tilde{\theta}_k}}{\tilde{\theta}-\tilde{\theta}_k}\,\overline{\Psi_k(\tilde{\theta})}\sqrt{\sin\tilde{\theta}}\,d\tilde{\theta}, \\
    \tilde{J}_{2,k}^2 &= \widetilde{W}_k(\tilde{\theta}_k)\sqrt{\sin\tilde{\theta}_k}\int_{\tilde{I}_{2,k}}\frac{\overline{\Psi_k(\tilde{\theta})}\sqrt{\sin\tilde{\theta}}-\overline{\Psi_k(\tilde{\theta}_k)}\sqrt{\sin\tilde{\theta}_k}}{\tilde{\theta}-\tilde{\theta_k}}\,d\tilde{\theta}, \\
    \tilde{J}_{2,k}^3 &= \widetilde{W}_k(\tilde{\theta}_k)\,\overline{\Psi_k(\tilde{\theta}_k)}\sin\theta_k\int_{\tilde{I}_{2,k}}\frac{1}{\tilde{\theta}-\tilde{\theta}_k}\,d\tilde{\theta}, \\
    \tilde{J}_{2,k}^4 &= \int_{\tilde{I}_{2,k}}\frac{\delta_k\widetilde{F}_k(\tilde{\theta})}{\tilde{\theta}-\tilde{\theta}_k}\,\overline{\Psi_k(\tilde{\theta})}\sin\tilde{\theta}\,d\tilde{\theta}.
  \end{align*}
  Also, let
  \begin{align*}
    \widetilde{K}_k &=\int_0^{\frac{\pi}{2}}\left|\frac{\widetilde{W}_k(\tilde{\theta})\sqrt{\sin\tilde{\theta}}-\widetilde{W}_k(\tilde{\theta})\sqrt{\sin\tilde{\theta}_k}}{\tilde{\theta}-\tilde{\theta}_k}\right|^2\,d\tilde{\theta}, \\
    \tilde{L}_k &= \int_0^{\frac{\pi}{2}}\left|\frac{\Psi_k(\tilde{\theta})\sqrt{\sin\tilde{\theta}}-\Psi_k(\tilde{\theta})\sqrt{\sin\tilde{\theta}_k}}{\tilde{\theta}-\tilde{\theta}_k}\right|^2\,d\tilde{\theta}.
  \end{align*}
  As in \eqref{Pf_KL:1_Kt1k_01} and \eqref{Pf_KL:1_Kt1k_02}, we apply Hardy's inequality to $\widetilde{K}_k$, set $\theta=\gamma_k\tilde{\theta}$, and use \eqref{E:L2_Mode} and \eqref{Pf_KL:1_sin}. Then we further use \eqref{E:KL_w_est} and \eqref{Pf_KL:As_u} to get
  \begin{align} \label{Pf_KL:1_Ktk_bd}
    \widetilde{K}_k \leq C\|\nabla w_k\|_{L^2(S^2)}^2 \leq C\|(-\Delta)^{1/2}v_k\|_{L^2(S^2)}^2 \leq C.
  \end{align}
  Also, by Hardy's inequality, $\mathrm{supp}\,\Psi_k\subset(\Theta,\pi/2)$, and \eqref{Pf_KL:1_Psik},
  \begin{align} \label{Pf_KL:1_Ltk_bd}
    \tilde{L}_k \leq C\|\Psi_k\|_{W^{1,\infty}(0,\pi/2)}^2\int_\Theta^{\frac{\pi}{2}}\left(\sin\tilde{\theta}+\frac{1}{\sin\tilde{\theta}}\right)\,d\tilde{\theta} \leq \frac{C}{\sin^3\Theta}\|\Psi\|_{W^{1,\infty}(\Theta,\pi/2)}^2
  \end{align}
  We use H\"{o}lder's inequality, $|\tilde{I}_{2,k}|\leq2\tilde{\varepsilon}'_k$, \eqref{Pf_KL:1_Psik}, \eqref{Pf_KL:1_epst}, and \eqref{Pf_KL:1_Ktk_bd} to get
  \begin{align} \label{Pf_KL:1_Jt2k1}
    |\tilde{J}_{2,k}^1| \leq \widetilde{K}_k^{1/2}|\tilde{I}_{2,k}|^{1/2}\|\Psi_k\|_{L^\infty(\Theta,\pi/2)} \leq \frac{C\sigma}{\sin\Theta}\|\Psi\|_{W^{1,\infty}(\Theta,\pi/2)}.
  \end{align}
  Also, by H\"{o}lder's inequality, $|\tilde{I}_{2,k}|\leq2\tilde{\varepsilon}'_k\leq C$, $\sin\tilde{\theta}_k\leq1$, and \eqref{Pf_KL:1_Ltk_bd},
  \begin{align} \label{Pf_KL:1_Jt2k2}
    |\tilde{J}_{2,k}^2| \leq \Bigl|\widetilde{W}_k(\tilde{\theta}_k)\Bigr|\sqrt{\sin\tilde{\theta}_k}\,\tilde{L}_k^{1/2}|\tilde{I}_{2,k}|^{1/2} \leq \frac{C}{\sin^{3/2}\Theta}\Bigl|\widetilde{W}_k(\tilde{\theta}_k)\Bigr|\|\Psi\|_{W^{1,\infty}(\Theta,\pi/2)}.
  \end{align}
  We have $\tilde{J}_{2,k}^3=0$ since
  \begin{align*}
    \int_{\tilde{I}_{2,k}}\frac{1}{\tilde{\theta}-\tilde{\theta}_k}\,d\tilde{\theta} &= \int_{\tilde{\theta}_k-\tilde{\varepsilon}'_k}^{\tilde{\theta}_k-\tilde{\varepsilon}_k}\frac{d}{d\tilde{\theta}}\bigl(\log(\tilde{\theta}_k-\tilde{\theta})\bigr)\,d\tilde{\theta}+\int_{\tilde{\theta}_k+\tilde{\varepsilon}_k}^{\tilde{\theta}_k+\tilde{\varepsilon}'_k}\frac{d}{d\tilde{\theta}}\bigl(\log(\tilde{\theta}-\tilde{\theta}_k)\bigr)\,d\tilde{\theta} = 0.
  \end{align*}
  For $\tilde{J}_{2,k}^4$, we see that
  \begin{align*}
    |\tilde{J}_{2,k}^4| \leq \delta_k\|\Psi_k\|_{L^\infty(0,\pi/2)}\left(\int_0^{\frac{\pi}{2}}\Bigl|\widetilde{F}_k(\tilde{\theta})\Bigr|^2\sin\tilde{\theta}\,d\tilde{\theta}\right)^{1/2}\left(\int_{\tilde{I}_{2,k}}\frac{1}{|\tilde{\theta}-\tilde{\theta}_k|^2}\,d\tilde{\theta}\right)^{1/2}.
  \end{align*}
  As in \eqref{Pf_KL:1_Int_L2}, we use $\widetilde{F}_k(\tilde{\theta})=F_k(\gamma_k\tilde{\theta})$, \eqref{E:L2_Mode}, and \eqref{Pf_KL:1_sin} to get
  \begin{align} \label{Pf_KL:1_Ft_L2}
    \int_0^{\frac{\pi}{2}}\Bigl|\widetilde{F}_k(\tilde{\theta})\Bigr|^2\sin\tilde{\theta}\,d\tilde{\theta} \leq \frac{C}{1-\mu_k}\|f_k\|_{L^2(S^2)}^2.
  \end{align}
  Also, we observe by \eqref{Pf_KL:1_epst} that
  \begin{align*}
    \int_{\tilde{I}_{2,k}}\frac{1}{|\tilde{\theta}-\tilde{\theta}_k|^2}\,d\tilde{\theta} = \left(\int_{\tilde{\theta}_k-\tilde{\varepsilon}'_k}^{\tilde{\theta}_k-\tilde{\varepsilon}_k}+\int_{\tilde{\theta}_k+\tilde{\varepsilon}_k}^{\tilde{\theta}_k+\tilde{\varepsilon}'_k}\right)\frac{d}{d\tilde{\theta}}\left(-\frac{1}{\tilde{\theta}-\tilde{\theta}_k}\right)\,d\tilde{\theta} \leq \frac{2}{\tilde{\varepsilon}_k} \leq \frac{C(1-\mu_k)}{\sigma^2\delta_k^2}.
  \end{align*}
  We deduce from the above inequalities and \eqref{Pf_KL:1_Psik} that
  \begin{align*}
    |\tilde{J}_{2,k}^4| \leq C\sigma^{-1}\|f_k\|_{L^2(S^2)}\|\Psi_k\|_{L^\infty(0,\pi/2)} \leq \frac{C\sigma^{-1}R_k}{\sin\Theta}\|\Psi\|_{W^{1,\infty}(\Theta,\pi/2)}.
  \end{align*}
  Thus, by this inequality, $\tilde{J}_{2,k}^3=0$, \eqref{Pf_KL:1_Jt2k1}, \eqref{Pf_KL:1_Jt2k2}, and $\sin\Theta\leq1$,
  \begin{align} \label{Pf_KL:1_Jt2k}
    |\tilde{J}_{2,k}| \leq \sum_{l=1}^4|\tilde{J}_{2,k}^l| \leq \frac{C}{\sin^{3/2}\Theta}\left(\sigma+\Bigl|\widetilde{W}_k(\tilde{\theta}_k)\Bigr|+\sigma^{-1}R_k\right)\|\Psi\|_{W^{1,\infty}(\Theta,\pi/2)}.
  \end{align}
  To estimate $\tilde{J}_{3,k}$, we decompose $\tilde{J}_{3,k}=\sum_{l=1}^3\tilde{J}_{3,k}^l$ into
  \begin{align*}
    \tilde{J}_{3,k}^1 &= \int_{\tilde{I}_{3,k}}\frac{\widetilde{W}_k(\tilde{\theta})\sqrt{\sin\tilde{\theta}}-\widetilde{W}_k(\tilde{\theta})\sqrt{\sin\tilde{\theta}_k}}{\tilde{\theta}-\tilde{\theta}_k}\,\overline{\Psi_k(\tilde{\theta})}\sqrt{\sin\tilde{\theta}}\,d\tilde{\theta}, \\
    \tilde{J}_{3,k}^2 &= \widetilde{W}_k(\tilde{\theta}_k)\sqrt{\sin\tilde{\theta}_k}\int_{\tilde{I}_{3,k}}\frac{\overline{\Psi_k(\tilde{\theta})}\sqrt{\sin\tilde{\theta}}}{\tilde{\theta}-\tilde{\theta}_k}\,d\tilde{\theta}, \\
    \tilde{J}_{3,k}^3 &= \int_{\tilde{I}_{3,k}}\frac{\delta_k\widetilde{F}_k(\tilde{\theta})}{\tilde{\theta}-\tilde{\theta}_k}\,\overline{\Psi_k(\tilde{\theta})}\sin\tilde{\theta}\,d\tilde{\theta}.
  \end{align*}
  We apply H\"{o}lder's inequality to $\tilde{J}_{3,k}^1$ and use \eqref{Pf_KL:1_Psik} and \eqref{Pf_KL:1_Ktk_bd} to get
  \begin{align*}
    |\tilde{J}_{3,k}^1| &\leq \widetilde{K}_k^{1/2}\|\Psi_k\|_{L^2(0,\pi/2)} \leq \frac{C}{\sin\Theta}\|\Psi\|_{L^2(\Theta,\pi/2)}.
  \end{align*}
  When $\tilde{\theta}\in\tilde{I}_{3,k}$, we have $|\tilde{\theta}-\tilde{\theta}|\geq\tilde{\varepsilon}'_k\geq\pi\sigma^2/2\sqrt{2}$ by \eqref{Pf_KL:1_epst}. Thus
  \begin{align*}
    |\tilde{J}_{3,k}^2| &\leq \Bigl|\widetilde{W}_k(\tilde{\theta}_k)\Bigr|\sqrt{\sin\tilde{\theta}_k}\,\frac{2\sqrt{2}}{\pi\sigma^2}\int_{\tilde{I}_{3,k}}|\Psi_k(\tilde{\theta})|\sqrt{\sin\tilde{\theta}}\,d\tilde{\theta} \\
    &\leq C\sigma^{-2}\Bigl|\widetilde{W}_k(\tilde{\theta}_k)\Bigr|\|\Psi_k\|_{L^2(0,\pi/2)} \leq \frac{C\sigma^{-2}}{\sin\Theta}\Bigl|\widetilde{W}_k(\tilde{\theta}_k)\Bigr|\|\Psi\|_{L^2(\Theta,\pi/2)}.
  \end{align*}
  by $|\tilde{I}_{3,k}|\leq C$, $\sin\tilde{\theta}_k\leq1$, and \eqref{Pf_KL:1_Psik}. Similarly,
  \begin{align*}
    |\tilde{J}_{3,k}^3| &\leq \frac{2\sqrt{2}\,\delta_k}{\pi\sigma^2}\int_{\tilde{I}_{3,k}}\Bigl|\widetilde{F}_k(\tilde{\theta})\Bigr||\Psi_k(\tilde{\theta})|\sin\tilde{\theta}\,d\tilde{\theta} \leq \frac{C\sigma^{-2}R_k}{\sin\Theta}\|\Psi\|_{L^2(\Theta,\pi/2)}
  \end{align*}
  by H\"{o}lder's inequality, \eqref{Pf_KL:1_Psik}, \eqref{Pf_KL:1_Ft_L2}, and $1-\mu_k\geq\kappa\delta_k^2$. Hence
  \begin{align} \label{Pf_KL:1_Jt3k}
    |\tilde{J}_{3,k}| \leq \sum_{l=1}^3|\tilde{J}_{3,k}^l| \leq \frac{C}{\sin\Theta}\left(1+\sigma^{-2}\Bigl|\widetilde{W}_k(\tilde{\theta}_k)\Bigr|+\sigma^{-2}R_k\right)\|\Psi\|_{L^2(\Theta,\pi/2)}.
  \end{align}
  Now we deduce from \eqref{Pf_KL:1_sum_Jt}, \eqref{Pf_KL:1_Jt1k}, \eqref{Pf_KL:1_Jt2k}, and \eqref{Pf_KL:1_Jt3k} that
  \begin{multline*}
    \left|\left(\frac{\widetilde{W}_k+\delta_k\widetilde{F}_k}{\tilde{\theta}-\tilde{\theta}_k},\zeta_{3,k}\Psi\right)_{L^2(\Theta,\pi/2)}\right| \leq \frac{C}{\sin\Theta}\left(1+\sigma^{-2}\Bigl|\widetilde{W}_k(\tilde{\theta}_k)\Bigr|+\sigma^{-2}R_k\right)\|\Psi\|_{L^2(\Theta,\pi/2)} \\
    +\frac{C}{\sin^{3/2}\Theta}\left(\sigma+\Bigl|\widetilde{W}_k(\tilde{\theta}_k)\Bigr|+\sigma^{-1}R_k\right)\|\Psi\|_{W^{1,\infty}(\Theta,\pi/2)}
  \end{multline*}
  for all $\sigma\in(0,1/2)$, where we also used $\sin\Theta\leq1$ and $\sigma<1$. Then we send $k\to\infty$, use \eqref{Pf_KL:As_Rk} and \eqref{Pf_KL:1_tilWF}, and let $\sigma\to0$ to find that
  \begin{align*}
    \limsup_{k\to\infty}\left|\left(\frac{\widetilde{W}_k+\delta_k\widetilde{F}_k}{\tilde{\theta}-\tilde{\theta}_k},\zeta_{3,k}\Psi\right)_{L^2(\Theta,\pi/2)}\right| \leq \frac{C}{\sin\Theta}\|\Psi\|_{L^2(\Theta,\pi/2)}.
  \end{align*}
  Therefore, applying this inequality to \eqref{Pf_KL:1_Win_wbd}, we obtain
  \begin{align*}
    \left|\Bigl(\widetilde{W}'_\infty,\Psi'\Bigr)_{L^2(\Theta,\pi/2)}\right| \leq \left(\frac{1}{\Theta}\Bigl\|\widetilde{W}'_\infty\Bigr\|_{L^2(\Theta,\pi/2)}+\frac{m_\infty^2}{\Theta^2}\Bigl\|\widetilde{W}_\infty\Bigr\|_{L^2(\Theta,\pi/2)}+\frac{C}{\sin\Theta}\right)\|\Psi\|_{L^2(\Theta,\pi/2)}
  \end{align*}
  for all $\Psi\in C_c^\infty(\Theta,\pi/2)$, which gives \eqref{Pf_KL:1_Wtin_H2} since $C_c^\infty(\Theta,\pi/2)$ is dense in $L^2(\Theta,\pi/2)$.

  Next we derive an ODE for $\widetilde{W}_\infty$. For $\Theta\in(0,\tilde{\theta}_\infty)$, let $\Psi\in C_c^\infty(\Theta,\pi/2)$ satisfy $\Psi=0$ near $\tilde{\theta}_\infty$. Then we observe by \eqref{Pf_KL:1_Wk_weak} that
  \begin{multline*}
    -\Bigl(\widetilde{W}'_k,\Psi'\Bigr)_{L^2(\Theta,\pi/2)} = \Bigl(\widetilde{W}'_k,\zeta_{1,k}\Psi\Bigr)_{L^2(\Theta,\pi/2)}+\Bigl(\widetilde{W}_k,\zeta_{2,k}\Psi\Bigr)_{L^2(\Theta,\pi/2)} \\
    +\left(\widetilde{W}_k+\delta_k\widetilde{F}_k,\frac{\zeta_{3,k}\Psi}{\tilde{\theta}-\tilde{\theta}_k}\right)_{L^2(\Theta,\pi/2)}.
  \end{multline*}
  We send $k\to\infty$ in this equality. Then since
  \begin{align*}
    \Bigl\|\delta_k\widetilde{F}_k\Bigr\|_{L^2(\Theta,\pi/2)}^2 \leq \frac{\delta_k^2}{\sin\Theta}\int_\Theta^{\frac{\pi}{2}}\Bigl|\widetilde{F}_k(\tilde{\theta})\Bigr|^2\sin\tilde{\theta}\,d\tilde{\theta} \leq \frac{C}{\sin\Theta}\cdot\frac{\delta_k^2}{1-\mu_k}\|f_k\|_{L^2(S^2)}^2 \leq \frac{CR_k^2}{\sin\Theta} \to 0
  \end{align*}
  as $k\to\infty$ by \eqref{Pf_KL:As_Rk}, \eqref{Pf_KL:1_sTh}, \eqref{Pf_KL:1_Ft_L2}, and $1-\mu_k\geq\kappa\delta_k^2$, and since
  \begin{align*}
    \frac{\zeta_{3,k}\Psi}{\tilde{\theta}-\tilde{\theta}_k} \to \frac{\zeta_{3,\infty}\Psi}{\tilde{\theta}-\tilde{\theta}_\infty} \quad\text{uniformly on}\quad \left(\Theta,\frac{\pi}{2}\right)
  \end{align*}
  by \eqref{Pf_KL:1_GtTht}, \eqref{Pf_KL:1_ze3_co}, and $\Psi=0$ near $\tilde{\theta}_\infty$, we find by \eqref{Pf_KL:1_L2Thst}, \eqref{Pf_KL:1_H1Thwe}, and \eqref{Pf_KL:1_ze12} that
  \begin{multline*}
    -\Bigl(\widetilde{W}'_\infty,\Psi'\Bigr)_{L^2(\Theta,\pi/2)} = \Bigl(\widetilde{W}'_\infty,\zeta_{1,\infty}\Psi\Bigr)_{L^2(\Theta,\pi/2)}+\Bigl(\widetilde{W}_\infty,\zeta_{2,\infty}\Psi\Bigr)_{L^2(\Theta,\pi/2)} \\
    +\left(\widetilde{W}_\infty,\frac{\zeta_{3,\infty}\Psi}{\tilde{\theta}-\tilde{\theta}_\infty}\right)_{L^2(\Theta,\pi/2)}.
  \end{multline*}
  Since this equality is valid for all $\Theta\in(0,\tilde{\theta}_\infty)$ and $\Psi\in C_c^\infty(\Theta,\pi/2)$ vanishing near $\tilde{\theta}_\infty$, and since $\widetilde{W}_\infty\in H_{loc}^2(0,\pi/2)$ by \eqref{Pf_KL:1_Wtin_H2}, we obtain
  \begin{align} \label{Pf_KL:1_Wtin_Eq}
    \widetilde{W}''_\infty(\tilde{\theta}) = \zeta_{1,\infty}(\tilde{\theta})\widetilde{W}'_\infty(\tilde{\theta})+\left\{\zeta_{2,\infty}(\tilde{\theta})+\frac{\zeta_{3,\infty}(\tilde{\theta})}{\tilde{\theta}-\tilde{\theta}_\infty}\right\}\widetilde{W}_\infty(\tilde{\theta}), \quad \tilde{\theta}\in\left(0,\frac{\pi}{2}\right)\setminus\{\tilde{\theta}_\infty\}.
  \end{align}
  Now we observe that $\sin\tilde{\theta}\geq\sin\tilde{\theta}_k\geq1/2$ for $\tilde{\theta}\in(\tilde{\theta}_k,\pi/2)$ since $\tilde{\theta}_k\in(\pi/6,\pi/3)$. We apply this inequality and calculate as in \eqref{Pf_KL:1_Int_L2} by using \eqref{Pf_KL:1_sin} and $\theta_k=\gamma_k\tilde{\theta}_k$. Then
  \begin{align*}
    \Bigl\|\widetilde{W}_k\Bigr\|_{L^2(\tilde{\theta}_k,\pi/2)}^2 \leq 2\int_{\tilde{\theta}_k}^{\frac{\pi}{2}}\Bigl|\widetilde{W}_k(\tilde{\theta})\Bigr|^2\sin\tilde{\theta}\,d\tilde{\theta} \leq \frac{C}{\gamma_k^2}\int_{\theta_k}^\pi|W_k(\theta)|^2\sin\theta\,d\theta.
  \end{align*}
  Moreover, we apply \eqref{E:L2_Mode} and \eqref{Pf_KL:1_Def_ga} to the last term and use \eqref{Pf_KL:1_wLS} to get
  \begin{align*}
    \Bigl\|\widetilde{W}_k\Bigr\|_{L^2(\tilde{\theta}_k,\pi/2)}^2 \leq \frac{C}{1-\mu_k}\|w_k\|_{L^2(S^2(\theta_k,\pi))}^2 \to 0 \quad\text{as}\quad k\to\infty.
  \end{align*}
  By this fact, \eqref{Pf_KL:1_GtTht}, and \eqref{Pf_KL:1_L2Thst} with $\Theta=\pi/6$, we find that
  \begin{align*}
    \Bigl\|\widetilde{W}_\infty\Bigr\|_{L^2(\tilde{\theta}_\infty,\pi/2)} = \lim_{k\to\infty}\Bigl\|\widetilde{W}_k\Bigr\|_{L^2(\tilde{\theta}_k,\pi/2)} = 0.
  \end{align*}
  Thus $\widetilde{W}_\infty=0$ on $(\tilde{\theta}_\infty,\pi/2)$ and, since $\widetilde{W}_\infty\in C^1(0,\pi/2)$ by \eqref{Pf_KL:1_Wtin_H2}, we further get
  \begin{align} \label{Pf_KL:1_Wtin_zero}
    \widetilde{W}_\infty(\tilde{\theta}_\infty) = \widetilde{W}'_\infty(\tilde{\theta}_\infty) = 0.
  \end{align}
  Then we easily find that any solution of \eqref{Pf_KL:1_Wtin_Eq}--\eqref{Pf_KL:1_Wtin_zero} in $C^1(0,\pi/2)$ must be trivial by the standard theory of ODEs, since $\zeta_{1,\infty}$, $\zeta_{2,\infty}$, and $\zeta_{3,\infty}$ given by \eqref{Pf_KL:1_ze12} and \eqref{Pf_KL:1_ze3_co} are smooth on $(0,\pi/2)$ and the singularity of $(\tilde{\theta}-\tilde{\theta}_\infty)^{-1}$ at $\tilde{\theta}=\tilde{\theta}_\infty$ is of order one. Hence $\widetilde{W}_\infty=0$, i.e. $\tilde{w}_\infty=0$, which contradicts $\tilde{w}_\infty\neq0$. This completes the proof in Case 2, and we conclude that \eqref{E:Ko_Lowu} is valid.
\end{proof}

\begin{proof}[Proof of \eqref{E:Ko_LowS}]
  We assume $\mu\geq0$ for simplicity, since the case $\mu\leq0$ can be handled similarly. Let $\kappa$ be the constant given by \eqref{Pf_KM:kappa} and $\delta\in(0,1]$, $\mu\in[0,1-\kappa\delta^2]$, and $\theta_\mu=\arccos\mu\in(0,\pi/2]$. Also, for $u\in\mathcal{Y}_m\cap H^1(S^2)$ with $m\in\mathbb{Z}\setminus\{0\}$, let $v$, $w$, and $f$ be given by \eqref{E:KL_vw} and \eqref{E:KL_fR}. We set $\varepsilon=\kappa^{1/2}\delta/2$, which satisfies \eqref{E:KL_eps} by \eqref{E:KL_sinmu}. Since $u,v,w,f\in\mathcal{X}_m$, we can write
  \begin{align*}
    u = U(\theta)e^{im\varphi}, \quad v = V(\theta)e^{im\varphi}, \quad w = W(\theta)e^{im\varphi}, \quad f = F(\theta)e^{im\varphi}.
  \end{align*}
  For $\theta\in(\theta_\mu-\varepsilon,\theta_\mu+\varepsilon)$, we observe by \eqref{E:IF_sinU} and \eqref{E:KL_theta} that
  \begin{align*}
    |\sin\theta\,U(\theta)|^2 \leq C\sin^2\theta_\mu|U(\theta)|^2 \leq C\|M_{\sin\theta}u\|_{L^2(S^2)}\|(-\Delta)^{1/2}u\|_{L^2(S^2)}.
  \end{align*}
  By this inequality, \eqref{E:KL_cosep}, and Young's inequality,
  \begin{align} \label{Pf_KLS:In}
    \begin{aligned}
      \int_{\theta_\mu-\varepsilon}^{\theta_\mu+\varepsilon}|\sin\theta\,U(\theta)|^2\sin\theta\,d\theta &\leq C\varepsilon\sin\theta_\mu\|M_{\sin\theta}u\|_{L^2(S^2)}\|(-\Delta)^{1/2}u\|_{L^2(S^2)} \\
      &\leq \frac{1}{2}\|M_{\sin\theta}u\|_{L^2(S^2)}^2+C\varepsilon^2\sin^2\theta_\mu\|(-\Delta)^{1/2}u\|_{L^2(S^2)}^2.
    \end{aligned}
  \end{align}
  Since $u+6v=B_{2,m}u=(w+\delta f)/(\mu-x_3)$ for $x_3\neq\mu$ by \eqref{E:KL_B2u_Eq},
  \begin{align*}
    U(\theta) = -6V(\theta)+\frac{W(\theta)}{\mu-\cos\theta}+\frac{\delta F(\theta)}{\mu-\cos\theta}, \quad \theta\in(0,\pi)\setminus\{\theta_\mu\}.
  \end{align*}
  By this equality and $|\mu-\cos\theta|\geq\frac{1}{2}|\theta-\theta_\mu|\sin\theta$ (see \eqref{E:KL_mucos}),
  \begin{align*}
    |U(\theta)| \leq C\left(|V(\theta)|+\left|\frac{W(\theta)}{\theta-\theta_\mu}\right|+\frac{\delta}{|\theta-\theta_\mu|}|F(\theta)|\right), \quad \theta\in(0,\pi)\setminus\{\theta_\mu\}.
  \end{align*}
  Moreover, if $|\theta-\theta_\mu|\geq\varepsilon$, then $\delta/|\theta-\theta_\mu|\leq\delta/\varepsilon=2/\kappa^{1/2}$. By these inequalities, $|\sin\theta|\leq1$, and \eqref{E:L2_Mode}, we find that
  \begin{multline} \label{Pf_KLS:Out}
    \int_{|\theta-\theta_\mu|\geq\varepsilon}|\sin\theta\,U(\theta)|^2\sin\theta\,d\theta \\
    \leq C\left(\|v\|_{L^2(S^2)}^2+\|f\|_{L^2(S^2)}^2+\int_{|\theta-\theta_\mu|\geq\varepsilon}\left|\frac{W(\theta)\sqrt{\sin\theta}}{\theta-\theta_\mu}\right|^2\,d\theta\right).
  \end{multline}
  To estimate the last term, we see that
  \begin{align*}
    \int_{|\theta-\theta_\mu|\geq\varepsilon}\left|\frac{W(\theta)\sqrt{\sin\theta}}{\theta-\theta_\mu}\right|^2\,d\theta \leq C\left(J+|W(\theta_\mu)|^2\sin\theta_\mu\int_{|\theta-\theta_\mu|\geq\varepsilon}\frac{1}{|\theta-\theta_\mu|^2}\,d\theta\right),
  \end{align*}
  where
  \begin{align*}
    J = \int_{|\theta-\theta_\mu|\geq\varepsilon}\left|\frac{W(\theta)\sqrt{\sin\theta}-W(\theta_\mu)\sqrt{\sin\theta_\mu}}{\theta-\theta_\mu}\right|^2\,d\theta.
  \end{align*}
  We use Hardy's inequality, \eqref{E:L2_Mode}, and \eqref{E:KL_w_est} to get
  \begin{align*}
    J &\leq 4\int_0^\pi\left|\frac{d}{d\theta}\Bigl(W(\theta)\sqrt{\sin\theta}\Bigr)\right|^2\,d\theta \leq C\int_0^\pi\left(|W'(\theta)|^2+\frac{m^2}{\sin^2\theta}|W(\theta)|^2\right)\sin\theta\,d\theta \\
    &\leq C\|\nabla w\|_{L^2(S^2)}^2 \leq C\|(-\Delta)^{1/2}v\|_{L^2(S^2)}^2.
  \end{align*}
  Also, $\int_{|\theta-\theta_\mu|\geq\varepsilon}|\theta-\theta_\mu|^{-2}\,d\theta\leq2/\varepsilon$ by $|\theta-\theta_\mu|^{-2}=-\frac{d}{d\theta}(\theta-\theta_\mu)^{-1}$. Hence
  \begin{align*}
    \int_{|\theta-\theta_\mu|\geq\varepsilon}\left|\frac{W(\theta)\sqrt{\sin\theta}}{\theta-\theta_\mu}\right|^2\,d\theta \leq C\left(\|(-\Delta)^{1/2}v\|_{L^2(S^2)}^2+\frac{\sin\theta_\mu}{\varepsilon}|W(\theta_\mu)|^2\right).
  \end{align*}
  We combine this inequality, \eqref{Pf_KLS:In}, and \eqref{Pf_KLS:Out} and use \eqref{E:H1_Equiv} to $v$ to get
  \begin{multline} \label{Pf_KLS:Sinu}
    \|M_{\sin\theta}u\|_{L^2(S^2)}^2 \leq C\varepsilon^2\sin^2\theta_\mu\|(-\Delta)^{1/2}u\|_{L^2(S^2)}^2 \\
    +C\left(\|(-\Delta)^{1/2}v\|_{L^2(S^2)}^2+\|f\|_{L^2(S^2)}^2+\frac{\sin\theta_\mu}{\varepsilon}|W(\theta_\mu)|^2\right).
  \end{multline}
  Let us estimate the last term. By Lemma \ref{L:KL_WF} we have $W(\theta_\mu)+\delta F(\theta_\mu)=0$. Fix $\sigma\in(0,1)$. Since $\sigma\varepsilon$ satisfies \eqref{E:KL_eps}, there exists $\theta'_\mu\in(\theta_\mu-\sigma\varepsilon,\theta_\mu+\sigma\varepsilon)$ such that
  \begin{align*}
    |F(\theta'_\mu)|^2 &\leq \frac{1}{\sigma\varepsilon\sin\theta_\mu}\int_{\theta_\mu-\sigma\varepsilon}^{\theta_\mu+\sigma\varepsilon}|F(\theta)|^2\sin\theta\,d\theta \leq \frac{C}{\sigma\varepsilon\sin\theta_\mu}\|f\|_{L^2(S^2)}^2, \\
    |F(\theta_\mu)-F(\theta'_\mu)|^2 &\leq \frac{C\sigma\varepsilon}{\sin\theta_\mu}\int_{\theta_\mu-\sigma\varepsilon}^{\theta_\mu+\sigma\varepsilon}|F'(\theta)|^2\sin\theta\,d\theta \leq \frac{C\sigma\varepsilon}{\sin\theta_\mu}\|\nabla f\|_{L^2(S_{\mu,\sigma\varepsilon}^2)}^2
  \end{align*}
  as in \eqref{Pf_WF:prime} and \eqref{Pf_WF:diff}, where $S_{\mu,\sigma\varepsilon}^2$ is given by \eqref{E:KL_Sme}. Thus
  \begin{align*}
    |W(\theta_\mu)|^2 = \delta^2|F(\theta_\mu)|^2 &\leq \frac{C\sigma\varepsilon}{\sin\theta_\mu}\left(\sigma^{-2}\|f\|_{L^2(S^2)}^2+\delta^2\|\nabla f\|_{L^2(S_{\mu,\sigma\varepsilon}^2)}^2\right),
  \end{align*}
  where we also used $\delta/\varepsilon=2/\kappa^{1/2}$. We apply this inequality to \eqref{Pf_KLS:Sinu} and then use \eqref{E:KL_gradf}, $\sigma<1$, and $\|M_{\sin\theta}u\|_{L^2(S_{\mu,\sigma\varepsilon}^2)}\leq\|M_{\sin\theta}u\|_{L^2(S^2)}$ to get
  \begin{multline*}
    (1-C\sigma)\|M_{\sin\theta}u\|_{L^2(S^2)}^2 \\
    \leq C\Bigl(\varepsilon^2\sin^2\theta_\mu\|(-\Delta)^{1/2}u\|_{L^2(S^2)}^2+\|(-\Delta)^{1/2}v\|_{L^2(S^2)}^2+\sigma^{-2}\|f\|_{L^2(S^2)}^2\Bigr).
  \end{multline*}
  In this inequality, we fix $\sigma\in(0,1)$ so that $C\sigma\leq1/2$, apply \eqref{E:Ko_Lowu} to $(-\Delta)^{1/2}v$, and use $\varepsilon=\kappa^{1/2}\delta/2$ and \eqref{E:KL_sinmu} to obtain \eqref{E:Ko_LowS}.
\end{proof}

Now we are ready to verify Assumption \ref{As:Est} for $A_m$ and $\Lambda_m$. Let $\kappa$ be the constant given by \eqref{Pf_KM:kappa}. For $m\in\mathbb{Z}\setminus\{0\}$, $\xi\in(0,\infty)$, and $\mu\in\mathbb{R}$, we define
\begin{align*}
  h_{1,m}(\xi,\mu) &=
  \begin{cases}
    0 &\text{if}\quad |\mu| > 1+\kappa\xi^{-1}, \\
    |m|^{-1/2}\xi^{-1/2} &\text{if}\quad 1-\kappa\xi^{-1} < |\mu| \leq 1+\kappa\xi^{-1}, \\
    \xi^{-1}(1-|\mu|)^{-1/2} &\text{if}\quad |\mu| \leq 1-\kappa\xi^{-1},
  \end{cases} \\
  h_{2,m}(\xi,\mu) &=
  \begin{cases}
    0 &\text{if}\quad |\mu| > 1+\kappa\xi^{-2}, \\
    |m|^{-1/2}\xi^{-2} &\text{if}\quad 1-\kappa\xi^{-2} < |\mu| \leq 1+\kappa\xi^{-2}, \\
    \xi^{-1}(1-|\mu|)^{1/2} &\text{if}\quad |\mu| \leq 1-\kappa\xi^{-2}.
  \end{cases}
\end{align*}
Note that $h_{j,m}(\xi,\mu)\geq0$ and $\lim_{\xi\to\infty}\sup_{\mu\in\mathbb{R}}h_j(\xi,\mu)=0$ for $j=1,2$.

\begin{lemma} \label{L:Kom_As4}
  Let $m\in\mathbb{Z}\setminus\{0\}$. Then $A_m$ and $\Lambda_m$ satisfy Assumption \ref{As:Est} with the functions $h_{1,m}$ and $h_{2,m}$ given above and the operator $B_{3,m}=M_{\sin\theta}$ on $\mathcal{B}_m=L^2(S^2)$.
\end{lemma}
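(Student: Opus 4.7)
The plan is to unify Lemmas~\ref{L:Ko_High}, \ref{L:Ko_Middle}, and \ref{L:Ko_Low} into the single statement of Assumption~\ref{As:Est} by a case split on $|\mu|$ together with a choice of the parameter $\delta$ in terms of $\xi$ taken differently for the two estimates associated with $h_{1,m}$ and $h_{2,m}$. I would focus on $\xi \geq 1$; for $\xi \in (0,1)$ the desired inequalities follow from the $\xi = 1$ version since both $h_{1,m}(\cdot,\mu)$ and $h_{2,m}(\cdot,\mu)$ are nonincreasing in $\xi$ in each of their three regimes.

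For the first estimate (the one producing $h_{1,m}$), I would set $\delta = \xi^{-1/2} \in (0,1]$. The three regimes $|\mu| > 1 + \kappa\xi^{-1}$, $1 - \kappa\xi^{-1} \leq |\mu| \leq 1 + \kappa\xi^{-1}$, and $|\mu| \leq 1 - \kappa\xi^{-1}$ then correspond exactly to the three cases in the definition of $h_{1,m}$ and align with the hypotheses of Lemmas~\ref{L:Ko_High}, \ref{L:Ko_Middle}, \ref{L:Ko_Low}. In the outer regime \eqref{E:Ko_Hiu} supplies a pure resolvent bound with no smoothing term, consistent with $h_{1,m} = 0$. In the middle regime \eqref{E:Ko_Midu} with $\delta^2 = \xi^{-1}$ produces the smoothing coefficient $\delta^2/|m| = |m|^{-1}\xi^{-1}$, which is $h_{1,m}^2$. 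In the inner regime, dividing \eqref{E:Ko_Lowu} through by $\delta^2 = \xi^{-1}$ produces the smoothing coefficient $\delta^4/(1-|\mu|) = \xi^{-2}(1-|\mu|)^{-1}$, which is again $h_{1,m}^2$.

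For the second estimate (defining $h_{2,m}$ with $B_{3,m} = M_{\sin\theta}$), I would repeat the same three-way decomposition with the different choice $\delta = \xi^{-1}$, so that the thresholds $\pm\kappa\delta^2 = \pm\kappa\xi^{-2}$ now reproduce the three cases in the definition of $h_{2,m}$. Applying \eqref{E:Ko_HiS}, \eqref{E:Ko_MidS}, and \eqref{E:Ko_LowS} gives smoothing coefficients $0$, $\delta^4/|m| = |m|^{-1}\xi^{-4}$, and $\delta^2(1-|\mu|) = \xi^{-2}(1-|\mu|)$ in the outer, middle, and inner regimes respectively, each matching $h_{2,m}^2$ on the nose.

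All of the substantial analytic work has already been done upstream: the genuine difficulty of this section — the inner-regime coercive estimate \eqref{E:Ko_Lowu} of Lemma~\ref{L:Ko_Low}, proved by the long contradiction argument built on Lemmas~\ref{L:KL_eps}--\ref{L:KL_wLS} — is behind us, as is the large-$|\mu|$ argument based on pairing with $B_{2,m}u$. The only remaining tasks in the proof of Lemma~\ref{L:Kom_As4} are therefore the bookkeeping of constants, the choice of the two values of $\delta$, and the verification that the regime thresholds line up with those in the definitions of $h_{1,m}$ and $h_{2,m}$, so I do not anticipate any real obstacle at this step.
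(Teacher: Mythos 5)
Your treatment of $\xi\geq 1$ is exactly the paper's argument: $\delta=\xi^{-1/2}$ for \eqref{E:u_h} and $\delta=\xi^{-1}$ for \eqref{E:B3u_h}, with Lemmas \ref{L:Ko_High}, \ref{L:Ko_Middle}, \ref{L:Ko_Low} supplying the three regimes (together with \eqref{E:Ko_Amh} to pass between $(-\Delta)^{1/2}$ and $(-A_m)^{1/2}$, and \eqref{E:Ko_B3} for \eqref{E:ALam_B3}, which you should at least cite since it is part of Assumption \ref{As:Est}). The gap is your reduction of $\xi\in(0,1)$ to $\xi=1$. Monotonicity of $h_{j,m}(\cdot,\mu)$ does not suffice: in \eqref{E:u_h} the coefficient of the resolvent term is $\xi^2$, which \emph{decreases} as $\xi$ decreases, so the right-hand side at $\xi<1$ is not bounded below by the right-hand side at $\xi=1$. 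Concretely, knowing $\|u\|_{\mathcal{X}}^2\leq C(\|\mathbb{Q}(\mu-\Lambda)u\|_{\mathcal{X}}^2+h_1(1,\mu)^2\|(-A)^{1/2}u\|_{\mathcal{X}}^2)$ gives no control of the form $C\xi^2\|\mathbb{Q}(\mu-\Lambda)u\|_{\mathcal{X}}^2+\cdots$ when $\xi$ is small, since $\|\mathbb{Q}(\mu-\Lambda)u\|_{\mathcal{X}}$ may be the dominant term and cannot be shrunk by a factor $\xi^2$ for free. (There is also the secondary issue that the regimes themselves move with $\xi$: a $\mu$ in the middle regime for small $\xi$ may lie in the outer regime for $\xi=1$, so you would not even be comparing the same case of the definition.)

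The fix is what the paper does: for $\xi\in(0,1)$ and $|\mu|\leq 1+\kappa\xi^{-1}$ (resp.\ $|\mu|\leq1+\kappa\xi^{-2}$), discard the resolvent term entirely and use the Poincar\'e-type bound on $\mathcal{Y}_m$,
\begin{align*}
  \|u\|_{L^2(S^2)}^2 \leq \frac{1}{|m|^2}\|(-\Delta)^{1/2}u\|_{L^2(S^2)}^2 \leq \frac{C}{|m|^2}\|(-A_m)^{1/2}u\|_{L^2(S^2)}^2,
\end{align*}
which follows from \eqref{E:Def_Laps}, $\lambda_n\geq|m|^2$ for $n\geq|m|$, and \eqref{E:Ko_Amh}; then check directly that $|m|^{-2}\leq h_{1,m}(\xi,\mu)^2$ (e.g.\ $|m|^{-2}\leq|m|^{-1}\xi^{-1}$ in the middle regime and $|m|^{-2}\leq\xi^{-2}(1-|\mu|)^{-1}$ in the inner regime, using $|m|\geq1$ and $\xi<1$), and similarly $\|M_{\sin\theta}u\|_{L^2(S^2)}\leq\|u\|_{L^2(S^2)}$ together with $|m|^{-2}\leq h_{2,m}(\xi,\mu)^2$ for \eqref{E:B3u_h}. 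The outer regime $|\mu|>1+\kappa\xi^{-j}$ needs no such case split, since \eqref{E:Ko_Hiu} and \eqref{E:Ko_HiS} already hold for all $\xi>0$.
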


\begin{proof}
  Since $B_{3,m}$ is a bounded operator on $\mathcal{B}_m$, it is closed in $\mathcal{B}_m$ and
  \begin{align*}
    D_{\mathcal{X}_m}((-A_m)^{1/2}) \subset \mathcal{X}_m \subset L^2(S^2) = \mathcal{B}_m = D_{\mathcal{B}_m}(B_{3,m}).
  \end{align*}
  Also, \eqref{E:ALam_B3} holds by \eqref{E:Ko_B3}. Let us verify \eqref{E:u_h} and \eqref{E:B3u_h}.
  If $\xi\in(0,\infty)$ and $|\mu|>1+\kappa\xi^{-1}$, then \eqref{E:u_h} is valid by \eqref{E:Ko_Hiu} and $(|\mu|-1)^{-1}<\kappa^{-1}\xi$. Also, if $\xi\in[1,\infty)$ and $|\mu|\leq 1+\kappa\xi^{-1}$, then \eqref{E:u_h} follows from \eqref{E:Ko_Amh}, \eqref{E:Ko_Midu}, and \eqref{E:Ko_Lowu} with $\delta=\xi^{-1/2}$. Let $\xi\in(0,1)$ and $|\mu|\leq1+\kappa\xi^{-1}$. For $u\in\mathcal{Y}_m\cap H^1(S^2)$ of the form \eqref{E:Ko_Ym}, we have
  \begin{align*}
    \|u\|_{L^2(S^2)}^2 \leq \frac{1}{|m|^2}\|(-\Delta)^{1/2}u\|_{L^2(S^2)}^2 \leq \frac{C}{|m|^2}\|(-A_m)^{1/2}u\|_{L^2(S^2)}^2
  \end{align*}
  by \eqref{E:Def_Laps}, $\lambda_n=n(n+1)\geq|m|^2$ for $n\geq|m|$, and \eqref{E:Ko_Amh}. Moreover,
  \begin{align*}
    |m|^{-2} \leq
    \begin{cases}
      |m|^{-1}\xi^{-1} &\text{if}\quad 1-\kappa\xi^{-1} < |\mu| \leq 1+\kappa\xi^{-1}, \\
      \xi^{-2}(1-|\mu|)^{-1} &\text{if}\quad |\mu| \leq 1-\kappa\xi^{-1}
    \end{cases}
  \end{align*}
  since $|m|\geq1$, $\xi\in(0,1)$, and $(1-|\mu|)^{-1}\geq1$. Hence \eqref{E:u_h} is valid.

  The inequality \eqref{E:B3u_h} follows from \eqref{E:Ko_HiS} and $(|\mu|-1)^{-1}<\kappa^{-1}\xi^{-2}$ when $\xi\in(0,\infty)$ and $|\mu|>1+\kappa\xi^{-2}$ and from \eqref{E:Ko_Amh}, \eqref{E:Ko_MidS}, and \eqref{E:Ko_LowS} with $\delta=\xi^{-1}$ when $\xi\in[1,\infty)$ and $|\mu|\leq1+\kappa\xi^{-2}$. Also, when $\xi\in(0,1)$ and $|\mu|\leq1+\kappa\xi^{-2}$, we have \eqref{E:B3u_h} by
  \begin{align*}
    \|B_{3,m}u\|_{L^2(S^2)}^2 = \|M_{\sin\theta}u\|_{L^2(S^2)}^2 \leq \|u\|_{L^2(S^2)}^2 \leq \frac{C}{|m|^2}\|(-A_m)^{1/2}u\|_{L^2(S^2)}^2
  \end{align*}
  for $u\in\mathcal{Y}_m\cap H^1(S^2)$ and
  \begin{align*}
    |m|^{-2} \leq
    \begin{cases}
      |m|^{-1}\xi^{-4} &\text{if}\quad 1-\kappa\xi^{-2} < |\mu| \leq 1+\kappa\xi^{-2}, \\
      \kappa^{-1}\xi^{-2}(1-|\mu|) &\text{if}\quad |\mu| \leq 1-\kappa\xi^{-2}
    \end{cases}
  \end{align*}
  due to $|m|\geq1$, $\xi\in(0,1)$, and $\kappa^{-1}\xi^2(1-|\mu|)\geq1$ when $|\mu|\leq1-\kappa\xi^{-2}$.
\end{proof}

\subsection{Estimates for the semigroup} \label{SS:Ko_Semi}
By Lemmas \ref{L:Kom_As13} and \ref{L:Kom_As4}, $A_m$ and $\Lambda_m$ satisfy Assumptions \ref{As:A}--\ref{As:La_02} and \ref{As:Est}. Moreover, $\|B_{2,m}\|_{\mathcal{X}_m\to\mathcal{X}_m}\leq1$ by \eqref{E:B2m_Xmk} and the constants in \eqref{E:uB2u}, \eqref{E:AB2u}, and \eqref{E:u_h}--\eqref{E:B3u_h} can be taken independently of $m$. Hence we can apply the abstract results in Section \ref{S:Abst} to $L_{\alpha,m}=A_m-i\alpha m\Lambda_m$ to get the following results.

\begin{theorem} \label{T:Kom_PSB}
  Let $m\in\mathbb{Z}\setminus\{0\}$. Then
  \begin{align} \label{E:Kom_PS_Set}
    \{\zeta\in\mathbb{C} \mid \mathrm{Re}\,\zeta\geq0\} \subset \rho_{\mathcal{X}_m}(L_{\alpha,m}) \subset \rho_{\mathcal{Y}_m}(\mathbb{Q}_mL_{\alpha,m})
  \end{align}
  for all $\alpha\in\mathbb{R}$. Moreover, for all $\alpha\in\mathbb{R}\setminus\{0\}$ and $\lambda\in\mathbb{R}$ we have
  \begin{align} \label{E:Kom_PS_Bo}
    \|(i\lambda+\mathbb{Q}_mL_{m,\alpha})^{-1}\|_{\mathcal{Y}_m\to\mathcal{Y}_m} \leq CG_m\left(\alpha,\frac{\lambda}{\alpha m}\right).
  \end{align}
  Here $C>0$ is a constant independent of $m$, $\alpha$, and $\lambda$. Also,
  \begin{align} \label{E:Kom_PS_Gm}
    G_m(\alpha,\mu) =
    \begin{cases}
      |\alpha m|^{-1}(|\mu|-1)^{-1} &\text{if} \quad |\mu| > 1+|\alpha|^{-1/2}, \\
      |\alpha|^{-1/2}|m|^{-1} &\text{if} \quad 1-|\alpha|^{-1/2} < |\mu| \leq 1+|\alpha|^{-1/2}, \\
      |\alpha m|^{-2/3}(1-|\mu|)^{-1/3} &\text{if} \quad |\mu| \leq 1-|\alpha|^{-1/2}
    \end{cases}
  \end{align}
  for $\alpha\in\mathbb{R}\setminus\{0\}$ and $\mu\in\mathbb{R}$.
\end{theorem}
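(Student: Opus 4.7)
The plan is to invoke the abstract resolvent estimate of Section~\ref{S:Abst} applied to $L_{\alpha,m}=A_m-i\alpha m\Lambda_m$. All of its hypotheses are already in hand: Lemmas~\ref{L:Kom_As13} and \ref{L:Kom_As4} secure Assumptions~\ref{As:A}--\ref{As:La_02} and \ref{As:Est} with $B_{1,m}=M_{\cos\theta}$, $B_{2,m}=(I+6\Delta^{-1})|_{\mathcal{X}_m}$, $B_{3,m}=M_{\sin\theta}$, and the explicit functions $h_{1,m},h_{2,m}$; the uniform bound $\|B_{2,m}\|_{\mathcal{X}_m\to\mathcal{X}_m}\leq 1$ from \eqref{E:B2m_Xmk} and the $m$-independent constants throughout then propagate to an $m$-uniform conclusion.

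For the spectral inclusion \eqref{E:Kom_PS_Set}, I would combine the coercivity $-A_m\geq 4I$ (Lemma~\ref{L:Ko_ALam}) with the uniform boundedness $\|\Lambda_m u\|_{L^2(S^2)}\leq\|u\|_{L^2(S^2)}$ from \eqref{E:Ko_Lm_Bo}: for each $\zeta$ with $\mathrm{Re}\,\zeta\geq 0$, the equation $(\zeta-L_{\alpha,m})u=f$ can be solved by a standard perturbation argument starting from the invertibility of $\zeta-A_m$. The reduction to $\rho_{\mathcal{Y}_m}(\mathbb{Q}_mL_{\alpha,m})$ then follows from the invariance of $N_{\mathcal{X}_m}(\Lambda_m)$ under $L_{\alpha,m}$ together with the commutativity $\mathbb{Q}_mA_m\subset A_m\mathbb{Q}_m$ already noted in the proof of Lemma~\ref{L:Kom_As13}.

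For the pseudospectral bound \eqref{E:Kom_PS_Bo}, I would rewrite the resolvent equation $(i\lambda+\mathbb{Q}_mL_{\alpha,m})u=f$ in $\mathcal{Y}_m$ as
\begin{align*}
  |\alpha m|\,\mathbb{Q}_m(\mu-\Lambda_m)u=i\bigl(A_mu-f\bigr),\qquad \mu=\frac{\lambda}{\alpha m},
\end{align*}
and combine two test-function computations: pairing with $u$ to produce the real-part identity that controls $\|(-A_m)^{1/2}u\|^2$ by $\|f\|\|u\|$ plus the cross term $\alpha m\,\mathrm{Im}(\Lambda_mu,u)$, which is in turn absorbed through \eqref{E:Ko_B3}; and pairing with a $B_{2,m}$-twisted companion to convert the left-hand side into the coercive estimates \eqref{E:u_h}--\eqref{E:B3u_h}. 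These are precisely the ingredients required by the abstract theorem of Section~\ref{S:Abst}, whose output is a bound of the schematic form
\begin{align*}
  \|u\|_{L^2(S^2)}\leq C\bigl(h_{1,m}(\xi,\mu)+|\alpha m|^{-1}\xi\bigr)\|f\|_{L^2(S^2)}
\end{align*}
valid for every $\xi>0$, with the contribution of $h_{2,m}$ entering to tame the cross term through the auxiliary operator $B_{3,m}$.

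The three branches of $G_m$ then emerge by optimizing $\xi$ regime by regime. Choosing $\xi=|\alpha|^{1/2}$ turns off the first branch of $h_{1,m}$ when $|\mu|>1+|\alpha|^{-1/2}$ and recovers directly Lemma~\ref{L:Ko_High}, giving $G_m=|\alpha m|^{-1}(|\mu|-1)^{-1}$; the same choice in the transition window $1-|\alpha|^{-1/2}<|\mu|\leq 1+|\alpha|^{-1/2}$ produces the middle rate $|\alpha|^{-1/2}|m|^{-1}$; and for $|\mu|\leq 1-|\alpha|^{-1/2}$ the two terms $|\alpha m|^{-1}\xi^{-1}(1-|\mu|)^{-1/2}$ and $|\alpha m|^{-1}\xi$ balance at $\xi\sim|\alpha m|^{2/3}(1-|\mu|)^{-1/3}$, producing the characteristic $|\alpha m|^{-2/3}(1-|\mu|)^{-1/3}$ rate. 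The principal obstacle is to track the $h_{2,m}$ contribution, coming from the cross term via \eqref{E:Ko_B3}, delicately enough that it does not spoil the $2/3$ exponent in the inner regime—this is exactly the pseudospectral mechanism of \cite{IbMaMa19} codified in the abstract result of Section~\ref{S:Abst}.
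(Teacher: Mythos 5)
Your overall strategy coincides with the paper's: both reduce everything to the abstract results of Section~\ref{S:Abst} applied with $\alpha$ replaced by $\alpha m$ (Lemma~\ref{L:PS_Pre} for \eqref{E:Kom_PS_Set}, Theorem~\ref{T:PS_Bound} for the resolvent bound), with Lemmas~\ref{L:Kom_As13} and \ref{L:Kom_As4} supplying the hypotheses and the $m$-uniform constants. Your separate perturbation argument for \eqref{E:Kom_PS_Set} is just a re-derivation of what Lemma~\ref{L:PS_Pre} already provides, so that part is fine.

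The problem is in the optimization step, which is where the actual content of this theorem lies. First, the quantity to be minimized is not the schematic $h_{1,m}(\xi,\mu)+|\alpha m|^{-1}\xi$ you wrote, but
\begin{align*}
  \frac{\xi_1}{|\alpha m|}+\frac{\xi_1^2\xi_2^2}{(\alpha m)^2}+\frac{\xi_1^2h_{2,m}(\xi_2,\mu)}{|\alpha m|}+h_{1,m}(\xi_1,\mu)^2,
\end{align*}
with \emph{two} independent parameters and with $h_{1,m}$ entering squared; dropping the $\xi_2$-terms hides exactly the choice one must make to keep the cross term harmless. Second, your explicit choices do not produce the claimed branches of $G_m$. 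In the regime $|\mu|>1+|\alpha|^{-1/2}$, taking $\xi_1=|\alpha|^{1/2}$ kills $h_{1,m}$ but leaves the first term equal to $|\alpha|^{-1/2}|m|^{-1}$, which is \emph{larger} than the target $|\alpha m|^{-1}(|\mu|-1)^{-1}$ there (since $(|\mu|-1)^{-1}<|\alpha|^{1/2}$); one must instead take $\xi_1=\xi_2^2\sim(|\mu|-1)^{-1}$, as the paper does, so that the dominant term is $\xi_1/|\alpha m|$. In the regime $|\mu|\leq 1-|\alpha|^{-1/2}$, the balance is between $\xi_1/|\alpha m|$ and $h_{1,m}(\xi_1,\mu)^2=\xi_1^{-2}(1-|\mu|)^{-1}$, giving $\xi_1=|\alpha m|^{1/3}(1-|\mu|)^{-1/3}$ (not $|\alpha m|^{2/3}(1-|\mu|)^{-1/3}$, which plugged into $\xi_1/|\alpha m|$ would only yield $|\alpha m|^{-1/3}(1-|\mu|)^{-1/3}$), together with the separate choice $\xi_2=|\alpha m|^{1/3}(1-|\mu|)^{1/6}$ to make the $\xi_2$-terms comparable. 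These are fixable slips, but as written the proposal does not establish \eqref{E:Kom_PS_Gm}.
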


\begin{proof}
  By Lemma \ref{L:PS_Pre} and Theorem \ref{T:PS_Bound} with $\alpha$ replaced by $\alpha m$ we have \eqref{E:Kom_PS_Set} for all $\alpha\in\mathbb{R}$ and
  \begin{align} \label{Pf_KmP:Bound}
    \|(i\lambda+\mathbb{Q}_mL_{\alpha,m})^{-1}\|_{\mathcal{Y}_m\to\mathcal{Y}_m} \leq CF_m\left(\alpha m,\frac{\lambda}{\alpha m}\right)
  \end{align}
  for all $\alpha\in\mathbb{R}\setminus\{0\}$ and $\lambda\in\mathbb{R}$. Here $C>0$ is a constant independent of $m$, $\alpha$, and $\lambda$. Also,
  \begin{align} \label{Pf_KmP:Fm}
    F_m(\alpha m,\mu) = \inf_{\xi_1,\xi_2>0}\left(\frac{\xi_1}{|\alpha m|}+\frac{\xi_1^2\xi_2^2}{(\alpha m)^2}+\frac{\xi_1^2h_2(\xi_2,\mu)}{|\alpha m|}+h_1(\xi_1,\mu)^2\right)
  \end{align}
  for $\mu=\lambda/\alpha m\in\mathbb{R}$. Let us estimate $F_m(\alpha m,\mu)$. In what follows, we write $C$ for general positive constant depending only on the constant $\kappa$ given by \eqref{Pf_KM:kappa}.

  When $|\mu|>1+|\alpha|^{-1/2}$, let $\xi_1=\xi_2^2=2\kappa(|\mu|-1)^{-1}$. Then $1+\kappa\xi_j^{-j}=(|\mu|+1)/2<|\mu|$ and thus $h_j(\xi_j,\mu)=0$ for $j=1,2$. Also,
  \begin{align*}
    \frac{\xi_1}{|\alpha m|} = \frac{2\kappa}{|\alpha m|(|\mu|-1)}, \quad \frac{\xi_1^2\xi_2^2}{(\alpha m)^2} = \frac{8\kappa^3}{(\alpha m)^2(|\mu|-1)^3} \leq \frac{8\kappa^3}{|m|}\cdot\frac{1}{|\alpha m|(|\mu|-1)}.
  \end{align*}
  By these relations and $|m|\geq1$, we find that
  \begin{align} \label{Pf_KmP:mu_hi}
    F_m(\alpha m,\mu) \leq \frac{C}{|\alpha m|(|\mu|-1)}.
  \end{align}
  When $1-|\alpha|^{-1/2}<|\mu|\leq 1+|\alpha|^{-1/2}$, we set $\xi_1=\xi_2^2=\kappa|\alpha|^{1/2}/2$. Then
  \begin{align*}
    \kappa\xi_j^{-j} = 2|\alpha|^{-1/2} > |\alpha|^{-1/2} \geq \bigl|\,1-|\mu|\,\bigr|, \quad\text{i.e.}\quad 1-\kappa\xi_j^{-j}<|\mu|<1+\kappa\xi_j^{-j}
  \end{align*}
  for $j=1,2$. Hence $h_1(\xi_1,\mu)=|m|^{-1/2}\xi_1^{-1/2}$, $h_2(\xi_2,\mu)=|m|^{-1/2}\xi_2^{-2}$, and
  \begin{gather*}
    \frac{\xi_1}{|\alpha m|} = \frac{\kappa}{2|\alpha|^{1/2}|m|}, \quad \frac{\xi_1^2\xi_2^2}{(\alpha m)^2} = \frac{\kappa^3}{8|\alpha|^{1/2}m^2}, \\
    \frac{\xi_1^2h_2(\xi_2,\mu)}{|\alpha m|} = \frac{\kappa}{2|\alpha|^{1/2}|m|^{3/2}}, \quad h_1(\xi_1,\mu)^2 = \frac{2}{\kappa|\alpha|^{1/2}|m|}.
  \end{gather*}
  By these equalities and $|m|\geq1$, we find that
  \begin{align} \label{Pf_KmP:mu_md}
    F_m(\alpha m,\mu) \leq \frac{C}{|\alpha|^{1/2}|m|}.
  \end{align}
  When $|\mu|\leq1-|\alpha|^{-1/2}$, we take
  \begin{align*}
    \xi_1 = |\alpha m|^{1/3}(1-|\mu|)^{-1/3}, \quad \xi_2 = |\alpha m|^{1/3}(1-|\mu|)^{1/6}.
  \end{align*}
  Then for $j=1,2$ we see by $(1-|\mu|)^{-1/3}\leq|\alpha|^{1/6}$, $\kappa\in(0,1)$, and $|m|\geq1$ that
  \begin{align*}
    \frac{\kappa}{\xi_j^j(1-|\mu|)} = \frac{\kappa}{|\alpha m|^{j/3}(1-|\mu|)^{2j/3}} \leq \frac{\kappa}{|m|^{j/3}} < 1, \quad\text{i.e.}\quad |\mu| < 1-\kappa\xi_j^{-j}.
  \end{align*}
  Hence $h_1(\xi_1,\mu)=\xi_1^{-1}(1-|\mu|)^{-1/2}$, $h_2(\xi_2,\mu)=\xi_2^{-1}(1-|\mu|)^{1/2}$, and
  \begin{align*}
    \frac{\xi_1}{|\alpha m|} = \frac{\xi_1^2\xi_2^2}{(\alpha m)^2} = \frac{\xi_1^2h_2(\xi_2,\mu)}{|\alpha m|} = h_1(\xi_1,\mu)^2 = \frac{1}{|\alpha m|^{2/3}(1-|\mu|)^{1/3}}.
  \end{align*}
  Therefore,
  \begin{align} \label{Pf_KmP:mu_lo}
    F_m(\alpha m,\mu) \leq \frac{4}{|\alpha m|^{2/3}(1-|\mu|)^{1/3}}.
  \end{align}
  Hence we get \eqref{E:Kom_PS_Bo}--\eqref{E:Kom_PS_Gm} by \eqref{Pf_KmP:Bound} and \eqref{Pf_KmP:mu_hi}--\eqref{Pf_KmP:mu_lo}.
\end{proof}

\begin{theorem} \label{T:Kom_Dec_Q}
  Let $m\in\mathbb{Z}\setminus\{0\}$. Then $L_{\alpha,m}$ generates an analytic semigroup $\{e^{tL_{\alpha,m}}\}_{t\geq0}$ in $\mathcal{X}_m$ for all $\alpha\in\mathbb{R}$. Moreover,
  \begin{align} \label{E:Kom_Dec_Q}
    \|\mathbb{Q}_me^{tL_{\alpha,m}}f\|_{L^2(S^2)} \leq C_1e^{-C_2|\alpha|^{1/2}|m|^{2/3}t}\|\mathbb{Q}_mf\|_{L^2(S^2)}, \quad t\geq0, \, f\in\mathcal{X}_m
  \end{align}
  for all $\alpha\in\mathbb{R}\setminus\{0\}$, where $C_1$ and $C_2$ are positive constants independent of $m$, $\alpha$, and $t$.
\end{theorem}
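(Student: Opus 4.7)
The plan is to combine the resolvent bound of Theorem \ref{T:Kom_PSB} with the Gearhart--Pr\"uss type theorem of Wei \cite{Wei21}, an abstract version of which is supplied in Section \ref{S:Abst}. The analyticity of $\{e^{tL_{\alpha,m}}\}_{t\geq 0}$ on $\mathcal{X}_m$ is immediate from Lemma \ref{L:Kom_As13}: $-A_m$ is self-adjoint and positive while $\Lambda_m$ is $A_m$-compact, so the same perturbation argument that generates $\{e^{t\mathcal{L}^{\nu,a}}\}_{t\geq 0}$ in Section \ref{S:Intro} applies, noting that $\mathcal{X}_m$ is invariant under both $A_m$ and $\Lambda_m$.

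Next I would reduce \eqref{E:Kom_Dec_Q} to an estimate for the semigroup generated by $\mathbb{Q}_m L_{\alpha,m}$ on $\mathcal{Y}_m$. The key structural observation is that $\ker\mathbb{Q}_m=N_{\mathcal{X}_m}(\Lambda_m)$ is invariant under $L_{\alpha,m}$: this is vacuous when $|m|\geq 3$, while for $|m|=1,2$ one has $\Lambda_m Y_2^m=0$ and $A_m Y_2^m=-4 Y_2^m$ by \eqref{E:Ko_AL_Y}, so $L_{\alpha,m}$ stabilises $\mathrm{span}\{Y_2^m\}$. Hence $\mathbb{Q}_m L_{\alpha,m}(I-\mathbb{Q}_m)=0$ on $D_{\mathcal{X}_m}(A_m)$. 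Differentiating $t\mapsto \mathbb{Q}_m e^{tL_{\alpha,m}}f$ and using this identity gives the Cauchy problem $\partial_t(\mathbb{Q}_m e^{tL_{\alpha,m}}f)=\mathbb{Q}_m L_{\alpha,m}(\mathbb{Q}_m e^{tL_{\alpha,m}}f)$ with initial datum $\mathbb{Q}_m f$, whence $\mathbb{Q}_m e^{tL_{\alpha,m}}f=e^{t\mathbb{Q}_m L_{\alpha,m}}\mathbb{Q}_m f$.

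The Gearhart--Pr\"uss theorem then yields $\|e^{t\mathbb{Q}_m L_{\alpha,m}}\|_{\mathcal{Y}_m\to \mathcal{Y}_m}\leq C_1 e^{-t/\Psi_m}$, where
\[
\Psi_m:=\sup_{\lambda\in\mathbb{R}}\|(i\lambda+\mathbb{Q}_m L_{\alpha,m})^{-1}\|_{\mathcal{Y}_m\to \mathcal{Y}_m}.
\]
By Theorem \ref{T:Kom_PSB}, $\Psi_m\leq C\sup_{\mu\in\mathbb{R}}G_m(\alpha,\mu)$. Examining the three regimes in \eqref{E:Kom_PS_Gm}, the high-$|\mu|$ piece is maximised as $|\mu|\to (1+|\alpha|^{-1/2})^+$ with value $|\alpha|^{-1/2}|m|^{-1}$, the intermediate piece is constantly $|\alpha|^{-1/2}|m|^{-1}$, and the low-$|\mu|$ piece $|\alpha m|^{-2/3}(1-|\mu|)^{-1/3}$ is maximised at $|\mu|=1-|\alpha|^{-1/2}$ with value $|\alpha|^{-1/2}|m|^{-2/3}$. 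Since $|m|\geq 1$, the low-$|\mu|$ regime dominates, giving $\Psi_m\leq C|\alpha|^{-1/2}|m|^{-2/3}$ and hence \eqref{E:Kom_Dec_Q} with $C_2=C^{-1}$.

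The main delicate point is ensuring that the constants $C_1,C_2$ produced by the abstract Gearhart--Pr\"uss mechanism are genuinely independent of $m$ and $\alpha$. This rests on the uniform structural bounds already recorded in the opening lines of Section \ref{SS:Ko_Semi}: the constants in Lemmas \ref{L:Kom_As13} and \ref{L:Kom_As4} and in \eqref{E:uB2u}--\eqref{E:AB2u} may all be taken $m$-independent, and $\|B_{2,m}\|_{\mathcal{X}_m\to \mathcal{X}_m}\leq 1$ uniformly in $m$ by \eqref{E:B2m_Xmk}. Granting this uniformity, the abstract theorem of Section \ref{S:Abst} outputs an $m$- and $\alpha$-independent $C_1$, while the entire rate dependence is concentrated in $\Psi_m$, completing the proof.
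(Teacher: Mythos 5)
Your proposal is correct and follows essentially the same route as the paper: the paper likewise obtains the analytic semigroup and the bound $\|\mathbb{Q}_me^{tL_{\alpha,m}}f\|\leq C_1e^{-C_2t/F_m(\alpha m)}\|\mathbb{Q}_mf\|$ from the abstract Gearhart--Pr\"uss-type result (Theorem \ref{T:Semi_BoAl}/\ref{T:Decay_QL}), and then computes $\sup_{\mu}$ over the same three regimes, finding that the low-$|\mu|$ regime dominates and yields $C|\alpha|^{-1/2}|m|^{-2/3}$. Your supremum computation of $G_m$ and the uniformity discussion of the constants match the paper's argument exactly (the paper works with $F_m(\alpha m,\mu)$ directly, but $G_m$ is precisely its established upper bound), so there is nothing to correct.
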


\begin{proof}
  Theorem \ref{T:Semi_BoAl} implies that $L_{\alpha,m}$ generates an analytic semigroup $\{e^{tL_{\alpha,m}}\}_{t\geq0}$ in $\mathcal{X}_m$ for all $\alpha\in\mathbb{R}$. Moreover, for all $\alpha\in\mathbb{R}\setminus\{0\}$ we have
  \begin{align} \label{Pf_KmDQ:QeL}
    \|\mathbb{Q}_me^{tL_{\alpha,m}}f\|_{L^2(S^2)} \leq C_1e^{-C_2t/F_m(\alpha m)}\|\mathbb{Q}_mf\|_{L^2(S^2)}, \quad t\geq0, \, f\in\mathcal{X}_m
  \end{align}
  by \eqref{E:Decay_QL} with $\alpha$ replaced by $\alpha m$. Here $C_1,C_2>0$ are constants independent of $m$, $\alpha$, and $t$. Also, $F_m(\alpha m)=\sup_{\lambda\in\mathbb{R}}F_m\left(\alpha m,\mu\right)$ with $F_m(\alpha m,\mu)$ given by \eqref{Pf_KmP:Fm}.

  Let us estimate $F_m(\alpha m)$. In what follows, $C$ denotes a general positive constant independent of $m$, $\alpha$, and $\mu$. If $|\mu|>1+|\alpha|^{-1/2}$, then $F_m(\alpha m,\mu)\leq C|\alpha|^{-1/2}|m|^{-1}$ by \eqref{Pf_KmP:mu_hi} and $|\mu|-1>|\alpha|^{-1/2}$. The same inequality holds when $1-|\alpha|^{-1/2}<|\mu|\leq1+|\alpha|^{-1/2}$ by \eqref{Pf_KmP:mu_md}. When $|\mu|\leq1-|\alpha|^{-1/2}$, we have $F_m(\alpha m,\mu)\leq 4|\alpha|^{-1/2}|m|^{-2/3}$ by \eqref{Pf_KmP:mu_lo} and $1-|\mu|\geq|\alpha|^{-1/2}$. By these facts and $|m|\geq1$, we get $F_m(\alpha m)\leq C|\alpha|^{-1/2}|m|^{-2/3}$. Hence we apply this inequality to \eqref{Pf_KmDQ:QeL} to obtain \eqref{E:Kom_Dec_Q}.
\end{proof}

When $|m|\geq3$, $\mathbb{Q}_m=I$ on $\mathcal{X}_m$ by \eqref{E:Ko_Qm} and thus \eqref{E:Kom_Dec_Q} gives a decay estimate for $e^{tL_{\alpha,m}}$. On the other hand, when $|m|=1,2$, $\mathbb{P}_mf=(I-\mathbb{Q}_m)f=(f,Y_2^m)_{L^2(S^2)}Y_2^m$ does not vanish for $f\in\mathcal{X}_m$, so we need to estimate the $\mathbb{P}_m$-part of $e^{tL_{\alpha,m}}$.

\begin{theorem} \label{T:Kom_Dec_P}
  Let $|m|=1,2$. Then for all $t\geq0$, $f\in\mathcal{X}_m$, and $\alpha\in\mathbb{R}$ such that $|\alpha|>4$ is sufficiently large, we have
  \begin{multline} \label{E:Kom_Dec_P}
    \|\mathbb{P}_me^{tL_{\alpha,m}}f\|_{L^2(S^2)} \leq e^{-4t}\|\mathbb{P}_mf\|_{L^2(S^2)} \\
    +C_1|\alpha m|^{1/3}\log\Bigl(C_2|\alpha m|^{2/3}\Bigr)e^{-2t}\|\mathbb{Q}_mf\|_{L^2(S^2)},
  \end{multline}
  where $C_1,C_2>0$ are constants independent of $t$, $f$, $\alpha$, and $m$.
\end{theorem}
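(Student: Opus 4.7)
The plan is to exploit the block upper-triangular structure of $L_{\alpha,m}$ on $\mathcal{X}_m = \mathbb{P}_m\mathcal{X}_m \oplus \mathcal{Y}_m$. Since $\mathbb{Q}_m A_m \subset A_m \mathbb{Q}_m$ and $\Lambda_m Y_2^m = 0$, one has $\mathbb{Q}_m L_{\alpha,m}\mathbb{P}_m = 0$ together with $L_{\alpha,m}\mathbb{P}_m = -4\mathbb{P}_m$ (since $A_m Y_2^m = -4 Y_2^m$). This immediately gives $e^{tL_{\alpha,m}}\mathbb{P}_m f = e^{-4t}\mathbb{P}_m f$, which reproduces the first term in \eqref{E:Kom_Dec_P}, so it suffices to bound $\mathbb{P}_m e^{tL_{\alpha,m}}\mathbb{Q}_m f$. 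The same block structure yields, for every $\zeta$ in the resolvent set,
\[
\mathbb{P}_m(\zeta - L_{\alpha,m})^{-1}\mathbb{Q}_m = \frac{i\alpha m}{\zeta+4}\,\mathbb{P}_m\Lambda_m(\zeta - \mathbb{Q}_m L_{\alpha,m})^{-1}.
\]

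The key idea is to use the Dunford integral representation of the semigroup and to shift the Bromwich contour from $\{\mathrm{Re}\,\zeta = 0\}$ down to the vertical line $\{\mathrm{Re}\,\zeta = -2\}$. This shift is legitimate because, for $|\alpha|>4$ sufficiently large, Theorem \ref{T:Kom_Dec_Q} forces $\sigma(\mathbb{Q}_m L_{\alpha,m}) \subset \{\mathrm{Re}\,\zeta < -2\}$, while the only other spectral point of $L_{\alpha,m}$ in $\{\mathrm{Re}\,\zeta \geq -2\}$ is $\zeta = -4$ (from the $\mathbb{P}_m$-block), which also lies to the left of the new contour; hence no residue is picked up. The outcome is
\[
\mathbb{P}_m e^{tL_{\alpha,m}}\mathbb{Q}_m f = \frac{\alpha m\,e^{-2t}}{2\pi}\int_\mathbb{R} \frac{e^{i\lambda t}}{i\lambda+2}\,\mathbb{P}_m\Lambda_m(-2+i\lambda - \mathbb{Q}_m L_{\alpha,m})^{-1}\mathbb{Q}_m f\,d\lambda,
\]
from which the $e^{-2t}$ factor in the statement arises.

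To estimate the shifted resolvent I rewrite $\mathbb{Q}_m L_{\alpha,m}+2I = (A_m+2I) - i\alpha m\Lambda_m$ and observe that $\tilde A_m := A_m+2I = \Delta+4$ satisfies on $\mathcal{Y}_m$ the same abstract assumptions as $A_m$ (its spectrum on $\mathcal{Y}_m$ equals $\{-(\lambda_n-4)\}_{n\geq3} \subset (-\infty,-8]$), while Lemma \ref{L:Ko_B3} still holds with the same $B_{3,m}=M_{\sin\theta}$. Applying Theorem \ref{T:Kom_PSB} verbatim to $\mathbb{Q}_m L_{\alpha,m}+2I$ then furnishes $\|(-2+i\lambda - \mathbb{Q}_m L_{\alpha,m})^{-1}\|_{\mathcal{Y}_m\to\mathcal{Y}_m} \leq C\,G_m(\alpha m,\lambda/(\alpha m))$. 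Combined with the bound $\|\mathbb{P}_m\Lambda_m\|_{\mathcal{Y}_m\to\mathbb{P}_m\mathcal{X}_m} \leq C$, which follows from \eqref{E:Y_Rec} via the identity $\mathbb{P}_m\Lambda_m u = \tfrac{a_3^m}{2}(u,Y_3^m)_{L^2(S^2)}Y_2^m$ for $u\in\mathcal{Y}_m$, the problem reduces to bounding the scalar integral
\[
I(\alpha m) := \int_\mathbb{R} \frac{G_m(\alpha m,\lambda/(\alpha m))}{\sqrt{\lambda^2+4}}\,d\lambda.
\]

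The final step is to bound $I(\alpha m)$ after the substitution $\mu = \lambda/(\alpha m)$ and a split according to the three regimes in \eqref{E:Kom_PS_Gm}. The dominant contribution comes from $|\mu| \leq 1-|\alpha|^{-1/2}$: using $(\sqrt{(\alpha m)^2\mu^2+4})^{-1} \leq \min\{1/2,(|\alpha m\mu|)^{-1}\}$, the integrand there is of order $|\alpha m|^{-5/3}|\mu|^{-1}(1-|\mu|)^{-1/3}$, and integrating $|\mu|^{-1}$ from the cut-off at $|\mu| \sim |\alpha m|^{-1}$ (produced by the Lorentzian weight) up to $|\mu| \sim 1$ produces the logarithm of size $\log(|\alpha m|^{2/3})$, while the factor $(1-|\mu|)^{-1/3}$ remains integrable near $|\mu|=1$. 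The other two regimes contribute lower-order non-logarithmic terms of size $O(|\alpha m|^{-5/3})$. Multiplying by the prefactor $|\alpha m|$ then yields the claimed bound $C_1|\alpha m|^{1/3}\log(C_2|\alpha m|^{2/3})$. The main technical difficulty I anticipate is the careful book-keeping of this integral across the three regimes — in particular, showing that the naive coefficient $|\alpha m|$ coming from the trivial estimate $\|\mathbb{P}_m\Lambda_m u_Q(s)\| \leq \|\mathbb{Q}_m f\|$ in a direct Duhamel approach is indeed refined to $|\alpha m|^{1/3}\log|\alpha m|$ only through the delicate interplay between the singularity of $G_m$ at $\mu=0$ and the Lorentzian decay in $\lambda$, together with the rigorous justification of the contour shift via the spectral inclusion mentioned above.
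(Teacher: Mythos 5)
Your overall architecture is the one the paper uses: the Dunford integral for $e^{tL_{\alpha,m}}$, the resolvent identity \eqref{E:PS_ReOp} splitting off the $\mathbb{P}_mA_m$-block (whose sole eigenvalue $-4$ produces the $e^{-4t}\|\mathbb{P}_mf\|$ term), and an integral of the pseudospectral bound against the kernel $|\zeta+4|^{-1}$ producing the $|\alpha m|^{1/3}\log(\cdot)$ factor. The only structural difference is the contour: you shift to the straight vertical line $\mathrm{Re}\,\zeta=-2$, whereas the paper uses a piecewise contour $\Gamma=\bigcup_k\Gamma_{k,\pm}$ that starts at $\mathrm{Re}\,\zeta=-2$ near the real axis but dives much deeper into the left half-plane (to $\mathrm{Re}\,\zeta\sim-|\alpha m|^{2/3}$) away from it. Both choices stay inside the region where the resolvent of $\mathbb{Q}_mL_{\alpha,m}$ is controlled, both yield $|e^{t\zeta}|\le e^{-2t}$, and the logarithm arises from the same source (in your version, $\int_{|\mu|\gtrsim|\alpha m|^{-1}}^{1}|\mu|^{-1}\,d\mu$ in the regime where $G_m\sim|\alpha m|^{-2/3}$; in the paper, $\int_2^{S_1}|\zeta_{1,\pm}(s)+4|^{-1}\,ds$ on $\Gamma_{1,\pm}$). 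Your identification of $\mathbb{P}_m\Lambda_m u=\tfrac{a_3^m}{2}(u,Y_3^m)_{L^2(S^2)}Y_2^m$ and the bookkeeping of the three regimes of $G_m$ are correct, and the final bound comes out right.

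The one genuine gap is your justification of the resolvent bound on the shifted line. You propose to apply Theorem \ref{T:Kom_PSB} \emph{verbatim} to $\mathbb{Q}_mL_{\alpha,m}+2I$ by replacing $A_m=\Delta+2$ with $\tilde{A}_m=\Delta+4$ and asserting that ``Lemma \ref{L:Ko_B3} still holds with the same $B_{3,m}$.'' That is not automatic: the proof of \eqref{E:Ko_B3} hinges on an exact cancellation of the terms $\mp 12(u,x_3v)_{L^2(S^2)}$ (with $v=\Delta^{-1}u$), which occurs precisely because the zeroth-order coefficient in $A_m$ is $2$. With $\Delta+4$ an extra term $12\,\mathrm{Im}(u,x_3v)_{L^2(S^2)}=-12\,\mathrm{Im}(\nabla v,v\nabla x_3)_{L^2(S^2)}$ survives, and bounding it by $C\|(-\Delta)^{1/2}u\|_{L^2(S^2)}\|M_{\sin\theta}u\|_{L^2(S^2)}$ is not established (one only gets $\|\nabla v\|\,\|M_{\sin\theta}v\|$, and $\|M_{\sin\theta}v\|$ does not obviously control against $\|M_{\sin\theta}u\|$). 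Fortunately you do not need this detour at all: a standard Neumann-series perturbation of the imaginary-axis bound \eqref{E:Kom_PS_Bo} — exactly the step \eqref{Pf_KmDP:Bd_Res} in the paper — shows that $\|(\zeta-\mathbb{Q}_mL_{\alpha,m})^{-1}\|_{\mathcal{Y}_m\to\mathcal{Y}_m}\le 2C_1H_m(\alpha,\mathrm{Im}\,\zeta)$ whenever $|\mathrm{Re}\,\zeta|\le 1/2C_1H_m(\alpha,\mathrm{Im}\,\zeta)$, and since $\inf_{\lambda}1/2C_1H_m(\alpha,\lambda)=c|\alpha|^{1/2}|m|^{2/3}>2$ for $|\alpha|$ large, the entire line $\mathrm{Re}\,\zeta=-2$ lies in this region. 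This also supplies the resolvent control on the strip $-2\le\mathrm{Re}\,\zeta\le 0$ needed to justify the contour deformation rigorously (the spectral inclusion you cite from Theorem \ref{T:Kom_Dec_Q} alone does not give quantitative bounds there). With that substitution your argument closes; the remaining quibbles (the sign in the resolvent identity, and the claim that the outer regime contributes $O(|\alpha m|^{-5/3})$ rather than the actual $O(|\alpha m|^{-1}\log|\alpha|)$, still lower order) are harmless.
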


\begin{proof}
  Let $\mathcal{N}_m=N_{\mathcal{X}_m}(\Lambda_m)=\{cY_2^m\mid c\in\mathbb{C}\}$ (see \eqref{Pf_K13:Ker}). Then
  \begin{align} \label{Pf_KmDP:PA_Res}
    (\zeta-\mathbb{P}_mA_m)^{-1}\mathbb{P}_mf = (\zeta+4)^{-1}\mathbb{P}_mf, \quad \zeta\in\rho_{\mathcal{N}_m}(\mathbb{P}_mA_m)=\mathbb{C}\setminus\{-4\}
  \end{align}
  for $f\in\mathcal{X}_m$ since $\mathbb{P}_mA_mY_2^m=-4Y_2^m$ by \eqref{E:Ko_AL_Y} and $\lambda_2=6$. Also, by \eqref{E:PS_ReSet},
  \begin{align} \label{Pf_KmDP:L_ReSet}
    \rho_{\mathcal{X}_m}(L_{\alpha,m}) = \rho_{\mathcal{Y}_m}(\mathbb{Q}_mL_{\alpha,m})\cap\rho_{\mathcal{N}_m}(\mathbb{P}_mA_m) = \rho_{\mathcal{Y}_m}(\mathbb{Q}_mL_{\alpha,m})\cup(\mathbb{C}\setminus\{-4\}).
  \end{align}
  Since $A_m$ is self-adjoint and satisfies \eqref{E:Ko_A_Po} in $\mathcal{X}_m$ and $\Lambda_m$ is $A_m$-compact in $\mathcal{X}_m$, we see by a perturbation theory of semigroups (see \cite{EngNag00}) that $e^{tL_{\alpha,m}}$ is expressed by the Dunford integral
 \begin{align*}
    e^{tL_{\alpha,m}}f = \frac{1}{2\pi i}\int_\Gamma e^{t\zeta}(\zeta-L_{\alpha,m})^{-1}f\,d\zeta, \quad t>0, \, f\in\mathcal{X}_m,
  \end{align*}
  where $\Gamma$ is any piecewise smooth curve in $\rho_{\mathcal{X}_m}(L_{\alpha,m})$ going from $\infty e^{-i\delta}$ to $\infty e^{i\delta}$ with any $\delta\in(\pi/2,\pi)$ and located on the right-hand side of $\sigma_{\mathcal{X}_m}(L_{\alpha,m})=\sigma_{\mathcal{Y}_m}(\mathbb{Q}_mL_{\alpha,m})\cup\{-4\}$. We apply $\mathbb{P}_m$ to the above expression to get
  \begin{align} \label{Pf_KmDP:Pef_Dec}
    \mathbb{P}_me^{tL_{\alpha,m}}f = \frac{1}{2\pi i}\int_\Gamma e^{t\zeta}(\zeta-\mathbb{P}_mA_m)^{-1}\mathbb{P}_mf\,d\zeta-\frac{\alpha m}{2\pi}J
  \end{align}
  by \eqref{E:PS_ReOp} with $\alpha$ replaced by $\alpha m$, where
  \begin{align*}
    J = \int_\Gamma e^{t\zeta}(\zeta-\mathbb{P}_mA_m)^{-1}\mathbb{P}_m\Lambda_m(\zeta-\mathbb{Q}_mL_{\alpha,m})^{-1}\mathbb{Q}_mf\,d\zeta.
  \end{align*}
  Noting that $\zeta=-4$ is located on the left-hand side of $\Gamma$, we use \eqref{Pf_KmDP:PA_Res}  and the residue theorem (after replacing $\Gamma$ by a small circle centered at $\zeta=-4$) to deduce that
  \begin{align} \label{Pf_KmDP:Int_PA}
    \frac{1}{2\pi i}\int_\Gamma e^{t\zeta}(\zeta-\mathbb{P}_mA_m)^{-1}\mathbb{P}_mf\,d\zeta = \left(\frac{1}{2\pi i}\int_\Gamma \frac{e^{t\zeta}}{\zeta+4}\,d\zeta\right)\mathbb{P}_mf = e^{-4t}\mathbb{P}_mf.
  \end{align}
  Next we estimate $J$. For $\alpha\in\mathbb{R}\setminus\{0\}$ and $\lambda\in\mathbb{R}$ let
  \begin{align*}
    H_m(\alpha,\lambda) =
    \begin{cases}
      |m|^{1/3}(|\lambda|-|\alpha m|)^{-1} &\text{if}\quad |\lambda/\alpha m| > 1+|\alpha|^{-1/2}, \\
      |\alpha|^{-1/2}|m|^{-2/3} &\text{if}\quad 1-|\alpha|^{-1/2} < |\lambda/\alpha m| \leq 1+|\alpha|^{-1/2}, \\
      |\alpha m|^{-1/3}(|\alpha m|-|\lambda|)^{-1/3} &\text{if}\quad |\lambda/\alpha m| \leq 1-|\alpha|^{-1/2}.
    \end{cases}
  \end{align*}
  Then since $G_m(\alpha,\lambda/\alpha m)\leq H_m(\alpha,\lambda)$, where $G_m$ is the function given by \eqref{E:Kom_PS_Gm}, we see by \eqref{E:Kom_PS_Bo} with $\lambda$ replaced by $-\lambda$ that
  \begin{align} \label{Pf_KmDP:Bd_Im}
    \|(i\lambda-\mathbb{Q}_mL_{\alpha,m})^{-1}\|_{\mathcal{Y}_m\to\mathcal{Y}_m} \leq C_1H_m(\alpha,-\lambda) = C_1H_m(\alpha,\lambda)
  \end{align}
  with a constant $C_1>0$ independent of $m$, $\alpha$, and $\lambda$.
  Also,
  \begin{align} \label{Pf_KmDP:Bd_Res}
    \begin{gathered}
      \Omega = \{\zeta\in\mathbb{C} \mid |\mathrm{Re}\,\zeta| \leq 1/2C_1H_m(\alpha,\mathrm{Im}\,\zeta)\} \subset \rho_{\mathcal{Y}_m}(\mathbb{Q}_mL_{\alpha,m}), \\
      \|(\zeta-\mathbb{Q}_mL_{\alpha,m})^{-1}\|_{\mathcal{Y}_m\to\mathcal{Y}_m} \leq 2C_1H_m(\alpha,\mathrm{Im}\,\zeta), \quad \zeta\in\Omega
    \end{gathered}
  \end{align}
  by \eqref{Pf_KmDP:Bd_Im} and a standard Neumann series argument. Let
  \begin{align*}
    S_1 = \frac{1}{4C_1}\frac{|\alpha m|^{2/3}}{2^{1/3}}, \quad S_2 = \frac{|\alpha|^{1/2}|m|^{2/3}}{4C_1}.
  \end{align*}
  We take $|\alpha|>4$ sufficiently large so that $S_1>S_2>2$ and set
  \begin{align*}
    \zeta_{1,\pm}(s) &= -s\pm i\frac{|\alpha m|}{2}\frac{s-2}{S_1-2}, \quad s\in I_1 = [2,S_1], \\
    \zeta_{2,\pm}(s) &= -s\pm i\left\{|\alpha m|-\frac{1}{|\alpha m|}(4C_1s)^3\right\}, \quad s\in I_2 = [S_2,S_1], \\
    \zeta_{3,\pm}(s) &= -S_2\pm is, \quad s\in I_3 = \left[|\alpha m|-|\alpha|^{1/2}|m|,|\alpha m|+|\alpha|^{1/2}|m|\right], \\
    \zeta_{4,\pm}(s) &= -s\pm i\left(|\alpha m|+4C_1|m|^{1/3}s\right), \quad s\in I_4 = [S_2,\infty).
  \end{align*}
  Then we define a piecewise smooth curve
  \begin{align*}
    \Gamma = \bigcup_{k=1}^4(\Gamma_{k,+}\cup\Gamma_{k,-})\subset\mathbb{C}\setminus\{-4\}, \quad \Gamma_{k,\pm} = \{\zeta_{k,\pm}(s) \mid s\in I_k\}.
  \end{align*}
  For $s\in I_1$ we have $|\mathrm{Im}\,\zeta_{1,\pm}(s)/\alpha m|\leq1/2\leq1-|\alpha|^{-1/2}$ by $|\alpha|>4$.
  Hence
  \begin{align} \label{Pf_KmDP:Hz1}
    H_m(\alpha,\mathrm{Im}\,\zeta_{1,\pm}(s)) = \frac{1}{|\alpha m|^{1/3}(|\alpha m|-|\mathrm{Im}\,\zeta_{1,\pm}(s)|)^{1/3}} \leq \frac{2^{1/3}}{|\alpha m|^{2/3}}
  \end{align}
  and
  \begin{align*}
    \frac{1}{2C_1H_m(\alpha,\mathrm{Im}\,\zeta_{1,\pm}(s))} \geq \frac{1}{2C_1}\frac{|\alpha m|^{2/3}}{2^{1/3}} = 2S_1 > S_1 \geq |\mathrm{Re}\,\zeta_{1,\pm}(s)|,
  \end{align*}
  which shows $\Gamma_{1,\pm}\subset\Omega$. Similarly, for $k=2,3,4$ and $s\in I_k$ we have
  \begin{align} \label{Pf_KmDP:Hz234}
    2C_1H_m(\alpha,\mathrm{Im}\,\zeta_{k,\pm}(s)) =
    \begin{cases}
      (2s)^{-1}, \quad k=2,4, \\
      2C_1|\alpha|^{-1/2}|m|^{-2/3} = (2S_2)^{-1}, \quad k=3
    \end{cases}
  \end{align}
  and thus $\Gamma_{k,\pm}\subset\Omega$. Hence it follows from \eqref{Pf_KmDP:L_ReSet} and \eqref{Pf_KmDP:Bd_Res} that
  \begin{align*}
    \Gamma \subset \Omega\cap(\mathbb{C}\setminus\{-4\}) \subset \rho_{\mathcal{Y}_m}(\mathbb{Q}_mL_{\alpha,m})\cap\rho_{\mathcal{N}_m}(\mathbb{P}_mA_m) = \rho_{\mathcal{X}_m}(L_{\alpha,m})
  \end{align*}
  and we can take this $\Gamma$ as the path of the integral for $J$ to get $J=\sum_{k=1}^4J_k$, where
  \begin{align*}
    J_k = \left(\int_{\Gamma_{k,+}}+\int_{\Gamma_{k,-}}\right)e^{t\zeta}(\zeta-\mathbb{P}_mA_m)^{-1}\mathbb{P}_m\Lambda_m(\zeta-\mathbb{Q}_mL_{\alpha,m})^{-1}\mathbb{Q}_mf\,d\zeta.
  \end{align*}
  Let us estimate each $J_k$. In what follows, we write $C$ and $C'$ for general positive constant depending only on $C_1$. For $s\in I_1$ we have
  \begin{align*}
    |\zeta_{1,\pm}(s)+4| = \sqrt{(4-s)^2+b^2(s-2)^2} = \sqrt{(1+b^2)\{(s-c_1)^2+c_2^2\}}
  \end{align*}
  and $|e^{t\zeta_{1,\pm}(s)}|=e^{-ts}\leq e^{-2t}$ and $|\frac{d}{ds}\zeta_{1,\pm}(s)|=\sqrt{1+b^2}$, where
  \begin{align*}
    b = \frac{|\alpha m|}{2}\frac{1}{S_1-2}, \quad c_1 = 2\left(1+\frac{1}{1+b^2}\right), \quad c_2 = \frac{2b}{1+b^2}.
  \end{align*}
  By these relations, \eqref{E:Ko_Lm_Bo}, \eqref{Pf_KmDP:PA_Res}, \eqref{Pf_KmDP:Bd_Res}, \eqref{Pf_KmDP:Hz1}, we find that
  \begin{align*}
    \|J_1\|_{L^2(S^2)} &\leq \frac{Ce^{-2t}}{|\alpha m|^{2/3}}\left(\int_2^{S_1}\frac{1}{\sqrt{(s-c_1)^2+c_2^2}}\,ds\right)\|\mathbb{Q}_mf\|_{L^2(S^2)} \\
    &= \frac{Ce^{-2t}}{|\alpha m|^{2/3}}\left[\log\left(s-c_1+\sqrt{(s-c_1)^2+c_2^2}\right)\right]_{s=2}^{S_1}\|\mathbb{Q}_mf\|_{L^2(S^2)}.
  \end{align*}
  Moreover, since $S_1=C|\alpha m|^{2/3}$, we have $C|\alpha m|^{1/3}\leq b\leq C'|\alpha m|^{1/3}$ and thus $b\geq1$ when $|\alpha|>4$ is large. Then since $c_1-2=2(1+b^2)^{-1}$ and $b^{-1}\leq c_2\leq 2b^{-1}$,
  \begin{align*}
    1 \leq S_1-c_1 \leq S_1, \quad \frac{C}{|\alpha m|^{1/3}} \leq c_2 \leq S_1, \quad (c_1-2)^2+c_2^2 \leq 1
  \end{align*}
  for sufficiently large $|\alpha|>4$ and thus
  \begin{align*}
    1 &\leq S_1-c_1+\sqrt{(S_1-c_1)^2+c_2^2} \leq CS_1 = C|\alpha m|^{2/3}, \\
    1 &\geq \sqrt{(c_1-2)^2+c_2^2}-(c_1-2) \geq \frac{c_2^2}{2} \geq \frac{C}{|\alpha m|^{2/3}}
  \end{align*}
  by the mean value theorem for $\sqrt{(c_1-2)^2+s}$ with $s\geq0$. Hence
  \begin{align} \label{Pf_KmDP:J1}
    \|J_1\|_{L^2(S^2)} \leq \frac{Ce^{-2t}}{|\alpha m|^{2/3}}\log\Bigl(C|\alpha m|^{2/3}\Bigr)\|\mathbb{Q}_mf\|_{L^2(S^2)}.
  \end{align}
  For $s\in I_2$ we have $|e^{t\zeta_{2,\pm}(s)}|=e^{-ts}\leq e^{-2t}$ and
  \begin{align*}
    |\zeta_{2,\pm}(s)+4| \geq |\mathrm{Im}\,\zeta_{2,\pm}(s)| \geq \frac{|\alpha m|}{2}, \quad \left|\frac{d\zeta_{2,\pm}}{ds}(s)\right| = \sqrt{1+\frac{Cs^4}{|\alpha m|^2}} \leq \frac{Cs^2}{|\alpha m|}
  \end{align*}
  by $s^2/|\alpha m|\geq S_2^2/|\alpha m|=C|m|^{1/3}\geq C$. It follows from these inequalities, \eqref{E:Ko_Lm_Bo}, \eqref{Pf_KmDP:PA_Res}, \eqref{Pf_KmDP:Bd_Res}, \eqref{Pf_KmDP:Hz234}, and $S_1=C|\alpha m|^{2/3}$ that
  \begin{align} \label{Pf_KmDP:J2}
    \|J_2\|_{L^2(S^2)} \leq \frac{Ce^{-2t}}{|\alpha m|^2}\left(\int_{S_2}^{S_1}s\,ds\right)\|\mathbb{Q}_mf\|_{L^2(S^2)} \leq \frac{Ce^{-2t}}{|\alpha m|^{2/3}}\|\mathbb{Q}_mf\|_{L^2(S^2)}.
  \end{align}
  Similarly, since $|e^{t\zeta_{3,\pm}(s)}|=e^{-S_2t}\leq e^{-2t}$, $|\frac{d}{ds}\zeta_{3,\pm}(s)|=1$, and
  \begin{align*}
    |\zeta_{3,\pm}(s)+4| \geq |\mathrm{Im}\,\zeta_{3,\pm}(s)| \geq |\alpha m|-|\alpha|^{1/2}|m| \geq \frac{|\alpha m|}{2}, \quad s\in I_3
  \end{align*}
  by $|\alpha|>4$, we see by \eqref{E:Ko_Lm_Bo}, \eqref{Pf_KmDP:PA_Res}, \eqref{Pf_KmDP:Bd_Res}, and \eqref{Pf_KmDP:Hz234} that
  \begin{align} \label{Pf_KmDP:J3}
    \|J_3\|_{L^2(S^2)} \leq \frac{C|I_3|e^{-2t}}{|\alpha|^{3/2}|m|^{5/3}}\|\mathbb{Q}_mf\|_{L^2(S^2)} = \frac{Ce^{-2t}}{|\alpha||m|^{2/3}}\|\mathbb{Q}_mf\|_{L^2(S^2)},
  \end{align}
  where $|I_3|=2|\alpha|^{1/2}|m|$ is the length of $I_3$. For $s\in I_4$ we have
  \begin{gather*}
    |e^{t\zeta_{4,\pm}(s)}| = e^{-st} \leq e^{-2t}, \quad |\zeta_{4,\pm}(s)+4| \geq |\mathrm{Im}\,\zeta_{4,\pm}(s)| \geq C(|\alpha m|+s), \\
    \left|\frac{d\zeta_{4,\pm}}{ds}(s)\right| = \sqrt{1+(4C_1|m|^{1/3})^2} \leq C.
  \end{gather*}
  Note that here $|m|=1,2$. By these inequalities, \eqref{E:Ko_Lm_Bo}, \eqref{Pf_KmDP:PA_Res}, \eqref{Pf_KmDP:Bd_Res}, and \eqref{Pf_KmDP:Hz234},
  \begin{align} \label{Pf_KmDP:J4}
    \begin{aligned}
      \|J_4\|_{L^2(S^2)} &\leq Ce^{-2t}\left(\int_{S_2}^\infty\frac{1}{s(|\alpha m|+s)}\,ds\right)\|\mathbb{Q}_mf\|_{L^2(S^2)} \\
      &= Ce^{-2t}\left\{\frac{1}{|\alpha m|}\log\left(\frac{|\alpha m|+S_2}{S_2}\right)\right\}\|\mathbb{Q}_mf\|_{L^2(S^2)} \\
      &\leq \frac{Ce^{-2t}}{|\alpha m|}\log\Bigl(C|\alpha m|^{1/2}\Bigr)\|\mathbb{Q}_mf\|_{L^2(S^2)},
    \end{aligned}
  \end{align}
  where we also used $(|\alpha m|+S_2)/S_2=C|\alpha|^{1/2}|m|^{1/3}+1\leq C|\alpha m|^{1/2}$ in the last inequality. Now, noting that $|\alpha|>4$ is sufficiently large, we deduce from \eqref{Pf_KmDP:J1}--\eqref{Pf_KmDP:J4} that
  \begin{align*}
    \|J\|_{L^2(S^2)} \leq \sum_{k=1}^4\|J_k\|_{L^2(S^2)} \leq \frac{Ce^{-2t}}{|\alpha m|^{2/3}}\log\Bigl(C|\alpha m|^{2/3}\Bigr)\|\mathbb{Q}_mf\|_{L^2(S^2)}.
  \end{align*}
  Hence we get \eqref{E:Kom_Dec_P} by applying this inequality and \eqref{Pf_KmDP:Int_PA} to \eqref{Pf_KmDP:Pef_Dec}.
\end{proof}

Now we recall that $L_\alpha=A-i\alpha\Lambda$ on $\mathcal{X}$ is diagonalized as
\begin{align*}
  L_\alpha = \oplus_{m\in\mathbb{Z}\setminus\{0\}}L_{\alpha,m}, \quad L_{\alpha,m} = L_\alpha|_{\mathcal{X}_m} = L_\alpha|_{\mathcal{P}_m\mathcal{X}},
\end{align*}
where $\mathcal{P}_m$ is the operator given by \eqref{E:Def_Proj}. Moreover, for $f\in\mathcal{X}$ we have
\begin{align*}
  \mathbb{Q}_m\mathcal{P}_mf = \mathbb{Q}\mathcal{P}_mf, \quad m\in\mathbb{Z}\setminus\{0\}, \quad \mathbb{P}_m\mathcal{P}_mf = (I-\mathbb{Q})\mathcal{P}_mf, \quad |m| = 1,2,
\end{align*}
where $\mathbb{Q}$ is the orthogonal projection from $\mathcal{X}$ onto $\mathcal{Y}=N_{\mathcal{X}}(\Lambda)^\perp$ (see Lemma \ref{L:Ko_NLam}). Hence by Theorems \ref{T:Kom_Dec_Q} and \ref{T:Kom_Dec_P} we get the next result which implies Theorem \ref{T:OL_Dec}.

\begin{theorem} \label{T:Ko_Dec}
  There exist constants $C_1,C_2>0$ such that
  \begin{align*}
    \|\mathbb{Q}\mathcal{P}_me^{tL_\alpha}f\|_{L^2(S^2)} \leq C_1e^{-C_2|\alpha|^{1/2}|m|^{2/3}t}\|\mathbb{Q}\mathcal{P}_mf\|_{L^2(S^2)}
  \end{align*}
  for all $t\geq0$, $f\in\mathcal{X}$, $\alpha\in\mathbb{R}$, and $m\in\mathbb{Z}\setminus\{0\}$ (note that $\mathbb{Q}\mathcal{P}_m=\mathcal{P}_m$ on $\mathcal{X}$ if $|m|\geq3$). Also, if $|m|=1,2$ and $|\alpha|>4$ is sufficiently large, then for all $t\geq0$ and $f\in\mathcal{X}$ we have
  \begin{multline*}
    \|(I-\mathbb{Q})\mathcal{P}_me^{tL_\alpha}f\|_{L^2(S^2)} \leq e^{-4t}\|(I-\mathbb{Q})\mathcal{P}_mf\|_{L^2(S^2)} \\
    + C_3|\alpha m|^{1/3}\log\Bigl(C_4|\alpha m|^{2/3}\Bigr)e^{-2t}\|\mathbb{Q}\mathcal{P}_mf\|_{L^2(S^2)},
  \end{multline*}
  where $C_3,C_4>0$ are constants independent of $t$, $f$, $\alpha$, and $m$.
\end{theorem}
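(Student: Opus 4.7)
The plan is to observe that Theorem \ref{T:Ko_Dec} is essentially a repackaging of Theorems \ref{T:Kom_Dec_Q} and \ref{T:Kom_Dec_P} in terms of the original operator $L_\alpha$ on $\mathcal{X}$, using the orthogonal decomposition $\mathcal{X}=\oplus_{m\in\mathbb{Z}\setminus\{0\}}\mathcal{X}_m$ with $\mathcal{X}_m=\mathcal{P}_m\mathcal{X}$ and the diagonalization $L_\alpha=\oplus_m L_{\alpha,m}$ that was established in Section \ref{SS:Ko_Basic}. There is no new analytic content to generate; the task is purely bookkeeping about how $\mathcal{P}_m$, $\mathbb{Q}$, and the semigroup $e^{tL_\alpha}$ commute.

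First I would fix $f\in\mathcal{X}$, $\alpha\in\mathbb{R}$, and $m\in\mathbb{Z}\setminus\{0\}$, and note that $\mathcal{P}_mf\in\mathcal{X}_m$. Because $\mathcal{X}_m$ is invariant under both $A$ and $\Lambda$, it is invariant under $L_\alpha=A-i\alpha\Lambda$, so $e^{tL_\alpha}$ preserves $\mathcal{X}_m$ and the identification $L_{\alpha,m}=L_\alpha|_{\mathcal{X}_m}$ yields
\begin{align*}
  \mathcal{P}_m e^{tL_\alpha}f = e^{tL_\alpha}(\mathcal{P}_mf) = e^{tL_{\alpha,m}}(\mathcal{P}_mf),
\end{align*}
so the estimates on $\mathcal{X}$ follow mode by mode from the corresponding estimates on $\mathcal{X}_m$.

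Second, I would match the projections. The orthogonal projection $\mathbb{Q}_m$ from $\mathcal{X}_m$ onto $\mathcal{Y}_m=\mathcal{P}_m\mathcal{Y}$ is just the restriction of $\mathbb{Q}$ to $\mathcal{X}_m$, so $\mathbb{Q}_m\mathcal{P}_mf=\mathbb{Q}\mathcal{P}_mf$; for $|m|=1,2$ the complementary projection $\mathbb{P}_m=I-\mathbb{Q}_m$ on $\mathcal{X}_m$ satisfies $\mathbb{P}_m\mathcal{P}_mf=(I-\mathbb{Q})\mathcal{P}_mf$. These identifications are exactly the ones already flagged in the paragraph introducing $\mathcal{Y}_m$ and $\mathbb{Q}_m$.

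With these two observations in hand, applying $\mathbb{Q}\mathcal{P}_m$ to $e^{tL_\alpha}f$, rewriting it as $\mathbb{Q}_m e^{tL_{\alpha,m}}(\mathcal{P}_mf)$, and invoking Theorem \ref{T:Kom_Dec_Q} with $\mathbb{Q}_m\mathcal{P}_mf=\mathbb{Q}\mathcal{P}_mf$ immediately gives the first bound. For $|m|=1,2$ and $|\alpha|>4$ sufficiently large, applying $(I-\mathbb{Q})\mathcal{P}_m=\mathbb{P}_m\mathcal{P}_m$ to $e^{tL_\alpha}f$ and invoking Theorem \ref{T:Kom_Dec_P} produces the second bound in the same way, after rewriting $\mathbb{P}_m\mathcal{P}_mf=(I-\mathbb{Q})\mathcal{P}_mf$ and $\mathbb{Q}_m\mathcal{P}_mf=\mathbb{Q}\mathcal{P}_mf$. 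The main obstacle --- if one can call it that --- is notational rather than analytic: one must simply be careful to distinguish the ambient projections $\mathbb{Q},\mathcal{P}_m$ acting on $\mathcal{X}$ from their per-mode counterparts $\mathbb{Q}_m,\mathbb{P}_m$ acting on $\mathcal{X}_m$. All the substantive work (coercive estimates for $\mathbb{Q}_m(\mu-\Lambda_m)$, the pseudospectral bound on $\mathbb{Q}_mL_{\alpha,m}$, and the Dunford-integral analysis controlling the $\mathbb{P}_m$-component) has already been carried out in the preceding sections.
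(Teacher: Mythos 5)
Your proposal is correct and is essentially identical to the paper's own (very short) argument: the paper likewise deduces Theorem \ref{T:Ko_Dec} directly from Theorems \ref{T:Kom_Dec_Q} and \ref{T:Kom_Dec_P} via the diagonalization $L_\alpha=\oplus_m L_{\alpha,m}$ and the identifications $\mathbb{Q}_m\mathcal{P}_mf=\mathbb{Q}\mathcal{P}_mf$ and $\mathbb{P}_m\mathcal{P}_mf=(I-\mathbb{Q})\mathcal{P}_mf$. The only point left implicit in both accounts is the trivial case $\alpha=0$ of the first bound, which holds because $e^{tA}$ is a contraction commuting with $\mathbb{Q}$.
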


\begin{proof}[Proof of Theorem \ref{T:OL_Dec}]
  For $\nu>0$ and $a\in\mathbb{R}$ we have $\mathcal{L}^{\nu,a}|_{\mathcal{X}}=\nu L_{a/\nu}$ and thus $e^{t\mathcal{L}^{\nu,a}}|_{\mathcal{X}}=e^{\nu tL_{a/\nu}}$ in $\mathcal{X}$. Hence Theorem \ref{T:OL_Dec} follows from Theorem \ref{T:Ko_Dec}.
\end{proof}

\section{Abstract results} \label{S:Abst}
In this section we present abstract results for a perturbed operator.

For a linear operator $T$ on a Banach space $\mathcal{B}$, we denote by $D_{\mathcal{B}}(T)$, $\rho_{\mathcal{B}}(T)$, and $\sigma_{\mathcal{B}}(T)$ the domain, the resolvent set, and the spectrum of $T$ in $\mathcal{B}$. Also, let $N_{\mathcal{B}}(T)$ and $R_{\mathcal{B}}(T)$ be the kernel and range of $T$ in $\mathcal{B}$.

Let $(\mathcal{X},(\cdot,\cdot)_{\mathcal{X}})$ be a Hilbert space and $A$ and $\Lambda$ linear operators on $\mathcal{X}$. We make the following assumptions.

\begin{assumption} \label{As:A}
  The operator $A$ is self-adjoint in $\mathcal{X}$ and satisfies
  \begin{align} \label{E:Ab_A_Po}
    (-Au,u)_{\mathcal{X}} \geq C_A\|u\|_{\mathcal{X}}^2, \quad u\in D_{\mathcal{X}}(A).
  \end{align}
  with some constant $C_A>0$.
\end{assumption}

\begin{assumption} \label{As:La_01}
  The following conditions hold:
  \begin{enumerate}
    \item The operator $\Lambda$ is densely defined, closed, and $A$-compact in $\mathcal{X}$.
    \item Let $\mathcal{Y}=N_{\mathcal{X}}(\Lambda)^\perp$ be the orthogonal complement of $N_{\mathcal{X}}(\Lambda)$ in $\mathcal{X}$ and $\mathbb{Q}$ the orthogonal projection from $\mathcal{X}$ onto $\mathcal{Y}$. Then $\mathbb{Q}A\subset A\mathbb{Q}$ in $\mathcal{X}$.
  \end{enumerate}
\end{assumption}

\begin{assumption} \label{As:La_02}
  There exist a Hilbert space $(\mathcal{H},(\cdot,\cdot)_{\mathcal{H}})$, a closed symmetric operator $B_1$ on $\mathcal{H}$, and a bounded self-adjoint operator $B_2$ on $\mathcal{X}$ such that the following conditions hold:
  \begin{enumerate}
    \item The inclusion $\mathcal{X}\subset\mathcal{H}$ holds and $(u,v)_{\mathcal{X}}=(u,v)_{\mathcal{H}}$ for all $u,v\in\mathcal{X}$.
    \item The relation $N_{\mathcal{X}}(\Lambda)=N_{\mathcal{X}}(B_2)$ holds in $\mathcal{X}$ and
    \begin{align*}
      B_2u \in D_{\mathcal{H}}(B_1), \quad B_1B_2u = \Lambda u \in\mathcal{X} \quad\text{for all}\quad u\in D_{\mathcal{X}}(\Lambda).
    \end{align*}
    \item There exists a constant $C>0$ such that
    \begin{alignat}{3}
      (u,B_2u)_{\mathcal{X}} &\geq C\|u\|_{\mathcal{X}}^2, &\quad &u\in \mathcal{Y}, \label{E:uB2u} \\
      \mathrm{Re}(-Au,B_2u)_{\mathcal{X}} &\geq C\|(-A)^{1/2}u\|_{\mathcal{X}}^2, &\quad &u\in D_{\mathcal{X}}(A)\cap\mathcal{Y}. \label{E:AB2u}
    \end{alignat}
  \end{enumerate}
\end{assumption}

Note that $B_2$ is a linear operator on the original space $\mathcal{X}$, not on the auxiliary space $\mathcal{H}$. Also, the operator $B_1$ on $\mathcal{H}$ does not necessarily map $\mathcal{X}$ into itself.

By $\mathbb{Q}A\subset A\mathbb{Q}$ in Assumption \ref{As:La_01} we can consider $\mathbb{Q}A$ as a linear operator
\begin{align*}
  \mathbb{Q}A\colon D_{\mathcal{Y}}(\mathbb{Q}A) \subset \mathcal{Y}\to\mathcal{Y}, \quad D_{\mathcal{Y}}(\mathbb{Q}A) = D_{\mathcal{X}}(A)\cap\mathcal{Y}.
\end{align*}
In what follows, we use the notation $\mathcal{N}=N_{\mathcal{X}}(\Lambda)$ for simplicity. Let $\mathbb{P}=I-\mathbb{Q}$ be the orthogonal projection from $\mathcal{X}$ onto $\mathcal{N}$ (note that $\mathcal{N}$ is closed in $\mathcal{X}$ since $\Lambda$ is closed). Then $\mathbb{P}A\subset A\mathbb{P}$ and we can also consider $\mathbb{P}A$ as a linear operator
\begin{align*}
  \mathbb{P}A\colon D_{\mathcal{N}}(\mathbb{P}A) \subset \mathcal{N} \to \mathcal{N}, \quad D_{\mathcal{N}}(\mathbb{P}A) = D_{\mathcal{X}}(A)\cap\mathcal{N}.
\end{align*}
Note that $\mathbb{Q}A$ and $\mathbb{P}A$ are closed in $\mathcal{Y}$ and in $\mathcal{N}$, respectively. Also, $\mathbb{Q}\Lambda$ is $\mathbb{Q}A$-compact in $\mathcal{Y}$. For $\alpha\in\mathbb{R}$ we define a linear operator $L_\alpha$ on $\mathcal{X}$ by
\begin{align*}
  L_\alpha=A-i\alpha\Lambda, \quad D_{\mathcal{X}}(L_\alpha)=D_{\mathcal{X}}(A)
\end{align*}
and consider $\mathbb{Q}L_\alpha=\mathbb{Q}A-i\alpha\mathbb{Q}\Lambda$ on $\mathcal{Y}$ with domain $D_{\mathcal{Y}}(\mathbb{Q}L_\alpha)=D_{\mathcal{Y}}(\mathbb{Q}A)$.

Our purpose is to derive a decay estimate for the $\mathbb{Q}$-part of the semigroup generated by $L_\alpha$ in terms of $\alpha$. The following results were obtained in \cite[Lemmas 6.4--6.6]{Miu21pre}.

\begin{lemma} \label{L:PS_Pre}
  Under Assumptions \ref{As:A}--\ref{As:La_02}, we have
  \begin{align} \label{E:QB2}
    \mathbb{Q}B_2 = B_2 \quad\text{on}\quad \mathcal{X}, \quad \mathrm{Im}(\Lambda u,\mathbb{Q}B_2u)_{\mathcal{X}} = \mathrm{Im}(\Lambda u,B_2u)_{\mathcal{X}} = 0, \quad u\in D_{\mathcal{X}}(\Lambda).
  \end{align}
  Moreover, $L_\alpha$ and $\mathbb{Q}L_\alpha$ are closed in $\mathcal{X}$ and in $\mathcal{Y}$, respectively, and
  \begin{align} \label{E:PS_ReSet}
    \{\zeta\in\mathbb{C} \mid \mathrm{Re}\,\zeta\geq0\} \subset \rho_{\mathcal{X}}(L_\alpha) = \rho_{\mathcal{Y}}(\mathbb{Q}L_\alpha)\cap\rho_{\mathcal{N}}(\mathbb{P}A)
  \end{align}
  for all $\alpha\in\mathbb{R}$, and if $\zeta\in\rho_{\mathcal{X}}(L_\alpha)$ and $f\in\mathcal{X}$, then
  \begin{align} \label{E:PS_ReOp}
    \begin{aligned}
      \mathbb{Q}(\zeta-L_\alpha)^{-1}f &= (\zeta-\mathbb{Q}L_\alpha)^{-1}\mathbb{Q}f, \\
      \mathbb{P}(\zeta-L_\alpha)^{-1}f &= (\zeta-\mathbb{P}A)^{-1}\mathbb{P}f-i\alpha(\zeta-\mathbb{P}A)^{-1}\mathbb{P}\Lambda(\zeta-\mathbb{Q}L_\alpha)^{-1}\mathbb{Q}f.
    \end{aligned}
  \end{align}
\end{lemma}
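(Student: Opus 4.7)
The plan is to exploit the decomposition $\mathcal{X} = \mathcal{Y} \oplus \mathcal{N}$ induced by $\mathbb{Q}$ and $\mathbb{P} = I - \mathbb{Q}$, together with the compatibility $\mathbb{Q}A \subset A\mathbb{Q}$ from Assumption \ref{As:La_01} and the structural information about $B_1, B_2$ from Assumption \ref{As:La_02}. For the identity $\mathbb{Q}B_2 = B_2$, I note that $B_2$ is self-adjoint on $\mathcal{X}$ and $N_{\mathcal{X}}(B_2) = N_{\mathcal{X}}(\Lambda) = \mathcal{N}$, so $R_{\mathcal{X}}(B_2) \subset \mathcal{N}^{\perp} = \mathcal{Y}$. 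For the reality of $(\Lambda u, B_2 u)_{\mathcal{X}}$, I would write $\Lambda u = B_1 B_2 u$ with $B_2 u \in D_{\mathcal{H}}(B_1)$, and use the symmetry of $B_1$ on $\mathcal{H}$ together with the compatibility of inner products to get $(B_1 B_2 u, B_2 u)_{\mathcal{H}} \in \mathbb{R}$.

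Closedness of $L_\alpha$ on $D_{\mathcal{X}}(A)$ and of $\mathbb{Q}L_\alpha$ on $D_{\mathcal{Y}}(\mathbb{Q}A)$ follow from the $A$-compactness of $\Lambda$ and the $\mathbb{Q}A$-compactness of $\mathbb{Q}\Lambda$ (which give relative bound zero) by a standard perturbation argument. The structural backbone is then that $\Lambda u = \Lambda\mathbb{Q}u$ (since $\mathbb{P}u \in \mathcal{N} = N(\Lambda)$) and $\mathbb{P}A \subset A\mathbb{P}$; projecting $(\zeta-L_\alpha)u = f$ with $u \in D_{\mathcal{X}}(A)$ by $\mathbb{Q}$ and $\mathbb{P}$ produces the triangular system
\begin{align*}
  (\zeta - \mathbb{Q}L_\alpha)\mathbb{Q}u &= \mathbb{Q}f, \\
  (\zeta - \mathbb{P}A)\mathbb{P}u &= \mathbb{P}f + i\alpha\,\mathbb{P}\Lambda\mathbb{Q}u.
\end{align*}
The implication $\rho_{\mathcal{Y}}(\mathbb{Q}L_\alpha) \cap \rho_{\mathcal{N}}(\mathbb{P}A) \subset \rho_{\mathcal{X}}(L_\alpha)$ and the formulas \eqref{E:PS_ReOp} follow by solving this system explicitly. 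The reverse inclusion uses the same system: given $\zeta \in \rho_{\mathcal{X}}(L_\alpha)$, for $g \in \mathcal{Y}$ the element $u = (\zeta - L_\alpha)^{-1}g$ satisfies $(\zeta-\mathbb{Q}L_\alpha)\mathbb{Q}u = g$ after applying $\mathbb{Q}$, giving surjectivity of $\zeta-\mathbb{Q}L_\alpha$; injectivity is obtained by feeding any element of its kernel back into the $\mathbb{P}$-equation and using uniqueness for $\zeta - L_\alpha$, and an analogous step handles $\mathbb{P}A$.

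Finally, to locate $\{\mathrm{Re}\,\zeta \geq 0\}$ inside $\rho_{\mathcal{X}}(L_\alpha)$, the decomposition reduces matters to the two diagonal blocks. For $\mathbb{P}A$, self-adjointness together with \eqref{E:Ab_A_Po} places its spectrum in $(-\infty,-C_A]$, which misses the closed right half-plane. For $\mathbb{Q}L_\alpha$, I plan a coercivity argument: pair $(\zeta-\mathbb{Q}L_\alpha)u$ with $B_2 u$ in $\mathcal{X}$ for $u \in D_{\mathcal{Y}}(\mathbb{Q}L_\alpha)$, use $B_2 u \in \mathcal{Y}$ to drop $\mathbb{Q}$ on both the $\mathbb{Q}A$ and $\mathbb{Q}\Lambda$ terms, take real parts, and invoke \eqref{E:uB2u}, \eqref{E:AB2u}, and the already established $\mathrm{Im}(\Lambda u,B_2 u)_{\mathcal{X}}=0$ to obtain
\begin{equation*}
  C\,\mathrm{Re}\,\zeta\,\|u\|_{\mathcal{X}}^2 + C\|(-A)^{1/2}u\|_{\mathcal{X}}^2 \leq \|(\zeta-\mathbb{Q}L_\alpha)u\|_{\mathcal{X}}\,\|B_2\|\,\|u\|_{\mathcal{X}}.
\end{equation*}
This yields injectivity and closed range of $\zeta-\mathbb{Q}L_\alpha$; surjectivity then follows from Fredholm theory, since $i\alpha\mathbb{Q}\Lambda$ is a $\mathbb{Q}A$-compact perturbation of the invertible operator $\zeta-\mathbb{Q}A$, making $\zeta-\mathbb{Q}L_\alpha$ Fredholm of index zero. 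The main subtlety I anticipate is not analytic but bookkeeping: tracking which operators commute with which projections and verifying $B_2 u \in \mathcal{Y}$ so the cross-terms drop; once that is in place, the coercive step itself is routine.
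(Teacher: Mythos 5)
The paper does not actually prove this lemma; it is imported verbatim from \cite{Miu21pre} (Lemmas 6.4--6.6), so your reconstruction has to stand on its own. Most of it does: $\mathbb{Q}B_2=B_2$ follows from $\overline{R_{\mathcal{X}}(B_2)}=N_{\mathcal{X}}(B_2)^\perp=N_{\mathcal{X}}(\Lambda)^\perp=\mathcal{Y}$; the reality of $(\Lambda u,B_2u)_{\mathcal{X}}=(B_1B_2u,B_2u)_{\mathcal{H}}$ is exactly what the symmetry of $B_1$ and Assumption \ref{As:La_02}(i)--(ii) are designed to give; closedness via relative bound zero is standard; the triangular system is the right backbone (up to a sign: projecting $(\zeta-L_\alpha)u=f$ by $\mathbb{P}$ gives $(\zeta-\mathbb{P}A)\mathbb{P}u=\mathbb{P}f-i\alpha\,\mathbb{P}\Lambda\mathbb{Q}u$, consistent with \eqref{E:PS_ReOp}, not $+i\alpha$); the inclusion $\rho_{\mathcal{Y}}(\mathbb{Q}L_\alpha)\cap\rho_{\mathcal{N}}(\mathbb{P}A)\subset\rho_{\mathcal{X}}(L_\alpha)$ and the formulas \eqref{E:PS_ReOp} follow by solving it; and the half-plane statement is correctly handled by your coercivity pairing with $B_2u$ (which lands in $\mathcal{Y}$, so the projections drop) plus Fredholm index zero for $\zeta-\mathbb{Q}L_\alpha$ and self-adjointness of $\mathbb{P}A$ with \eqref{E:Ab_A_Po}.

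The one genuine gap is in the reverse inclusion $\rho_{\mathcal{X}}(L_\alpha)\subset\rho_{\mathcal{Y}}(\mathbb{Q}L_\alpha)\cap\rho_{\mathcal{N}}(\mathbb{P}A)$, and it is not bookkeeping: for a general block lower-triangular operator, bijectivity of the whole does \emph{not} imply bijectivity of the diagonal blocks (shift-operator examples), so some extra structure must enter. As you order the steps, the argument is circular. To get injectivity of $\zeta-\mathbb{Q}L_\alpha$ by ``feeding a kernel element $v$ back into the $\mathbb{P}$-equation'' you must solve $(\zeta-\mathbb{P}A)w=-i\alpha\,\mathbb{P}\Lambda v$, i.e.\ you need surjectivity of $\zeta-\mathbb{P}A$; but surjectivity of $\zeta-\mathbb{P}A$ (from $u=(\zeta-L_\alpha)^{-1}h$, $h\in\mathcal{N}$) requires $\mathbb{Q}u=0$, i.e.\ injectivity of $\zeta-\mathbb{Q}L_\alpha$. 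The loop is broken by the essential-spectrum input you invoke only for the half-plane: since $A=\mathbb{Q}A\oplus\mathbb{P}A$ and the essential spectrum is stable under relatively compact perturbation, $\tilde{\sigma}_{\mathcal{Y}}(\mathbb{Q}L_\alpha)=\tilde{\sigma}_{\mathcal{Y}}(\mathbb{Q}A)\subset\tilde{\sigma}_{\mathcal{X}}(A)=\tilde{\sigma}_{\mathcal{X}}(L_\alpha)\subset\sigma_{\mathcal{X}}(L_\alpha)$, so every $\zeta\in\rho_{\mathcal{X}}(L_\alpha)$ makes $\zeta-\mathbb{Q}L_\alpha$ Fredholm of index zero; combined with the surjectivity you do get for free from $(\zeta-\mathbb{Q}L_\alpha)\mathbb{Q}(\zeta-L_\alpha)^{-1}g=g$, this yields $\zeta\in\rho_{\mathcal{Y}}(\mathbb{Q}L_\alpha)$. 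Only then does $\mathbb{Q}(\zeta-L_\alpha)^{-1}h=0$ for $h\in\mathcal{N}$ give surjectivity of $\zeta-\mathbb{P}A$, while its injectivity is immediate because $N_{\mathcal{N}}(\zeta-\mathbb{P}A)\subset N_{\mathcal{X}}(\zeta-L_\alpha)$. With that reordering and the Fredholm input made explicit at this point, your plan goes through.
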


Moreover, the next result was shown in \cite[Theorem 6.7]{Miu21pre} by application of the Gearhart--Pr\"{u}ss type theorem given by Wei \cite{Wei21} to the $m$-accretive operator $-\mathbb{Q}L_\alpha$ on the Hilbert space $\mathcal{Y}'=\mathcal{Y}$ equipped with the weighted inner product $(u,v)_{\mathcal{Y}'}=(u,B_2v)_{\mathcal{X}}$.

\begin{theorem} \label{T:Semi_BoAl}
  Under Assumptions \ref{As:A}--\ref{As:La_02}, the operator $L_\alpha$ generates an analytic semigroup $\{e^{tL_\alpha}\}_{t\geq0}$ in $\mathcal{X}$ for all $\alpha\in\mathbb{R}$. Moreover, there exist positive constants $C_1$ and $C_2$ depending only on $\|B_2\|_{\mathcal{X}\to\mathcal{X}}$ and the constants appearing in \eqref{E:uB2u} and \eqref{E:AB2u} (and in particular independent of the constant $C_A$ appearing in \eqref{E:Ab_A_Po}) such that
  \begin{align} \label{E:Semi_BoAl}
    \|\mathbb{Q}e^{tL_\alpha}f\|_{\mathcal{X}} \leq C_1e^{-C_2t/\Phi_{\mathcal{Y}}(-\mathbb{Q}L_\alpha)}\|\mathbb{Q}f\|_{\mathcal{X}}, \quad t\geq0, \, f\in\mathcal{X}
  \end{align}
  for all $\alpha\in\mathbb{R}$, where $\Phi_{\mathcal{Y}}(-\mathbb{Q}L_\alpha)=\sup_{\lambda\in\mathbb{R}}\|(i\lambda-\mathbb{Q}L_\alpha)^{-1}\|_{\mathcal{Y}\to\mathcal{Y}}$.
\end{theorem}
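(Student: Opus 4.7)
The plan is to deduce the theorem in three steps: analytic semigroup generation, reduction to an evolution on $\mathcal{Y}$, and a Gearhart--Pr\"{u}ss argument in a weighted inner product. For generation, since $A$ is self-adjoint and satisfies \eqref{E:Ab_A_Po} by Assumption \ref{As:A}, it is sectorial on $\mathcal{X}$; the $A$-compactness of $\Lambda$ from Assumption \ref{As:La_01} makes $-i\alpha\Lambda$ an $A$-bounded perturbation with $A$-bound zero, so $L_\alpha=A-i\alpha\Lambda$ generates an analytic semigroup $\{e^{tL_\alpha}\}_{t\ge 0}$ in $\mathcal{X}$ by the standard perturbation theory recalled in Lemma \ref{L:PS_Pre}. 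The identical reasoning applied on $\mathcal{Y}$, using that $A|_{\mathcal{Y}}$ is self-adjoint and satisfies \eqref{E:Ab_A_Po} on $\mathcal{Y}$ (thanks to $\mathbb{Q}A\subset A\mathbb{Q}$), shows that $\mathbb{Q}L_\alpha=A|_{\mathcal{Y}}-i\alpha\,\mathbb{Q}\Lambda$ generates an analytic semigroup on $\mathcal{Y}$.

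Next I would establish the commutation identity
\begin{align*}
  \mathbb{Q}e^{tL_\alpha}f = e^{t\mathbb{Q}L_\alpha}\mathbb{Q}f, \quad f\in\mathcal{X},\,t\ge 0,
\end{align*}
by differentiating $v(t)=\mathbb{Q}e^{tL_\alpha}f$ and using $\mathbb{Q}A=A\mathbb{Q}$ together with $\Lambda\mathbb{P}u=0$ (since $R_{\mathcal{X}}(\mathbb{P})=\mathcal{N}=N_{\mathcal{X}}(\Lambda)$), which gives $\mathbb{Q}\Lambda u=\mathbb{Q}\Lambda v$ and hence $v'=\mathbb{Q}L_\alpha v$ with $v(0)=\mathbb{Q}f$. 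This reduces the theorem to a decay bound for the semigroup generated by $\mathbb{Q}L_\alpha$ on $\mathcal{Y}$.

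The decay bound is obtained by changing inner product on $\mathcal{Y}$: equip $\mathcal{Y}$ with $(u,v)_{\mathcal{Y}'}=(u,B_2v)_{\mathcal{X}}$, which is a Hilbert inner product whose norm is equivalent to $\|\cdot\|_{\mathcal{X}}$ on $\mathcal{Y}$, with constants depending only on $\|B_2\|_{\mathcal{X}\to\mathcal{X}}$ and the coercivity constant of \eqref{E:uB2u}; denote this Hilbert space by $\mathcal{Y}'$. I then verify that $-\mathbb{Q}L_\alpha$ is $m$-accretive on $\mathcal{Y}'$. Accretivity is the key computation: for $u\in D_{\mathcal{Y}}(\mathbb{Q}A)$,
\begin{align*}
  \mathrm{Re}(-\mathbb{Q}L_\alpha u,u)_{\mathcal{Y}'} &= \mathrm{Re}(-\mathbb{Q}Au,B_2u)_{\mathcal{X}}+\alpha\,\mathrm{Im}(\mathbb{Q}\Lambda u,B_2u)_{\mathcal{X}} \\
  &= \mathrm{Re}(-Au,B_2u)_{\mathcal{X}} \ge C\,\|(-A)^{1/2}u\|_{\mathcal{X}}^{2},
\end{align*}
where I used $\mathbb{Q}B_2=B_2$ and $\mathrm{Im}(\Lambda u,B_2u)_{\mathcal{X}}=0$ from \eqref{E:QB2}, and then \eqref{E:AB2u}. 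The range condition $R(\lambda+(-\mathbb{Q}L_\alpha))=\mathcal{Y}$ for $\lambda>0$ follows from the fact that $\mathbb{Q}L_\alpha$ already generates an analytic semigroup on $\mathcal{Y}$, so $\{\mathrm{Re}\,\zeta\ge 0\}\subset\rho_{\mathcal{Y}}(\mathbb{Q}L_\alpha)$ by \eqref{E:PS_ReSet}.

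With $m$-accretivity in hand, Wei's version of the Gearhart--Pr\"{u}ss theorem \cite{Wei21} gives
\begin{align*}
  \|e^{t\mathbb{Q}L_\alpha}\|_{\mathcal{Y}'\to\mathcal{Y}'} \le e\,\exp\!\Bigl(-t/\Phi_{\mathcal{Y}'}(-\mathbb{Q}L_\alpha)\Bigr),\quad t\ge 0,
\end{align*}
with a universal constant independent of the sectoriality constant $C_A$. The two-sided comparison of $\|\cdot\|_{\mathcal{Y}'}$ and $\|\cdot\|_{\mathcal{X}}$ on $\mathcal{Y}$ yields $\Phi_{\mathcal{Y}'}(-\mathbb{Q}L_\alpha)\le C\Phi_{\mathcal{Y}}(-\mathbb{Q}L_\alpha)$ and an equivalent bound for the semigroup in the $\mathcal{X}$-norm; combined with the identity from the second paragraph, this produces \eqref{E:Semi_BoAl} with constants $C_1,C_2$ depending only on $\|B_2\|_{\mathcal{X}\to\mathcal{X}}$ and the constants in \eqref{E:uB2u}--\eqref{E:AB2u}. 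The main delicate point is the accretivity computation: one must exploit the cancellation $\mathrm{Im}(\Lambda u,B_2u)_{\mathcal{X}}=0$ from \eqref{E:QB2} to eliminate the skew term proportional to $\alpha$, so that the coercivity constant of $-\mathbb{Q}L_\alpha$ on $\mathcal{Y}'$ is independent of $\alpha$; this is what ultimately forces the rate in \eqref{E:Semi_BoAl} to be dictated purely by the pseudospectral bound $\Phi_{\mathcal{Y}}$ and not by $\alpha$ or $C_A$ directly.
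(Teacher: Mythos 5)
Your proposal is correct and follows essentially the same route as the paper: Theorem \ref{T:Semi_BoAl} is quoted from \cite[Theorem 6.7]{Miu21pre}, whose proof is precisely the application of Wei's Gearhart--Pr\"{u}ss type theorem \cite{Wei21} to the $m$-accretive operator $-\mathbb{Q}L_\alpha$ on $\mathcal{Y}$ equipped with the weighted inner product $(u,v)_{\mathcal{Y}'}=(u,B_2v)_{\mathcal{X}}$, with the $\alpha$-dependent skew term eliminated via \eqref{E:QB2} and the coercivity and norm equivalence supplied by \eqref{E:uB2u} and \eqref{E:AB2u}. The only (immaterial) slip is a sign: $\mathrm{Re}\,(i\alpha\mathbb{Q}\Lambda u,B_2u)_{\mathcal{X}}=-\alpha\,\mathrm{Im}(\Lambda u,B_2u)_{\mathcal{X}}$, not $+\alpha\,\mathrm{Im}(\Lambda u,B_2u)_{\mathcal{X}}$, but this term vanishes by \eqref{E:QB2} in any case.
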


In order to get an upper bound of $\Phi_{\mathcal{Y}}(-\mathbb{Q}L_\alpha)$ by a function of $\alpha$, we make an addition assumption on $A$ and $\Lambda$.

\begin{assumption} \label{As:Est}
  There exist bounded nonnegative functions
  \begin{align*}
    h_1,h_2\colon(0,\infty)\times\mathbb{R}\to[0,\infty) \quad\text{satisfying}\quad \lim_{\xi\to\infty}\sup_{\mu\in\mathbb{R}}h_j(\xi,\mu) = 0, \quad j=1,2,
  \end{align*}
  a constant $C>0$, a Banach space $(\mathcal{B},\|\cdot\|_{\mathcal{B}})$, and a closed operator $B_3$ on $\mathcal{B}$ such that the following conditions hold:
  \begin{enumerate}
    \item For all $\xi\in(0,\infty)$ and $\mu\in\mathbb{R}$ we have
    \begin{align} \label{E:u_h}
      \|u\|_{\mathcal{X}}^2 \leq C\left(\xi^2\|\mathbb{Q}(\mu-\Lambda)u\|_{\mathcal{X}}^2+h_1(\xi,\mu)^2\|(-A)^{1/2}u\|_{\mathcal{X}}^2\right), \quad u\in D_{\mathcal{X}}(A)\cap\mathcal{Y}.
    \end{align}
    \item The inclusions $\mathcal{X}\subset\mathcal{B}$ and $D_{\mathcal{X}}((-A)^{1/2})\subset D_{\mathcal{B}}(B_3)$ hold and
    \begin{align} \label{E:ALam_B3}
      |\mathrm{Im}(Au,\Lambda u)_{\mathcal{X}}| \leq C\|(-A)^{1/2}u\|_{\mathcal{X}}\|B_3u\|_{\mathcal{B}}, \quad u\in D_{\mathcal{X}}(A)\cap\mathcal{Y}.
    \end{align}
    Moreover, for all $\xi\in(0,\infty)$ and $\mu\in\mathbb{R}$ we have
    \begin{align} \label{E:B3u_h}
    \|B_3u\|_{\mathcal{B}}^2 \leq C\left(\xi^2\|\mathbb{Q}(\mu-\Lambda)u\|_{\mathcal{X}}^2+h_2(\xi,\mu)^2\|(-A)^{1/2}u\|_{\mathcal{X}}^2\right), \quad u\in D_{\mathcal{X}}(A)\cap\mathcal{Y}.
    \end{align}
  \end{enumerate}
\end{assumption}

Let us give an estimate for the resolvent of $\mathbb{Q}L_\alpha$ along the imaginary axis which was originally shown in \cite[Theorem 2.9]{IbMaMa19} under assumptions on coercive estimates for $\mu-\Lambda$ on $\mathcal{X}$ with $\mu\in\mathbb{R}$ away from zero and for $\mathbb{Q}(\mu-\Lambda)$ on $\mathcal{Y}$ with $\mu\in\mathbb{R}$ close to zero.

\begin{theorem} \label{T:PS_Bound}
  Under Assumptions \ref{As:A}--\ref{As:La_02} and \ref{As:Est}, we have
  \begin{align} \label{E:PS_Bound}
    \|\mathbb{Q}(i\lambda+L_\alpha)^{-1}\|_{\mathcal{X}\to\mathcal{X}} = \|(i\lambda+\mathbb{Q}L_\alpha)^{-1}\|_{\mathcal{Y}\to\mathcal{Y}} \leq CF\left(\alpha,\frac{\lambda}{\alpha}\right)
  \end{align}
  for all $\alpha\in\mathbb{R}\setminus\{0\}$ and $\lambda\in\mathbb{R}$. Here
  \begin{align} \label{E:PS_F}
    F(\alpha,\mu) = \inf_{\xi_1,\xi_2>0}\left(\frac{\xi_1}{|\alpha|}+\frac{\xi_1^2\xi_2^2}{\alpha^2}+\frac{\xi_1^2h_2(\xi_2,\mu)}{|\alpha|}+h_1(\xi_1,\mu)^2\right)
  \end{align}
  for $\alpha\in\mathbb{R}\setminus\{0\}$ and $\mu\in\mathbb{R}$. Also, $C>0$ is a constant depending only on $\|B_2\|_{\mathcal{X}\to\mathcal{X}}$ and the constants appearing in \eqref{E:uB2u}, \eqref{E:AB2u}, and \eqref{E:u_h}--\eqref{E:B3u_h} (and in particular independent of the constant $C_A$ appearing in \eqref{E:Ab_A_Po}).
\end{theorem}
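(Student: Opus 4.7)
\textbf{Plan of proof of Theorem \ref{T:PS_Bound}.}

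The equality in \eqref{E:PS_Bound} is immediate from Lemma~\ref{L:PS_Pre}: by \eqref{E:PS_ReSet} we have $i\mathbb{R}\subset\rho_{\mathcal{X}}(L_\alpha)$, and \eqref{E:PS_ReOp} gives $\mathbb{Q}(i\lambda+L_\alpha)^{-1}f = (i\lambda+\mathbb{Q}L_\alpha)^{-1}\mathbb{Q}f$ for all $f\in\mathcal{X}$, so that the two operator norms agree after observing that $\mathbb{Q}$ maps $\mathcal{X}$ onto $\mathcal{Y}$.

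For the quantitative part I will fix $\alpha\in\mathbb{R}\setminus\{0\}$, $\lambda\in\mathbb{R}$, $\mu=\lambda/\alpha$, and $f\in\mathcal{Y}$, and take $u\in D_{\mathcal{Y}}(\mathbb{Q}A)$ with $(i\lambda+\mathbb{Q}L_\alpha)u=f$, with the aim $\|u\|_{\mathcal{X}}\leq CF(\alpha,\mu)\|f\|_{\mathcal{X}}$. Since $u\in\mathcal{Y}$ and $\mathbb{Q}A\subset A\mathbb{Q}$ give $\mathbb{Q}Au=Au$, the equation rearranges to the key resolvent identity
\[
-i\alpha\,\mathbb{Q}(\mu-\Lambda)u = f - Au,
\]
whence $\|\mathbb{Q}(\mu-\Lambda)u\|_{\mathcal{X}}\leq |\alpha|^{-1}(\|f\|_{\mathcal{X}}+\|Au\|_{\mathcal{X}})$. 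Two energy estimates then follow by testing against suitable weights. Taking the real part of the $\mathcal{X}$-inner product with $B_2u$ and using \eqref{E:QB2}, the self-adjointness of $B_2$, and \eqref{E:AB2u} yields
\[
\|(-A)^{1/2}u\|_{\mathcal{X}}^2 \leq C\|f\|_{\mathcal{X}}\|u\|_{\mathcal{X}}.
\]
Taking the real part of the inner product with $Au$ (so that the $A$-term gives $\|Au\|_{\mathcal{X}}^2$, the $i\lambda$-term vanishes, and the $\Lambda$-term contributes $\alpha\,\mathrm{Im}(\Lambda u,Au)_{\mathcal{X}}$) and invoking \eqref{E:ALam_B3} together with Young's inequality yields
\[
\|Au\|_{\mathcal{X}}^2 \leq C\|f\|_{\mathcal{X}}^2 + C|\alpha|\,\|(-A)^{1/2}u\|_{\mathcal{X}}\,\|B_3u\|_{\mathcal{B}}.
\]

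The bootstrap step is to insert \eqref{E:u_h} and \eqref{E:B3u_h} with free parameters $\xi_1,\xi_2>0$ into this chain. Combining the resolvent identity with \eqref{E:B3u_h} gives $\|B_3u\|_{\mathcal{B}}\leq C\xi_2|\alpha|^{-1}(\|f\|_{\mathcal{X}}+\|Au\|_{\mathcal{X}})+Ch_2(\xi_2,\mu)\|(-A)^{1/2}u\|_{\mathcal{X}}$; substituting into the $\|Au\|^2$-bound, absorbing the resulting $\|Au\|$-cross-term with Young's inequality, and using $\|(-A)^{1/2}u\|^2\leq C\|f\|\|u\|$ produces
\[
\|Au\|_{\mathcal{X}}^2 \leq C\|f\|_{\mathcal{X}}^2 + C\bigl(\xi_2^2+|\alpha|h_2(\xi_2,\mu)\bigr)\|f\|_{\mathcal{X}}\|u\|_{\mathcal{X}}.
\]
Similarly, \eqref{E:u_h} and the resolvent identity yield $\|u\|_{\mathcal{X}}\leq C\xi_1|\alpha|^{-1}(\|f\|_{\mathcal{X}}+\|Au\|_{\mathcal{X}})+Ch_1(\xi_1,\mu)\|(-A)^{1/2}u\|_{\mathcal{X}}$; feeding in the preceding $\|Au\|$-bound, invoking $\|(-A)^{1/2}u\|^2\leq C\|f\|\|u\|$ one more time, and applying Young's inequality to absorb every surviving $\|u\|^{1/2}$-factor on the right into the left, I arrive at
\[
\|u\|_{\mathcal{X}} \leq C\Bigl(\frac{\xi_1}{|\alpha|}+\frac{\xi_1^2\xi_2^2}{\alpha^2}+\frac{\xi_1^2\,h_2(\xi_2,\mu)}{|\alpha|}+h_1(\xi_1,\mu)^2\Bigr)\|f\|_{\mathcal{X}}.
\]
Taking the infimum over $\xi_1,\xi_2>0$ gives \eqref{E:PS_Bound}.

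The hard part will be this last bootstrap: the substitutions must be performed in the right order (first $\|B_3u\|$, then $\|Au\|$, then $\|u\|$), and each Young splitting must be tuned so that it reproduces exactly one of the four building blocks of $F(\alpha,\mu)$ without introducing spurious cross-terms that would spoil the optimal exponents. In particular, the asymmetry between $h_1^2$ (unweighted) and $\xi_1^2 h_2/|\alpha|$ (weighted by $|\alpha|^{-1}$) in \eqref{E:PS_F} arises precisely because $h_1$ enters $\|u\|$ directly through \eqref{E:u_h}, whereas $h_2$ enters only through $\|B_3u\|$ and is then amplified by the factor $|\alpha|$ appearing in the $\|Au\|^2$-estimate; the ordering of absorbing is dictated by this weight count.
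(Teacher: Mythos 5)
Your plan is correct and follows essentially the same route as the paper's proof: the same energy identities (testing against $B_2u$ and against the equation itself), the same bootstrap order (first $\|B_3u\|_{\mathcal{B}}$, then the quadratic quantity, then $\|u\|_{\mathcal{X}}$ via $\|(-A)^{1/2}u\|_{\mathcal{X}}^2\leq C\|f\|_{\mathcal{X}}\|u\|_{\mathcal{X}}$), and the same Young absorptions producing exactly the four terms of $F(\alpha,\mu)$. The only cosmetic difference is that you control $\|Au\|_{\mathcal{X}}$ through $\mathrm{Re}(f,Au)_{\mathcal{X}}$ and then bound $\|\mathbb{Q}(\mu-\Lambda)u\|_{\mathcal{X}}$ by the triangle inequality, whereas the paper bounds $\|\mathbb{Q}(\mu-\Lambda)u\|_{\mathcal{X}}$ directly from $\mathrm{Im}(f,\mathbb{Q}(\mu-\Lambda)u)_{\mathcal{X}}$; the two are interchangeable via the identity $f-Au=i\alpha\,\mathbb{Q}(\mu-\Lambda)u$, so the resulting estimates coincide.
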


\begin{proof}
  Let $\alpha\in\mathbb{R}\setminus\{0\}$ and $\lambda\in\mathbb{R}$. Then
  \begin{align*}
    -i\lambda\in\rho_{\mathcal{X}}(L_\alpha)\subset\rho_{\mathcal{Y}}(\mathbb{Q}L_\alpha), \quad \mathbb{Q}(i\lambda+L_\alpha)^{-1}f = (i\lambda+\mathbb{Q}L_\alpha)^{-1}\mathbb{Q}f, \quad f\in\mathcal{X}
  \end{align*}
  by \eqref{E:PS_ReSet} and \eqref{E:PS_ReOp}. By these relations, we easily find that the first equality of \eqref{E:PS_Bound} holds. Let us prove the second inequality of \eqref{E:PS_Bound}. Without loss of generality, we may assume $\alpha>0$. We set $\mu=\lambda/\alpha$ and abbreviate $h_j(\xi_j,\mu)$ to $h_j$ for $\xi_1,\xi_2>0$ and $j=1,2$. In what follows, we denote by $C$ a general positive constant depending only on $\|B_2\|_{\mathcal{X}\to\mathcal{X}}$ and the constants appearing in \eqref{E:uB2u}, \eqref{E:AB2u}, and \eqref{E:u_h}--\eqref{E:B3u_h}.
  Let $u\in D_{\mathcal{Y}}(\mathbb{Q}L_\alpha)$ and
  \begin{align*}
    f = (i\lambda+\mathbb{Q}L_{\alpha})u = \mathbb{Q}Au+i\alpha\mathbb{Q}(\mu-\Lambda)u \in \mathcal{Y} \subset \mathcal{X}.
  \end{align*}
  We take the inner products of both sides with $\mathbb{Q}(\mu-\Lambda)u$ in $\mathcal{X}$. Then
  \begin{align} \label{Pf_PS:fQ}
    (f,\mathbb{Q}(\mu-\Lambda)u)_{\mathcal{X}} = \mu(\mathbb{Q}Au,\mathbb{Q}u)_{\mathcal{X}}-(\mathbb{Q}Au,\mathbb{Q}\Lambda u)_{\mathcal{X}}+i\alpha\|\mathbb{Q}(\mu-\Lambda)u\|_{\mathcal{X}}^2.
  \end{align}
  Since $\mathbb{Q}A\subset A\mathbb{Q}$, $\mathbb{Q}u=u$, and $A$ is self-adjoint in $\mathcal{X}$, we have
  \begin{align*}
    (\mathbb{Q}Au,\mathbb{Q}\Lambda u)_{\mathcal{X}} = (Au,\Lambda u)_{\mathcal{X}}, \quad \mathrm{Im}(\mathbb{Q}Au,\mathbb{Q}u)_{\mathcal{X}} = \mathrm{Im}(Au,u)_{\mathcal{X}} = 0.
  \end{align*}
  Thus, taking the imaginary part of \eqref{Pf_PS:fQ}, we obtain
  \begin{align*}
    \mathrm{Im}(f,\mathbb{Q}(\mu-\Lambda)u)_{\mathcal{X}} = -\mathrm{Im}(Au,\Lambda u)_{\mathcal{X}}+\alpha\|\mathbb{Q}(\mu-\Lambda)u\|_{\mathcal{X}}^2.
  \end{align*}
  By this equality and \eqref{E:ALam_B3} we get
  \begin{align*}
    \alpha\|\mathbb{Q}(\mu-\Lambda)u\|_{\mathcal{X}}^2 &\leq \|f\|_{\mathcal{X}}\|\mathbb{Q}(\mu-\Lambda)u\|_{\mathcal{X}}+C\|(-A)^{1/2}u\|_{\mathcal{X}}\|B_3u\|_{\mathcal{B}} \\
    &\leq \frac{\alpha}{2}\|\mathbb{Q}(\mu-\Lambda)u\|_{\mathcal{X}}^2+\frac{1}{2\alpha}\|f\|_{\mathcal{X}}^2+C\|(-A)^{1/2}u\|_{\mathcal{X}}\|B_3u\|_{\mathcal{B}}
  \end{align*}
  and therefore
  \begin{align} \label{Pf_PS:alpha}
    \alpha\|\mathbb{Q}(\mu-\Lambda)u\|_{\mathcal{X}}^2 \leq \frac{1}{\alpha}\|f\|_{\mathcal{X}}^2+C\|(-A)^{1/2}u\|_{\mathcal{X}}\|B_3u\|_{\mathcal{B}}.
  \end{align}
  Moreover, for each $\xi_2>0$ it follows from \eqref{E:B3u_h} and \eqref{Pf_PS:alpha} that
  \begin{align*}
    \|B_3u\|_{\mathcal{B}}^2 &\leq C\left(\xi_2^2\|\mathbb{Q}(\mu-\Lambda)u\|_{\mathcal{X}}^2+h_2^2\|(-A)^{1/2}u\|_{\mathcal{X}}^2\right) \\
    &\leq C\left(\frac{\xi_2^2}{\alpha^2}\|f\|_{\mathcal{X}}^2+\frac{\xi_2^2}{\alpha}\|(-A)^{1/2}u\|_{\mathcal{X}}\|B_3u\|_{\mathcal{B}}+h_2^2\|(-A)^{1/2}u\|_{\mathcal{X}}^2\right) \\
    &\leq \frac{1}{2}\|B_3u\|_{\mathcal{B}}^2+C\left\{\frac{\xi_2^2}{\alpha^2}\|f\|_{\mathcal{X}}^2+\left(\frac{\xi_2^4}{\alpha^2}+h_2^2\right)\|(-A)^{1/2}u\|_{\mathcal{X}}^2\right\}.
  \end{align*}
  Hence, subtracting $\frac{1}{2}\|B_2u\|_{\mathcal{X}}^2$ from both sides and taking the square root of the resulting inequality, we obtain
  \begin{align*}
    \|B_3u\|_{\mathcal{B}} \leq C\left\{\frac{\xi_2}{\alpha}\|f\|_{\mathcal{X}}+\left(\frac{\xi_2^2}{\alpha}+h_2\right)\|(-A)^{1/2}u\|_{\mathcal{X}}\right\}.
  \end{align*}
  We apply this inequality to \eqref{Pf_PS:alpha} to find that
  \begin{align*}
    \|\mathbb{Q}(\mu-\Lambda)u\|_{\mathcal{X}}^2 &\leq \frac{1}{\alpha^2}\|f\|_{\mathcal{X}}^2+\frac{C}{\alpha}\|(-A)^{1/2}u\|_{\mathcal{X}}\left\{\frac{\xi_2}{\alpha}\|f\|_{\mathcal{X}}+\left(\frac{\xi_2^2}{\alpha}+h_2\right)\|(-A)^{1/2}u\|_{\mathcal{X}}\right\} \\
    &\leq C\left\{\frac{1}{\alpha^2}\|f\|_{\mathcal{X}}^2+\left(\frac{\xi_2^2}{\alpha^2}+\frac{h_2}{\alpha}\right)\|(-A)^{1/2}u\|_{\mathcal{X}}^2\right\}.
  \end{align*}
  For each $\xi_1>0$, we deduce from \eqref{E:u_h} and the above inequality that
  \begin{align} \label{Pf_PS:u_fA}
    \begin{aligned}
      \|u\|_{\mathcal{X}}^2 &\leq C\left(\xi_1^2\|\mathbb{Q}(\mu-\Lambda)u\|_{\mathcal{X}}^2+h_1^2\|(-A)^{1/2}u\|_{\mathcal{X}}^2\right) \\
      &\leq C\left\{\frac{\xi_1^2}{\alpha^2}\|f\|_{\mathcal{X}}^2+\left(\frac{\xi_1^2\xi_2^2}{\alpha^2}+\frac{\xi_1^2h_2}{\alpha}+h_1^2\right)\|(-A)^{1/2}u\|_{\mathcal{X}}^2\right\}.
    \end{aligned}
  \end{align}
  To estimate $\|(-A)^{1/2}u\|_{\mathcal{X}}$, we observe by $f=(i\lambda+\mathbb{Q}L_\alpha)u=\mathbb{Q}(i\lambda+L_\alpha)u$ that
  \begin{align*}
    (f,B_2u)_{\mathcal{X}} = \bigl((i\lambda+L_\alpha)u,\mathbb{Q}B_2u\bigr)_{\mathcal{X}} = i\lambda(u,\mathbb{Q}B_2u)_{\mathcal{X}}+(Au,\mathbb{Q}B_2u)_{\mathcal{X}}-i\alpha(\Lambda u,\mathbb{Q}B_2u)_{\mathcal{X}}.
  \end{align*}
  We take the real part of this equality and use \eqref{E:QB2} and $\mathrm{Im}(u,B_2u)_{\mathcal{X}}=0$ by the self-adjointness of $B_2$ in $\mathcal{X}$ to get $\mathrm{Re}(f,B_2u)_{\mathcal{X}} = \mathrm{Re}(Au,B_2u)_{\mathcal{X}}$. Hence
  \begin{align*}
    \|(-A)^{1/2}u\|_{\mathcal{X}}^2 \leq C\,\mathrm{Re}(-Au,B_2u)_{\mathcal{X}} = -C\,\mathrm{Re}(f,B_2u)_{\mathcal{X}} \leq C\|f\|_{\mathcal{X}}\|B_2u\|_{\mathcal{X}} \leq C\|f\|_{\mathcal{X}}\|u\|_{\mathcal{X}}
  \end{align*}
  by \eqref{E:AB2u} and the boundedness of $B_2$. Applying this inequality to \eqref{Pf_PS:u_fA} we obtain
  \begin{align*}
    \|u\|_{\mathcal{X}}^2 &\leq C\left\{\frac{\xi_1^2}{\alpha^2}\|f\|_{\mathcal{X}}^2+\left(\frac{\xi_1^2\xi_2^2}{\alpha^2}+\frac{\xi_1^2h_2}{\alpha}+h_1^2\right)\|f\|_{\mathcal{X}}\|u\|_{\mathcal{X}}\right\} \\
    &\leq \frac{1}{2}\|u\|_{\mathcal{X}}^2+C\left\{\frac{\xi_1^2}{\alpha^2}+\left(\frac{\xi_1^2\xi_2^2}{\alpha^2}+\frac{\xi_1^2h_2}{\alpha}+h_1^2\right)^2\right\}\|f\|_{\mathcal{X}}^2.
  \end{align*}
  We further subtract $\frac{1}{2}\|u\|_{\mathcal{X}}^2$ from both sides and take the square root of the resulting inequality to find that
  \begin{align*}
    \|u\|_{\mathcal{X}} \leq C\left(\frac{\xi_1}{\alpha}+\frac{\xi_1^2\xi_2^2}{\alpha^2}+\frac{\xi_1^2h_2}{\alpha}+h_1^2\right)\|f\|_{\mathcal{X}}, \quad f = (i\lambda+\mathbb{Q}L_\alpha)u
  \end{align*}
  for all $u\in D_{\mathcal{Y}}(\mathbb{Q}L_\alpha)$. Since $-i\lambda\in\rho_{\mathcal{Y}}(\mathbb{Q}L_\alpha)$, the above inequality shows that
  \begin{align*}
    \|(i\lambda+\mathbb{Q}L_\alpha)^{-1}\|_{\mathcal{Y}\to\mathcal{Y}} \leq C\left(\frac{\xi_1}{|\alpha|}+\frac{\xi_1^2\xi_2^2}{\alpha^2}+\frac{\xi_1^2h_2(\xi_2,\mu)}{|\alpha|}+h_1(\xi_1,\mu)^2\right)
  \end{align*}
  for all $\xi_1,\xi_2>0$ with $\mu=\lambda/\alpha$. Therefore, the second inequality of \eqref{E:PS_Bound} is valid.
\end{proof}

\begin{remark} \label{R:PS_Bound}
  Contrary to \cite[Assumption 1]{IbMaMa19}, we do not assume that $A$ has a compact resolvent in Assumption \ref{As:A}. Thus the essential spectrum $\tilde{\sigma}_{\mathcal{X}}(A)$ of $A$ in $\mathcal{X}$ may be not empty, but we must have $\tilde{\sigma}_{\mathcal{Y}}(\mathbb{Q}A)=\emptyset$ for the essential spectrum of $\mathbb{Q}A$ in $\mathcal{Y}$ under Assumptions \ref{As:A}--\ref{As:La_02} and \ref{As:Est}. Indeed, $0\in\rho_{\mathcal{Y}}(\mathbb{Q}L_\alpha)$ and $\|(\mathbb{Q}L_\alpha)^{-1}\|_{\mathcal{Y}\to\mathcal{Y}} \leq CF(\alpha,0)$ for all $\alpha\in\mathbb{R}\setminus\{0\}$ by Theorem \ref{T:PS_Bound}. Thus, by a standard Neumann series argument,
  \begin{align*}
    \{\zeta\in\mathbb{C} \mid |\zeta| < 1/CF(\alpha,0)\} \subset \rho_{\mathcal{Y}}(\mathbb{Q}L_\alpha).
  \end{align*}
  On the other hand, since $\mathbb{Q}\Lambda$ is $\mathbb{Q}A$-compact in $\mathcal{Y}$ and the essential spectrum is invariant under a relatively compact perturbation (see \cite[Theorem IV.5.35]{Kato76}),
  \begin{align*}
    \tilde{\sigma}_{\mathcal{Y}}(\mathbb{Q}A) = \tilde{\sigma}_{\mathcal{Y}}(\mathbb{Q}A-i\alpha\mathbb{Q}\Lambda) = \tilde{\sigma}_{\mathcal{Y}}(\mathbb{Q}L_\alpha) \subset \sigma_{\mathcal{Y}}(\mathbb{Q}L_\alpha).
  \end{align*}
  Hence, if $\tilde{\sigma}_{\mathcal{Y}}(\mathbb{Q}A)$ contains some $\zeta\in\mathbb{C}$, then $\zeta\in\sigma_{\mathcal{Y}}(\mathbb{Q}L_\alpha)$ and thus $|\zeta|\geq1/CF(\alpha,0)$ for all $\alpha\in\mathbb{R}\setminus\{0\}$, but this is impossible since $\lim_{|\alpha|\to\infty}F(\alpha,0)=0$ by $\lim_{\xi\to\infty}h_j(\xi,0)=0$ for $j=1,2$. Thus $\tilde{\sigma}_{\mathcal{Y}}(\mathbb{Q}A)=\emptyset$. In particular, $\tilde{\sigma}_{\mathcal{X}}(A)=\emptyset$ if $\mathcal{N}=N_{\mathcal{X}}(\Lambda)=\{0\}$.
\end{remark}

Combining Theorems \ref{T:Semi_BoAl} and \ref{T:PS_Bound}, we obtain the following result.

\begin{theorem} \label{T:Decay_QL}
  Under Assumptions \ref{As:A}--\ref{As:La_02} and \ref{As:Est}, there exist positive constants $C_1$ and $C_2$ depending only on $\|B_2\|_{\mathcal{X}\to\mathcal{X}}$ and the constants appearing in \eqref{E:uB2u}, \eqref{E:AB2u}, and \eqref{E:u_h}--\eqref{E:B3u_h} (and in particular independent of the constant $C_A$ appearing in \eqref{E:Ab_A_Po}) such that
  \begin{align} \label{E:Decay_QL}
    \|\mathbb{Q}e^{tL_\alpha}f\|_{\mathcal{X}} \leq C_1e^{-C_2t/F(\alpha)}\|\mathbb{Q}f\|_{\mathcal{X}} \quad t\geq0, \, f\in\mathcal{X}
  \end{align}
  for all $\alpha\in\mathbb{R}\setminus\{0\}$, where $F(\alpha)=\sup_{\mu\in\mathbb{R}}F(\alpha,\mu)$ with $F(\alpha,\mu)$ given by \eqref{E:PS_F}.
\end{theorem}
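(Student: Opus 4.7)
The proof is essentially an assembly of the two previous theorems, so the plan is mostly bookkeeping. First I would note that for any $\alpha\in\mathbb{R}\setminus\{0\}$ and $\lambda\in\mathbb{R}$, Theorem \ref{T:PS_Bound} yields
\begin{align*}
  \|(i\lambda+\mathbb{Q}L_\alpha)^{-1}\|_{\mathcal{Y}\to\mathcal{Y}} \leq CF\!\left(\alpha,\frac{\lambda}{\alpha}\right).
\end{align*}
Since $(i\lambda-\mathbb{Q}L_\alpha)^{-1}=-(-i\lambda+\mathbb{Q}L_\alpha)^{-1}$, substituting $\lambda\mapsto-\lambda$ above gives the same bound with $\lambda/\alpha$ replaced by $-\lambda/\alpha$. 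Taking the supremum over $\lambda\in\mathbb{R}$ and using $\mu=-\lambda/\alpha$ as a change of variable,
\begin{align*}
  \Phi_{\mathcal{Y}}(-\mathbb{Q}L_\alpha) = \sup_{\lambda\in\mathbb{R}}\|(i\lambda-\mathbb{Q}L_\alpha)^{-1}\|_{\mathcal{Y}\to\mathcal{Y}} \leq C\sup_{\mu\in\mathbb{R}}F(\alpha,\mu) = CF(\alpha).
\end{align*}

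Next I would insert this bound into Theorem \ref{T:Semi_BoAl}, which produces constants $C_1,C_2>0$ depending only on $\|B_2\|_{\mathcal{X}\to\mathcal{X}}$ and on the constants in \eqref{E:uB2u}, \eqref{E:AB2u} such that
\begin{align*}
  \|\mathbb{Q}e^{tL_\alpha}f\|_{\mathcal{X}} \leq C_1 e^{-C_2t/\Phi_{\mathcal{Y}}(-\mathbb{Q}L_\alpha)}\|\mathbb{Q}f\|_{\mathcal{X}} \leq C_1 e^{-(C_2/C)\,t/F(\alpha)}\|\mathbb{Q}f\|_{\mathcal{X}},
\end{align*}
for every $t\geq0$ and $f\in\mathcal{X}$. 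Renaming $C_2/C$ as $C_2$ yields \eqref{E:Decay_QL}. The constant $C$ from Theorem \ref{T:PS_Bound} depends only on $\|B_2\|_{\mathcal{X}\to\mathcal{X}}$ and the constants in \eqref{E:uB2u}, \eqref{E:AB2u}, \eqref{E:u_h}--\eqref{E:B3u_h}, so the same is true of the final $C_1,C_2$; in particular neither depends on the coercivity constant $C_A$ in \eqref{E:Ab_A_Po}.

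The only non-mechanical point is to verify that $F(\alpha)$ is finite so that the estimate is non-vacuous. This is immediate from the assumed boundedness of $h_1,h_2$ in Assumption \ref{As:Est}: fixing any $\xi_1,\xi_2>0$ in the infimum \eqref{E:PS_F} produces the uniform-in-$\mu$ bound
\begin{align*}
  F(\alpha,\mu) \leq \frac{\xi_1}{|\alpha|}+\frac{\xi_1^2\xi_2^2}{\alpha^2}+\frac{\xi_1^2\|h_2\|_\infty}{|\alpha|}+\|h_1\|_\infty^2 < \infty.
\end{align*}

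There is no genuine obstacle here: both resolvent bound (Theorem \ref{T:PS_Bound}) and the Gearhart--Pr\"uss-type semigroup bound (Theorem \ref{T:Semi_BoAl}) have already done the work, and Theorem \ref{T:Decay_QL} merely records their combination in a single statement suitable for application to the concrete operators $A_m,\Lambda_m$ in Section \ref{SS:Ko_Semi}.
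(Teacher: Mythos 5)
Your proposal is correct and follows exactly the paper's own argument: bound $\Phi_{\mathcal{Y}}(-\mathbb{Q}L_\alpha)=\sup_{\lambda\in\mathbb{R}}\|(i\lambda-\mathbb{Q}L_\alpha)^{-1}\|_{\mathcal{Y}\to\mathcal{Y}}$ by $CF(\alpha)$ via Theorem \ref{T:PS_Bound} and the symmetry $\lambda\mapsto-\lambda$, then insert this into \eqref{E:Semi_BoAl}. The added check that $F(\alpha)$ is finite is a harmless extra that the paper leaves implicit.
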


\begin{proof}
  By \eqref{E:PS_Bound} we have
  \begin{align*}
    \Phi_{\mathcal{Y}}(-\mathbb{Q}L_\alpha) &= \sup_{\lambda\in\mathbb{R}}\|(i\lambda-\mathbb{Q}L_\alpha)^{-1}\|_{\mathcal{Y}\to\mathcal{Y}} = \sup_{\lambda\in\mathbb{R}}\|(i\lambda+\mathbb{Q}L_\alpha)^{-1}\|_{\mathcal{Y}\to\mathcal{Y}} \\
    &\leq C\sup_{\lambda\in\mathbb{R}}F\left(\alpha,\frac{\lambda}{\alpha}\right) = C\sup_{\mu\in\mathbb{R}}F(\alpha,\mu) = CF(\alpha).
  \end{align*}
  Hence \eqref{E:Decay_QL} follows from this inequality and \eqref{E:Semi_BoAl}.
\end{proof}

\section{Appendix: basic formulas of differential geometry} \label{S:DG}
This section gives some notations and basic formulas of differential geometry. We refer to e.g. \cite{Lee13,Lee18} for details.

Let $(M,g)$ be a two-dimensional Riemannian manifold. For a local coordinate system $(x^1,x^2)$ of $M$, let $(\partial_1,\partial_2)$ and $(dx^1,dx^2)$ be the coordinate frame and its dual coframe. We set $g_{jk}=g(\partial_j,\partial_k)$ for $j,k=1,2$ and denote by $(g^{jk})_{j,k=1,2}$ the inverse matrix of $(g_{jk})_{j,k=1,2}$ so that the inner products of one-forms and $(0,2)$-tensor fields on $M$ are given by $g(dx^j,dx^k)=g^{jk}$ and
\begin{align*}
  g(dx^{j_1}\otimes dx^{k_1},dx^{j_2}\otimes dx^{k_2}) = g(dx^{j_1},dx^{j_2})g(dx^{k_1},dx^{k_2}) = g^{j_1j_2}g^{k_1k_2}.
\end{align*}
Let $\Gamma_{jk}^l$ be the Christoffel symbols of $g$ given by
\begin{align*}
  \Gamma_{jk}^l = \frac{1}{2}\sum_{m=1,2}g^{lm}(\partial_jg_{km}+\partial_kg_{jm}-\partial_mg_{jk}), \quad j,k,l=1,2.
\end{align*}
For a (complex-valued) function $u$ on $M$, we write $\nabla u$ and $\nabla^2u$ for the gradient and the covariant Hessian of $u$, respectively, which are locally expressed as
\begin{align*}
  \nabla u = \sum_{j,k=1,2}g^{jk}(\partial_ju)\partial_k, \quad \nabla^2u = \sum_{j,k=1,2}u_{;j;k}dx^j\otimes dx^k
\end{align*}
with $u_{;j;k}=\partial_k\partial_ju-\sum_{l=1,2}\Gamma_{kj}^l\partial_lu$ for $j,k=1,2$. Then
\begin{align*}
  |\nabla u|^2 &= g(\nabla u,\nabla\bar{u}) = \sum_{j,k=1,2}g^{jk}\partial_ju\,\overline{\partial_ku}, \\
  |\nabla^2u|^2 &= g(\nabla^2u,\nabla^2\bar{u}) = \sum_{j_1,j_2,k_1,k_2=1,2}g^{j_1j_2}g^{k_1k_2}u_{;j_1;k_1}\,\overline{u_{;j_2;k_2}}.
\end{align*}
Also, the Laplace--Beltrami operator $\Delta$ on $M$ is locally given by
\begin{align*}
  \Delta u = \frac{1}{\det g}\sum_{j,k=1,2}\partial_j\Bigl(g^{jk}\partial_ku\sqrt{\det g}\Bigr), \quad \det g = \det\bigl((g_{jk})_{j,k=1,2}\bigr).
\end{align*}
We use these expressions under the spherical coordinate system of $M=S^2$.

\begin{lemma} \label{L:Re_SC}
  Let $(x^1,x^2)=(\theta,\varphi)$ be the spherical coordinate system
  \begin{align*}
    [0,\pi]\times[0,2\pi) \ni (\theta,\varphi) \mapsto \mathbf{x}(\theta,\varphi) = (\sin\theta\cos\varphi,\sin\theta\sin\varphi,\cos\theta) \in S^2.
  \end{align*}
  Then for a function $u$ on $S^2$ we have
  \begin{align} \label{E:Re_SC}
    \begin{aligned}
      |\nabla u|^2 &= |\partial_\theta u|^2+\frac{1}{\sin^2\theta}|\partial_\varphi u|^2, \\
      |\nabla^2u|^2 &= |\partial_\theta^2u|^2+\frac{2}{\sin^2\theta}\left|\partial_\varphi\partial_\theta u-\frac{\cos\theta}{\sin\theta}\partial_\varphi u\right|^2+\frac{1}{\sin^4\theta}|\partial_\varphi^2u+\sin\theta\cos\theta\,\partial_\theta u|^2, \\
      \Delta u &= \frac{1}{\sin\theta}\partial_\theta\bigl(\sin\theta\,\partial_\theta u\bigr)+\frac{1}{\sin^2\theta}\partial_\varphi^2u.
    \end{aligned}
  \end{align}
\end{lemma}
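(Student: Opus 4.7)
The plan is a direct computation from the general formulas stated just before Lemma \ref{L:Re_SC}. First I would compute the induced metric from the embedding $\mathbf{x}(\theta,\varphi)=(\sin\theta\cos\varphi,\sin\theta\sin\varphi,\cos\theta)$. Differentiating gives
\begin{align*}
  \partial_\theta\mathbf{x} &= (\cos\theta\cos\varphi,\cos\theta\sin\varphi,-\sin\theta), \\
  \partial_\varphi\mathbf{x} &= (-\sin\theta\sin\varphi,\sin\theta\cos\varphi,0),
\end{align*}
so that the Euclidean inner products yield $g_{\theta\theta}=1$, $g_{\varphi\varphi}=\sin^2\theta$, and $g_{\theta\varphi}=0$. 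Consequently $g^{\theta\theta}=1$, $g^{\varphi\varphi}=1/\sin^2\theta$, $g^{\theta\varphi}=0$, and $\det g=\sin^2\theta$. Plugging these into the local formula $|\nabla u|^2=\sum_{j,k}g^{jk}\partial_ju\,\overline{\partial_ku}$ gives the first line of \eqref{E:Re_SC} immediately, and substituting into the local expression for $\Delta u$ (using $\sqrt{\det g}=\sin\theta$) yields the third line with no further work.

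For the Hessian formula I would next compute the Christoffel symbols from $\Gamma_{jk}^l=\tfrac{1}{2}\sum_m g^{lm}(\partial_jg_{km}+\partial_kg_{jm}-\partial_mg_{jk})$. Only $\partial_\theta g_{\varphi\varphi}=2\sin\theta\cos\theta$ is nonzero, so the only nonvanishing Christoffel symbols are
\begin{equation*}
  \Gamma_{\varphi\varphi}^{\theta} = -\sin\theta\cos\theta, \qquad \Gamma_{\theta\varphi}^{\varphi} = \Gamma_{\varphi\theta}^{\varphi} = \frac{\cos\theta}{\sin\theta}.
\end{equation*}
Substituting into $u_{;j;k}=\partial_k\partial_ju-\sum_l\Gamma_{kj}^l\partial_lu$ then gives $u_{;\theta;\theta}=\partial_\theta^2u$, $u_{;\theta;\varphi}=u_{;\varphi;\theta}=\partial_\varphi\partial_\theta u-\tfrac{\cos\theta}{\sin\theta}\partial_\varphi u$, and $u_{;\varphi;\varphi}=\partial_\varphi^2u+\sin\theta\cos\theta\,\partial_\theta u$.

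Finally I would expand $|\nabla^2u|^2=\sum_{j_1,j_2,k_1,k_2}g^{j_1j_2}g^{k_1k_2}u_{;j_1;k_1}\overline{u_{;j_2;k_2}}$. Because $g$ is diagonal, only terms with $(j_1,j_2)$ and $(k_1,k_2)$ each coincident survive, and they combine into
\begin{equation*}
  |\nabla^2u|^2 = (g^{\theta\theta})^2|u_{;\theta;\theta}|^2+2g^{\theta\theta}g^{\varphi\varphi}|u_{;\theta;\varphi}|^2+(g^{\varphi\varphi})^2|u_{;\varphi;\varphi}|^2,
\end{equation*}
where the factor of $2$ accounts for the two off-diagonal index pairs. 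Inserting the values of $g^{jk}$ and the components $u_{;j;k}$ computed above produces the second line of \eqref{E:Re_SC} exactly. No step is genuinely difficult: the only mild obstacle is keeping track of the index symmetries in the Hessian expansion so that the factor $2/\sin^2\theta$ appears correctly in front of the mixed-derivative term.
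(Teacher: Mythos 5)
Your computation is correct and follows essentially the same route as the paper: compute the induced metric from the parametrization, read off the Christoffel symbols, form the Hessian components, and substitute into the general local formulas. The only difference is that you spell out the diagonal-metric expansion of $|\nabla^2u|^2$ explicitly, which the paper leaves implicit.
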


\begin{proof}
  We use the index $j=\theta,\varphi$ instead of $j=1,2$. Since
  \begin{align*}
    \partial_\theta\mathbf{x} =
    \begin{pmatrix}
      \cos\theta\cos\varphi \\
      \cos\theta\sin\varphi \\
      -\sin\theta
    \end{pmatrix},
    \quad \partial_\varphi\mathbf{x} =
    \begin{pmatrix}
      -\sin\theta\sin\varphi \\
      \sin\theta\cos\varphi \\
      0
    \end{pmatrix},
  \end{align*}
  we observe by direct calculations that
  \begin{align*}
    g_{\theta\theta} &= 1, \quad g_{\varphi\varphi} = \sin^2\theta, \quad \det g = \sin^2\theta, \\
    g^{\theta\theta} &= 1, \quad g^{\varphi\varphi} = \frac{1}{\sin^2\theta}, \quad \Gamma_{\varphi\varphi}^\theta = -\sin\theta\cos\theta, \quad \Gamma_{\theta\varphi}^\varphi = \Gamma_{\varphi\theta}^\varphi = \frac{\cos\theta}{\sin\theta}
  \end{align*}
  and the other $g_{jk}$, $g^{jk}$, and $\Gamma_{jk}^l$ vanish identically. Hence
  \begin{align*}
    u_{;\theta;\theta} = \partial_\theta^2u, \quad u_{;\theta;\varphi} = u_{;\varphi;\theta} = \partial_\varphi\partial_\theta u-\frac{\cos\theta}{\sin\theta}\partial_\varphi u, \quad u_{;\varphi;\varphi} = \partial_\varphi^2u+\sin\theta\cos\theta\,\partial_\theta u
  \end{align*}
  and we obtain \eqref{E:Re_SC} by the above expressions.
\end{proof}

\section*{Acknowledgments}
The work of the first author was supported by JSPS KAKENHI Grant Numbers 20K03698, 19H05597, 20H00118. Also, the work of the second author was supported by Grant-in-Aid for JSPS Fellows No. 19J00693.

\bibliographystyle{abbrv}
\bibliography{NSK_Rate_Ref}

\end{document}